\theoremstyle{plain}
\newtheorem{theorem}{Theorem}[section]
\newtheorem{proposition}[theorem]{Proposition}
\newtheorem{lemma}[theorem]{Lemma}
\newtheorem{corollary}[theorem]{Corollary}
\newtheorem{observation}[theorem]{Observation}
\newtheorem{observations}[theorem]{Observations}
\newtheorem*{obs}{Observation}
\newtheorem*{intthm}{Theorem}
\newtheorem*{mainthm}{Theorem~\ref{mainthm}}
\newtheorem*{secmainthm}{Theorem~\ref{secmainthm}}
\theoremstyle{definition}
\newtheorem{definition}[theorem]{Definition}
\newtheorem{example}[theorem]{Example}
\theoremstyle{remark}
\newtheorem*{remark}{Remark}
\newcommand{\Fraisse}{Fra\"\i{}ss\'e\xspace}
\newcommand{\Konig}{K\H{o}nig\xspace}
\newcommand{\Kubis}{Kubi\'s\xspace}
\newcommand{\injto}{\hookrightarrow}
\newcommand{\injfrom}{\hookleftarrow}
\newcommand{\epito}{\twoheadrightarrow}
\newcommand{\Age}{\operatorname{Age}}
\newcommand{\restr}{\mathord{\upharpoonright}}
\newcommand{\down}{\!\downarrow\,}
\newcommand{\Aut}{\operatorname{Aut}}
\newcommand{\struc}[1]{\mathbf{#1}}	
\newcommand{\mstr}[1]{\mathcal{#1}}   
\newcommand{\class}[1]{\mathscr{#1}}	
\newcommand{\cat}[1]{\pmb{\mathscr{#1}}} 
\newcommand{\cats}[1]{\pmb{\mathscr{#1}}_{\!\!<\omega}} 
\newcommand{\func}{\mathrm}  
\newcommand{\cochain}[1]{\overleftarrow{\func{#1}}} 
\newcommand{\chain}[1]{\overrightarrow{\func{#1}}} 
\newcommand{\tup}[1]{\mathbf{#1}}
\newcommand{\ttup}[1]{\overline{\tup{#1}}}
\newcommand{\ul}{\underline}
\newcommand{\ol}{\overline}
\newcommand{\plim}{\varprojlim}
\newcommand{\comma}{\mathop{\downarrow}}
\newcommand{\lex}{{\operatorname{lex}}} 
\newcommand{\genericprojlim}[1]{%
  \mathop{\mathpalette\myvarlim@{{#1}{\leftarrowfill@\textstyle}}}\nmlimits@
}
\def\myvarlim@#1#2{\myvarlim@@#1#2}
\def\myvarlim@@#1#2#3{%
  \vtop{\m@th\ialign{##\cr
    \hfil$#1\operator@font #2$\hfil\cr
    \noalign{\nointerlineskip\kern1.5\ex@}#3\cr
    \noalign{\nointerlineskip\kern-\ex@}\cr}}%
}
\newcommand{\mlim}{\genericprojlim{Lim}}
\newcommand{\defeq}{\vcentcolon\Leftrightarrow}
\newcommand{\JEP}{\ensuremath{\operatorname{JEP}}\xspace}
\newcommand{\HP}{\ensuremath{\operatorname{HP}}\xspace}
\newcommand{\AP}{\ensuremath{\operatorname{AP}}\xspace}
\newcommand{\HAP}{\ensuremath{\operatorname{HAP}}\xspace}
\newcommand{\AEP}{\ensuremath{\operatorname{AEP}}\xspace}
\newcommand{\ar}{\operatorname{ar}}
\newcommand{\op}{{\operatorname{op}}}
\newcommand{\Rho}{\mathrm{P}}
\newcommand{\bR}{\mathbb{R}}
\newcommand{\Seq}{\operatorname{Seq}}
\newcommand{\R}{\struc{R}} 
\newcommand{\G}{\mathbb G} 
\author[W.~Kubi\'s]{Wies\l aw Kubi\'s}
\address{Institute of Mathematics\\Czech Academy of Sciences\\\v{Z}itn\'a 25, 115 67 Prague, Czech Republic}
\email{kubis@math.cas.cz}
\urladdr{https://users.math.cas.cz/kubis/}
\author[Ch.\,Pech]{Christian Pech}
\email{pech@math.cas.cz}
\urladdr{https://www.researchgate.net/profile/Christian\_Pech2}
\thanks{The research of the first and the second author was supported by GA~\v{C}R (Czech Science Foundation) grant EXPRO 20-31529X}
\author[M.\,Pech]{Maja Pech} 
\address{Department of Mathematics and Informatics\\University of Novi Sad\\21000 Novi Sad\\ Serbia} 
\email{maja@dmi.uns.ac.rs}
\urladdr{http://people.dmi.uns.ac.rs/~maja/}
\thanks{The third author acknowledges financial support of the Ministry of Education, Science and Technological Development of the Republic of Serbia (Grant No. 451-03-68/2022-14/200125).}
\title{Homogeneous Ultrametric Structures}
\subjclass[2010]{Primary: 03C30; Secondary: 03C52, 03C68}
\keywords{\Fraisse limit, ultrametric, metric structure, amalgamated extension property}
\begin{document}

\begin{abstract}
	We develop the theory of homogeneous Polish ultrametric structures. Our starting point is a \Fraisse class of finite structures and the crucial tool is the universal homogeneous epimorphism. The new \Fraisse limit is an inverse limit, nevertheless its universality is with respect to embeddings and, contrary to the Polish metric \Fraisse theory of Ben Yaacov~\cite{Yaa15}, homogeneity is strict. Our development can be viewed as the third step of building a Borel-like hierarchy of \Fraisse limits, where the first step was the classical setting of \Fraisse and the second step is the more recent theory, due to Irwin and Solecki~\cite{IrvSol06}, of pro-finite \Fraisse limits.
\end{abstract}

\maketitle

\tableofcontents

\section{Introduction}

Trees and tree-like structures seem to be ubiquitous in pure mathematics and theoretical computer science.
A pure tree is typically understood as a connected simple graph with no cycles, or as a partially ordered set in which all sets of the form $\{x \colon x < p\}$ are well-ordered, or even finite linearly ordered. In the last case, we call it an $\omega$-tree, as its branches are of order type $\omega$---the natural numbers.
An $\omega$-tree can be called \emph{pruned} if every element (node) belongs to a branch, namely, there are no leaves.
As it happens, pruned $\omega$-trees are in one-to-one correspondence with inverse sequences of surjections between their levels.
Specifically, given an inverse sequence of surjections
\[\begin{tikzcd}
	L_0 & L_1 \ar[l, two heads] & L_2 \ar[l, two heads] & \cdots \ar[l, two heads]
\end{tikzcd}
\]
one obtains a pruned $\omega$-tree whose levels are the $L_i$s and the ordering is given by the bonding mappings, augmented by a smallest element $\bot$---the root. Conversely, a pruned $\omega$-tree induces an inverse sequence of surjections between its levels, determined by the ordering.

It is natural to add an extra structure on the levels of the tree and require that the bonding mappings (induced by the ordering) preserve this structure. By this way we may obtain, e.g.,  lexicographic trees, namely, where each level is linearly ordered. But there are many other possibilities, for example, graphs, groups, metric spaces, etc. The limit of such a tree is often called a \emph{pro-finite} structure. It is indeed a structure of the same type as the levels of the tree, declaring all the relations and operations in the obvious way.

There is yet another structure on the limit (the set of all branches) of a pruned $\omega$-tree. Specifically, one can naturally measure how far from each other are two distinct branches, saying their distance is $2^{-n}$  if $n$ is the number of levels, where they agree.
In this setting, all the levels of the tree are treated as discrete (equilateral, all pairs of points of the same distance) metric spaces and the limit is a complete ultrametric space in which the levels could often be reconstructed as closed balls of a fixed radius.

Combining both ideas, we can obtain complete ultrametric mathematical structures as inverse limits of ``structured'' trees and it is natural to ask how complicated they could be.

For example, it follows from the results of~\cite{Kub14} that there exists a graph epimorphism $\Omega \colon \R \to \R$, where $\R$ denotes the Rado graph, which is most complicated in the sense that it captures all homomorphisms into the Rado graph and is homogeneous with respect to finite homomorphisms of graphs. This special epimorphism is a retraction (i.e., it has a right inverse) and it is unique up to isomorphism. Repeating $\Omega$ infinitely many times, we obtain a sequence/tree
\[\begin{tikzcd}
	\R & \ar[l, two heads, "\Omega"'] \R & \ar[l, two heads, "\Omega"'] \R \ar[l, two heads, "\Omega"'] &  \ar[l, two heads, "\Omega"'] \cdots
\end{tikzcd}
\]
whose limit is a highly homogeneous graph on the Baire space, namely, the universal Polish space. The natural representation of the Baire space is the set of all infinite sequences with values in the natural numbers, or any other fixed countable set. Here we obtain a concrete graph representation of the Baire space, based on the Rado graph instead of the pure countable infinite set.

What is the role of the Rado graph here? It turns out that the class of finite graphs has very nice amalgamations, therefore the universal homomorphism $\Omega$ exists. In general, one needs to impose some restrictions onto the class of finite (or finitely generated) structures in order to get an analogue of $\Omega$ that would be the main building block of the most complicated ultrametric structure.

The purpose of this paper is to present a theory explaining highly homogeneous ultrametric structures that could be called \Fraisse limits, perhaps two-dimensional, as they rely first on the existence of a universal homogeneous homomorphism $\Omega$ and then showing that the inverse limit of repeating $\Omega$ infinitely often leads to an ultrametric structure with high homogeneity and universality.

On the one hand, we are using inverse limits, so our theory resembles projective \Fraisse theory of Irwin and Solecki~\cite{IrvSol06}, while on the other hand we obtain metric structures directly, resembling the metric \Fraisse theory~\cite{Yaa15}, however we obtain strict homogeneity, contrary to what is typically happening in continuous \Fraisse theory.
Furthermore, our research could also be viewed as the first attempt to lift the theory of \Fraisse limits to a higher level, thus possibly initiating a hierarchy, in the same spirit as the Borel hierarchy in descriptive set theory. The first step is the classical \Fraisse theory, the second one is Irwin -- Solecki projective \Fraisse theory~\cite{IrvSol06} that could be understood as the compact (profinite) theory. We are proposing the next step, that could be called the \emph{$G_\delta$ theory}, as discrete structures could be thought of as open and their limits are $G_\delta$ structures. Closed structures could be inverse limits of finite ones, so it is tempting to look for the theory of generic limits in the class $F_\sigma$, consisting of colimits of profinite structures. But this is beyond the scope of this paper.

Furthermore we are working in pure category theory, with no metric enrichment, therefore we are not considering almost commuting diagrams, even though they make sense, as our objects are metric spaces after all.

Our main results show that under suitable assumptions there exists a unique complete ultrametric structure that is ultra-homogeneous, both with respect to isometric isomorphisms and more general embeddings, preserving distances in a suitable natural way.

In particular, as a special case, we obtain the following result.

\begin{intthm}
	There exists a closed graph relation on the Baire space $\omega^\omega$ such that for every finite induced subgraph $F$, every finite extension $G \supseteq F$ is realized in $\omega^\omega$, even preserving the metric, in case $G$ is viewed as an ultrametric graph with distances $2^{-n}$, $n \in \mathbb{N}$.
	Furthermore, the graph relation is uniquely determined by this extension property. 
	
	Finally, every closed graph on a Polish ultrametric space embeds into the graph above.
\end{intthm}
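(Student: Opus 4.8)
The plan is to realize the desired graph as the inverse limit of a single universal homogeneous epimorphism of the Rado graph, and to read off all three assertions from the properties of that epimorphism. First I would invoke the universal homogeneous graph epimorphism $\Omega\colon\R\to\R$, whose existence and uniqueness up to isomorphism rest on the good amalgamation behaviour of the class of finite graphs; I take this as the input from the general theory. Forming the inverse sequence with every bonding map equal to $\Omega$, let $\mathbb{L}$ be its limit. A point of $\mathbb{L}$ is a thread $(x_i)_i$ with $\Omega(x_{i+1})=x_i$, and since each fibre of $\Omega$ is countably infinite, $\mathbb{L}$ is a nonempty, zero-dimensional, nowhere locally compact Polish space, hence homeomorphic to $\omega^\omega$; the natural ultrametric $d(x,y)=2^{-n}$, with $n$ the first coordinate at which two threads differ, is the standard ultrametric on the Baire space. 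I define the graph relation by declaring $x\mathrel{R}y$ exactly when $\pi_i(x)\mathrel{R}\pi_i(y)$ at every level $i$; this is a countable intersection of clopen conditions, so $R$ is closed, and because $\Omega$ preserves edges the levels at which $\pi_i(x)\mathrel{R}\pi_i(y)$ holds form an initial segment of $\mathbb{N}$.

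The heart of the argument is the extension property. A finite ultrametric graph $G$ with values $2^{-n}$ encodes a finite tree of balls, and embedding $G$ isometrically over an already-placed copy of $F$ amounts to choosing, for each new vertex, a thread that branches off the placed threads at the prescribed level and carries the prescribed edges. I would reduce this, by induction on the depth of the tree, to a single one-level lifting problem: given a finite configuration sitting over a point at level $k$, together with a finite refinement at level $k+1$ consistent with a graph homomorphism down to level $k$, realize the refinement inside the corresponding fibre of $\Omega$. This is exactly what the homogeneity of $\Omega$ supplies, its fibres being rich enough to realize every finite configuration compatible with a homomorphism into $\R$, and iterating across the finitely many levels of $G$ yields the sought isometric graph embedding. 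I expect the main obstacle to lie here, in reconciling the metric bookkeeping of the ball tree with the lifting property of $\Omega$.

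Uniqueness then follows from a back-and-forth argument: given a second closed graph on $\omega^\omega$ with the same extension property, I would construct an isometric graph isomorphism between countable dense subsets, extending finite isometric partial isomorphisms by the extension property applied alternately on the two sides, and complete it by uniform continuity. For universality, let $\mathbb{X}$ be a closed graph on a Polish ultrametric space $Z$. Its balls of radius $2^{-n}$ form countable discrete graphs $Z_n$ with surjective homomorphic refinement maps, and closedness of the relation presents $\mathbb{X}$ as the metric inverse limit $\varprojlim Z_n$. Using the universality of $\Omega$, which captures all homomorphisms into $\R$, I would embed the tower $(Z_n)$ into the constant tower $(\R,\Omega)$ one level at a time while keeping the squares commutative; passing to limits gives the required isometric graph embedding $Z\hookrightarrow\mathbb{L}$, completing the proof.
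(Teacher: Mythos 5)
Your route is the one the paper takes: the theorem is the special case, for the Fra\"\i{}ss\'e class of finite graphs, of the general machinery --- form the constant inverse tower of the universal homogeneous epimorphism $\Omega\colon\R\to\R$, identify the thread space with $\omega^\omega$ carrying the $2^{-n}$ ultrametric, define the limit relation as the conjunction of the level-wise relations (hence closed), get uniqueness by a back-and-forth between countable dense substructures followed by completion, and get universality by presenting a closed graph on a Polish ultrametric space as a surjective $\omega$-cochain of countable graphs (the quotients by the $2^{-i}$-ball equivalences) and embedding that cochain level by level into the constant one via the universality of $\Omega$. This is precisely the content of Observations~\ref{sequniv}, \ref{seqhom} and \ref{injective} and of the proof of Theorem~\ref{mainthm}, so in architecture your proposal and the paper's proof coincide.

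The one step that would fail as written is the extension property, exactly where you predicted the obstacle. You propose to obtain the isometric copy of $G$ over $F$ by ``iterating across the finitely many levels of $G$.'' But a point of the limit is a full thread, and an edge of the limit graph requires adjacency of the two threads at \emph{every} level $i<\omega$ (likewise, preserving the metric value of the graph predicate constrains every coordinate, not just those up to the depth of the ball tree of $G$). After lifting through the finitely many levels at which $G$ branches, the new vertices are only finite partial threads, and completing them by mere surjectivity of $\Omega$ will in general destroy the prescribed adjacencies. The lifting through $\Omega$, relative to the already-fixed coordinates of the threads in $F$, must therefore be carried out for all $i<\omega$; this is what the inductive construction of the natural embedding $\Seq(G)\Rightarrow\cochain{U}$ in Observations~\ref{sequniv} and~\ref{seqhom} (and, packaged differently, Observation~\ref{injective}) accomplishes. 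The tool you invoke is the right one; only the claim that the iteration terminates after finitely many steps is wrong, and the repair is to let it run through all $\omega$ levels.
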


This result is a special case of a much more general theorem involving a relational language, where a certain stronger variant of the amalgamation property is required. This is explained below in detail.

Let us add that the main idea of this note actually appeared (most likely for the first time) in~\cite{BKK17} in the context of Banach and Fr\'echet spaces. Namely, the Gurarii space  $\G$ is the \Fraisse limit of finite-dimensional Banach spaces, in the metric-enriched category-theoretic sense. It turns out that there exists a universal homogeneous non-expansive operator on $\G$ (constructed in~\cite{GarKub15}) and repeating it infinitely many times one obtains an inverse sequence whose limit is a universal (almost) homogeneous separable graded Fr\'echet space. 

This indicates that it should be possible to extend the results of our note to metric \Fraisse theory~\cite{Yaa15, K41}.

\vspace{2mm}

\begin{center}
	***
\end{center}

In 1953 Roland \Fraisse introduced the notion of an ``age'' as a tool to specify a class of (relational) structures \cite{Fra53a} . 

If $\struc{A}$ is a model-theoretic structure, then by $\Age(\struc{A})$ we denote the class of all finitely generated structures that embed into $\struc{A}$. Here a model-theoretic structure is specified through
\begin{enumerate}
	\item a signature $\Sigma=(\Phi,\Rho,\ar)$ where $\Phi$ and $\Rho$ are mutually disjoint sets of operational symbols and relational symbols, respectively, and where $\ar$ is a function that assigns to each element of $\Phi\cup\Rho$ its arity ($0$-ary operational symbols take the role of constant symbols),
	\item a tuple $\struc{A}=(A,(f^\struc{A})_{f\in\Phi}, (\varrho^\struc{A})_{\varrho\in\Rho})$, where for all $f\in\Phi$, $\varrho\in\Rho$ we have that $f^\struc{A}\colon A^{\ar(f)} \to A$, and $\varrho^\struc{A}\subseteq A^{\ar(\varrho)}$. The operations $f^\struc{A}$ of $\struc{A}$ are also called the \emph{basic operations} of $\struc{A}$. Similarly, the relations $\varrho^\struc{A}$ of $\struc{A}$ are called the \emph{basic relations} of $\struc{A}$.
\end{enumerate}

A structure $\struc{B}$ is said to be \emph{younger} than $\struc{A}$ if $\Age(\struc{B})\subseteq \Age(\struc{A})$ (originally, \Fraisse used the notation $\gamma_{\struc{A}}$ for $\Age(\struc{A})$ and denoted the class of structures younger that $\struc{A}$ by $\Gamma_{\struc{A}}$). 

Initially \Fraisse introduced ages for his famous characterization of countable homogeneous structures (recall that a structure $\struc{A}$ is called \emph{homogeneous} if every isomorphism between finitely generated substructures of $\struc{A}$ extends to an automorphism of $\struc{A}$):
   \begin{theorem}[{\cite{Fra53}}]
   	  Let $\class{C}$ be a class of finitely generated structures of the same type. Then $\class{C}$ is the age of a countable homogeneous structure if and only if 
   	  \begin{enumerate}
   	  	\item up to isomorphism $\class{C}$ contains countably many structures,
   	  	\item $\class{C}$ has the hereditary property (\HP), i.e., 
   	  	\[
   	  	\forall\struc{A},\struc{B}\,:\,\struc{B}\in\class{C},\,\struc{A}\injto\struc{B} \implies \struc{A}\in\class{C},
   	  	\]
   	  	\item $\class{C}$ has the joint embedding property (\JEP), i.e.,
   	  	\[
   	  	\forall \struc{A},\struc{B}\in\class{C}\,\exists\struc{C}\in\class{C}\,:\,\struc{A}\injto\struc{C},\,\struc{B}\injto\struc{C},
   	  	\] 
   	  	\item $\class{C}$ has the amalgamation property (\AP), i.e., 
   	  	\[
   	  	\forall\struc{A},\struc{B}_1,\struc{B}_2\in\class{C},\,\iota_1\colon\struc{A}\injto\struc{B}_1,\,\iota_2\colon\struc{A}\injto\struc{B}_2\,\exists \struc{C}\in\class{C},\,\kappa_1\colon\struc{B}_1\injto\struc{C},\,\kappa_2\colon\struc{B}_2\injto\struc{C} :
   	  	\]
   	  	the following diagram commutes:
   	  	\[
   	  		\begin{tikzcd}
   	  		\struc{B}_1 \ar[r,"\kappa_1",hook,dashed]& \struc{C}\\
   	  		\struc{A} \ar[u,"\iota_1",hook']\ar[r,"\iota_2",hook]&\struc{B}_2\ar[u,"\kappa_2"',hook',dashed]
   	  	\end{tikzcd}
   	  	\]
   	  \end{enumerate}
   	  Moreover, any two countable homogeneous structures with the same age are isomorphic.
   \end{theorem}
   The proof of this theorem usually involves the construction of an $\omega$-tower
   \[
   \begin{tikzcd}
   \struc{A}_0 \ar[r,"\le",hook] & \struc{A}_1 \ar[r,"\le",hook] & \struc{A}_2 \ar[r,"\le",hook] & \struc{A}_3 \ar[r,"\le",hook] & \dots    	
   \end{tikzcd}
   \]
   of structures from $\class{C}$. Clearly, the union
   \[
   \struc{A}_\infty\coloneqq \bigcup_{i<\omega}\struc{A}_i 
   \]
   has age $\class{C}$ if and only if the family $(\struc{A}_i)_{i<\omega}$ is \emph{cofinal} in $\class{C}$, i.e., 
   \[
   \forall \struc{B}\in\class{C}\,\exists i<\omega\,:\,\struc{B}\injto\struc{A}_i.
   \]
   It was observed in \cite{Kub14} that if $\Age(\struc{A}_\infty)=\class{C}$, then $\struc{A}_\infty$ is homogeneous if and only if the tower $(\struc{A}_i)_{i<\omega}$ has the \emph{absorption property}:
   \[
   \forall\struc{B}\in\class{C},\, i<\omega,\,\iota\colon\struc{A}_i\injto\struc{B}\,\exists k<\omega,\,\hat\iota\colon\struc{B}\injto\struc{A}_k\,:\, \hat\iota\circ\iota=\iota_i^k,
   \] 
   where $\iota_i^k$ is the inclusion homomorphism of $\struc{A}_i$ into $\struc{A}_k$. 
   
   Starting with an age $\class{C}$ the existence of a cofinal $\omega$-tower $(\struc{A}_i)_{i<\omega}$ with the absorption property may be proved using an abstract version of Banach-Mazur games (see \cite{KraKub21}). 
   
   All this emphasizes the role of $\omega$-towers when studying countable structures. A class $\class{C}$ of finitely generated structures is called a \emph{hereditary class} if it has the \HP.  Clearly, the class of all countably generated structures whose age is contained in a given hereditary class $\class{C}$ coincides with the class of all those structures that can be expressed as the union of an $\omega$-tower of elements from $\class{C}$. This motivates the following definition:
   \begin{definition}
   		Let $\class{C}$ be a class of structures of the same type. Then we define
   		\[
   		\sigma\class{C} \coloneqq \{\struc{A}\mid\struc{A}\text{ is the union of an $\omega$-tower of elements from $\class{C}$}\}.
   		\]
   \end{definition}
   Classes of the shape $\sigma\class{C}$ have been the subject of numerous investigations in model theory and in combinatorics. Questions of interest are, e.g., about the existence of universal structures, generic structures, Ramsey-structures, or homogeneous structures in $\sigma\class{C}$.

\section{Chains and cochains}

In this section we discuss connections between inverse sequences (called $\omega$-cochains) and ultrametrics. We also define the concept of an ultrametric structure, whose underlying space is the inverse limit of a sequence od countable discrete spaces.

\subsection{From colimits of \texorpdfstring{$\omega$}{omega}-chains to limits of \texorpdfstring{$\omega$}{omega}-cochains}
The construction of structures as unions of towers of structures is a special case of a very general, category theoretical construction---colimits of functors (cf. \cite{Mac98}).
In the present case we start with the category $\cat{S}_\Sigma$ that has as object-class the class  $\class{S}_\Sigma$ of all $\Sigma$-structures and as morphisms the homomorphisms between $\Sigma$-structures. 
  In $\cat{S}_\Sigma$ we fix the subcategory $\cat{C}$ induced by $\class{C}$. In this setting an $\omega$-tower is simply a special case of a functor from $\cat{\omega}$ to $\cat{C}$ (here  $\cat{\omega}$ denotes the category that has finite ordinals as objects and morphisms $i\to j$ for all $i\le j<\omega$). The  $\omega$-tower $(\struc{A}_i)_{i<\omega}$ corresponds to the functor $F\colon\cat{\omega}\to\cat{C}$, where $F(i)=\struc{A}_i$ and where $F(i\to j)$ is the identical embedding of $\struc{A}_i$ into $\struc{A}_j$. The union $\struc{A}_\infty=\bigcup_{i<\omega}\struc{A}_i$ is a colimit of $F$ in $\cat{S}_\Sigma$ with limiting cocone $(\alpha_i^\infty)_{i<\omega}$, where $\alpha_i^\infty\colon\struc{A}_i\injto\struc{A}_\infty$ is the identical embedding (in order for this to be precise we need to consider $F$ as a functor from $\cat{\omega}$ to $\cat{S}_\Sigma$, which is not a problem since $\cat{C}$ is a subcategory of $\cat{S}_\Sigma$) .
  
  It is not hard to see that for every hereditary class $\class{C}$ we have that $\sigma\class{C}$ consists of all colimits of functors $F\colon\cat{\omega}\to\cat{C}$ in $\cat{S}_\Sigma$ for which $F$ maps morphisms to embeddings. This motivates the following definition:
  \begin{definition}
  	Let $\class{C}$ be a class of structures of the same type. An \emph{$\omega$-chain} in $\cat{C}$ is a functor $\chain{A}\colon\cat{\omega} \to\cat{C}$ that maps morphisms of $\cat{\omega}$ to embeddings in $\cat{C}$.  If $\chain{A}$ acts like $i\mapsto \struc{A}_i$ on objects and as $(i\to j)\mapsto\alpha_i^j$ on morphisms then $\chain{A}$ will be denoted also by $((\struc{A}_i)_{i<\omega}, (\alpha_i^j)_{i\le j<\omega})$. A simplified representation of $\chain{A}$ as a diagram is given by:
  	\[
  	\begin{tikzcd}
  		\struc{A}_0\ar[r,"\alpha^0_1",hook] & \struc{A}_1\ar[r,"\alpha^1_2",hook] & \struc{A}_2\ar[r,"\alpha^2_3",hook] & \struc{A}_3\ar[r,"\alpha^3_4",hook] & \dots
  	\end{tikzcd}.
  	\]
  \end{definition} 
  By $\cat{\sigma\class{C}}$ we denote the full subcategory of $\cat{S}_\Sigma$ that is induced by $\sigma\class{C}$.  Thus $\cat{\sigma\class{C}}$ is the category of colimits of $\omega$-chains in $\cat{C}$. Each time when a notion is defined in terms of categories, we get for free other notions obtained by dualization. In the present case the most reasonable choice for a dual notion of colimits of $\omega$-chains is given by the notion of limits of $\omega$-cochains.  
  \begin{definition}
  	Let $\class{C}$ be a class of structures of the same type. An \emph{$\omega$-cochain} in $\cat{C}$ is a functor $\cochain{A}\colon\cat{\omega}^\op\to\cat{C}$. If $\cochain{A}$ acts on objects like $i\mapsto\struc{A}_i$ and on morphisms as $(i\leftarrow j)\mapsto \alpha^j_i$, then $\cochain{A}$ will also be denoted by $((\struc{A}_i)_{i<\omega}, (\alpha^j_i)_{i\le j<\omega})$. A simplified representation of $\cochain{A}$ as a diagram is given by:
  	\[
  	\begin{tikzcd}
  		\struc{A}_0 & \struc{A}_1\ar[l,"\alpha^1_0"']  & \struc{A}_2\ar[l,"\alpha^2_1"'] & \struc{A}_3\ar[l,"\alpha^3_2"'] & \dots\ar[l,"\alpha^4_3"'] 
  	\end{tikzcd}.
  	\]
  \end{definition}
  A limit of the $\omega$-cochain $\cochain{A}=((\struc{A}_i)_{i<\omega}, (\alpha^j_i)_{i\le j <\omega})$ is given by the substructure $\struc{A}_\infty$ of $\prod_{i<\omega}\struc{A}_i$ that is induced by all those tuples $\tup{c}=(c_i)_{i<\omega}$ satisfying
  \[
  \forall i<j<\omega\,:\, \alpha^j_i(c_j)=c_i.
  \]
  The limiting cone witnessing this fact is given by the family $(\alpha^\infty_i)_{i<\omega}$ of projection homomorphisms, where
  \[
  \alpha^\infty_i\colon \struc{A}_\infty\to\struc{A}_i\,:\, \tup{c}\mapsto c_i.
  \] 
  This cone will be called the \emph{canonical cone} of $\cochain{A}$. The structure $\struc{A}_\infty$ will be called the \emph{canonical limit} of $\cochain{A}$. 
  
  For a better intuition it is helpful to envision $\cochain{A}$ as a tree $T_{\cochain{A}}$ and $A_\infty$ as the set of its branches. The node set of $T_{\cochain{A}}$ is given by the set $\bigcup_{i<\omega} \{i\}\times A_i$. In $T_{\cochain{A}}$ we define
  \[
  		(i,a_i)\sqsubseteq (j,a_j) :\iff i\le j \text{ and } \alpha^j_i(a_j) = a_i.
  	\]
    
  \begin{observations}
  	For each $\omega$-cochain $\cochain{A}$  we have
  	\begin{enumerate}
  		\item $(T_{\cochain{A}},\sqsubseteq)$ is a tree, i.e., $\forall (j,a_j)\in T_{\cochain{A}}\,:\, (j,a_j)\down = \{(i,a_i)\in T_{\cochain{A}}\mid (i,a_i)\sqsubseteq(j,a_j)\}$ is a chain,
  		\item $(T_{\cochain{A}},\sqsubseteq)$ is well-founded, i.e., $\forall (i,a_i)\in T_{\cochain{A}}\,:\, (i,a_i)\down$ is finite,
  		\item $(T_{\cochain{A}},\sqsubseteq)$ is pruned if and only if for all $i\le j<\omega$ we have that $\alpha_i^j$ is surjective.
  	\end{enumerate}
  \end{observations}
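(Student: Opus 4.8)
The plan is to derive all three assertions from two elementary facts: the functoriality of $\cochain{A}=((\struc{A}_i)_{i<\omega},(\alpha_i^j)_{i\le j<\omega})$ as a functor on $\cat{\omega}^\op$, which gives $\alpha_i^i=\operatorname{id}_{\struc{A}_i}$ and $\alpha_i^k=\alpha_i^j\circ\alpha_j^k$ for all $i\le j\le k$; and the explicit description of down-sets that these relations force. First I would record that $\sqsubseteq$ is a partial order. Reflexivity and antisymmetry follow at once from $\alpha_i^i=\operatorname{id}$, since on a single level $i=j$ the defining condition $\alpha_i^i(a_j)=a_i$ collapses to $a_j=a_i$; transitivity is precisely the composition law $\alpha_i^k=\alpha_i^j\circ\alpha_j^k$.

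The engine of the whole argument is the identity
\[
(j,a_j)\down=\{(i,\alpha_i^j(a_j))\mid 0\le i\le j\}.
\]
Indeed, $(i,b)\sqsubseteq(j,a_j)$ forces $i\le j$ and $b=\alpha_i^j(a_j)$, so the second coordinate is determined by the first, while conversely each listed pair is a genuine node sitting below $(j,a_j)$. Assertions (1) and (2) are then immediate. For (1), given $i\le i'\le j$ the relation $\alpha_i^{i'}\circ\alpha_{i'}^j=\alpha_i^j$ yields $(i,\alpha_i^j(a_j))\sqsubseteq(i',\alpha_{i'}^j(a_j))$, so $(j,a_j)\down$ is linearly ordered (in fact order-isomorphic to $\{0,\dots,j\}$). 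For (2), the same description exhibits $(j,a_j)\down$ as a set of exactly $j+1$ elements, hence finite.

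For (3) the key observation is that a node $(i,a_i)$ has some node strictly above it if and only if $a_i\in\operatorname{im}(\alpha_i^{i+1})$. From a preimage $c$ of $a_i$ under $\alpha_i^{i+1}$ one gets the immediate successor $(i+1,c)$; conversely, if $(i,a_i)\sqsubseteq(k,a_k)$ with $k\ge i+1$, then by (1) the node $(i+1,\alpha_{i+1}^k(a_k))$ lies strictly above $(i,a_i)$ and, using $\alpha_i^{i+1}\circ\alpha_{i+1}^k=\alpha_i^k$, witnesses $a_i\in\operatorname{im}(\alpha_i^{i+1})$. Hence the absence of leaves is equivalent to surjectivity of every consecutive bonding map $\alpha_i^{i+1}$, which in turn is equivalent to surjectivity of all $\alpha_i^j$: the latter factor as $\alpha_i^j=\alpha_i^{i+1}\circ\alpha_{i+1}^{i+2}\circ\cdots\circ\alpha_{j-1}^j$, a composite of surjections, and each $\alpha_i^{i+1}$ is itself one of them.

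The one point that needs genuine care is reconciling ``no leaves'' with the intended reading of \emph{pruned}, namely that every node belongs to a branch. Assuming surjectivity, from an arbitrary node I would build an infinite ascending chain by repeatedly choosing preimages under the maps $\alpha_i^{i+1}$, and then adjoin the finite down-set to obtain a branch through the node; the reverse implication is trivial, since a branch supplies a node strictly above. I expect this branch construction to be the only delicate step, precisely because the tree is indexed level by level: the chain is produced one level at a time, so no \Konig-type compactness argument is required, and the two formulations of prunedness coincide.
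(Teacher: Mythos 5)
The paper states these observations without proof, treating them as immediate from the definitions, and your argument is exactly the routine verification being omitted: the poset axioms and the identity $(j,a_j)\down=\{(i,\alpha_i^j(a_j))\mid i\le j\}$ follow from functoriality, and the leaf criterion via $\operatorname{im}(\alpha_i^{i+1})$ together with the factorization $\alpha_i^j=\alpha_i^{i+1}\circ\cdots\circ\alpha_{j-1}^j$ gives (3). Your proof is correct, including the care taken in (3) to read ``pruned'' as every node lying on a branch of order type $\omega$ (which is the paper's intended notion of branch for $\omega$-trees, so that the level-by-level preimage construction and the trivial converse suffice).
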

As usually, the maximal chains in $(T_{\cochain{A}},\sqsubseteq)$ are called \emph{branches}. Coming back to the consideration of $A_\infty$, an important observation is that there is a one-to-one correspondence between the elements of $A_\infty$ and the branches of $T_{\cochain{A}}$. It is obtained by assigning to each element $\tup{a}=(a_i)_{i<\omega}\in A_\infty$ the set $\{(i,a_i)\mid i<\omega\}\subseteq T_{\cochain{A}}$. With a convenient definition of families (as functions from an index set to a set) one could argue that $(a_i)_{i<\omega}$, is actually equal to a branch of $T_{\cochain{A}}$

The canonical cone $(\alpha_i^\infty)_{i<\omega}$ of an $\omega$-cochain $\cochain{A}=((\struc{A}_i)_{i<\omega}, (\alpha^j_i)_{i\le j <\omega})$ naturally induces an ultrametric $\delta_{A_\infty}$ on $A_\infty$ that may be defined through 
\[
\delta_{A_\infty}(\tup{a},\tup{b}) \coloneqq \begin{cases}
2^{-\Delta(\tup{a},\tup{b})} & \tup{a}\neq\tup{b}\\
 	0 & \text{else,}
 \end{cases}
\] 
where
\[
	\Delta(\tup{a},\tup{b}) \coloneqq\min\{i<\omega\mid \alpha^\infty_i(\tup{a})\neq\alpha^\infty_i(\tup{b})\}= \min\{i<\omega\mid a_i\neq b_i\}. 
\]
\begin{observations} With the notions from above:
\begin{enumerate}
	\item \label{um1} $(A_\infty,\delta_{A_\infty})$ is a complete metric space,
	\item \label{um2} all basic operations of $\struc{A}_\infty$ are $1$-Lipschitz with respect to $\delta_{A_\infty}$,
	\item \label{um3} all basic relations of $\struc{A}_\infty$ are closed (in the appropriate product topology). 
\end{enumerate}
Moreover, if for each $i<\omega$ we have that the image of $\alpha^\infty_i$ is countable, then $(A_\infty,\delta_{A_\infty})$ is separable.
\end{observations}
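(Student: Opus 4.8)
The plan is to reduce every item to the coordinatewise description of $\struc{A}_\infty$ as a substructure of $\prod_{i<\omega}\struc{A}_i$, together with the cylinder form of the balls of $\delta_{A_\infty}$. I would first verify that $\delta_{A_\infty}$ is an ultrametric: positivity and symmetry are immediate from the definition of $\Delta$, and for the strong triangle inequality I would note that if $\tup a,\tup b$ agree on all coordinates below $m$ and $\tup b,\tup c$ agree on all coordinates below $m'$, then $\tup a,\tup c$ agree on all coordinates below $\min\{m,m'\}$; in terms of $\Delta$ this reads $\Delta(\tup a,\tup c)\ge\min\{\Delta(\tup a,\tup b),\Delta(\tup b,\tup c)\}$, which gives $\delta_{A_\infty}(\tup a,\tup c)\le\max\{\delta_{A_\infty}(\tup a,\tup b),\delta_{A_\infty}(\tup b,\tup c)\}$.

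For completeness I would take a Cauchy sequence $(\tup a^{(n)})_{n<\omega}$. By definition of $\delta_{A_\infty}$, being Cauchy forces each coordinate sequence $(a^{(n)}_i)_{n<\omega}$ to be eventually constant; call its eventual value $a_i$ and put $\tup a=(a_i)_{i<\omega}$. Each equality $\alpha^j_i(a^{(n)}_j)=a^{(n)}_i$ persists after stabilization, so $\tup a$ satisfies the compatibility conditions and hence $\tup a\in A_\infty$; by construction $\tup a^{(n)}\to\tup a$, proving \ref{um1}.

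For \ref{um2} and \ref{um3} I would first record that the $\delta_{A_\infty}$-ball of radius $2^{-n}$ about $\tup a$ consists exactly of those $\tup b\in A_\infty$ with $a_i=b_i$ for $i\le n$, i.e. a basic cylinder; thus the metric topology is the subspace topology from the product of the discrete spaces $A_i$. Both items then follow from the fact that the projections $\alpha^\infty_i$ are homomorphisms, so operations and relations of $\struc{A}_\infty$ are evaluated coordinatewise. If $f$ is a basic operation and the inputs $\tup a^1,\dots,\tup a^k$, $\tup b^1,\dots,\tup b^k$ satisfy $a^\ell_i=b^\ell_i$ for all $\ell$ and all $i\le n$, then $\alpha^\infty_i(f^{\struc{A}_\infty}(\tup a^1,\dots,\tup a^k))=f^{\struc{A}_i}(a^1_i,\dots,a^k_i)=f^{\struc{A}_i}(b^1_i,\dots,b^k_i)$ for $i\le n$, which yields $1$-Lipschitzness with respect to the max metric on $A_\infty^k$. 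For a basic relation $\varrho$, membership $(\tup a^1,\dots,\tup a^k)\in\varrho^{\struc{A}_\infty}$ is equivalent to $(a^1_i,\dots,a^k_i)\in\varrho^{\struc{A}_i}$ for every $i$; along a convergent sequence of such tuples each coordinate stabilizes, so the limit still satisfies every $\varrho^{\struc{A}_i}$ and hence lies in $\varrho^{\struc{A}_\infty}$, proving closedness.

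Finally, assume each image $B_i:=\alpha^\infty_i(A_\infty)$ is countable. For each $i<\omega$ and each node $a\in B_i$ I would choose, by definition of the image, a branch $\tup c^{i,a}\in A_\infty$ with $\alpha^\infty_i(\tup c^{i,a})=a$, and let $D$ be the resulting countable set. Given $\tup a\in A_\infty$ and $n<\omega$, set $a:=\alpha^\infty_n(\tup a)$; using the cone identities $\alpha^n_i\circ\alpha^\infty_n=\alpha^\infty_i$ for $i\le n$ one checks that $\tup c^{n,a}$ agrees with $\tup a$ on all coordinates $\le n$, so $\delta_{A_\infty}(\tup a,\tup c^{n,a})\le 2^{-(n+1)}$; hence $D$ is dense and $(A_\infty,\delta_{A_\infty})$ is separable. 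I expect the only genuinely fiddly step to be this last one, namely selecting for each approximation the representative branch through the correct node; the first three items are direct consequences of the coordinatewise evaluation and the cylinder description of balls, the single clarifying remark being the identification of the metric topology with the product topology that makes \ref{um3} transparent.
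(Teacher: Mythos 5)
Your proof is correct and follows essentially the same route as the paper's: completeness via coordinatewise stabilization of Cauchy sequences, $1$-Lipschitzness of operations via coordinatewise evaluation, and closedness of relations via the cylinder/product-topology description (you argue sequential closedness where the paper shows the complement is open, but these are the same argument). You additionally verify the strong triangle inequality and supply an explicit density argument for the separability claim, which the paper's proof leaves to the reader; both of these added steps are correct.
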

\begin{proof}
	\textbf{about \ref{um1}:} Let $(\tup{a}_i)_{i<\omega}$ be a Cauchy-sequence in $A_\infty$. Then for every $M<\omega$ there exists $n_0=n_0(M)$, such that for all $n,m>n_0$ we have that $\alpha^\infty_M(\tup{a}_n) = \alpha^\infty_M(\tup{a}_m)=:b_M$. Clearly, the family $(b_i)_{i<\omega}$ forms a branch $\tup{b}$ in $T_{\cochain{A}}$.   Hence it is an element of  $A_\infty$. Moreover, $\lim_{i\to\infty} \tup{a}_i = \tup{b}$. 
	
	\textbf{about \ref{um2}:} Let $f^{\struc{A}_\infty}$ be an $n$-ary basic operation of $\struc{A}_\infty$. Let $\ttup{a}=(\tup{a}^{(0)},\dots,\tup{a}^{(n-1)})$, and $\ttup{b}=(\tup{b}^{(0)},\dots,\tup{b}^{(n-1)})$ be from $(A_\infty)^n$. Then
	\[
	\delta_{A_\infty}(\ttup{a}, \ttup{b}) = \max_{0\le i<n} \delta_{A_\infty}(\tup{a}^{(i)},\tup{b}^{(i)}) = 2^{-M},
	\]
	for some $M<\omega$. In particular, for all $m<M$ and for all $0\le i<n$ we have 
	\[ \tag{*}\label{star}
	\alpha^\infty_m(\tup{a}^{(i)}) =\alpha^\infty_m(\tup{b}^{(i)}).
	\] 
	We go on computing 
	\begin{align*}
		f^{\struc{A}_\infty}(\tup{a}^{(0)},\dots,\tup{a}^{(n-1)})&=\tup{c}, & f^{\struc{A}_\infty}(\tup{b}^{(0)},\dots, \tup{b}^{(n-1)}) &=\tup{d},\text{ where}\\
		f^{\struc{A}_i}(a_i^{(0)},\dots, a_i^{(n-1)}) &=c_i, & f^{\struc{A}_i}(b_i^{(0)},\dots,b_i^{(n-1)}) &= d_i, \text{ for all } i<\omega.
	\end{align*}
	By \eqref{star} we have $c_i=d_i$, for all $0\le i<M$.   Consequently $\delta_{A_\infty}(\tup{c},\tup{d})\le 2^{-M}$.
	
	\textbf{about \ref{um3}:} Let $\varrho^{\struc{A}_\infty}\subseteq (A_\infty)^n$ be a basic relation of $\struc{A}_\infty$. Let $\sigma\coloneqq (A_\infty)^n\setminus\varrho^{\struc{A}_\infty}$. Let us show that $\sigma$ is open: Let $\ttup{a}=(\tup{a}^{(0)},\dots,\tup{a}^{(n-1)})\in\sigma$. Let $i_0<\omega$ be minimal with the property that $(a^{(0)}_{i_0},\dots,a^{(n-1)}_{i_0})\notin\varrho^{\struc{A}_{i_0}}$. Let $\ttup{b}=(\tup{b}^{(0)},\dots,\tup{b}^{(n-1)})\in (A_\infty)^n$ be such, that $\delta_{A_\infty}(\ttup{a},\ttup{b})< 2^{-i_0}$. Then $(b^{(0)}_{i_0},\dots,b^{(n-1)}_{i_0}) = (a^{(0)}_{i_0},\dots,a^{(n-1)}_{i_0})\notin\varrho^{\struc{A}_{i_0}}$. Thus $\ttup{b}\in\sigma$. Consequently, $\sigma$ is open. 
\end{proof}

\subsection{From \texorpdfstring{$\omega$}{omega}-cochains to metric structures}
In the following let $\cochain{A}=((\struc{A}_i)_{i<\omega},(\alpha_i^j)_{i\le j<\omega})$ be an $\omega$-cochain in $\cat{S}_\Sigma$. Let $\struc{A}_\infty$ be its canonical limit and let $(\alpha_i^\infty)_{i<\omega}$ be its canonical cone. 
With each basic relation $\varrho^{\struc{A}_\infty}\subseteq (A_\infty)^n$ we may associate a function 
\begin{equation*}
	\ul\varrho^{\struc{A}_\infty}\colon(A_\infty)^n\to[0,1],\quad (\tup{x}^{(0)},\dots,\tup{x}^{(n-1)})\mapsto
	\begin{cases}
		0 & (\tup{x}^{(0)},\dots,\tup{x}^{(n-1)})\in\varrho^{\struc{A}_\infty}\\
		2^{-M(\tup{x}^{(0)},\dots,\tup{x}^{(n-1)})} & \text{else,}
	\end{cases}
\end{equation*}
where
\[
M(\tup{x}^{(0)},\dots,\tup{x}^{(n-1)}) = \min\{i<\omega\mid (x^{(0)}_i,\dots,x^{(n-1)}_i)\notin\varrho^{\struc{A}_i}\}.
\]
\begin{observation}
	For each basic relation $\varrho^{\struc{A}_\infty}$ of $\struc{A}_\infty$, we have that $\ul\varrho^{\struc{A}_\infty}$ is $1$-Lipschitz. Moreover, we have that 
	\[
	\varrho^{\struc{A}_\infty} = \{\tup{x}\mid \ul\varrho^{\struc{A}_\infty}(\tup{x})=0\}.
	\]
\end{observation}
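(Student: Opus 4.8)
The plan is to dispatch the ``moreover'' clause first, since it is immediate from the definition, and then to establish the Lipschitz estimate by a short case analysis organized around the first index at which each argument leaves the relation.

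For the ``moreover'' clause, observe that by construction $\ul\varrho^{\struc{A}_\infty}(\tup{x})=0$ holds exactly when $\tup{x}\in\varrho^{\struc{A}_\infty}$: in the complementary case the set $\{i<\omega\mid (x^{(0)}_i,\dots,x^{(n-1)}_i)\notin\varrho^{\struc{A}_i}\}$ is nonempty, so $M(\tup{x})<\omega$ and hence $\ul\varrho^{\struc{A}_\infty}(\tup{x})=2^{-M(\tup{x})}>0$. This yields $\varrho^{\struc{A}_\infty}=\{\tup{x}\mid\ul\varrho^{\struc{A}_\infty}(\tup{x})=0\}$ with no further work.

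For the Lipschitz property I equip $(A_\infty)^n$ with the maximum metric induced by $\delta_{A_\infty}$, exactly as in the computation for the basic operations above, and take the usual metric on $[0,1]$. Fix $\ttup{a}=(\tup{a}^{(0)},\dots,\tup{a}^{(n-1)})$ and $\ttup{b}=(\tup{b}^{(0)},\dots,\tup{b}^{(n-1)})$ in $(A_\infty)^n$; if they coincide there is nothing to prove, so write $\delta_{A_\infty}(\ttup{a},\ttup{b})=2^{-K}$. The defining feature of this distance is that the componentwise projections agree strictly below $K$: for every $m<K$ and every $0\le j<n$ we have $a^{(j)}_m=b^{(j)}_m$, so the $\struc{A}_m$-tuples $(a^{(0)}_m,\dots,a^{(n-1)}_m)$ and $(b^{(0)}_m,\dots,b^{(n-1)}_m)$ coincide, and in particular membership in $\varrho^{\struc{A}_m}$ is the same for both. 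Writing $N(\tup{x})$ for the least $i$ with $(x^{(0)}_i,\dots,x^{(n-1)}_i)\notin\varrho^{\struc{A}_i}$ and $N(\tup{x})=\infty$ when $\tup{x}\in\varrho^{\struc{A}_\infty}$, so that $\ul\varrho^{\struc{A}_\infty}(\tup{x})=2^{-N(\tup{x})}$ under the convention $2^{-\infty}\coloneqq 0$, the agreement of projections below $K$ forces the exit indices to agree below $K$: if $N(\ttup{a})<K$ then, comparing the two tuples at every index up to $N(\ttup{a})$, one gets $N(\ttup{b})=N(\ttup{a})$, and symmetrically. Consequently only two situations can occur. Either $N(\ttup{a})=N(\ttup{b})$, whence $\ul\varrho^{\struc{A}_\infty}(\ttup{a})=\ul\varrho^{\struc{A}_\infty}(\ttup{b})$ and the difference is $0\le 2^{-K}$; or both exit indices are at least $K$, whence $\ul\varrho^{\struc{A}_\infty}(\ttup{a}),\ul\varrho^{\struc{A}_\infty}(\ttup{b})\in[0,2^{-K}]$ and therefore $|\ul\varrho^{\struc{A}_\infty}(\ttup{a})-\ul\varrho^{\struc{A}_\infty}(\ttup{b})|\le 2^{-K}=\delta_{A_\infty}(\ttup{a},\ttup{b})$. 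This is the desired $1$-Lipschitz bound.

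The argument is entirely routine; the only point needing a little care is the situation where one argument lies in $\varrho^{\struc{A}_\infty}$ while the other does not. The unifying device that absorbs this into the general case is the convention $N=\infty$ for members of the relation, which recasts membership as ``exit index $\infty$'' and reduces everything to the single observation that exit indices coincide below $K$. I expect no genuine obstacle here; it is precisely the ultrametric (rather than merely metric) character of $\delta_{A_\infty}$ that makes the estimate close with constant $1$ and with no auxiliary error terms.
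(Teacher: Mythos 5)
Your proof is correct and rests on the same key fact as the paper's: since $\delta_{A_\infty}(\ttup{a},\ttup{b})=2^{-K}$ forces the projected tuples to agree at all indices below $K$, the ``exit indices'' must coincide whenever one of them is smaller than $K$. The paper runs a four-way case analysis on membership of $\ttup{a}$ and $\ttup{b}$ in $\varrho^{\struc{A}_\infty}$, whereas your convention $N=\infty$ for members collapses this to two cases; that is a tidier packaging of the identical argument, not a different route.
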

\begin{proof}
	The second claim follows directly from the definition of $\ul\varrho^{\struc{A}_\infty}$. For the proof of the first claim take  $\ttup{x}=(\tup{x}^{(0)},\dots,\tup{x}^{(n-1)})$ and $\ttup{y}=(\tup{y}^{(0)},\dots,\tup{y}^{(n-1)})$ from $(A_\infty)^n$, such that $\ttup{x}\neq\bar{\tup{y}}$. Then, for some $M<\omega$ we have that 
	\[
	\delta_{A_\infty}(\ttup{x},\ttup{y})= \max_{0\le j<n}\delta_{A_\infty}(\tup{x}^{(j)},\tup{y}^{(j)}) = 2^{-M}.
	\]
	In particular for all $j\in\{0,\dots,n-1\}$ and for all $0\le i<M$ we  have that $x^{(j)}_i = y^{(j)}_i$.
	Next we distinguish cases depending on whether $\tup{x}$ and $\tup{y}$ are in $\varrho^{\struc{A}_\infty}$ or not. 
	\begin{enumerate}[label=\textbf{case \arabic*:}, ref=\arabic*,nosep,align=left,leftmargin=0em,labelindent=0em,itemindent=1em,labelsep=0.5em,labelwidth=!]
		\item Suppose that $\ttup{x},\ttup{y}\in\varrho^{\struc{A}_\infty}$. Then $|\ul\varrho^{\struc{A}_\infty}(\ttup{x})-\ul\varrho^{\struc{A}_\infty}(\ttup{y})|=0<\delta_{A_\infty}(\ttup{x},\ttup{y})$. 
		\item Suppose that $\ttup{x}\in\varrho^{\struc{A}_\infty}$,  $\ttup{y}\notin\varrho^{\struc{A}_\infty}$. Then $|\ul\varrho^{\struc{A}_\infty}(\ttup{x})-\ul\varrho^{\struc{A}_\infty}(\ttup{y})|=\ul\varrho^{\struc{A}_\infty}(\ttup{y})=2^{-\widehat{M}}$ for some $\widehat{M}<\omega$. In particular, 
		$(y^{(0)}_{\widehat{M}},\dots,y^{(n-1)}_{\widehat{M}})\notin\varrho^{\struc{A}_{\widehat{M}}}$. Since on the other hand we have that  
		$(x^{(0)}_{\widehat{M}},\dots,x^{(n-1)}_{\widehat{M}})\in\varrho^{\struc{A}_{\widehat{M}}}$, 
		it follows that $(x^{(0)}_{\widehat{M}},\dots,x^{(n-1)}_{\widehat{M}}) \neq (y^{(0)}_{\widehat{M}},\dots,y^{(n-1)}_{\widehat{M}})$. Consequently, $\widehat{M}\ge M$. 
		\item Suppose that $\ttup{x},\ttup{y}\notin\varrho^{\struc{A}_\infty}$. Then $\ul\varrho^{\struc{A}_\infty}(\ttup{x}) = 2^{-M_1}$ and $\ul\varrho^{\struc{A}_\infty}(\ttup{y})=2^{-M_2}$, for certain $M_1,M_2<\omega$. let $\widehat{M}\coloneqq\min(M_1,M_2)$. 
		\begin{enumerate}[label=\textbf{case \theenumi.\arabic*:}, ref=\theenumi.\arabic*,align=left,leftmargin=0em,labelindent=0em,itemindent=1em,labelsep=0.5em,labelwidth=!]
		\item Suppose that $\widehat{M}<M$. We claim that then $M_1=M_2$, for suppose on the contrary that $M_1\neq M_2$, say, $M_1< M_2$. Then $(y^{(0)}_{M_1},\dots, y^{(n-1)}_{M_1})\in\varrho^{\struc{A}_{M_1}}$. Since $M_1=\widehat{M}<M$, it follows that $(x^{(0)}_{M_1},\dots,x^{(n-1)}_{M_1}) = (y^{(0)}_{M_1},\dots,y^{(n-1)}_{M_1})$.  In particular, $(x^{(0)}_{M_1},\dots,x^{(n-1)}_{M_1})\in \varrho^{\struc{A}_{M_1}}$, a contradiction.
		\item Suppose that $M\le\widehat{M}$. Suppose without loss of generality that $M_1\le M_2$. Then
		\[
		|\ul\varrho^{\struc{A}_\infty}(\ttup{x}) - \ul\varrho^{\struc{A}_\infty}(\ttup{y})| = 2^{-M_1} - 2^{-M_2}\le 2^{-M_1} = 2^{-\widehat{M}} \le 2^{-M}.\qedhere
		\]
		\end{enumerate}
	\end{enumerate}
\end{proof}

There is a second natural way to measure how much a given tuple $\ttup{x}$ is \emph{not} contained in $\varrho^{\struc{A}_\infty}$. In particular we might define
\begin{equation*}
	\ol\varrho^{\struc{A}_\infty}(\ttup{x})\coloneqq\delta_{A_\infty}(\ttup{x},\varrho^{\struc{A}_\infty}) = 
	\begin{cases}
		\inf \{\delta_{A_\infty}(\ttup{x},\ttup{y})\mid \ttup{y}\in\varrho^{\struc{A}_\infty}\} & \varrho^{\struc{A}_\infty}\neq\emptyset,\\
		1 & \text{else.}
	\end{cases}
\end{equation*}
Similarly as for $\ul\varrho^{\struc{A}_\infty}$ it can be shown that $\ol\varrho^{\struc{A}_\infty}$ is $1$-Lipschitz, and that $\ol\varrho^{\struc{A}_\infty}(\ttup{x})=0$ if and only if $\ttup{x}\in\varrho^{\struc{A}_\infty}$. However, in general we have $\ol\varrho^{\struc{A}_\infty}(\ttup{x})\ge\ul\varrho^{\struc{A}_\infty}(\ttup{x})$, as the following example demonstrates:
\begin{example}
	Consider the complete graph $K_2$ on two vertices $a$ and $b$, and its complement graph, the empty graph $\ol{K}_2$ as relational structures of the signature that consists of exactly one binary relational symbol $\varrho$. Define $\struc{A}_0\coloneqq K_2$, and $\struc{A}_i\coloneqq\ol{K}_2$, for all $0<i<\omega$. For all $0\le i\le j<\omega$ let $\alpha^j_i\colon\{a,b\}\to\{a,b\}$ be the identity. Then $\cochain{A}=((\struc{A}_i)_{i<\omega},(\alpha^j_i)_{0\le i\le j<\omega})$ is an $\omega$-cochain. Its canonical limit $\struc{A}_\infty$ is the empty graph with vertex set $\{\tup{a},\tup{b}\}$, where $\tup{a}=(a,a,\dots)$ and where $\tup{b}=(b,b,\dots)$.
	
	Note that $\ol\varrho^{\struc{A}_\infty}(\tup{a},\tup{b}) = 1$, while $\ul\varrho^{\struc{A}_\infty}(\tup{a},\tup{b}) = \tfrac{1}{2}$.
\end{example}

\begin{observation}
	Given an $\omega$-cochain $\cochain{A}=((\struc{A}_i)_{i<\omega}, (\alpha_i^j)_{0\le i\le j<\omega})$ of $\Sigma$-structures. Let  $(\alpha_i^\infty)_{i<\omega}$ be its canonical cone, where $\alpha_i^\infty\colon\struc{A}_\infty\to\struc{A}_i$, for each $i<\omega$. Then the following are equivalent:
	\begin{enumerate}
		\item\label{st1} $\forall\varrho\in\Rho\,:\, \ol\varrho^{\struc{A}_\infty}=\ul\varrho^{\struc{A}_\infty}$,
		\item\label{st2} for all $i<\omega$ the homomorphism $\alpha_i^\infty$ is strong, i.e., for all $\varrho\in \Rho$ (say, $\ar(\varrho)=n$), whenever $a^{(0)},\dots,a^{(n-1)}$ are in the image of $\alpha_i^\infty$ and $(a^{(0)},\dots,a^{(n-1)})\in\varrho^{\struc{A}_i}$, then there exist $\tup{x}^{(0)},\dots,\tup{x}^{(n-1)}\in A_\infty$, such that $(a^{(0)},\dots,a^{(n-1)}) = (x^{(0)}_i,\dots,x^{(n-1)}_i)$, and such that  $(\tup{x}^{(0)},\dots,\tup{x}^{(n-1)})\in\varrho^{\struc{A}_\infty}$.
	\end{enumerate}
\end{observation}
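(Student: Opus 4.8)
The plan is to prove the two implications separately, using the two properties already recorded for the functions $\ul\varrho^{\struc{A}_\infty}$ and $\ol\varrho^{\struc{A}_\infty}$: that $\ol\varrho^{\struc{A}_\infty}\ge\ul\varrho^{\struc{A}_\infty}$ holds pointwise, and that both functions vanish exactly on $\varrho^{\struc{A}_\infty}$. The only structural ingredient I need is that a branch of $T_{\cochain{A}}$ is rigid below any level: if $\tup{x},\tup{y}\in A_\infty$ satisfy $x_k=y_k$, then $x_m=\alpha^k_m(x_k)=\alpha^k_m(y_k)=y_m$ for all $m\le k$, so the two branches agree on every level $\le k$ and $\delta_{A_\infty}(\tup{x},\tup{y})\le 2^{-(k+1)}$. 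I will also use that each bonding map $\alpha^k_m$ is a homomorphism, hence carries tuples of $\varrho^{\struc{A}_k}$ into $\varrho^{\struc{A}_m}$.

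For \eqref{st2}$\Rightarrow$\eqref{st1} I would fix $\varrho$ and a tuple $\ttup{x}$. If $\ttup{x}\in\varrho^{\struc{A}_\infty}$ both functions are $0$, so I may assume $\ttup{x}\notin\varrho^{\struc{A}_\infty}$ and write $\ul\varrho^{\struc{A}_\infty}(\ttup{x})=2^{-M}$, where $M$ is the least level at which the componentwise tuple leaves the relation. If $M=0$ the value is $1$, and since $\delta_{A_\infty}\le 1$ we also have $\ol\varrho^{\struc{A}_\infty}(\ttup{x})\le 1$, forcing equality via the general inequality. If $M\ge 1$, then $(x^{(0)}_{M-1},\dots,x^{(n-1)}_{M-1})\in\varrho^{\struc{A}_{M-1}}$ and each entry lies in the image of $\alpha^\infty_{M-1}$; applying \eqref{st2} at level $M-1$ yields $\tup{y}^{(0)},\dots,\tup{y}^{(n-1)}\in A_\infty$ with $(\tup{y}^{(0)},\dots,\tup{y}^{(n-1)})\in\varrho^{\struc{A}_\infty}$ and $y^{(j)}_{M-1}=x^{(j)}_{M-1}$ for all $j$. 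By rigidity each $\tup{y}^{(j)}$ agrees with $\tup{x}^{(j)}$ on all levels $\le M-1$, so $\delta_{A_\infty}(\ttup{x},\ttup{y})\le 2^{-M}$ and hence $\ol\varrho^{\struc{A}_\infty}(\ttup{x})\le 2^{-M}=\ul\varrho^{\struc{A}_\infty}(\ttup{x})$, giving equality.

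For \eqref{st1}$\Rightarrow$\eqref{st2} I would argue by contraposition. Suppose \eqref{st2} fails, witnessed by $\varrho$, a level $i$, and a tuple $(a^{(0)},\dots,a^{(n-1)})\in\varrho^{\struc{A}_i}$ with each $a^{(j)}$ in the image of $\alpha^\infty_i$ but with no lift into $\varrho^{\struc{A}_\infty}$ through level $i$. Let $B$ be the nonempty set of all $\ttup{x}\in(A_\infty)^n$ with $x^{(j)}_i=a^{(j)}$ for every $j$. Every such $\ttup{x}$ has its levels $0,\dots,i$ pinned down by $(a^{(0)},\dots,a^{(n-1)})$ through the bonding maps, and since these are homomorphisms its componentwise tuple lies in $\varrho^{\struc{A}_k}$ for all $k\le i$. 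As $B\cap\varrho^{\struc{A}_\infty}=\emptyset$, the leaving level of each $\ttup{x}\in B$ exceeds $i$, so $\ul\varrho^{\struc{A}_\infty}(\ttup{x})\le 2^{-(i+1)}$. On the other hand, any $\ttup{y}\in\varrho^{\struc{A}_\infty}$ with $\delta_{A_\infty}(\ttup{x},\ttup{y})<2^{-i}$ would agree with $\ttup{x}$ at level $i$ and hence lie in $B\cap\varrho^{\struc{A}_\infty}$; thus $\ol\varrho^{\struc{A}_\infty}(\ttup{x})\ge 2^{-i}>\ul\varrho^{\struc{A}_\infty}(\ttup{x})$, contradicting \eqref{st1}.

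The hard part is this last step, where the bookkeeping must be got exactly right: a lift is free to choose its branches only above level $i$ but is rigid below it, and this rigidity simultaneously bounds $\ul\varrho^{\struc{A}_\infty}$ from above (through downward preservation of $\varrho$ by the homomorphisms $\alpha^k_m$) and keeps $\ol\varrho^{\struc{A}_\infty}$ bounded below (because the ball of radius $2^{-i}$ around any $\ttup{x}\in B$ collapses into $B$). The boundary case $M=0$ and the nonemptiness of $B$ are the only other points requiring a line of care.
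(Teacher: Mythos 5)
Your proof is correct and rests on the same mechanism as the paper's: the equivalence between $\delta_{A_\infty}(\ttup{x},\ttup{y})<2^{-i}$ and agreement of the two tuples at level $i$, used in one direction to turn $\ol\varrho^{\struc{A}_\infty}(\ttup{x})<2^{-i}$ into a lift witnessing strongness, and in the other to turn a lift into an upper bound on the distance to $\varrho^{\struc{A}_\infty}$. The only differences are cosmetic — you phrase \eqref{st1}$\Rightarrow$\eqref{st2} contrapositively and split \eqref{st2}$\Rightarrow$\eqref{st1} on the leaving level $M$ where the paper quantifies over all $i$ — so this is essentially the paper's argument.
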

\begin{proof}
	``\ref{st1}$\implies$\ref{st2}'': Let $\tup{x}^{(0)},\dots,\tup{x}^{(n-1)}\in A_\infty$, such that $(x^{(0)}_i,\dots,x^{(n-1)}_i)\in\varrho^{\struc{A}_i}$.  Then
	\[
	\ol\varrho^{\struc{A}_\infty}(\tup{x}^{(0)},\dots,\tup{x}^{(n-1)}) = \ul\varrho^{\struc{A}_\infty}(\tup{x}^{(0)},\dots,\tup{x}^{(n-1)})< 2^{-i}.
	\]
	Hence there exists $(\ttup{y}^{(0)},\dots,\ttup{y}^{(n-1)})\in\varrho^{\struc{A}_\infty}$, such that $\delta_{A_\infty}(\tup{x}^{(j)},\tup{y}^{(j)})< 2^{-i}$, for all $0\le j < n$. In particular, $x^{(j)}_i=y^{(j)}_i$, for all $0\le j < n$. This shows that $\alpha^\infty_i$ is strong.
	
	``\ref{st2}$\implies$\ref{st1}'': Let $\tup{x}^{(0)},\dots,\tup{x}^{(n-1)}\in A_\infty$. Suppose that $\ul\varrho^{\struc{A}_\infty}(\tup{x}^{(0)},\dots,\tup{x}^{(n-1)})< 2^{-i}$. Then $(x^{(0)}_i,\dots,x^{(n-1)}_i)\in\varrho^{\struc{A}_i}$. Since $\alpha_i^\infty$ is strong, there exists $(\tup{y}^{(0)},\dots,\tup{y}^{(n-1)})\in\varrho^{\struc{A}_\infty}$, such that $(x^{(0)}_i,\dots,x^{(n-1)}_i)=(y^{(0)}_i,\dots,y^{(n-1)}_i)$. 	It follows that $\delta_{A_\infty}(\ttup{x},\varrho^{\struc{A}_\infty})< 2^{-i}$. Since $i$ may be chosen arbitrarily large, it follows that $\ol\varrho^{\struc{A}_\infty}(\ttup{x})\le\ul\varrho^{\struc{A}_\infty}(\ttup{x})$. Hence equality holds.
\end{proof}
The previous observations show that 
\[
\ul{\mstr{A}}_\infty \coloneqq (A_\infty,\delta_{A_\infty}, (f^{\struc{A}_\infty})_{f\in\Phi}, (\ul\varrho^{\struc{A}_\infty})_{\varrho\in\Rho}) \quad\text{and} \quad \ol{\mstr{A}}_\infty\coloneqq(A_\infty,\delta_{A_\infty}, (f^{\struc{A}_\infty})_{f\in\Phi}, (\ol\varrho^{\struc{A}_\infty})_{\varrho\in\Rho})
\]
are \emph{metric structures} in the sense of \cite{YaaBerHenUsv09}. Let us recall the basic definitions concerning metric structures:
\begin{definition}
	Let $\Sigma=(\Phi,\Rho,\ar)$ be a signature. A \emph{metric $\Sigma$-structure} $\mstr{A}$ is  given by a quadruple $(A,\delta,(f^{\mstr{A}})_{f\in\Phi}, (\varrho^{\mstr{A}})_{\varrho\in\Rho})$, such that
	\begin{enumerate}
		\item $(A,\delta)$ is a bounded complete metric space,
		\item $\forall f\in\Phi\,:\, f^\mstr{A}\colon A^{\ar(f)}\to A$ is uniformly continuous,
		\item $\forall\varrho\in\Rho\,:\, \varrho^\mstr{A}\colon A^{\ar(\varrho)}\to\bR$ is bounded and uniformly continuous.
	\end{enumerate} 
	In case that $\Sigma$ is clear from the context, or if it is of no importance, we are going to skip the ``$\Sigma$-'' from ``metric $\Sigma$-structures''.
\end{definition}
Metric substructures of metric structures are defined in the obvious way:
\begin{definition}
	Let $\mstr{A}$ and $\mstr{B}$ be metric $\Sigma$-structures. Then we say that $\mstr{A}$ is a \emph{metric substructure} of $\mstr{B}$ (designated by $\mstr{A}\le\mstr{B}$) if 
	\begin{enumerate}
		\item $A\subseteq B$,
		\item $\forall f\in\Phi\,\forall \bar{x}\in A^{\ar(f)}\,:\, f^{\mstr{A}}(\bar{x}) = f^{\mstr{B}}(\bar{x})$,
		\item $\forall\varrho\in\Rho\,\forall\bar{x}\in A^{\ar(\varrho)}\,:\,\varrho^{\mstr{A}}(\bar{x}) = \varrho^{\mstr{B}}(\bar{x})$.
	\end{enumerate} 
\end{definition}
The definition of metric embeddings matches the definition of metric substructures:
\begin{definition}
	Let $\mstr{A}$ and $\mstr{B}$ be metric $\Sigma$-structures. A function $h\colon A\to B$ is called a \emph{metric embedding} of $\mstr{A}$ into $\mstr{B}$ if 
	\begin{enumerate}
		\item $h$ is injective,
		\item for all $f\in\Phi$ (say, of arity $n$) the following diagram commutes:
		\[
		\begin{tikzcd}
			A^n \ar[r,"f^{\mstr{A}}"]\ar[d,hook',"h^n"']& A\ar[d,hook',"h"]\\
			B^n \ar[r,"f^{\mstr{B}}"]& B ,
		\end{tikzcd}
		\]
		\item
		For all $\varrho\in\Rho$ (say, of arity $n$) the following diagram commutes:
		\[
		\begin{tikzcd}
			A^n \ar[r,"\varrho^{\mstr{A}}"]\ar[d,hook',"h^n"']& \bR\ar[d,equal] \\
			B^n \ar[r,"\varrho^{\mstr{B}}"]& \bR.
		\end{tikzcd}
		\]
	\end{enumerate}
	Bijective metric embeddings will be called \emph{metric isomorphisms}.
\end{definition}
Figure \ref{constructions}  recapitulates all our constructions so far.
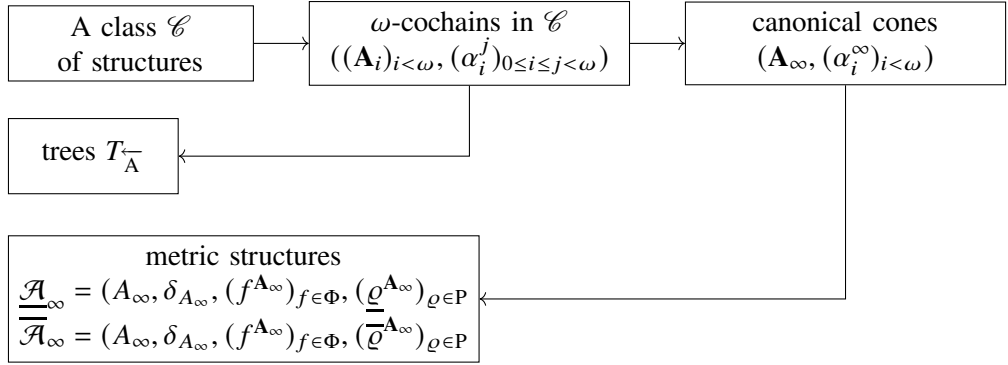
\begin{figure}[htb]
\begin{tikzpicture}
	\coordinate (orig)   at (0,0);
	\coordinate (age)   at (0,0);
	\coordinate (cochains) at (4,0);
	\coordinate (cone) at (9,0);
	\coordinate (met) at (0,-3.4);
	\coordinate (tree) at (0,-1.5);
	
	\node[draw, minimum width=2.53cm, minimum height=1cm, anchor=west, text width=3cm, align=center] (A) at (age) {A class $\class{C}$ of structures};	
	\node[draw, minimum width=4cm, minimum height=1cm, anchor=west, text width=4cm, align=center] (B) at (cochains) {$\omega$-cochains in $\class{C}$\\$((\struc{A}_i)_{i<\omega},(\alpha^j_i)_{0\le i\le j<\omega})$};	
	\node[draw, minimum width=4cm, minimum height=1cm, anchor=west, text width=4cm, align=center] (C) at (cone) {canonical cones\\$(\struc{A}_\infty, (\alpha^\infty_i)_{i<\omega})$};	
	\node[draw, minimum width=6cm, minimum height=1cm, anchor=west, text width=6cm, align=center] (D) at (met) {metric structures\\$\ul{\mstr{A}}_\infty = ( A_\infty,\delta_{A_\infty},(f^{\struc{A}_\infty})_{f\in\Phi},(\ul\varrho^{\struc{A}_\infty})_{\varrho\in\Rho}$\\
	$\ol{\mstr{A}}_\infty = ( A_\infty,\delta_{A_\infty},(f^{\struc{A}_\infty})_{f\in\Phi},(\ol\varrho^{\struc{A}_\infty})_{\varrho\in\Rho}$};
	\node[draw, minimum width=2cm, minimum height=1cm, anchor=west, text width=2cm, align=center] (E) at (tree) {trees $T_{\cochain{A}}$};	
	
	\path[draw,<-] (D.0) -|(C.270);
	\path[draw,<-] (E.0) -|(B.270);
	\path[draw,->] (A.0) -- (B.180);
	\path[draw,->] (B.0) -- (C.180);
\end{tikzpicture}
\caption{From structures to metric structures}\label{constructions}
\end{figure}
For each $\omega$-cochain $((\struc{A}_i)_{i<\omega},(\alpha^j_i)_{i\le j<\omega})$ the metric structure $\ul{\mstr{A}}_\infty$ will be called the \emph{canonical metric structure} of the $\omega$-cochain. Note that the metric $\delta_{A_\infty}$ of $\ul{\mstr{A}}$ and $\ol{\mstr{A}}$ is completely determined by the tree $T_{\cochain{A}}$. 
\begin{definition}
	Let $\class{C}$ be a class of $\Sigma$-structures. Then by $\pi\class{C}$ we denote the class of all those metric structures that are metrically isomorphic to the canonical metric structure of some $\omega$-cochain over $\class{C}$. 
\end{definition}

\subsection{An adjunction between \texorpdfstring{$\omega$}{omega}-cochains and ultrametric structures}
The nature of the construction of the canonical metric structure out of an $\omega$-cochain suggests that it should be functorial in some way.  Our next goal is to make this feeling concrete  by turning $\pi\class{C}$ into a category.  As of writing this paper the literature appears not to contain the definition of a concept of homomorphisms between metric structures. The definition that we are going to give does not pretend to fill this gap. However, for the very special case of metric structures relevant for us, we argue that our definition is the most natural one. Before actually defining homomorphisms, let us narrow down the class of metric structures under consideration:
\begin{definition}
	Let $\mstr{A}=(A,\delta_{\mstr{A}},(f^{\mstr{A}})_{f\in\Phi},(\varrho^{\mstr{A}})_{\varrho\in\Rho})$ be a metric $\Sigma$-structure. Then $\mstr{A}$ is called an \emph{ultrametric $\Sigma$-structure} if
	\begin{enumerate}
		\item $(A,\delta_{\mstr{A}})$ is an ultrametric space of diameter at most $1$,
		\item $f^{\mstr{A}}$ is $1$-Lipschitz, for each $f\in\Phi$,
		\item $\varrho^{\mstr{A}}\colon A^{\ar(\varrho)}\to [0,1]$ is $1$-Lipschitz, for each $\varrho\in\Rho$.
	\end{enumerate}
	If $\mstr{B}=(B,\delta_{\mstr{B}},(f^{\mstr{B}})_{f\in\Phi},(\varrho^{\mstr{B}})_{\varrho\in\Rho})$ is another ultrametric $\Sigma$-structure then a function $h\colon A\to B$ is called a \emph{metric homomorphism} from $\mstr{A}$ to $\mstr{B}$ (formally: $h\colon\mstr{A}\to\mstr{B}$) if
	\begin{enumerate}
		\item $h\colon(A,\delta_{\mstr{A}})\to(B,\delta_{\mstr{B}})$ is $1$-Lipschitz,
		\item for all $f\in\Phi$ (say, of arity $n$) the following diagram commutes:
		\[
		\begin{tikzcd}
			A^n \ar[r,"f^{\mstr{A}}"]\ar[d,hook',"h^n"']& A\ar[d,hook',"h"]\\
			B^n \ar[r,"f^{\mstr{B}}"]& B ,
		\end{tikzcd}
		\]
		\item \label{hom2}for all $\varrho\in\Rho$ (say, of arity $n$) and for all $x_0,\dots,x_{n-1}\in A$ we have:
		\[
		\varrho^{\mstr{A}}(x_0,\dots,x_{n-1})\ge \varrho^{\mstr{B}}(h(x_0),\dots,h(x_{n-1})).
		\]
	\end{enumerate}
	$h$ is called a \emph{metric embedding} if it is isometric and if \eqref{hom2} holds with equality. If $h$ is in addition bijective, then it is called a \emph{metric isomorphism}. The category of all ultrametric $\Sigma$-structures with metric homomorphisms will be denoted by $\cat{U}_\Sigma$. 
\end{definition}
It is important to note that canonical metric structures of $\omega$-cochains over $\class{C}$ are always ultrametric structures. To make the correspondence between $\omega$-cochains and ultrametric structures functorial, we still need to define its action on morphisms. Let $((\struc{A}_i)_{i<\omega}, (\alpha_i^j)_{i\le j<\omega})$ and $((\struc{B}_i)_{i<\omega},(\beta_i^j)_{i\le j<\omega})$ be $\omega$-cochains over $\class{C}$. Let $\struc{A}_\infty$ and $\struc{B}_\infty$ be their respective canonical limits and let $(\alpha^\infty_i)_{i<\omega}$ and $(\beta^\infty_i)_{i<\omega}$ be the corresponding canonical cones. Let 
\[
	(h_i)_{i<\omega} \colon ((\struc{A}_i)_{i<\omega}, (\alpha_i^j)_{i\le j<\omega}) \Longrightarrow((\struc{B}_i)_{i<\omega},(\beta_i^j)_{i\le j<\omega})
	\]
	be a natural transformation. In other words, for all $0\le i\le j<\omega$ we would like the following diagram to be  commutative:
\[
	\begin{tikzcd}
		\struc{A}_i \ar[d,"h_i"']& \struc{A}_j\ar[l,"\alpha^j_i"']\ar[d,"h_j"] \\
		\struc{B}_i & \struc{B}_j\ar[l,"\beta^j_i"'].
	\end{tikzcd}
\]
For each $i<\omega$ let $\tilde{\alpha}^\infty_i\colon\struc{A}_\infty\to\struc{B}_i$ be defined by $\tilde{\alpha}^\infty_i\coloneqq\alpha^\infty_i\circ h_i$.  Then $(\tilde\alpha^\infty_i)_{i<\omega}$ is a compatible cone for $((\struc{B}_i)_{i<\omega},(\beta_i^j)_{i\le j<\omega})$. I.e., for all $0\le i\le j<\omega$ we have that the following diagram is commutative:
\[
    \begin{tikzcd}
        \struc{B}_i & \struc{B}_j\ar[l,"\beta_i^j"] & \struc{A}_\infty.\ar[ll,bend right,"\tilde\alpha^\infty_i"']\ar[l,bend right,"\tilde\alpha^\infty_j"] 
    \end{tikzcd}
\]
Since the canonical cone $(\beta^\infty_i)_{i<\omega}$ is a limiting cone, it follows from the universal property of limits that there is a unique homomorphism $h$ from $\struc{A}_\infty$ to $\struc{B}_\infty$ such that for all $i<\omega$ we have  $\tilde\alpha^\infty_i = \beta^\infty_i\circ h$. 
\begin{observation}
	The above defined homomorphism $h$ acts like
	\[
	h\colon (x_i)_{i<\omega} \mapsto (h_i(x_i))_{i<\omega}.
	\]
\end{observation}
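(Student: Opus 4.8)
The plan is to read off the claimed formula coordinatewise through the canonical cone of $\cochain{B}$. The single fact that drives everything is that the bonding map $\beta^\infty_i$ is literally the $i$-th coordinate projection: for every $\tup{y}=(y_j)_{j<\omega}\in\struc{B}_\infty$ one has $\beta^\infty_i(\tup{y})=y_i$, so the $i$-th coordinate of an element of $\struc{B}_\infty$ is recovered by applying $\beta^\infty_i$.

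First I would recall the two defining properties of $h$ and of $\tilde\alpha^\infty_i$. By construction $h$ is the unique homomorphism with $\beta^\infty_i\circ h=\tilde\alpha^\infty_i$ for every $i<\omega$, and $\tilde\alpha^\infty_i$ is the composite $\struc{A}_\infty\to\struc{A}_i\to\struc{B}_i$, hence sends $\tup{x}=(x_j)_{j<\omega}$ to $h_i\bigl(\alpha^\infty_i(\tup{x})\bigr)=h_i(x_i)$. Fixing $\tup{x}\in\struc{A}_\infty$ and an arbitrary $i<\omega$, the $i$-th coordinate of $h(\tup{x})$ is therefore
\[
\bigl(h(\tup{x})\bigr)_i=\beta^\infty_i\bigl(h(\tup{x})\bigr)=\tilde\alpha^\infty_i(\tup{x})=h_i(x_i).
\]
Since $i$ was arbitrary this yields $h(\tup{x})=(h_i(x_i))_{i<\omega}$, which is exactly the asserted description of $h$.

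The only point that deserves a comment is that the tuple $(h_i(x_i))_{i<\omega}$ genuinely lies in $\struc{B}_\infty$, i.e.\ is a coherent tuple; but this is automatic here, for two reasons. On the one hand $h$ is already known to take values in $\struc{B}_\infty$, so $h(\tup{x})$ is coherent a priori. On the other hand, coherence also follows directly from naturality of $(h_i)_{i<\omega}$ together with $\tup{x}\in\struc{A}_\infty$: for $i\le j$ we have $\beta^j_i(h_j(x_j))=h_i(\alpha^j_i(x_j))=h_i(x_i)$, which is precisely the compatibility already used to see that $(\tilde\alpha^\infty_i)_{i<\omega}$ is a cone. An equivalent route would be to define $g\colon\struc{A}_\infty\to\struc{B}_\infty$ by $g(\tup{x})\coloneqq(h_i(x_i))_{i<\omega}$, check via this coherence that $g$ is a well-defined homomorphism satisfying $\beta^\infty_i\circ g=\tilde\alpha^\infty_i$, and then invoke the uniqueness clause of the universal property to conclude $h=g$. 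Either way there is no real obstacle: the statement is just an unwinding of the universal property, and the only item of genuine content, coherence of the image tuple, has effectively been discharged already in the construction of the cone $(\tilde\alpha^\infty_i)_{i<\omega}$.
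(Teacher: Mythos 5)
Your proof is correct and follows essentially the same route as the paper, which simply cites the defining identity $\beta^\infty_i\circ h=\tilde\alpha^\infty_i$ (with $\tilde\alpha^\infty_i=h_i\circ\alpha^\infty_i$) and reads off the $i$-th coordinate. Your additional remarks on coherence of the image tuple are a harmless elaboration of what the paper leaves implicit.
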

\begin{proof}
	This is a direct consequence of the identity $\alpha_i^\infty\circ h_i = \beta_i^\infty\circ h$.
\end{proof}

It is well-known that the mapping 
\[
	\plim \colon[\cat{\omega}^\op,\cat{S}_\Sigma] \to \cat{S}_\Sigma\quad:\quad ((\struc{A}_i)_{i<\omega},(\alpha^j_i)_{i\le j<\omega})\mapsto\struc{A}_\infty\quad,\qquad (h_i)_{i<\omega}\mapsto h
\]
defines a functor (see \cite[Theorem 8.6]{Kan58}). We are going to use the action of this functor on morphisms in order to define a functor from $[\cat{\omega}^\op,\cat{S}_\Sigma]$ to $\cat{U}_\Sigma$. 
\begin{observations}
	The above defined homomorphism $h\colon\struc{A}_\infty\to\struc{B}_\infty$ is also a metric homomorphism from $\mstr{A}_\infty$ to $\mstr{B}_\infty$. If $(h_i)_{i<\omega}$ is a natural embedding, then $h$ is even a metric embedding. 
\end{observations}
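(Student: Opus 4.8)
The plan is to verify directly the three defining conditions of a metric homomorphism, and afterwards to strengthen each of them to the corresponding condition for a metric embedding, using throughout the explicit coordinatewise description $h\colon(x_i)_{i<\omega}\mapsto(h_i(x_i))_{i<\omega}$ obtained in the preceding observation. I write $\mstr{A}_\infty=\ul{\mstr{A}}_\infty$ and $\mstr{B}_\infty=\ul{\mstr{B}}_\infty$ for the canonical metric structures, and I fix tuples $\tup{a}=(a_i)_{i<\omega},\tup{b}=(b_i)_{i<\omega}\in A_\infty$ as well as $\ttup{x}=(\tup{x}^{(0)},\dots,\tup{x}^{(n-1)})\in(A_\infty)^n$.

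First I would dispose of the metric and the operations. For the $1$-Lipschitz condition, note that the $i$-th coordinate of $h(\tup{a})$ is $h_i(a_i)$; hence $a_i=b_i$ forces $h_i(a_i)=h_i(b_i)$, so the smallest coordinate at which $h(\tup{a})$ and $h(\tup{b})$ can differ is at least $\Delta(\tup{a},\tup{b})$. This gives $\Delta(h(\tup{a}),h(\tup{b}))\ge\Delta(\tup{a},\tup{b})$, i.e. $\delta_{B_\infty}(h(\tup{a}),h(\tup{b}))\le\delta_{A_\infty}(\tup{a},\tup{b})$. Compatibility with the basic operations is immediate: it is exactly the assertion that $h\colon\struc{A}_\infty\to\struc{B}_\infty$ is a homomorphism of $\Sigma$-structures, which was already produced by the universal property of the limit, and the operations of $\mstr{A}_\infty$ and $\mstr{B}_\infty$ are by definition those of $\struc{A}_\infty$ and $\struc{B}_\infty$.

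The substantive point is the relational condition. Here I would exploit that each $h_i$, being a $\Sigma$-homomorphism, preserves the basic relations: if $(x_i^{(0)},\dots,x_i^{(n-1)})\in\varrho^{\struc{A}_i}$ then $(h_i(x_i^{(0)}),\dots,h_i(x_i^{(n-1)}))\in\varrho^{\struc{B}_i}$. Since the $i$-th coordinate of $h(\tup{x}^{(j)})$ is $h_i(x_i^{(j)})$, the set of levels at which the relation fails for $h^n(\ttup{x})$ is contained in the set of levels at which it fails for $\ttup{x}$. Comparing minimal failure indices — and reading membership in $\varrho^{\struc{B}_\infty}$, resp. $\varrho^{\struc{A}_\infty}$, as the value $0$ — yields $M(h^n(\ttup{x}))\ge M(\ttup{x})$ whenever both values are finite, and $\ttup{x}\in\varrho^{\struc{A}_\infty}$ implies $h^n(\ttup{x})\in\varrho^{\struc{B}_\infty}$. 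In every case this gives $\ul\varrho^{\struc{B}_\infty}(h^n(\ttup{x}))\le\ul\varrho^{\struc{A}_\infty}(\ttup{x})$, so $h$ is a metric homomorphism.

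For the embedding claim, assume now that each $h_i$ is an embedding, i.e. injective and strong. Injectivity upgrades the Lipschitz estimate to an isometry, since $a_i\neq b_i$ becomes equivalent to $h_i(a_i)\neq h_i(b_i)$, giving $\Delta(h(\tup{a}),h(\tup{b}))=\Delta(\tup{a},\tup{b})$. Strongness means each $h_i$ also reflects the basic relations, so the relation holds at level $i$ for $\ttup{x}$ precisely when it holds at level $i$ for $h^n(\ttup{x})$; the two failure sets coincide, hence $M(h^n(\ttup{x}))=M(\ttup{x})$ and $\ttup{x}\in\varrho^{\struc{A}_\infty}$ iff $h^n(\ttup{x})\in\varrho^{\struc{B}_\infty}$. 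Thus $\ul\varrho^{\struc{A}_\infty}(\ttup{x})=\ul\varrho^{\struc{B}_\infty}(h^n(\ttup{x}))$, the relational inequality holds with equality, and $h$ is a metric embedding. The computations are essentially bookkeeping; the only step demanding mild care is the relational condition, where one must keep the case $\ttup{x}\in\varrho^{\struc{A}_\infty}$ separate from a finite minimal failure index and confirm that preservation of relations by the $h_i$ translates into the inequality $M(h^n(\ttup{x}))\ge M(\ttup{x})$ in the correct direction. I anticipate no genuine obstacle beyond this.
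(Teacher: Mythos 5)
Your proposal is correct and follows essentially the same route as the paper: compare coordinatewise, use that each $h_i$ preserves (resp., for embeddings, preserves and reflects) equality and the basic relations, and translate the resulting inclusion (resp. equality) of "failure sets" into the inequality (resp. equality) of $\Delta$ and of the minimal failure indices $M$. The paper phrases this as a chain of implications that become equivalences when the $h_i$ are embeddings, which is exactly your argument; your explicit treatment of the basic operations is an additional remark the paper omits as already known.
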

\begin{proof}
	First we show that  $h\colon (\struc{A}_\infty,\delta_{\struc{A}_\infty})\to (\struc{B}_\infty,\delta_{\struc{B}_\infty})$ is $1$-Lipschitz: Let $\tup{x},\tup{y}\in A_\infty$, and let $i<\omega$. Then 
		\begin{equation}\label{hinj}
		x_i = y_i\implies h_i(x_i)=h_i(y_i) \iff \tilde\alpha^\infty_i(\tup{x})=\tilde\alpha^\infty_i(\tup{y}) \iff \beta^\infty_i(h(\tup{x})) = \beta_i^\infty(h(\tup{y})). 
		\end{equation}
		This shows that $\delta_{\struc{A}_\infty}(\tup{x},\tup{y})\ge\delta_{\struc{B}_\infty}(h(\tup{x}),h(\tup{y}))$ and $h$ is indeed $1$-Lipschitz.

	Let $\varrho\in\Rho$ (say, $\ar(\varrho)=n$). Let $(\tup{a}^{(0)},\dots,\tup{a}^{(n-1)})\in (A_\infty)^n$, and let $i<\omega$. Then 
	\begin{align}\label{hmet}
		(a^{(0)}_i,\dots,a^{(n-1)}_i)\in\varrho^{\struc{A}_i} &\implies (h_i(a^{(0)}_i),\dots,h_i(a^{(n-1)}_i))\in\varrho^{\struc{B}_i}\\
		\notag&\iff (\tilde\alpha^\infty_i(\tup{a}^{(0)}),\dots,\tilde\alpha^\infty_i(\tup{a}^{(n-1)}))\in\varrho^{\struc{B}_i}\\
		\notag&\iff (\beta^\infty_i(h(\tup{a}^{(0)})),\dots,\beta^\infty_i(h(\tup{a}^{(n-1)})))\in\varrho^{\struc{B}_i}. 
	\end{align}
	Consequently, $\ul\varrho^{\struc{A}_\infty}(\tup{a}^{(0)},\dots,\tup{a}^{(n-1)})\ge \ul\varrho^{\struc{B}_\infty}(h(\tup{a}^{(0)}),\dots,h(\tup{a}^{(n-1)}))$. It follows that $h$ is a metric homomorphism.
	
	Clearly, if $(h_i)_{i<\omega}$ is a natural embedding, then the implications in \eqref{hinj} and \eqref{hmet} are equivalences. Consequently, in this case $h$ is a metric embedding.
\end{proof}
The previous observation implies that the assignment 
\begin{equation*}
	\mlim\colon ((\struc{A}_i)_{i<\omega},(\alpha_i^j)_{i\le j<\omega})\mapsto\mstr{A}_\infty,\qquad (h_i)_{i<\omega}\mapsto h
\end{equation*}
is a functor from $[\cat{\omega}^\op,\cat{S}_\Sigma]$ to $\cat{U}_\Sigma$. 
\begin{observation}
	There is a natural forgetful functor $\func{U} \colon\cat{U}_\Sigma\to \cat{S}_\Sigma$  that maps each ultrametric $\Sigma$-structure $\mstr{A}$ to its underlying $\Sigma$-structure $\struc{A}$ where $\struc{A}$ shares with $\mstr{A}$ the carrier set and the basic operations, and where the basic relations of $\struc{A}$ are defined by 
	\[
	\varrho^{\struc{A}} = \{\bar{a}\in A^{\ar(\varrho)}\mid \varrho^{\mstr{A}}(\bar{a})=0\}.
	\]
	The functor $\func{U}$ has a left-adjoint $\func{Met}$ that  maps every $\Sigma$-structure $\struc{A}$ to an ultrametric structure $\mstr{A}$ that shares with $\struc{A}$ the carrier set and the basic operations. The ultrametric $\delta_{\mstr{A}}$ of $\mstr{A}$ is the discrete metric given by 
	\[
	\delta_{\mstr{A}}(x,y)= \begin{cases}
		0 & x=y\\
		1 & x\neq y.
	\end{cases}
	\]
	Moreover, for each relational symbol $\varrho\in\Rho$, say, of arity $n$ we have that
	\[
	\varrho^{\mstr{A}}(x_0,\dots,x_{n-1}) = \begin{cases}
		0 & (x_0,\dots,x_{n-1})\in\varrho^{\struc{A}}\\
		1 & (x_0,\dots,x_{n-1})\notin\varrho^{\struc{A}}.
	\end{cases}
	\] 
	It is not hard to see that $\func{Met}$ fully embeds $\cat{S}_\Sigma$ into $\cat{U}_\Sigma$. 
\end{observation}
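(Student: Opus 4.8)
The plan is to verify, in order, that $\func{U}$ is a well-defined functor, that $\func{Met}$ is a well-defined functor, that the two are adjoint, and finally that $\func{Met}$ is a full embedding. The whole argument turns on a single asymmetry in the definition of metric homomorphism, which I would isolate at the outset. First I would check that $\func{U}$ is a functor: on objects it produces a genuine $\Sigma$-structure, since $\varrho^{\struc{A}}=\{\bar a\mid \varrho^{\mstr{A}}(\bar a)=0\}$ is merely a subset of $A^{\ar(\varrho)}$; on morphisms, given a metric homomorphism $h\colon\mstr{A}\to\mstr{B}$, the same underlying map is a $\Sigma$-homomorphism $\struc{A}\to\struc{B}$, because operations are preserved outright, and if $\bar a\in\varrho^{\struc{A}}$, i.e.\ $\varrho^{\mstr{A}}(\bar a)=0$, then the defining inequality $\varrho^{\mstr{A}}(\bar a)\ge\varrho^{\mstr{B}}(h(\bar a))$ together with $\varrho^{\mstr{B}}\ge 0$ forces $\varrho^{\mstr{B}}(h(\bar a))=0$, that is, $h(\bar a)\in\varrho^{\struc{B}}$. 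Next I would check that $\func{Met}$ is well defined on objects: the discrete metric is an ultrametric of diameter $1$, and since every nonzero distance equals $1$ while all operation outputs and all relation values lie in a set of diameter at most $1$, each basic operation and each $\varrho^{\func{Met}(\struc{A})}$ is automatically $1$-Lipschitz. On morphisms $\func{Met}$ sends a $\Sigma$-homomorphism $g\colon\struc{A}\to\struc{B}$ to the same underlying map, which the same two-case relation check shows to be a metric homomorphism $\func{Met}(\struc{A})\to\func{Met}(\struc{B})$.

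The heart of the matter is the adjunction $\func{Met}\dashv\func{U}$, which I would establish by exhibiting a bijection
\[
\operatorname{Hom}_{\cat{U}_\Sigma}(\func{Met}(\struc{A}),\mstr{B})\;\cong\;\operatorname{Hom}_{\cat{S}_\Sigma}(\struc{A},\func{U}(\mstr{B}))
\]
that is the identity on underlying set maps. The decisive observation is that, since $\varrho^{\func{Met}(\struc{A})}$ takes only the values $0$ and $1$, the metric-homomorphism inequality $\varrho^{\func{Met}(\struc{A})}(\bar x)\ge\varrho^{\mstr{B}}(h(\bar x))$ is vacuous when $\bar x\notin\varrho^{\struc{A}}$ (its left side is $1$ while $\varrho^{\mstr{B}}\le 1$), and reduces, when $\bar x\in\varrho^{\struc{A}}$, to $\varrho^{\mstr{B}}(h(\bar x))=0$. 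This is exactly the requirement that $h$, viewed as a map $\struc{A}\to\func{U}(\mstr{B})$, respect relations; the $1$-Lipschitz condition is likewise automatic because $\func{Met}(\struc{A})$ carries the discrete metric, and the operation clause is identical on both sides. Hence a map is a metric homomorphism $\func{Met}(\struc{A})\to\mstr{B}$ if and only if it is a $\Sigma$-homomorphism $\struc{A}\to\func{U}(\mstr{B})$, which gives the bijection; naturality in both variables is immediate since everything is the identity on underlying maps. I would then record that the unit $\eta_{\struc{A}}\colon\struc{A}\to\func{U}\func{Met}(\struc{A})$ is the identity, using $\{\bar a\mid \varrho^{\func{Met}(\struc{A})}(\bar a)=0\}=\varrho^{\struc{A}}$, so that in fact $\func{U}\circ\func{Met}=\operatorname{Id}_{\cat{S}_\Sigma}$.

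For the full embedding I would invoke the standard fact (see, e.g.,~\cite{Mac98}) that a left adjoint is fully faithful precisely when its unit is a natural isomorphism; here the unit is the identity, so $\func{Met}$ is fully faithful, and being visibly injective on objects it is a full embedding. Alternatively, fullness reads off directly from the hom-bijection with $\mstr{B}=\func{Met}(\struc{B})$, using $\func{U}\func{Met}(\struc{B})=\struc{B}$. I do not expect a genuine obstacle here: the only step requiring care is the relation clause, where one must track the direction of the inequality in the definition of metric homomorphism and exploit that $\func{Met}(\struc{A})$ only ever assigns relation values $0$ or $1$; once that asymmetry is handled correctly, every remaining verification is routine bookkeeping.
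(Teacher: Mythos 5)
Your proposal is correct, and it supplies exactly the verification the paper omits: the observation is stated without proof, with only the remark that the full-embedding claim ``is not hard to see.'' Your argument is the standard one the authors clearly have in mind --- the key point being that $\varrho^{\func{Met}(\struc{A})}$ takes only the values $0$ and $1$, so the inequality $\varrho^{\func{Met}(\struc{A})}(\bar x)\ge\varrho^{\mstr{B}}(h(\bar x))$ is vacuous off the relation and forces $\varrho^{\mstr{B}}(h(\bar x))=0$ on it, while the discrete metric makes the Lipschitz condition automatic --- and your identification of the unit as the identity (so that $\func{U}\circ\func{Met}=\operatorname{Id}_{\cat{S}_\Sigma}$, whence $\func{Met}$ is fully faithful) is a clean way to finish.
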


By now we showed that natural transformations between $\omega$-cochains give rise to metric homomorphisms between their canonical ultrametric structures. A relevant question is whether all metric homomorphisms between the canonical ultrametric structures are of this shape. In order to answer this question (and more) our goal is to define a functor $\Seq\colon\cat{U}_\Sigma\to[\cat\omega^\op,\cat{S}_\Sigma]$ that is a left-adjoint of $\mlim$. 

Given an ultrametric structure $\mstr{A}=(A,\delta_{\mstr{A}},(f^{\mstr{A}})_{f\in\Phi},(\varrho^{\mstr{A}})_{\varrho\in\Rho})$. For each $i<\omega$ define $\approx_i\subset A\times A$ according to 
\[
 x\approx_i y \defeq \delta_{\mstr{A}}(x,y)<2^{-i}.
 \]
 Since $\delta_{\mstr{A}}$ is an ultrametric, we have that all $\approx_i$ are equivalence relations on $A$. Next, for each $i<\omega$ let us define $A_i\coloneqq A/\mathord{\approx}_i$. Our goal is to make $A_i$ the carrier of some $\Sigma$-structure $\struc{A}_i$. To this end, for each $\varrho\in\Rho$, say, of arity $n$  we define $\varrho^{\struc{A}_i}\subseteq A_i^n$ according to 
 \[
 ([a_0]_{\approx_i},\dots,[a_{n-1}]_{\approx_i})\in\varrho^{\struc{A}_i} \defeq \exists b_0\in[a_0]_{\approx_i}\dots \exists b_{n-1}\in[a_{n-1}]_{\approx_i}\,:\, \varrho^{\mstr{A}}(b_0,\dots,b_{n-1})< 2^{-i}.
 \]
Next, for each operational symbol $f\in\Phi$ (say, $\ar(f)=n$) and for all $a_0,\dots,a_{n-1}\in A$ let us define
\[
f^{\struc{A}_i}([a_0]_{\approx_i},\dots,[a_{n-1}]_{\approx_i})\coloneqq[f^{\mstr{A}}(a_0,\dots,a_{n-1})]_{\approx_i}.
\]In order to see that $f^{\struc{A}_i}$ is well-defined let $b_0,\dots,b_{n-1}\in A$, such that $b_0\in[a_0]_{\approx_i},\dots,b_{n-1}\in[a_{n-1}]_{\approx_i}$. Let $\bar{a}\coloneqq(a_0,\dots,a_{n-1})$ and $\bar{b}\coloneqq (b_0,\dots,b_{n-1})$. Then $\delta_{\mstr{A}}(\bar{a},\bar{b})<2^{-i}$. Since $f^{\mstr{A}}$ is $1$-Lipschitz, we also have $\delta_{\mstr{A}}(f^\mstr{A}(\bar{a}),f^\mstr{A}(\bar{b}))< 2^{-i}$. However, this directly implies that $[f^\mstr{A}(\bar{a})]_{\approx_i}=[f^\mstr{A}(\bar{b})]_{\approx_i}$.

Now we define $\struc{A}_i\coloneqq(A/{\approx_i}, (f^{\struc{A}_i})_{f\in\Phi}, (\varrho^{\struc{A}_i})_{\varrho\in\Rho})$. Having done this, for every $0\le i\le j<\omega$ we define $\alpha_i^j\colon A_j\to A_i$ according to $\alpha_i^j\colon [x]_{\approx_j}\mapsto [x]_{\approx_i}$. Since $(\approx_i)\subseteq(\approx_j)$, it follows that $\alpha_i^j$ is well-defined, and we may define:
\[
 	\Seq(\mstr{A})\coloneqq((\struc{A}_i)_{i<\omega}, (\alpha_i^j)_{i\le j <\omega}).
 \]   
\begin{observation}\label{seqepi}
	For all $0\le i\le j <\omega$ we have that $\alpha_i^j\colon\struc{A}_j\to\struc{A}_i$ is a surjective homomorphism. 
\end{observation}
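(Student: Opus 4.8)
The plan is to check the three defining components of a surjective homomorphism separately, with everything resting on the single observation that $i \le j$ forces $2^{-j} \le 2^{-i}$, whence $\mathord{\approx_j} \subseteq \mathord{\approx_i}$ as relations on $A$. This inclusion is exactly what made $\alpha_i^j$ well-defined in the first place, and it will also do all the work in the homomorphism check. Surjectivity I would dispose of immediately: an arbitrary element of $A_i$ has the form $[x]_{\approx_i}$ for some $x \in A$, and it is hit by $[x]_{\approx_j}$, since $\alpha_i^j([x]_{\approx_j}) = [x]_{\approx_i}$.

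For the operational part I would simply unwind the two relevant definitions. Fixing $f \in \Phi$ of arity $n$ and representatives $a_0, \dots, a_{n-1} \in A$, applying $\alpha_i^j$ to $f^{\struc{A}_j}([a_0]_{\approx_j}, \dots, [a_{n-1}]_{\approx_j}) = [f^{\mstr{A}}(a_0, \dots, a_{n-1})]_{\approx_j}$ yields $[f^{\mstr{A}}(a_0, \dots, a_{n-1})]_{\approx_i}$, and this is precisely $f^{\struc{A}_i}([a_0]_{\approx_i}, \dots, [a_{n-1}]_{\approx_i}) = f^{\struc{A}_i}(\alpha_i^j([a_0]_{\approx_j}), \dots, \alpha_i^j([a_{n-1}]_{\approx_j}))$. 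Hence $\alpha_i^j$ commutes with every basic operation.

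The relational part is the only step carrying any content, though it too is routine. Given $\varrho \in \Rho$ of arity $n$ and a tuple lying in $\varrho^{\struc{A}_j}$, I would extract from the definition of $\varrho^{\struc{A}_j}$ witnesses $b_0 \in [a_0]_{\approx_j}, \dots, b_{n-1} \in [a_{n-1}]_{\approx_j}$ with $\varrho^{\mstr{A}}(b_0, \dots, b_{n-1}) < 2^{-j}$, and then observe that the same $b_k$ witness membership in $\varrho^{\struc{A}_i}$: the inclusion $\mathord{\approx_j} \subseteq \mathord{\approx_i}$ gives $b_k \in [a_k]_{\approx_i}$ for each $k$, while $2^{-j} \le 2^{-i}$ upgrades the strict inequality to $\varrho^{\mstr{A}}(b_0, \dots, b_{n-1}) < 2^{-i}$. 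This yields $([a_0]_{\approx_i}, \dots, [a_{n-1}]_{\approx_i}) \in \varrho^{\struc{A}_i}$, which is exactly the image tuple under $\alpha_i^j$.

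There is essentially no obstacle here, as the whole argument is a direct reading of the definitions. The only point demanding attention is to keep in mind that a homomorphism need only \emph{preserve} relations, not reflect them; consequently a single tuple of witnesses for $\varrho^{\struc{A}_j}$ automatically supplies witnesses for $\varrho^{\struc{A}_i}$, and no converse implication is needed.
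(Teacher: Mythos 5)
Your proof is correct: the paper states this observation without any proof, treating it as immediate from the definitions, and your direct verification (surjectivity from representatives, compatibility with operations by unwinding $f^{\struc{A}_i}$, and preservation of relations via $\mathord{\approx_j}\subseteq\mathord{\approx_i}$ together with $2^{-j}\le 2^{-i}$) is exactly the intended argument. Your closing remark that only preservation, not reflection, of relations is required is also the right point to flag, since $\alpha_i^j$ is indeed not claimed to be an embedding.
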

By now we only defined the action of the functor $\Seq$ on objects. Next we define  its action on morphisms.  Let $\mstr{A}$ and   $\mstr{B}$ be metric $\Sigma$-structures and let $h\colon\mstr{A}\to\mstr{B}$ be a metric homomorphism. It is natural to define $h_i\colon A_i\to B_i$ according to $h_i\colon [a]_{\approx_i}\mapsto[h(a)]_{\approx_i}$. 
\begin{observations}\label{seqpresemb}
	 $(h_i)_{i<\omega}\colon\Seq(\mstr{A})\Rightarrow\Seq(\mstr{B})$  is a natural transformation. Moreover, if $h$ is a metric  embedding then $(h_i)_{i<\omega}$ is a natural embedding. 
\end{observations}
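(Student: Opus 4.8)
The plan is to prove the two assertions in turn: first that $(h_i)_{i<\omega}$ is a natural transformation $\Seq(\mstr{A})\Rightarrow\Seq(\mstr{B})$, which splits into checking that each $h_i$ is a well-defined homomorphism together with the naturality squares, and then, under the stronger hypothesis, that each $h_i$ is moreover a (strong) embedding. First I would check well-definedness of $h_i\colon[a]_{\approx_i}\mapsto[h(a)]_{\approx_i}$: since $h$ is $1$-Lipschitz, $a\approx_i a'$ means $\delta_{\mstr{A}}(a,a')<2^{-i}$, hence $\delta_{\mstr{B}}(h(a),h(a'))\le\delta_{\mstr{A}}(a,a')<2^{-i}$, i.e.\ $h(a)\approx_i h(a')$. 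Preservation of operations then falls out of the definitions of $f^{\struc{A}_i}$, $f^{\struc{B}_i}$ and the operation-square for $h$: both $h_i(f^{\struc{A}_i}([a_0],\dots))$ and $f^{\struc{B}_i}(h_i[a_0],\dots)$ reduce to $[h(f^{\mstr{A}}(a_0,\dots))]_{\approx_i}=[f^{\mstr{B}}(h(a_0),\dots)]_{\approx_i}$.

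For the relations, suppose $([a_0]_{\approx_i},\dots,[a_{n-1}]_{\approx_i})\in\varrho^{\struc{A}_i}$, witnessed by $b_k\approx_i a_k$ with $\varrho^{\mstr{A}}(b_0,\dots,b_{n-1})<2^{-i}$. Then $h(b_k)\approx_i h(a_k)$, and by condition~\eqref{hom2} for the metric homomorphism $h$ we get $\varrho^{\mstr{B}}(h(b_0),\dots,h(b_{n-1}))\le\varrho^{\mstr{A}}(b_0,\dots,b_{n-1})<2^{-i}$, so the tuple $(h_i[a_0],\dots,h_i[a_{n-1}])$ lies in $\varrho^{\struc{B}_i}$. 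This shows each $h_i$ is a homomorphism. Naturality is then a one-line computation: for $i\le j$ both $h_i\circ\alpha_i^j$ and $\beta_i^j\circ h_j$ send $[x]_{\approx_j}$ to $[h(x)]_{\approx_i}$, since $\alpha_i^j$ and $\beta_i^j$ are the canonical projections induced by $\mathord{\approx_j}\subseteq\mathord{\approx_i}$. This establishes the first assertion.

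For the second assertion assume $h$ is a metric embedding, so $h$ is isometric and \eqref{hom2} holds with equality. Injectivity of each $h_i$ is immediate: if $h_i[a]=h_i[a']$ then $\delta_{\mstr{B}}(h(a),h(a'))<2^{-i}$, whence $\delta_{\mstr{A}}(a,a')=\delta_{\mstr{B}}(h(a),h(a'))<2^{-i}$ and $[a]_{\approx_i}=[a']_{\approx_i}$. What remains, and what I expect to be the main obstacle, is to show that each $h_i$ is \emph{strong}, i.e.\ that it also reflects the relations: $(h_i[a_0],\dots,h_i[a_{n-1}])\in\varrho^{\struc{B}_i}$ must force $([a_0],\dots,[a_{n-1}])\in\varrho^{\struc{A}_i}$. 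The difficulty is that a witness $c_k\approx_i h(a_k)$ with $\varrho^{\mstr{B}}(c_0,\dots,c_{n-1})<2^{-i}$ need not lie in the image $h(A)$, and $1$-Lipschitzness of $\varrho^{\mstr{B}}$ by itself only yields the weaker bound $\varrho^{\mstr{B}}(h(a_0),\dots,h(a_{n-1}))<2^{-i+1}$, which does not suffice to certify membership in $\varrho^{\struc{B}_i}$ via the image.

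The plan for this step is to leverage the \emph{equality} in \eqref{hom2}, which matches the existential definitions of $\varrho^{\struc{A}_i}$ and $\varrho^{\struc{B}_i}$ through the identity $\varrho^{\mstr{A}}(b_0,\dots,b_{n-1})=\varrho^{\mstr{B}}(h(b_0),\dots,h(b_{n-1}))$ valid for all tuples drawn from $A$: one wants to transfer the defining $\approx_i$-witnesses from the classes of $\mstr{B}$ back into the classes of $\mstr{A}$, so that the two membership conditions become equivalent rather than merely implied in one direction. Carrying out this witness transfer entirely within the $\approx_i$-classes of $A$ is the delicate point, and is precisely where being a metric embedding (and not just a metric homomorphism) is essential. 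Once reflection is secured, it combines with the preservation already proved in the first part to give an equivalence, so every $h_i$ is a strong embedding and $(h_i)_{i<\omega}$ is a natural embedding, completing the argument.
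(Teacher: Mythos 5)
Your proof of the first assertion (well-definedness of the $h_i$ from $1$-Lipschitzness of $h$, preservation of operations and of relations, and the naturality squares) coincides with the paper's argument, and so does your proof that each $h_i$ is injective when $h$ is isometric. The gap is in the reflection step of the second assertion: you correctly single out as the crux the implication $(h_i([a_0]_{\approx_i}),\dots,h_i([a_{n-1}]_{\approx_i}))\in\varrho^{\struc{B}_i}\Rightarrow([a_0]_{\approx_i},\dots,[a_{n-1}]_{\approx_i})\in\varrho^{\struc{A}_i}$, and you correctly observe that a witness $c_k\approx_i h(a_k)$ need not lie in $h(A)$ and that $1$-Lipschitzness of $\varrho^{\mstr{B}}$ only yields $\varrho^{\mstr{B}}(h(a_0),\dots,h(a_{n-1}))<2^{-i+1}$ --- but you then stop at announcing a ``plan'' to transfer witnesses back into $A$. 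No such transfer is exhibited, so the proof is incomplete exactly at the step you yourself identify as the main obstacle.

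Moreover, the transfer you hope for does not exist at the stated level of generality, so this is not a detail that can be waved through. Take $\varrho$ unary, let $\mstr{B}$ be the two-point ultrametric structure on $\{p,q\}$ with $\delta_{\mstr{B}}(p,q)=2^{-i-1}$, $\varrho^{\mstr{B}}(p)=2^{-i}$, $\varrho^{\mstr{B}}(q)=2^{-i-1}$ (this is $1$-Lipschitz), and let $\mstr{A}$ be the substructure on $\{p\}$; the inclusion is a metric embedding, yet $[p]_{\approx_i}\notin\varrho^{\struc{A}_i}$ while $h_i([p]_{\approx_i})\in\varrho^{\struc{B}_i}$, witnessed by $q\notin h(A)$. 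For what it is worth, the paper's own proof is equally terse here: it only records that the equality in \eqref{hom2} gives $\varrho^{\mstr{B}}(h(b_0),\dots,h(b_{n-1}))<2^{-i}\Rightarrow\varrho^{\mstr{A}}(b_0,\dots,b_{n-1})<2^{-i}$ for tuples drawn from $A$, which handles precisely those witnesses lying in the image of $h$ and no others. The statement does hold in the situations where it is applied, namely when $\mstr{B}$ is (isomorphic to) a canonical limit in $\pi\class{S}_\Sigma$: there $\varrho^{\mstr{B}}=\ul\varrho^{\struc{B}_\infty}$, so whether $\varrho^{\mstr{B}}(\ttup{x})<2^{-i}$ depends only on the $\approx_i$-classes of the entries, the existential witness in the definition of $\varrho^{\struc{B}_i}$ may be replaced by $(h(a_0),\dots,h(a_{n-1}))$ itself, and the equality in \eqref{hom2} closes the argument. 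Your proof needs either this quantization argument or an explicit strengthening of the hypothesis; as written, the reflection step is missing.
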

\begin{proof}
	First observe that the fact that $h$ is $1$-Lipschitz entails that the $h_i$ are well-defined as functions.
	
	Let $f\in\Phi$, say, of arity $n$. Let $[a_0]_{\approx_i},\dots,[a_{n-1}]_{\approx_i}\in A_i$. Then
	\begin{align*}
	f^{\struc{B}_i}(h_i([a_0]_{\approx_i}),\dots,h_i([a_{n-1}]_{\approx_i}))
		&= f^{\struc{B}_i}([h(a_0)]_{\approx_i},\dots,[h(a_{n-1})]_{\approx_i})\\
		& = [f^{\mstr{B}}(h(a_0),\dots,h(a_{n-1}))]_{\approx_i}\\
		&= [h(f^{\mstr{A}}(a_0,\dots,a_{n-1}))]_{\approx_i}\\
		&= h_i([f^\mstr{A}(a_0,\dots,a_{n-1})]_{\approx_i})\\
		&= h_i(f^{\struc{A}_i}([a_0]_{\approx_i},\dots,[a_{n-1}]_{\approx_i})).
	\end{align*}
	Let now $\varrho\in\Rho$, say, of arity $n$, and let $[a_0]_{\approx_i},\dots,[a_{n-1}]_{\approx_i}\in A_i$. Then
	\[
		([a_0]_{\approx_i},\dots,[a_{n-1}]_{\approx_i})\in \varrho^{\struc{A}_i} \iff \forall j\in\{0,\dots,n-1\}\,\exists b_j\in [a_j]_{\approx_i}\varrho^{\mstr{A}}(b_0,\dots,b_{n-1})< 2^{-i}.
	\]
	Fix such a tuple $(b_0,\dots,b_{n-1})$. Then, by the definition of metric homomorphisms, we have that $\varrho^{\mstr{B}}(h(b_0),\dots,h(b_{n-1}))<2^{-i}$. Consequently, $([h(b_0)]_{\approx_i},\dots,[h(b_{n-1})]_{\approx_i})\in\varrho^{\struc{B}_i}$. However,
	\[
	([h(b_0)]_{\approx_i},\dots,[h(b_{n-1})]_{\approx_i}) = ([h(a_0)]_{\approx_i},\dots,[h(a_{n-1})]_{\approx_i}) = h_i([a_0]_{\approx_i},\dots,[a_{n-1}]_{\approx_i}).
	\]
	Thus $h_i$ is a homomorphism. 
	
	If $h$ is a metric embedding, then in the paragraph above we have additionally that 
	\[\varrho^{\mstr{B}}(h(b_0),\dots,h(b_{n-1}))< 2^{-i} \Rightarrow \varrho^{\mstr{A}}(b_0,\dots,b_{n-1})<2^{-i}.
	\]
	 Together with $h$ being an isometry, this has the consequence that $h_i$ is an embedding.
	
	It remains to check the naturality of $(h_i)_{i<\omega}$. Let $0\le i\le j<\omega$, and let $[a]_{\approx_j}\in A_j$. Then
	\[
	h_i(\alpha^j_i([a]_{\approx_j})) = h_i([a]_{\approx{i}}) = [h(a)]_{\approx_i} = \beta^j_i([h(a)]_{\approx_j}) = \beta^j_i(h_j([a]_{\approx_j})).\qedhere
	\] 
\end{proof}
At this point the functor $\Seq$ is completely defined. The compatibility of $\Seq$ with the composition of morphisms follows directly from the definition of the $h_i$.

Our next goal is to show that the functor $\Seq$ is left-adjoint to $\mlim$. To this end let us define for each $\mstr{A}\in\cat{U}_\Sigma$ define a function $\eta_\mstr{A}\colon \mstr{A}\to \mlim(\Seq(\mstr{A}))$ according to 
\[ 
\eta_{\mstr{A}}\colon x\mapsto ([x]_{\approx_i})_{i<\omega}.
\]
\begin{observation}\label{etabij}
	For each $\mstr{A}\in\cat{U}_\Sigma$ we have that $\eta_{\mstr{A}}$ is a bijective metric homomorphism.
\end{observation}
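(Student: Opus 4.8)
The plan is to verify the four conditions making $\eta_{\mstr{A}}$ a metric homomorphism and then to establish bijectivity, the latter being where the real content lies. Throughout I will use that $\mlim(\Seq(\mstr{A}))$ carries the dyadic ultrametric $\delta_{A_\infty}$ and the relations $\ul\varrho^{\struc{A}_\infty}$, computed coordinatewise on the canonical limit $A_\infty$ of $\Seq(\mstr{A})$, and that $(A,\delta_{\mstr{A}})$ is complete since $\mstr{A}$ is in particular a metric $\Sigma$-structure.

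First I would check that $\eta_{\mstr{A}}$ is well defined, i.e.\ that $([x]_{\approx_i})_{i<\omega}$ actually lies in $A_\infty$; this is immediate from the definition of the bonding maps, since $\alpha_i^j([x]_{\approx_j})=[x]_{\approx_i}$ makes the tuple compatible. For the Lipschitz condition, writing $\delta_{A_\infty}(\eta_{\mstr{A}}(x),\eta_{\mstr{A}}(y))=2^{-\Delta}$ with $\Delta=\min\{i\mid[x]_{\approx_i}\neq[y]_{\approx_i}\}$ and using $[x]_{\approx_i}\neq[y]_{\approx_i}\iff\delta_{\mstr{A}}(x,y)\ge 2^{-i}$, I obtain $2^{-\Delta}\le\delta_{\mstr{A}}(x,y)$, so $\eta_{\mstr{A}}$ is $1$-Lipschitz (but not in general isometric, as distances are rounded down to dyadic values). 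Compatibility with operations is a coordinatewise computation: the $i$-th coordinate of $f^{\struc{A}_\infty}(\eta_{\mstr{A}}(x_0),\dots)$ equals $f^{\struc{A}_i}([x_0]_{\approx_i},\dots)=[f^{\mstr{A}}(x_0,\dots)]_{\approx_i}$, which is exactly the $i$-th coordinate of $\eta_{\mstr{A}}(f^{\mstr{A}}(x_0,\dots))$.

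For relations I must verify $\varrho^{\mstr{A}}(x_0,\dots,x_{n-1})\ge\ul\varrho^{\struc{A}_\infty}(\eta_{\mstr{A}}(x_0),\dots,\eta_{\mstr{A}}(x_{n-1}))$. Setting $r\coloneqq\varrho^{\mstr{A}}(x_0,\dots,x_{n-1})$, the key observation is that whenever $2^{-i}>r$ the projected tuple $([x_0]_{\approx_i},\dots,[x_{n-1}]_{\approx_i})$ lies in $\varrho^{\struc{A}_i}$, as witnessed by the $x_k$ themselves. Hence the least index $M$ at which the projected tuple leaves $\varrho^{\struc{A}_i}$ satisfies $2^{-M}\le r$, so $\ul\varrho^{\struc{A}_\infty}(\eta_{\mstr{A}}(x_0),\dots)=2^{-M}\le r$; and if no such $M$ exists the left side is $0$. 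This gives the required inequality.

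Injectivity is clear, since $\eta_{\mstr{A}}(x)=\eta_{\mstr{A}}(y)$ forces $\delta_{\mstr{A}}(x,y)<2^{-i}$ for all $i$, hence $x=y$. Surjectivity is the main obstacle, and it is precisely where completeness of $(A,\delta_{\mstr{A}})$ enters. Given $\tup{c}=(c_i)_{i<\omega}\in A_\infty$, I would choose representatives $c_i=[a_i]_{\approx_i}$; compatibility $\alpha_i^j(c_j)=c_i$ translates into $\delta_{\mstr{A}}(a_i,a_j)<2^{-i}$ for $i\le j$, so $(a_i)_{i<\omega}$ is Cauchy and converges to some $a\in A$. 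The strong triangle inequality then yields $\delta_{\mstr{A}}(a,a_i)\le\max(\delta_{\mstr{A}}(a,a_j),\delta_{\mstr{A}}(a_j,a_i))<2^{-i}$ for suitable large $j$, so $[a]_{\approx_i}=[a_i]_{\approx_i}=c_i$ simultaneously at every level, whence $\eta_{\mstr{A}}(a)=\tup{c}$. The only delicate points are that the chosen representatives genuinely form a Cauchy sequence and that the approximation $\delta_{\mstr{A}}(a,a_i)<2^{-i}$ holds at every $i$ at once; the ultrametric inequality makes both clean.
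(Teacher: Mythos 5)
Your proposal is correct and follows essentially the same route as the paper's proof: the $1$-Lipschitz property via the equivalence $[x]_{\approx_i}\neq[y]_{\approx_i}\iff\delta_{\mstr{A}}(x,y)\ge 2^{-i}$, coordinatewise verification for operations and relations, injectivity from $\bigcap_i[x]_{\approx_i}=\{x\}$, and surjectivity by choosing representatives, observing they form a Cauchy sequence, and invoking completeness. The only cosmetic difference is that you phrase the relation inequality contrapositively (projected tuples stay in $\varrho^{\struc{A}_i}$ while $2^{-i}>r$) where the paper reads off the bound at the first failing index, but these are the same argument.
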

\begin{proof}
	First we show that $\eta_{\mstr{A}}$ is $1$-Lipschitz: Let $x,y\in A$. Then, by definition
	\[
		\delta(\eta_{\mstr{A}}(x),\eta_{\mstr{A}}(y)) = \delta(([x]_{\approx_i})_{i<\omega}, ([y]_{\approx_i})_{i<\omega}) = 2^{-M},
	\]
	where 
	\[ 
	M= \min\{j<\omega\mid [x]_{\approx_j}\neq[y]_{\approx_j}\}.
	\]
	Note that we have 
	\[ 
	[x]_{\approx_j}\neq [y]_{\approx_j}\iff y\notin[x]_{\approx_j} \iff \delta(x,y)\ge 2^{-j}.
	\]
	As a consequence we obtain that 
	\[
	M=\min\{j<\omega\mid d(x,y)\ge 2^{-j}\}.
	\]
	However, this entails that 
	\[
	\delta(x,y)\ge \delta(([x]_{\approx_i})_{i<\omega}, ([y]_{\approx_i})_{i<\omega}) = \delta(\eta_{\mstr{A}}(x),\eta_{\mstr{A}}(y)).
	\]
	
	Next we show that $\eta_{\mstr{A}}$ is bijective: First note that $\eta_{\mstr{A}}$ is injective because for each $x\in A$ we have
	\[
	\{x\} = \bigcap_{i<\omega} [x]_{\approx_i}.
	\]
	Towards the proof of surjectivity, let $(M_i)_{i<\omega}$ be an element of $\mlim(\Seq(\mstr{A}))$. For each $i<\omega$, choose some $x_i\in M_i$. Then $(x_i)_{i<\omega}$ is a Cauchy-sequence in $(A,\delta_{\mstr{A}})$.  As $(A,\delta_{\mstr{A}})$ is complete, this sequence has a limit $x$. Clearly, $M_i=[x]_{\approx_i}$, for each $i<\omega$. 
	
	It remains to show that $\eta_{\mstr{A}}$ satisfies the compatibility conditions for basic operations and for basic relations. For reasons of brevity, denote $\mlim(\Seq(\mstr{A}))$ by $\mstr{B}$. Let $f\in\Phi$, say of arity $n$. Let $x_0,\dots,x_{n-1}\in A$. Then 
	\begin{align*}
	 \eta_{\mstr{A}}(f^{\mstr{A}}(x_0,\dots,x_{n-1})) &= ([f^\mstr{A}(x_0,\dots,x_{n-1})]_{\approx_i})_{i<\omega} = (f^{\struc{A}_i}([x_0]_{\approx_i},\dots,[x_{n-1}]_{\approx_i}))_{i<\omega} \\
	 &= f^{\mstr{B}}(([x_0]_{\approx_i})_{i<\omega},\dots,([x_{n-1}]_{\approx_i})_{i<\omega}) = f^\mstr{B}(\eta_\mstr{A}(x_0),\dots, \eta_\mstr{A}(x_{n-1})).
	\end{align*}
	
	Let $\varrho\in\Rho$, say, of arity $n$. Let $x_0,\dots,x_{n-1}\in A$.  Then
	\[
	\varrho^{\mstr{B}}(\eta_{\mstr{A}}(x_0),\dots,\eta_\mstr{A}(x_{n-1})) = \varrho^{\mstr{B}}(([x_0]_{\approx_i})_{i<\omega},\dots, ([x_{n-1}]_{\approx_i})_{i<\omega}) = 2^{-M},
	\]
	where 
	\[ 
	M= \min\{j<\omega\mid ([x_0]_{\approx_j}, \dots,[x_{n-1}]_{\approx_j})\notin \varrho^{\struc{A}_j}\}.
	\]
	Note that we have 
	\[
	([x_0]_{\approx_j}, \dots,[x_{n-1}]_{\approx_j})\notin \varrho^{\struc{A}_j}\iff \forall a_0\in[x_0]_{\approx_j}\dots\forall a_{n-1}\in[x_{n-1}]_{\approx_j}\,:\, \varrho^{\mstr{A}}(a_0,\dots,a_{n-1})\ge 2^{-j}.
	\]
	In particular
	\[
	\varrho^{\mstr{A}}(x_0,\dots,x_{n-1})\ge 2^{-M}= \varrho^{\mstr{B}}(\eta_{\mstr{A}}(x_0),\dots,\eta_\mstr{A}(x_{n-1})).
	\]
\end{proof}

Next we consider the family $\eta=(\eta_\mstr{A})_{\mstr{A}\in\cat{U}_\Sigma}$. 
\begin{observation}
	$\eta\colon 1_{\cat{U}_\Sigma}\Rightarrow \mlim\circ\Seq$ is a natural transformation.
\end{observation}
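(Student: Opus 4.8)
The plan is to verify the naturality condition, since the only content of the claim beyond what is already established is the commutativity of the naturality squares. Indeed, Observation~\ref{etabij} already shows that each component $\eta_{\mstr{A}}$ is a (bijective) metric homomorphism, hence a morphism $1_{\cat{U}_\Sigma}(\mstr{A})\to(\mlim\circ\Seq)(\mstr{A})$ in $\cat{U}_\Sigma$. Thus it remains to fix an arbitrary metric homomorphism $h\colon\mstr{A}\to\mstr{B}$ and to establish the identity $(\mlim\circ\Seq)(h)\circ\eta_{\mstr{A}} = \eta_{\mstr{B}}\circ h$.

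First I would unwind the left-hand side using the already computed action of the two functors on morphisms. By Observations~\ref{seqpresemb} the natural transformation $\Seq(h)$ is the family $(h_i)_{i<\omega}$ with $h_i([a]_{\approx_i})=[h(a)]_{\approx_i}$, and by the observation describing the action of $\mlim$ on morphisms, the induced map $(\mlim\circ\Seq)(h)$ acts componentwise by $(M_i)_{i<\omega}\mapsto(h_i(M_i))_{i<\omega}$.

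The verification then reduces to a one-line computation at an arbitrary point $x\in A$: applying $\eta_{\mstr{A}}$ first yields $([x]_{\approx_i})_{i<\omega}$, and then $(\mlim\circ\Seq)(h)$ maps this to $(h_i([x]_{\approx_i}))_{i<\omega}=([h(x)]_{\approx_i})_{i<\omega}$; on the other hand, $\eta_{\mstr{B}}(h(x))$ is by definition $([h(x)]_{\approx_i})_{i<\omega}$. The two sequences coincide, so the square commutes.

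I expect no genuine obstacle here: the statement is a routine naturality check. The only point requiring care is to invoke the correct componentwise description of $(\mlim\circ\Seq)(h)$ from the preceding observations, rather than re-deriving it; once that description is in hand, both legs of the square evaluate to the same sequence of equivalence classes and equality is immediate.
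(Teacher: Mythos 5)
Your proposal is correct and follows essentially the same route as the paper: both reduce naturality to the componentwise identity $\mlim(\Seq(h))\bigl(([x]_{\approx_i})_{i<\omega}\bigr)=([h(x)]_{\approx_i})_{i<\omega}$, obtained by combining the description $h_i([x]_{\approx_i})=[h(x)]_{\approx_i}$ with the componentwise action of $\mlim$ on morphisms. Nothing is missing.
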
 
\begin{proof}
	Let $\mstr{A},\mstr{B}\in\cat{U}_\Sigma$, and let $h\colon \mstr{A}\to\mstr{B}$. We need to show that the following diagram commutes:
	\[
	\begin{tikzcd}
		\mstr{A}\ar[d,"h"']\ar[r,"\eta_{\mstr{A}}"] & \mlim(\Seq(\mstr{A}))\ar[d," \mlim(\Seq(h))"]\\
		\mstr{B}\ar[r,"\eta_\mstr{B}"] & \mlim(\Seq(\mstr{B})).
	\end{tikzcd}
	\]
	For this we need to show that 
	\[
	\mlim(\Seq(h))(([x]_{\approx_i})_{i<\omega})  = ([h(x)]_{\approx_i})_{i<\omega}.
	\]
	Recall that $\Seq(h)=(h_i)_{i<\omega}\colon\Seq(\mstr{A})\to\Seq(\mstr{B})$ is defined through 
	\[
	h_i\colon \struc{A_i}\to \struc{B_i}, \text{where }\quad  [x]_{\approx_i}\mapsto [h(x)]_{\approx_i}.
	\]
	Thus 
	\[\mlim(\Seq(h))(([x]_{\approx_i})_{i<\omega}) = (h_i([x]_{\approx_i}))_{i<\omega}=([h(x)]_{\approx_i})_{i<\omega},
	\]
	as desired.
\end{proof}
Let now $\cochain{A}= ((\struc{A_i})_{i<\omega},(\alpha_i^j)_{i\le j<\omega})$ be an $\omega$-cochain. Let $\mstr{A}$ be its canonical ultrametric structure. Let $\struc{A}_\infty\coloneqq U(\mstr{A})$ be the canonical limit of $\cochain{A}$ and let $(\alpha_i^\infty)_{i<\omega}$ be the canonical limiting cone.  Let $\cochain {B}\coloneqq\Seq(\mstr{A})= ((\struc{B}_i)_{i<\omega}, (\beta_i^j)_{i\le j <\omega})$ (recall that $\struc{B}_i=\struc{A}_\infty/\approx_i$).  We define $\varepsilon_{\cochain{A}}\colon \Seq(\mlim(\cochain{A}))\Rightarrow \cochain{A}$ according to  
\[
\varepsilon_{\cochain{A}} = (\varepsilon_{\cochain{A},i})_{i<\omega}\text{ where } \varepsilon_{\cochain{A},i}\colon \struc{B}_i\to\struc{A}_i\quad [\tup{a}]_{\approx_i}\mapsto\alpha^\infty_i(\tup{a}) = a_i.
\]
\begin{observation}
	Let $\varepsilon\coloneqq(\varepsilon_{\cochain{A}})_{\cochain{A}\in[\cat{\omega}^\op,\cat{S}_\Sigma]}$. Then $\varepsilon\colon \Seq\circ\mlim\Rightarrow \func{1}_{[\cat{\omega}^\op,\cat{S}_\Sigma]}$ is a natural transformation. 
\end{observation}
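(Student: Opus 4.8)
This $\varepsilon$ is the candidate counit of the adjunction $\Seq\dashv\mlim$, so the plan is to verify directly the two defining conditions of a natural transformation: first, that each component $\varepsilon_{\cochain{A}}$ is genuinely a morphism of $\cat{\omega}^\op$-diagrams, i.e.\ a natural transformation $\Seq(\mlim(\cochain{A}))\Rightarrow\cochain{A}$; and second, that the family $(\varepsilon_{\cochain{A}})_{\cochain{A}}$ is natural in $\cochain{A}$. The elementary fact I would isolate at the outset is that $\tup{a}\approx_i\tup{b}$ forces $a_i=b_i$: indeed $\delta_{A_\infty}(\tup{a},\tup{b})<2^{-i}$ means $a_k=b_k$ for all $k\le i$. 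This makes $\varepsilon_{\cochain{A},i}\colon[\tup{a}]_{\approx_i}\mapsto\alpha_i^\infty(\tup{a})=a_i$ well defined and exhibits it as the factorisation of the projection $\alpha_i^\infty$ through the quotient $\struc{B}_i=\struc{A}_\infty/\approx_i$.

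For the first requirement I would check that each $\varepsilon_{\cochain{A},i}$ is a homomorphism and that the family is natural over $\cat{\omega}^\op$. Preservation of operations is immediate: since $f^{\struc{B}_i}$ is computed by applying $f^{\struc{A}_\infty}$ to representatives and passing to the class, and since $\alpha_i^\infty$ is a homomorphism, both $\varepsilon_{\cochain{A},i}\circ f^{\struc{B}_i}$ and $f^{\struc{A}_i}\circ(\varepsilon_{\cochain{A},i})^n$ send $([\tup{a}^{(0)}]_{\approx_i},\dots,[\tup{a}^{(n-1)}]_{\approx_i})$ to $f^{\struc{A}_i}(a_i^{(0)},\dots,a_i^{(n-1)})$. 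Naturality over $\cat{\omega}^\op$, namely $\varepsilon_{\cochain{A},i}\circ\beta_i^j=\alpha_i^j\circ\varepsilon_{\cochain{A},j}$, follows at once from the cone identity $\alpha_i^j\circ\alpha_j^\infty=\alpha_i^\infty$, using $\beta_i^j([\tup{a}]_{\approx_j})=[\tup{a}]_{\approx_i}$.

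The only step requiring real unwinding---and the step I expect to be the main obstacle---is preservation of relations. Here I would start from $([\tup{a}^{(0)}]_{\approx_i},\dots,[\tup{a}^{(n-1)}]_{\approx_i})\in\varrho^{\struc{B}_i}$, choose representatives $\tup{b}^{(j)}\approx_i\tup{a}^{(j)}$ witnessing $\ul\varrho^{\struc{A}_\infty}(\tup{b}^{(0)},\dots,\tup{b}^{(n-1)})<2^{-i}$, and observe that the definition of $\ul\varrho^{\struc{A}_\infty}$ forces $(b_k^{(0)},\dots,b_k^{(n-1)})\in\varrho^{\struc{A}_k}$ for every $k\le i$ (whether the value is $0$ or $2^{-M}$ with $M>i$). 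Taking $k=i$ and using $b_i^{(j)}=a_i^{(j)}$ from the elementary fact above yields $(a_i^{(0)},\dots,a_i^{(n-1)})\in\varrho^{\struc{A}_i}$, which is exactly the homomorphism condition. The delicate point is purely bookkeeping: matching the existential quantifier in the definition of $\varrho^{\struc{B}_i}$ against the precise meaning of the inequality $\ul\varrho^{\struc{A}_\infty}(\cdot)<2^{-i}$.

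Finally, for naturality of $\varepsilon$ in $\cochain{A}$, given $(h_i)_{i<\omega}\colon\cochain{A}\to\cochain{A}'$ with $\cochain{A}'$ having canonical cone $(\gamma_i^\infty)_{i<\omega}$, I would use that $h=\mlim((h_i))$ acts coordinatewise, $h((x_k)_{k<\omega})=(h_k(x_k))_{k<\omega}$ (the observation proved earlier), and that $\Seq(h)$ has components $[\tup{a}]_{\approx_i}\mapsto[h(\tup{a})]_{\approx_i}$. Checking the square at level $i$ then reduces to the single computation that both composites send $[\tup{a}]_{\approx_i}$ to $h_i(a_i)$: along one side via $\varepsilon_{\cochain{A},i}$ followed by $h_i$, and along the other via $\Seq(h)$ followed by $\varepsilon_{\cochain{A}',i}$, the latter giving $\gamma_i^\infty(h(\tup{a}))=h_i(a_i)$. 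Since a natural transformation between $\cat{\omega}^\op$-diagrams is verified levelwise, this completes the argument.
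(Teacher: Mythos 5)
Your proposal is correct, and its core computation is exactly the paper's: for the naturality square in $\cochain{A}$, both composites send $[\tup{a}]_{\approx_i}$ to $h_i(a_i)$ (the paper writes $\zeta_i(x_i)$), using that $\mlim$ of a natural transformation acts coordinatewise and that $\Seq$ of the induced map sends $[\tup{a}]_{\approx_i}$ to $[h(\tup{a})]_{\approx_i}$. Where you differ is in scope: the paper's proof of this observation checks \emph{only} that outer square, taking for granted that each component $\varepsilon_{\cochain{A}}$ is itself a morphism of $[\cat{\omega}^\op,\cat{S}_\Sigma]$; that fact is established (in the stronger form that $\varepsilon_{\cochain{A}}$ is a natural embedding) only later, in Observation~\ref{isoiso}. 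Your additional verifications are all sound: well-definedness of $\varepsilon_{\cochain{A},i}$ from $\tup{a}\approx_i\tup{b}\implies a_i=b_i$, preservation of operations via the coordinatewise definition of $f^{\struc{A}_\infty}$, naturality over $\cat{\omega}^\op$ from the cone identity $\alpha_i^j\circ\alpha_j^\infty=\alpha_i^\infty$, and preservation of relations by unwinding $\ul\varrho^{\struc{A}_\infty}(\tup{b}^{(0)},\dots,\tup{b}^{(n-1)})<2^{-i}$ into $(b_i^{(0)},\dots,b_i^{(n-1)})\in\varrho^{\struc{A}_i}$. The net effect is that your write-up is self-contained where the paper distributes the burden across two observations; nothing in your argument fails, and the relation-preservation step you flag as delicate is handled correctly.
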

\begin{proof}
	Let $\cochain{K}=((\struc{K}_i)_{i<\omega},(\kappa_i^j)_{i\le j<\omega})\in[\cat{\omega}^\op,\cat{S}_\Sigma]$ with canonical structure $\mstr{K}$, and let $\cochain{L}=((\struc{L}_i,(\lambda_i^j)_{i\le j<\omega}))\coloneqq\Seq(\mstr{K})$. Let $(\zeta_i)_{i<\omega}\colon \cochain{A}\Rightarrow \cochain{K}$. Let $\zeta\coloneqq\mlim(\zeta_i)_{i<\omega}$ ($\zeta\colon\mlim\cochain{A}\to\mlim\cochain{K}$). We need to show that the following diagram commutes:
	\begin{equation}\label{epsnat}
	\begin{tikzcd}
		\cochain{B} \ar[r,Rightarrow,"\varepsilon_{\cochain{A}}"]\ar[d,Rightarrow,"\Seq(\zeta)"']& \cochain{A}\ar[d,Rightarrow,"(\zeta_i)_{i<\omega}"]\\
		\cochain{L} \ar[r,Rightarrow,"\varepsilon_{\cochain{K}}"]& \cochain{K}
	\end{tikzcd}
	\end{equation}	
	On the one hand we compute that for each $i<\omega$ we have 
	\[
	\zeta_i(\varepsilon_{\cochain{A},i}([\tup{x}]_{\approx_i})) = \zeta_i(x_i).
	\]
	On the other hand  we compute 
	\[
	\Seq(\zeta)_i([\tup{x}]_{\approx_i}) = [\zeta(\tup{x})]_{\approx_i} = [(\zeta_j(x_j))_{j<\omega}]_{\approx_i}.
	\]
	 Thus 
	\[
		\varepsilon_{\cochain{K},i}(\Seq(\zeta)_i([\tup{x}]_{\approx_i}) = \varepsilon_{\cochain{K},i}([(\zeta_j(x_j))_{j<\omega}]_{\approx_i}) = \zeta_i(x_i).
	\]
	Thus diagram \eqref{epsnat} commutes.
\end{proof}

\begin{proposition}
	The functor $\Seq$ is  left adjoint to the functor $\mlim$ with unit $\eta$ and counit $\varepsilon$.
\end{proposition}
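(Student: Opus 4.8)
The plan is to invoke the unit--counit (triangle identity) characterization of adjunctions. Since the preceding observations already establish that $\eta\colon 1_{\cat{U}_\Sigma}\Rightarrow\mlim\circ\Seq$ and $\varepsilon\colon\Seq\circ\mlim\Rightarrow\func{1}_{[\cat{\omega}^\op,\cat{S}_\Sigma]}$ are natural transformations, all that remains is to verify the two triangle identities, namely
\[
\varepsilon_{\Seq(\mstr{A})}\circ\Seq(\eta_{\mstr{A}}) = 1_{\Seq(\mstr{A})}\qquad (\mstr{A}\in\cat{U}_\Sigma)
\]
and
\[
\mlim(\varepsilon_{\cochain{A}})\circ\eta_{\mlim(\cochain{A})} = 1_{\mlim(\cochain{A})}\qquad (\cochain{A}\in[\cat{\omega}^\op,\cat{S}_\Sigma]).
\]
Each is a direct diagram chase using only the explicit formulas $\eta_{\mstr{A}}\colon x\mapsto([x]_{\approx_i})_{i<\omega}$ and $\varepsilon_{\cochain{A},i}\colon[\tup{a}]_{\approx_i}\mapsto a_i$, together with the fact (established in the observation on the action of $\mlim$ on morphisms) that $\mlim$ sends a natural transformation $(g_i)_{i<\omega}$ to the map acting componentwise as $(x_i)_{i<\omega}\mapsto(g_i(x_i))_{i<\omega}$.

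For the first identity, I would fix $\mstr{A}$, write $\cochain{A}\coloneqq\Seq(\mstr{A})$, and recall that $\Seq$ sends a metric homomorphism $h$ to the natural transformation with components $[a]_{\approx_i}\mapsto[h(a)]_{\approx_i}$. Hence $\Seq(\eta_{\mstr{A}})_i$ sends $[a]_{\approx_i}$ to $[\eta_{\mstr{A}}(a)]_{\approx_i}=[([a]_{\approx_j})_{j<\omega}]_{\approx_i}$, an element of the $i$-th level $\mlim(\cochain{A})/\mathord{\approx}_i$ of $\Seq(\mlim(\cochain{A}))$. Applying $\varepsilon_{\cochain{A},i}$, which extracts the $i$-th coordinate, returns $(\eta_{\mstr{A}}(a))_i=[a]_{\approx_i}$, so the composite is the identity on each $\struc{A}_i$, as required.

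For the second identity, I would fix $\cochain{A}$ and let $\mstr{A}\coloneqq\mlim(\cochain{A})$, with underlying limit $A_\infty$ and canonical cone $(\alpha_i^\infty)_{i<\omega}$. Then $\eta_{\mstr{A}}$ sends $\tup{a}\in A_\infty$ to $([\tup{a}]_{\approx_i})_{i<\omega}$, and since $\mlim(\varepsilon_{\cochain{A}})$ acts componentwise via $\varepsilon_{\cochain{A},i}\colon[\tup{a}]_{\approx_i}\mapsto\alpha_i^\infty(\tup{a})=a_i$, the composite sends $\tup{a}\mapsto(a_i)_{i<\omega}=\tup{a}$, again the identity. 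The only genuine care needed throughout is bookkeeping: distinguishing the equivalence classes $[\cdot]_{\approx_i}$ formed inside $\mstr{A}$ from those formed inside the reconstructed structure $\mlim(\Seq(\mstr{A}))$, and tracking which $\omega$-cochain each instance of $\varepsilon$ refers to. Once these are disentangled, both triangles collapse by direct substitution, and the unit--counit characterization of adjunctions yields $\Seq\dashv\mlim$ with unit $\eta$ and counit $\varepsilon$.
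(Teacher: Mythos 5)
Your proposal is correct and matches the paper's proof essentially verbatim: the paper also invokes the unit--counit (triangle identity) characterization of adjunctions and verifies both triangles by chasing elements through the explicit formulas $\eta_{\mstr{A}}\colon x\mapsto([x]_{\approx_i})_{i<\omega}$ and $\varepsilon_{\cochain{A},i}\colon[\tup{x}]_{\approx_i}\mapsto x_i$. No gaps.
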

\begin{proof}
	We are going to use the characterization of adjunctions from \cite[Theorem IV.1.2(v)]{Mac98}: 
	That is we show that $\eta$ and $\varepsilon$ satisfy the triangle identities:
	\begin{equation}\label{triagid} 
	\begin{tikzcd}[sep=large]
		\mlim \ar[dr,equal]\ar[r,Rightarrow,"\eta\mathop{\ast}\mlim"]& \mlim \circ \Seq\circ\mlim,\ar[d,Rightarrow,"\mlim\mathop{\ast}\varepsilon"]\\
		& \mlim   
	\end{tikzcd}\qquad\qquad
	\begin{tikzcd}[sep=large]
		\Seq \ar[r,Rightarrow,"\Seq\mathop{\ast}\eta"]\ar[dr,equal]& \Seq\circ\mlim\circ\Seq.\ar[d,Rightarrow,"\varepsilon\mathop{\ast}\Seq"]\\
		& \Seq
	\end{tikzcd}
	\end{equation}
	Here
	\begin{align*} 
	\eta\mathop{\ast}\mlim&\colon \mlim\Rightarrow \mlim\circ\Seq\circ\mlim & (\eta\mathop{\ast}\mlim)_{\cochain{A}} &= \eta_{\mlim(\cochain{A})},\\
	\mlim\mathop{\ast}\varepsilon&\colon \mlim\circ\Seq\circ\mlim\Rightarrow\mlim & (\mlim\mathop\ast\varepsilon)_{\cochain{A}} &= \mlim (\varepsilon_{\cochain{A}}),\\
	\Seq\mathop\ast\eta&\colon\Seq\Rightarrow \Seq\circ\mlim\circ\Seq & (\Seq\mathop\ast\eta)_{\mstr{A}} &= \Seq(\eta_{\mstr{A}}),\\
	\varepsilon\mathop\ast\Seq&\colon \Seq\circ\mlim\circ\Seq\Rightarrow\Seq & (\varepsilon\mathop\ast\Seq)_{\mstr{A}} &= \varepsilon_{\Seq(\mstr{A})},
\end{align*}
where	
\begin{align*}
	\eta_{\mlim(\cochain{A})}&\colon \tup{x}\mapsto ([\tup{x}]_{\approx_i})_{i<\omega},\\
	\mlim(\varepsilon_{\cochain{A}})&\colon([\tup{x}]_{\approx_i})_{i<\omega}\mapsto (\varepsilon_{\cochain{A},i}([\tup{x}]_{\approx_i}))_{i<\omega} = \tup{x},\\
	\Seq(\eta_{\mstr{A}})_i&\colon [x]_{\approx{i}}\mapsto [([x]_{\approx_j})_{j<\omega}]_{\approx_i} = \{([y]_{\approx_j})_{j<\omega}\mid [x]_{\approx_i} = [y]_{\approx_i}\},\\
	\varepsilon_{\Seq(\mstr{A}),i} &\colon [([x]_{\approx_j})_{j<\omega}]_{\approx_i}\mapsto [x]_{\approx_i}.
\end{align*}
In order to check that the left hand triangle in \eqref{triagid} commutes, let $\cochain{A}\in[\cat{\omega}^\op,\cat{S}_{\Sigma}]$, and let $\tup{x}\in\mlim(\cochain{A})$. Now we may chase $\tup{x}$ through the mentioned diagram:
\[
\begin{tikzcd}[sep = large]
	\tup{x} \ar[r,mapsto,"\eta_{\mlim(\cochain{A})}"]\ar[dr,equal]& ([\tup{x}]_{\approx_i})_{i<\omega}\ar[d,mapsto,"\mlim(\varepsilon_{\cochain{A}})"] \\
	& \tup{x}.
\end{tikzcd} 
\]
Finally, to observe the commutativity of the right hand triangle of \eqref{triagid}, let $\mstr{A}\in\cat{U}_\Sigma$ and let $x\in A$ and let $i<\omega$ be arbitrary. Again, we may chase $x$ through the right hand triangle of \eqref{triagid}:
\[
\begin{tikzcd}[sep=large]
	[x]_{\approx_i} \ar[r,mapsto,"\Seq(\eta_{\mstr{A}})_i"] \ar[dr,equal]& 
	 {[([x]_{\approx_j})_{j<\omega}]_{\approx_i}}\ar[d,mapsto,"\varepsilon_{\Seq(\mstr{A}),i}"]\\
	 & {[x]_{\approx_i}}.
\end{tikzcd}
\]
This finishes the proof.
\end{proof}

The adjunction $(\Seq,\mlim,\varepsilon,\eta)$ induces an adjoint equivalence between certain full subcategories of $[\cat{\omega}^\op,\cat{S}_\Sigma]$ and $\cat{U}_\Sigma$, respectively. On the side of $\omega$-cochains this subcategory is spanned by all those $\omega$-cochains $\cochain{A}$, for which $\varepsilon_{\cochain{A}}$ is  a natural isomorphism. On the side of ultrametric structures this subcategory is induced by all those $\mstr{A}$ for which $\eta_{\mstr{A}}$ is a metric isomorphism. The following observations make this more precise:
\begin{observations}\label{isoiso}
\begin{enumerate}[nosep]
	\item\label{epsiso} Let $\cochain{A}=((\struc{A}_i)_{i<\omega},(\alpha_i^j)_{i\le j<\omega})\in[\cat{\omega}^\op,\cat{S}_\Sigma]$. Then the natural transformation $\varepsilon_{\cochain{A}}\colon \Seq(\mlim(\cochain{A}))\Rightarrow\cochain{A}$ is a natural embedding. It is a natural isomorphism if and only if for all $i\le j<\omega$ we have that $\alpha_i^j$ is surjective. 
	
	\item\label{etaiso} Let $\mstr{A}\in\cat{U}_\Sigma$. We already saw in \ref{etabij} that $\eta_{\mstr{A}}\colon\mstr{A}\to \mlim(\Seq(\mstr{A}))$ is a bijective metric homomorphism. Now we add to this the observation that $\eta_{\mstr{A}}$ is a metric isomorphism if and only if $\mstr{A}$ is metrically isomorphic to $\mlim(\cochain{A})$ for some $\cochain{A}\in[\cat{\omega}^\op,\cat{S}_\Sigma]$. In other words, $\eta_{\mstr{A}}$ is a metric isomorphism if and only if $\mstr{A}\in\pi\class{S}_\Sigma$.
	\end{enumerate}
\end{observations}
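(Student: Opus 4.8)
The two assertions are proved separately, and I would dispatch \ref{epsiso} first, since the proof of \ref{etaiso} feeds on it through the triangle identities of the adjunction. For \ref{epsiso} the plan is to analyze the component $\varepsilon_{\cochain{A},i}\colon\struc{B}_i\to\struc{A}_i$ concretely, where $\struc{B}_i=\struc{A}_\infty/\mathord{\approx}_i$, recalling that it acts by $[\tup{a}]_{\approx_i}\mapsto\alpha^\infty_i(\tup{a})=a_i$. The first observation is that, because every $\tup{a}\in A_\infty$ is a branch, its lower coordinates are determined by $a_i$ via $a_k=\alpha^i_k(a_i)$ for $k\le i$; since $\tup{a}\approx_i\tup{b}$ means precisely that $\tup{a}$ and $\tup{b}$ agree on coordinates $0,\dots,i$, this yields at once $a_i=b_i\iff\tup{a}\approx_i\tup{b}$, so $\varepsilon_{\cochain{A},i}$ is both well-defined and injective. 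Compatibility with the basic operations is then a direct computation, as $f^{\struc{A}_\infty}$ and the induced $f^{\struc{B}_i}$ both act coordinatewise.

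The embedding (strongness) property is where the content lies, and it rests on the two-way reading of the definition of $\varrho^{\struc{B}_i}$ together with $\varrho^{\mstr{A}}=\ul\varrho^{\struc{A}_\infty}$. Unwinding the definitions, $\ul\varrho^{\struc{A}_\infty}(\tup{a}^{(0)},\dots,\tup{a}^{(n-1)})<2^{-i}$ holds iff $(a^{(0)}_k,\dots,a^{(n-1)}_k)\in\varrho^{\struc{A}_k}$ for all $k\le i$. For the forward implication I would note that membership of $([\tup{a}^{(0)}]_{\approx_i},\dots)$ in $\varrho^{\struc{B}_i}$ forces, at coordinate $i$, that $(a^{(0)}_i,\dots,a^{(n-1)}_i)\in\varrho^{\struc{A}_i}$; for the converse I would take the representatives $\tup{a}^{(j)}$ themselves and use that the bonding maps $\alpha^i_k$ are homomorphisms, hence preserve $\varrho$, to propagate $(a^{(0)}_i,\dots)\in\varrho^{\struc{A}_i}$ down to all $k\le i$, which is exactly what makes $\ul\varrho^{\struc{A}_\infty}(\tup{a}^{(0)},\dots)<2^{-i}$. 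This gives $([\tup{a}^{(0)}]_{\approx_i},\dots)\in\varrho^{\struc{B}_i}\iff(\varepsilon_{\cochain{A},i}[\tup{a}^{(0)}]_{\approx_i},\dots)\in\varrho^{\struc{A}_i}$, so $\varepsilon_{\cochain{A},i}$ is an embedding and $\varepsilon_{\cochain{A}}$ is a natural embedding. Finally, being an injective strong homomorphism, $\varepsilon_{\cochain{A},i}$ is an isomorphism iff it is onto; its image is exactly $\operatorname{im}\alpha^\infty_i$, the set of level-$i$ nodes lying on a branch. Hence all $\varepsilon_{\cochain{A},i}$ are surjective iff $T_{\cochain{A}}$ is pruned, which by the earlier characterization of pruned trees is equivalent to all $\alpha^j_i$ being surjective, giving the stated equivalence.

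For \ref{etaiso}, since \ref{etabij} already supplies that $\eta_{\mstr{A}}$ is a bijective metric homomorphism, the task reduces to deciding when it is isometric and relation-preserving with equality. The implication ``$\eta_{\mstr{A}}$ a metric isomorphism $\Rightarrow\mstr{A}\in\pi\class{S}_\Sigma$'' is immediate, since then $\mstr{A}\cong\mlim(\Seq(\mstr{A}))$ and $\Seq(\mstr{A})$ is an $\omega$-cochain. For the converse, assume $\mstr{A}$ is metrically isomorphic via $\phi$ to $\mlim(\cochain{B})$ for some cochain $\cochain{B}$; I would first settle the canonical case $\mstr{A}=\mlim(\cochain{B})$ and then transport along $\phi$ by naturality of $\eta$. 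In the canonical case, Part \ref{epsiso} tells us $\varepsilon_{\cochain{B}}$ is a natural embedding, so $\mlim(\varepsilon_{\cochain{B}})$ is a metric embedding (by the observation that $\mlim$ sends natural embeddings to metric embeddings). The left triangle identity gives $\mlim(\varepsilon_{\cochain{B}})\circ\eta_{\mlim(\cochain{B})}=\mathrm{id}_{\mlim(\cochain{B})}$; as $\eta_{\mlim(\cochain{B})}$ is a bijection, this forces $\mlim(\varepsilon_{\cochain{B}})$ to be a surjective metric embedding, i.e. a metric isomorphism, whence its two-sided inverse $\eta_{\mlim(\cochain{B})}$ is a metric isomorphism as well. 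For the general case, the naturality square of $\eta$ at $\phi$ exhibits $\eta_{\mstr{A}}=\mlim(\Seq(\phi))^{-1}\circ\eta_{\mlim(\cochain{B})}\circ\phi$, a composite of metric isomorphisms (functors preserve isomorphisms), hence a metric isomorphism.

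I expect the main obstacle to be the strongness half of Part \ref{epsiso}: one must carefully reconcile the ``there exist representatives'' quantifier in the definition of $\varrho^{\struc{B}_i}$ with the coordinatewise description of $\ul\varrho^{\struc{A}_\infty}$, and the decisive point that makes it work is that the bonding maps, being homomorphisms, let one reduce membership in $\varrho$ at level $i$ to membership at all lower levels. Everything in Part \ref{etaiso} is then essentially formal once the canonical reduction is performed, the only care needed being to pass to the canonical cochain before invoking the triangle identity.
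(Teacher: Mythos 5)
Your argument is correct; the two halves sit differently relative to the paper. For part \eqref{epsiso} you essentially reproduce the paper's proof: the same concrete analysis of $\varepsilon_{\cochain{A},i}\colon[\tup{x}]_{\approx_i}\mapsto x_i$, injectivity from the fact that $x_i=y_i$ forces $\tup{x}\approx_i\tup{y}$, and the embedding property from matching the existential quantifier in the definition of the quotient relation against $\ul\varrho^{\struc{A}_\infty}<2^{-i}$ (you are right that the decisive point is pushing membership in $\varrho^{\struc{A}_i}$ down through the bonding homomorphisms; the paper leaves this implicit, and it also only writes out one direction of the biconditional, which you treat more carefully). The only divergence is the surjectivity equivalence: you route it through the image of $\alpha^\infty_i$ and the pruned-tree characterization (an earlier observation the paper states without proof), whereas the paper redoes the explicit dependent-choice lifting of $x_i$ to a branch; these are the same argument in different clothing. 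For part \eqref{etaiso} your route is genuinely different. The paper verifies directly, by a coordinatewise computation with chains of equivalences, that $\eta_{\mstr{A}_\infty}$ is an isometry preserving each $\varrho$ with equality. You instead deduce it formally: $\varepsilon_{\cochain{B}}$ is a natural embedding by part \eqref{epsiso}, $\mlim$ sends natural embeddings to metric embeddings, and the left triangle identity $\mlim(\varepsilon_{\cochain{B}})\circ\eta_{\mlim(\cochain{B})}=1$ together with bijectivity of $\eta$ forces $\mlim(\varepsilon_{\cochain{B}})$ to be a bijective metric embedding, hence a metric isomorphism with inverse $\eta_{\mlim(\cochain{B})}$; the general case follows by transporting along the naturality square of $\eta$, which makes precise the paper's ``without loss of generality''. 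Your version buys economy and reuses the adjunction machinery already in place (the triangle identities are established before this observation, so there is no circularity); the paper's computation buys an explicit description of how $\eta$ interacts with $\delta$ and with $\ul\varrho$, which is occasionally useful later. Both are complete proofs.
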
 
\begin{proof}
\textbf{about \ref{epsiso}:} Let $\mstr{A}_\infty=\mlim(\cochain{A})$, and let $\Seq(\mstr{A})=((\tilde{\struc{A}}_i)_{i<\omega},(\tilde\alpha_i^j)_{i\le j<\omega})$. Then $\varepsilon_{\cochain{A},i}\colon\tilde{\struc{A}}_i\to\struc{A}_i$, where $\varepsilon_{\cochain{A},i}\colon [\tup{x}]_{\approx_i}\mapsto x_i$. Let $\tup{y}\in A_\infty$, such that $x_i=y_i$. Then $\delta_{A_\infty}(\tup{x},\tup{y})<2^{-i}$. Hence $\tup{x}\approx_i\tup{y}$. This shows that $\varepsilon_{\cochain{A},i}$ is injective. 

Let $\varrho\in \Rho$, say, of arity $n$. Let $\tup{x}^{(0)},\dots,\tup{x}^{(n-1)}\in A_\infty$, such that  $(x_i^{(0)},\dots,x_i^{(n-1)})\in\varrho^{\struc{A}_i}$. Then 
\[
\varrho^{\mstr{A}}(\tup{x}^{(0)},\dots,\tup{x}^{(n-1)}) = \ul\varrho^{\struc{A}_\infty}(\tup{x}^{(0)},\dots,\tup{x}^{(n-1)}) < 2^{-i}.
\]
Hence, by the definition,
\[
([\tup{x}^{(0)}]_{\approx_i},\dots,[\tup{x}^{(n-1)}]_{\approx_i})\in\varrho^{\tilde{\struc{A}}_i}.
\]
Consequently, $\varepsilon_{\cochain{A},i}$ is an embedding.

Note that by \ref{seqepi} we have that $\tilde\alpha_i^j$ is surjective, for each $i\le j<\omega$. Thus, if $\varepsilon_{\cochain{A}}$ is a natural isomorphism, then it follows that $\alpha_i^j$ is surjective, for all $i\le j<\omega$. 

Suppose now that for all $i\le j<\omega$ we have that $\alpha_i^j$ is surjective. We need to show that $\varepsilon_{\cochain{A},i}$ is surjective. So let $x_i\in A_i$. For all $k<i$ we define $x_k\coloneqq\alpha^i_k(x_i)$ and for all $k\ge i$ the $x_k$ are defined inductively: If $x_k$ is defined for some $k\ge i$ then we choose  $x_{k+1}\in A_{k+1}$ in such a way that $\alpha^{k+1}_k(x_{k+1})=x_k$. Define $\tup{x}\coloneqq(x_k)_{k<\omega}$. Then, by construction we have that $\tup{x}\in A_\infty$. Moreover we have that $\varepsilon_{\cochain{A},i}([\tup{x}]_{\approx_i}) = x_i$, as desired. Thus $\varepsilon_{\cochain{A}}$ is indeed a natural isomorphism. 

\textbf{about \ref{etaiso}:} Note that $\mlim(\Seq(\mstr{A}))\in\pi\class{S}_\Sigma$. If $\eta_{\mstr{A}}$ is a metric  isomorphism, then also $\mstr{A}\in\pi\class{S}_\Sigma$. 

So suppose in the following that $\mstr{A}\in\pi\class{S}_\Sigma$. Without loss of generality we may assume that $\mstr{A}=\mstr{A}_\infty=\mlim(\cochain{A})$, for some $\cochain{A}\in[\cat{\omega}^\op,\cat{S}_\Sigma]$. 
	Then $\eta_{\mstr{A}}\colon\tup{x}\mapsto ([\tup{x}]_{\approx_i})_{i<\omega}$. 
	
	Denote 	$\Seq(\mstr{A})=((\tilde{\struc{A}}_i)_{i<\omega},(\tilde\alpha_i^j)_{i\le j<\omega})$.  Let $\widetilde{\mstr{A}}_\infty\coloneqq\mlim(\Seq(\mstr{A}))$. Let $(\tilde\alpha_i^\infty)_{i<\omega}$ be the canonical cone for $\tilde{\struc{A}}_\infty= U(\widetilde{\mstr{A}}_\infty)$.  
	
	First we show that $\eta_{\mstr{A}}\colon (A_\infty,\delta_{\mstr{A}_\infty})\to(\tilde{A}_\infty,\delta_{\widetilde{\mstr{A}}})$ is an isometry: Let $\tup{a},\tup{b}\in A_\infty$ ($\tup{a}\neq\tup{b}$).  Let $i<\omega$. Then 
	\[
		\alpha_i^\infty(\tup{a}) = \alpha_i^\infty(\tup{b}) \iff a_i = b_i \iff [\tup{a}]_{\approx_i} = [\tup{b}]_{\approx_i}
	\]
	and
	\[\tilde\alpha_i^\infty(\eta_{\mstr{A}}(\tup{a})) = \tilde\alpha_i^\infty(\eta_{\mstr{A}}(\tup{b})) \iff  \tilde\alpha^\infty_i(([\tup{a}]_{\approx_j})_{j<\omega}) = \tilde\alpha^\infty_i(([\tup{b}]_{\approx_j})_{j<\omega})\iff [\tup{a}]_{\approx_i} = [\tup{b}]_{\approx_i}.
	\]
	Thus, $\delta_{\mstr{A}_\infty}(\tup{a},\tup{b}) = \delta_{\widetilde{\mstr{A}}_\infty}(\eta_{\mstr{A}}(\tup{a}),\eta_{\mstr{A}}(\tup{b}))$. 

	Similarly, for $\varrho\in\Rho$ of arity $n$, for $\tup{x}^{(0)},\dots,\tup{x}^{(n-1)}\in A_\infty$, and for $i<\omega$
	we argue
	\begin{multline*}
	(\alpha^\infty_i(\tup{x}^{(0)}),\dots,\alpha^\infty_i(\tup{x}^{(n-1)}))\in\varrho^{\struc{A}_i}\iff (x_i^{(0)},\dots,x_i^{(n-1)})\in\varrho^{\struc{A}_i}\\\iff \ul\varrho^{\struc{A}_\infty}(\tup{x}^{(0)},\dots,\tup{x}^{(n-1)})< 2^{-i}.
	\end{multline*}
	On the other hand we have 
	\begin{multline*}
	\varrho^{\widetilde{\mstr{A}}_\infty}(\eta_{\mstr{A}}(\tup{x}^{(0)}),\dots,\eta_{\mstr{A}}(\tup{x}^{(n-1)}))<2^{-i}   \iff \varrho^{\widetilde{\mstr{A}}_\infty} (([\tup{x}^{(0)}]_{\approx_j})_{j<\omega},\dots, ([\tup{x}^{(n-1)}]_{\approx_j})_{j<\omega}) < 2^{-i} \\\iff ([\tup{x}^{(0)}]_{\approx_i},\dots,[\tup{x}^{(n-1)}]_{\approx_i})\in\varrho^{\tilde{\struc{A}}_i}\\
	\iff \exists\tup{y}^{(0)}\in[\tup{x}^{(0)}]_{\approx_i},\dots,\tup{y}^{(n-1)}\in[\tup{x}^{(n-1)}]_{\approx_i}\,:\,\varrho^{\mstr{A}_\infty}(\tup{y}^{(0)},\dots,\tup{y}^{(n-1)})<2^{-i}\\
	\iff \exists\tup{y}^{(0)}\in[\tup{x}^{(0)}]_{\approx_i},\dots,\tup{y}^{(n-1)}\in[\tup{x}^{(n-1)}]_{\approx_i}\,:\,(y^{(0)}_i,\dots,y^{(n-1)}_i)\in\varrho^{\struc{A}_i}
	\end{multline*}
	Since for all $\tup{y}^{(0)}\in[\tup{x}^{(0)}]_{\approx_i},\dots,\tup{y}^{(n-1)}\in[\tup{x}^{(n-1)}]_{\approx_i}$ we have that $(y^{(0)}_i,\dots,y^{(n-1)}_i)$ is equal to $(x^{(0)}_i,\dots,x^{(n-1)}_i)$, the two chains of equivalences result in  
	\[
	\varrho^{\widetilde{\mstr{A}}_\infty}(\eta_{\mstr{A}}(\tup{x}^{(0)}),\dots,\eta_{\mstr{A}}(\tup{x}^{(n-1)})) = \ul\varrho^{\struc{A}_\infty}(\tup{x}^{(0)},\dots,\tup{x}^{(n-1)})\quad(=\varrho^{\mstr{A}_\infty}(\tup{x}^{(0)},\dots,\tup{x}^{(n-1)})).\qedhere
	\]
\end{proof}
In the following, for every full subcategory $\cat{C}$ of $\cat{S}_\Sigma$, by $[[\cat{\omega}^\op,\cat{C}]]$ we denote the full subcategory of $[\cat{\omega}^\op,\cat{C}]$ that is induced by all such $\omega$-cochains $\cochain{A}=((\struc{A}_i)_{i<\omega},(\alpha_i^j)_{i\le j<\omega})$ for which all $\struc{A}_i$ are from $\cat{C}$ and all $\alpha_i^j$ are surjective (where $i\le j<\omega$). Moreover, by $\cat{\pi C}$ we will denote the full subcategory of $\cat{U}_\Sigma$ that is induced by $\pi\class{C}$. Observation~\ref{isoiso} implies that $[[\cat{\omega}^\op,\cat{S}_\Sigma]]$ and $\cat{\pi S}_\Sigma$ are  equivalent categories (the adjoint equivalence is given by the restrictions of $\mlim$ and $\Seq$ to $[[\cat{\omega}^\op,\cat{S}_\Sigma]]$ and $\cat{\pi S}_\Sigma$). 

\subsection{The case of hereditary classes}
If we restrict our attention to such full subcategories $\cat{C}$ of $\cat{S}_\Sigma$ for which $\class{C}$ is a hereditary class (i.e., it consists of finitely generated $\Sigma$-structures and it has the \HP), then the mathematics of the operators $\sigma$ and $\pi$ becomes smoother. We already noted that for a hereditary class $\class{C}\subseteq\class{S}_\Sigma$ we have that $\sigma\class{C}$ consists of all countably generated $\Sigma$-structures whose age is contained in $\class{C}$. Concerning the operator $\pi$, in this case we can get more out of the  adjoint equivalence constructed in the previous section:
\begin{proposition}\label{CEquiv}
	Let $\class{C}\subseteq\class{S}_\Sigma$, such that $\class{C}$ has the hereditary property in $\cat{S}_\Sigma$. Then $[[\cat{\omega}^\op,\cat{C}]]$ and $\cat{\pi C}$ are equivalent categories. An adjoint equivalence is given by the appropriate restrictions of $\mlim$ and $\Seq$. 
\end{proposition}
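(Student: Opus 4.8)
The plan is to bootstrap from the adjoint equivalence between $[[\cat{\omega}^\op,\cat{S}_\Sigma]]$ and $\cat{\pi S}_\Sigma$ already established in the paragraph following Observation~\ref{isoiso}, and to show that it restricts to the full subcategories cut out by $\class{C}$. Since $[[\cat{\omega}^\op,\cat{C}]]$ and $\cat{\pi C}$ are full subcategories of $[[\cat{\omega}^\op,\cat{S}_\Sigma]]$ and $\cat{\pi S}_\Sigma$ respectively, and since $\mlim$, $\Seq$, $\eta$, $\varepsilon$ are already available on the ambient categories, the only thing that will require proof is that the two functors carry these $\class{C}$-level subcategories into one another: that $\mlim$ maps $[[\cat{\omega}^\op,\cat{C}]]$ into $\cat{\pi C}$, and that $\Seq$ maps $\cat{\pi C}$ into $[[\cat{\omega}^\op,\cat{C}]]$. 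Once both corestrictions are verified, the restricted unit and counit are obtained simply by restricting $\eta$ and $\varepsilon$; they remain natural isomorphisms (being so already on the larger subcategories, by Observation~\ref{isoiso}) and continue to satisfy the triangle identities, so the restrictions of $\mlim$ and $\Seq$ automatically form an adjoint equivalence.

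The first corestriction is immediate from the definitions. If $\cochain{A}=((\struc{A}_i)_{i<\omega},(\alpha_i^j)_{i\le j<\omega})\in[[\cat{\omega}^\op,\cat{C}]]$, then $\cochain{A}$ is in particular an $\omega$-cochain over $\class{C}$, so its canonical metric structure $\mlim(\cochain{A})$ lies in $\pi\class{C}$ by the very definition of $\pi\class{C}$; hence $\mlim(\cochain{A})\in\cat{\pi C}$.

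The main point is the second corestriction, and this is where the hereditary hypothesis enters. Let $\mstr{A}\in\cat{\pi C}$. Since membership in $[[\cat{\omega}^\op,\cat{C}]]$ is invariant under natural isomorphism of $\omega$-cochains, and $\Seq$ carries metric isomorphisms to natural isomorphisms, I may assume without loss of generality that $\mstr{A}=\mlim(\cochain{B})$ for some $\omega$-cochain $\cochain{B}=((\struc{B}_i)_{i<\omega},(\beta_i^j)_{i\le j<\omega})$ with all $\struc{B}_i\in\class{C}$. Writing $\Seq(\mstr{A})=((\struc{A}_i)_{i<\omega},(\alpha_i^j)_{i\le j<\omega})$, so that $\struc{A}_i=A/\mathord{\approx}_i$, I would invoke Observation~\ref{isoiso}\,(\ref{epsiso}): the counit $\varepsilon_{\cochain{B}}\colon\Seq(\mlim(\cochain{B}))\Rightarrow\cochain{B}$ is a natural \emph{embedding}, so each component $\varepsilon_{\cochain{B},i}\colon\struc{A}_i\injto\struc{B}_i$ exhibits $\struc{A}_i$ as a substructure of a member of $\class{C}$. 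Because $\class{C}$ has the \HP, it follows that $\struc{A}_i\in\class{C}$ for every $i<\omega$; and by Observation~\ref{seqepi} each bonding map $\alpha_i^j$ of $\Seq(\mstr{A})$ is surjective. Hence $\Seq(\mstr{A})\in[[\cat{\omega}^\op,\cat{C}]]$, as required.

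The crux, and the only place where heredity is used, is precisely this last step: the counit of Observation~\ref{isoiso}\,(\ref{epsiso}) presents the levels of $\Seq(\mstr{A})$ as substructures of the $\struc{B}_i$, and the \HP then pulls them back into $\class{C}$. I expect no genuine obstacle beyond isolating this observation, since everything else is inherited verbatim from the unrestricted adjunction---the naturality of $\eta$ and $\varepsilon$, the triangle identities, and the fact that on $[[\cat{\omega}^\op,\cat{S}_\Sigma]]$ and $\cat{\pi S}_\Sigma$ both $\eta$ and $\varepsilon$ are isomorphisms. In particular no separate check that $\mlim$ and $\Seq$ are mutually quasi-inverse on the restricted categories is needed; functoriality together with the restricted natural isomorphisms $\eta$ and $\varepsilon$ delivers the adjoint equivalence directly.
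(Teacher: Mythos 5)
Your proposal is correct and follows essentially the same route as the paper: both reduce the claim to the two corestriction statements, observe that the $\mlim$ direction is trivial, and for the $\Seq$ direction combine Observation~\ref{isoiso}(\ref{epsiso}) (the counit is a natural embedding into a cochain with levels in $\class{C}$) with the \HP and Observation~\ref{seqepi}. Your explicit handling of the without-loss-of-generality reduction and of why the restricted $\eta$ and $\varepsilon$ still give an adjoint equivalence is slightly more detailed than the paper's, but it is the same argument.
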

\begin{proof}
	We only need to show that the image of $\mlim\!\restr_{\cat{C}}$ lies in $\cat{\pi C}$, and that the image of $\Seq\!\restr_{\cat{\pi C}}$ lies within $[[\cat{\omega}^\op,\cat{C}]]$. The former holds trivially. So let $\mstr{A}\in\pi\class{C}$. Without loss of generality, $\mstr{A}=\mlim(\cochain{A})$, for some $\cochain{A}\in[\cat{\omega}^\op,\cat{C}]$. By Observation~\ref{isoiso}(\ref{epsiso}), we have that $\varepsilon_{\cochain{A}}\colon\Seq(\mlim(\cochain{A}))\Rightarrow\cochain{A}$ is a natural embedding. Since $\class{C}$ has the \HP in $\cat{S}_\Sigma$, it follows with Observation~\ref{seqepi} that $\Seq(\mlim(\cochain{A}))=\Seq(\mstr{A})$ is in $[[\cat{\omega}^\op,\cat{C}]]$. 
\end{proof}
So far we considered the hereditary property only for classes of model-theoretic structures. It is natural to extend this concept to metric structures and to examine under which conditions a class $\pi\class{C}$ has the generalized hereditary property. 
\begin{definition}
	Let $\class{D}\subseteq \class{M}$ be a classes of metric $\Sigma$-structures. We say that $\class{D}$ has the hereditary property (\HP) in $\class{M}$ if whenever $\mstr{B}\in\class{D}$ and $\iota\colon\mstr{A}\injto\mstr{B}$ is a metric embedding of $\mstr{A}\in\class{M}$ into $\mstr{B}$, then we have that $\mstr{A}$ is also in $\class{D}$.   
\end{definition}

\begin{observation}\label{hereditarypi}
	Let $\class{C}\subseteq\class{S}_\Sigma$ have the  hereditary property in $\class{S}_\Sigma$. Then  $\pi\class{C}$ has the hereditary property in $\class{U}_\Sigma$. 
\end{observation}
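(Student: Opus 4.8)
The plan is to exhibit $\mstr{A}$ itself as (metrically isomorphic to) the canonical metric structure of an $\omega$-cochain over $\class{C}$, and the obvious candidate cochain is $\Seq(\mstr{A})$. Concretely, given $\mstr{B}\in\pi\class{C}$ and a metric embedding $\iota\colon\mstr{A}\injto\mstr{B}$ with $\mstr{A}\in\class{U}_\Sigma$, I would establish two things: first, that $\Seq(\mstr{A})\in[[\cat{\omega}^\op,\cat{C}]]$, so that $\mlim(\Seq(\mstr{A}))\in\pi\class{C}$; and second, that the unit $\eta_{\mstr{A}}\colon\mstr{A}\to\mlim(\Seq(\mstr{A}))$ is a metric \emph{isomorphism}. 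Together these yield $\mstr{A}\in\pi\class{C}$.

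For the first point, since $\mstr{B}\in\pi\class{C}$, Proposition~\ref{CEquiv} gives $\Seq(\mstr{B})\in[[\cat{\omega}^\op,\cat{C}]]$, so each of its levels $\struc{B}_i$ belongs to $\class{C}$. Applying $\Seq$ to the metric embedding $\iota$, Observations~\ref{seqpresemb} yield that $\Seq(\iota)=(\iota_i)_{i<\omega}$ is a natural embedding; in particular each $\iota_i\colon\struc{A}_i\injto\struc{B}_i$ is an embedding of $\Sigma$-structures. Since $\class{C}$ has the hereditary property in $\class{S}_\Sigma$ and $\struc{B}_i\in\class{C}$, we conclude $\struc{A}_i\in\class{C}$ for every $i$. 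As the bonding maps of $\Seq(\mstr{A})$ are automatically surjective (Observation~\ref{seqepi}), this shows $\Seq(\mstr{A})\in[[\cat{\omega}^\op,\cat{C}]]$, whence $\mlim(\Seq(\mstr{A}))\in\pi\class{C}$.

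For the second point, recall from Observation~\ref{etabij} that $\eta_{\mstr{A}}$ is already a bijective metric homomorphism, so the only thing to verify is that it is isometric and preserves every relation value with equality. Here I would exploit the naturality of $\eta$, that is, the identity $\eta_{\mstr{B}}\circ\iota=\mlim(\Seq(\iota))\circ\eta_{\mstr{A}}$. In this square $\eta_{\mstr{B}}$ is a metric isomorphism (by Observation~\ref{isoiso}(\ref{etaiso}), since $\mstr{B}\in\pi\class{C}\subseteq\pi\class{S}_\Sigma$), $\iota$ is a metric embedding by hypothesis, and $\mlim(\Seq(\iota))$ is a metric embedding because $\Seq(\iota)$ is a natural embedding and $\mlim$ carries natural embeddings to metric embeddings. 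Chasing a distance $\delta_{\mstr{A}}(x,y)$ and a relation value $\varrho^{\mstr{A}}(\bar{x})$ around the square, all three of these maps act isometrically and preserve relation values with equality, forcing $\eta_{\mstr{A}}$ to do the same; being bijective, $\eta_{\mstr{A}}$ is then a metric isomorphism.

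The only genuinely delicate step is the second one, since a bijective metric homomorphism need not be an isomorphism in general---the map $\eta_{\mstr{A}}$ rounds each distance up to the nearest power of $2^{-1}$ and could a priori enlarge relation values as well. What rescues us is precisely that $\mstr{A}$ embeds metrically into $\mstr{B}$, whose distances and relation values already lie in $\{0\}\cup\{2^{-k}\mid k<\omega\}$; the naturality argument packages this observation without any explicit computation. If one prefers, the same conclusion can be reached by checking directly that $\delta_{\mstr{A}}$ and each $\varrho^{\mstr{A}}$ take values in $\{0\}\cup\{2^{-k}\mid k<\omega\}$ (inherited from $\mstr{B}$ through the isometric, relation-preserving embedding $\iota$), which by Observation~\ref{isoiso}(\ref{etaiso}) is exactly the condition guaranteeing that $\eta_{\mstr{A}}$ is a metric isomorphism.
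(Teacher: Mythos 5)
Your main argument is correct and is essentially the paper's own proof: the same two steps (use Proposition~\ref{CEquiv}, Observation~\ref{seqpresemb} and the \HP of $\class{C}$ to get $\Seq(\mstr{A})\in[[\cat{\omega}^\op,\cat{C}]]$, then chase the naturality square $\eta_{\mstr{B}}\circ\iota=\mlim(\Seq(\iota))\circ\eta_{\mstr{A}}$ to upgrade the bijective metric homomorphism $\eta_{\mstr{A}}$ to a metric isomorphism), with only a cosmetic difference in how the square is exploited (you cancel the metric embedding $\mlim(\Seq(\iota))$ on the left, the paper argues via $1$-Lipschitzness of both factors).

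However, the ``if one prefers'' alternative in your last sentence is wrong, and it is worth seeing why. Having $\delta_{\mstr{A}}$ and all $\varrho^{\mstr{A}}$ take values in $\{0\}\cup\{2^{-k}\mid k<\omega\}$ is necessary but \emph{not} sufficient for $\eta_{\mstr{A}}$ to be a metric isomorphism, and it is not what Observation~\ref{isoiso}(2) asserts (that observation characterizes the condition as $\mstr{A}\in\pi\class{S}_\Sigma$). Counterexample: one unary relation symbol $\varrho$, carrier $\{x,a\}$, $\delta(x,a)=2^{-1}$, $\varrho^{\mstr{A}}(x)=1$, $\varrho^{\mstr{A}}(a)=2^{-1}$. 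This is a legitimate ultrametric structure (the $1$-Lipschitz condition holds with equality) with all values powers of $2$, yet $a\in[x]_{\approx_0}$ and $\varrho^{\mstr{A}}(a)<1$ force $[x]_{\approx_0}\in\varrho^{\struc{A}_0}$, so $\varrho^{\mlim(\Seq(\mstr{A}))}(\eta_{\mstr{A}}(x))\le 2^{-1}<1=\varrho^{\mstr{A}}(x)$, and $\eta_{\mstr{A}}$ is not a metric isomorphism. (Correspondingly, this $\mstr{A}$ admits no metric embedding into any member of $\pi\class{S}_\Sigma$: in a canonical metric structure, $\ul\varrho(\ttup{x})=2^{-M}$ forces $\ul\varrho(\ttup{y})\ge 2^{-M}$ for every $\ttup{y}$ with $\delta(\ttup{x},\ttup{y})<2^{-M}$, which the pair $x,a$ violates.) So the quantization of values is genuinely weaker than membership in $\pi\class{S}_\Sigma$; the naturality argument, which transports the full structure of $\mstr{B}$ rather than just the value set, is the one that works. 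Since you flag this only as an optional variant, the proof as a whole stands, but that variant should be deleted.
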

\begin{proof}
	  Let $\mstr{B}\in\pi\class{C}$, let $\mstr{A}\in\class{U}_\Sigma$, and let $\iota\colon\mstr{A}\injto\mstr{B}$ be a metric embedding. Since $\class{C}$ has the \HP in $\class{S}_\Sigma$, it follows from Proposition~\ref{CEquiv} that $\Seq(\mstr{B})\in[[\cat{\omega}^\op,\cat{C}]]$. By Observation~\ref{seqpresemb}, $\Seq(\iota)\colon\Seq(\mstr{A})\to\Seq(\mstr{B})$ is a natural embedding. Again, since $\class{C}$ has the \HP in $\class{S}_\Sigma$, it follows that $\Seq(\mstr{A})\in[[\cat{\omega}^\op,\cat{C}]]$. Consequently, $\mlim(\Seq(\mstr{A}))\in\pi\class{C}$. 
	
	Since $\eta$ is a natural transformation, the following diagram commutes:
	\[
	\begin{tikzcd}[sep=large]
		\mstr{A} \ar[r,hook,"\iota"]\ar[d,"\eta_{\mstr{A}}"']& \mstr{B}\ar[d,"\eta_{\mstr{B}}"] \\
		\mlim(\Seq(\mstr{A})) \ar[r,"\mlim(\Seq(\iota))"]& \mlim(\Seq(\mstr{B}))
	\end{tikzcd}  
	\]
	By Observation~\ref{isoiso}(\ref{etaiso}), $\eta_{\mstr{B}}$ is a metric isomorphism. It follows that $\mlim(\Seq(\iota))\circ\eta_{\mstr{A}}$ is a metric embedding, too. Since both, $\eta_{\mstr{A}}$ and $\mlim(\Seq(\iota))$ are $1$-Lipschitz and since $\eta_{\mstr{A}}$ is bijective (see Observation~\ref{etabij}), it follows that both are actually isometries. Analogously, since $\eta_{\mstr{B}}\circ\iota$ preserves all basic relations, so do $\eta_{\mstr{A}}$ and $\mlim(\Seq(\iota))$. It follows that $\eta_A$ and $\mlim(\Seq(\iota))$ are both metric embeddings. Hence $\eta_{\mstr{A}}$ is a metric isomorphism.  This implies that $\mstr{A}\in\pi\class{C}$. Thus, $\pi\class{C}$ has the \HP in $\class{U}_\Sigma$. 
\end{proof}

\section{Ultrametric homogeneity and universality}

This section contains our main results, starting from a discussion of smallness in the new setting. The key ingredient for obtaining ultrametric \Fraisse limits is the existence of a universal skew-homogeneous homomorphism.

\subsection{Notions of smallness for ultrametric structures}
In model theory, a structure is considered to be \emph{small} if the cardinality of its carrier set is bounded from above by some (usually unspecified) cardinal. In \Fraisse theory it is customary to call a relational structure small if it is finite and, more generally, to call a structure small if it is finitely generated. In the case of metric structures, given that they are equipped with a metric, we see  at least four possible notions of smallness. A metric structure  $\mstr{A}$ might be called \emph{small} if one of the following conditions holds:
\begin{enumerate}
	\item $\mstr{A}$ is finite,
	\item $\mstr{A}$ is finitely generated,
	\item $\mstr{A}$ is compact,
	\item $\mstr{A}$ is compactly generated.
\end{enumerate}
Here   compactness is to be understood with respect to the topology induced on $A$ by the metric of $\mstr{A}$.
\begin{remark}
 Recall that a subset $A$ of a metric space $(M,\delta)$  is called \emph{totally bounded} if for every $\varepsilon> 0$ we have that $A$ can be covered by finitely many open $\varepsilon$-balls. $A$ is called \emph{Cauchy-precompact} if every sequence in $A$ admits a Cauchy subsequence. It is well-known that $A$ is totally bounded if and only if it is Cauchy-precompact. Moreover, $A$ is compact if and only if it is totally bounded and Cauchy-complete. 
\end{remark}
\begin{observation}\label{precomp}
	Let $\mstr{A}\in\class{U}_\Sigma$ be an ultrametric structure. A subset $M\subseteq A$ is Cauchy-precompact if and only if for each $i<\omega$ we have that $M/\approx_i$ is finite. 
\end{observation}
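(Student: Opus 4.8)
The plan is to reduce everything to total boundedness via the Remark, which records that for a subset of a metric space Cauchy-precompactness and total boundedness coincide. Thus it suffices to show that $M\subseteq A$ is totally bounded if and only if $M/\approx_i$ is finite for every $i<\omega$. The structural fact I would establish first, and on which both directions rest, is that in the ultrametric space $(A,\delta_{\mstr{A}})$ the $\approx_i$-classes are exactly the open balls of radius $2^{-i}$. Indeed, $[x]_{\approx_i}=\{y\in A\mid \delta_{\mstr{A}}(x,y)<2^{-i}\}$ is by definition such a ball, and conversely any two points $y,z$ of an open ball of radius $2^{-i}$ centered at $x$ satisfy $\delta_{\mstr{A}}(y,z)\le\max(\delta_{\mstr{A}}(y,x),\delta_{\mstr{A}}(x,z))<2^{-i}$ by the ultrametric inequality, so the whole ball lies in a single $\approx_i$-class.

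For the implication from total boundedness to finiteness of the quotients, I would fix $i<\omega$ and apply the definition of total boundedness with $\varepsilon=2^{-i}$, obtaining finitely many open balls of radius $2^{-i}$ that cover $M$. Since each such ball is contained in one $\approx_i$-class, the set $M$ meets only finitely many $\approx_i$-classes, which is exactly the statement that $M/\approx_i$ is finite. For the converse, given $\varepsilon>0$ I would choose $i$ with $2^{-i}\le\varepsilon$; by hypothesis $M$ meets only finitely many $\approx_i$-classes, and since each such class is itself an open ball of radius $2^{-i}\le\varepsilon$, selecting a representative in each gives a finite cover of $M$ by open $\varepsilon$-balls, so $M$ is totally bounded. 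Here I use that the radii $2^{-i}$ are cofinal towards $0$, so controlling these radii controls all $\varepsilon$.

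The only delicate point—and hence the main (though mild) obstacle—is the identification of $\approx_i$-classes with open balls of radius $2^{-i}$, which genuinely uses the ultrametric inequality rather than merely the triangle inequality: it is precisely this that makes each ball an entire equivalence class, so that a cover by $2^{-i}$-balls translates directly into a bound on the number of $\approx_i$-classes, and conversely. Once this identification is in place, both implications are immediate, and the proof concludes by invoking the Remark to pass back from total boundedness to Cauchy-precompactness.
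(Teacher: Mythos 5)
Your proof is correct, but it takes a different route from the paper's. The key observation you isolate --- that in an ultrametric space the $\approx_i$-classes are precisely the open balls of radius $2^{-i}$, so that a cover by $2^{-i}$-balls is the same thing as a bound on the number of classes met by $M$ --- is sound, and once you have it, both implications do reduce to the equivalence of total boundedness and Cauchy-precompactness recorded in the Remark. The paper instead argues directly with sequences: for the forward direction it takes the contrapositive, picking representatives of infinitely many distinct $\approx_i$-classes to get a sequence with pairwise distances $\ge 2^{-i}$ and hence no Cauchy subsequence; for the converse it builds the tree of classes $[a]_{\approx_i}$ containing infinitely many terms of a given sequence, applies \Konig's tree lemma to find a branch, and extracts a Cauchy subsequence along that branch. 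Your version is shorter and more conceptual, but it delegates the combinatorial content (essentially the same diagonal/pigeonhole argument) to the unproved ``well-known'' equivalence in the Remark, whereas the paper's proof is self-contained and makes explicit the tree-of-classes picture that the rest of the paper leans on. Both are legitimate; just be aware that if the Remark were not available you would have to supply the \Konig-style extraction yourself.
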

\begin{proof}
	``$\Rightarrow$'': Suppose that for some $i<\omega$ the set $M/\approx_i$ is infinite. Let $([a_j]_{\approx_i})_{j<\omega}$ be a sequence of distinct elements of $M/\approx_i$. Then for all $j_1< j_2<\omega$ we have that $\delta_{\mstr{A}}(a_{j_1},a_{j_2})\ge 2^{-i}$. Thus, the sequence $(a_j)_{j<\omega}$ in $M$ has no Cauchy-subsequence. In other words, $M$ is not Cauchy-precompact. 
	
	``$\Leftarrow$'': Let $(a_j)_{j<\omega}$ be a sequence in $M$. For each $i<\omega$ let $L_i\coloneqq\{[a_j]_{\approx_i}\mid j<\omega\}$, and $N_i\coloneqq\{[a]_{\approx_i}\in L_i\mid\{j<\omega\mid [a_j]_{\approx_i}=[a]_{\approx_i}\}\text{ is infinite}\}$. Since $M/\approx_i$ is finite, $N_i$ is non-empty. Let $N\coloneqq\bigcup_{i<\omega} N_i$.  Then $(N,\supseteq)$ is a tree. Since all $N_i$ are finite and non-empty, by \Konig's tree-lemma, $N$ has a branch $([b_i]_{\approx_i})_{i<\omega}$. Next, by induction,  we define a sequence $j_0<j_1<\dots<\omega$, such that for all $i<\omega$ we have that $[a_{j_i}]_{\approx_i}=[b_i]_{\approx_i}$. We start by defining $j_0\coloneqq 0$. Note that $[b_0]_{\approx_0}= [a_0]_{\approx_0}=M$.  Suppose that $j_0,\dots,j_n$ are already fixed. Take $j_{n+1}\in J_{n+1}\coloneqq\{k<\omega\mid [a_k]_{\approx_{n+1}}= [b_{n+1}]_{\approx_{n+1}}\}$, such that $j_{n+1}> j_n$ (this is possible, since $J_{n+1}$ is infinite). We claim that the sequence $(a_{j_i})_{i<\omega}$ is  Cauchy. Indeed, if $i< k<\omega$, then $[a_{j_i}]_{\approx_i} = [b_i]_{\approx_i}$, and $[a_{j_k}]_{\approx_k} = [b_k]_{\approx_k}$. Since $[b_k]_{\approx_k}\subseteq [b_i]_{\approx_i}$, we have $[a_{j_k}]_{\approx_i} = [a_{j_i}]_{\approx_i}$ in other words $\delta_{A_\infty}(a_{j_i},a_{j_k})<2^{-i}$. This proves the claim. Consequently, $M$ is Cauchy-precompact.
\end{proof}

An immediate consequence is:
\begin{corollary}
	Let $\mstr{A}\in\class{U}_\Sigma$ be an ultrametric structure with $\Seq(\mstr{A})=((\struc{A}_i)_{i<\omega},(\alpha_i^j)_{i\le j<\omega})$. Then $\mstr{A}$ is compact if and only if for all $i<\omega$ the structure $\struc{A}_i$ is finite.	
\end{corollary}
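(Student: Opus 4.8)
The plan is to read off the statement from Observation~\ref{precomp} by specializing it to the whole carrier set $M=A$, together with the standard characterization of compactness recorded in the Remark preceding Observation~\ref{precomp}. The only point that requires a moment's attention is that completeness is built into membership in $\cat{U}_\Sigma$, so it can be discarded from the compactness criterion for free.

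\textbf{Reduction to Cauchy-precompactness.} First I would recall that, since $\mstr{A}\in\class{U}_\Sigma$ is an ultrametric (hence metric) structure, its underlying space $(A,\delta_{\mstr{A}})$ is by definition a complete metric space. By the Remark, $A$ is compact if and only if it is totally bounded and Cauchy-complete, and totally bounded is equivalent to Cauchy-precompact. Because completeness holds automatically here, this collapses to the single condition that $\mstr{A}$ is compact if and only if $(A,\delta_{\mstr{A}})$ is Cauchy-precompact.

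\textbf{Applying the Observation.} Next I would invoke Observation~\ref{precomp} with the choice $M=A$: it gives that $A$ is Cauchy-precompact if and only if $A/\mathord\approx_i$ is finite for every $i<\omega$. Now recall from the construction of $\Seq$ that the carrier of $\struc{A}_i$ in $\Seq(\mstr{A})=((\struc{A}_i)_{i<\omega},(\alpha_i^j)_{i\le j<\omega})$ is precisely $A_i=A/\mathord\approx_i$. Hence ``$A/\mathord\approx_i$ is finite'' is literally the statement ``$\struc{A}_i$ is finite'', and chaining the equivalences yields: $\mstr{A}$ is compact $\iff$ $A$ is Cauchy-precompact $\iff$ $A/\mathord\approx_i$ is finite for all $i<\omega$ $\iff$ $\struc{A}_i$ is finite for all $i<\omega$.

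\textbf{Where the work lives.} There is no genuine obstacle; the corollary is indeed immediate once Observation~\ref{precomp} is in hand. The single conceptual step worth flagging is that the passage from ``compact'' to ``Cauchy-precompact'' uses completeness of $(A,\delta_{\mstr{A}})$, which is not something to be reproved but is guaranteed by $\mstr{A}\in\class{U}_\Sigma$. Everything else is a direct translation between the two descriptions $A/\mathord\approx_i$ and $\struc{A}_i$ of the same finite sets.
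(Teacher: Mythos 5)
Your proposal is correct and matches the paper's intent exactly: the paper's own proof is simply ``Clear.'', relying on precisely the chain you spell out (completeness is built into the definition of a metric structure, so compactness reduces to Cauchy-precompactness, and Observation~\ref{precomp} with $M=A$ together with $A_i=A/\mathord{\approx}_i$ finishes it). No gaps.
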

\begin{proof}
	Clear.
\end{proof}

For compactly generated ultrametric structures a slightly weaker observation may be formulated:
\begin{corollary}
	Let $\mstr{A}\in\class{U}_{\Sigma}$ be compactly generated with $\Seq(\mstr{A})=((\struc{A}_i)_{i<\omega},(\alpha_i^j)_{i\le j<\omega})$. Then  for all $i<\omega$ the structure $\struc{A}_i$ is finitely generated.
\end{corollary}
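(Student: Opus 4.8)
The plan is to factor the problem through the canonical surjection $q_i\colon\struc{A}\to\struc{A}_i$ given by $x\mapsto[x]_{\approx_i}$ (where $\struc{A}=\func{U}(\mstr{A})$ is the underlying $\Sigma$-structure), and to show that the image of a compact generating set is a \emph{finite} generating set for $\struc{A}_i$. First I would fix a compact set $K\subseteq A$ generating $\mstr{A}$ and record the relevant properties of $q_i$: it is surjective because $A_i=A/\mathord\approx_i$, and it commutes with the basic operations by the very identity $f^{\struc{A}_i}([a_0]_{\approx_i},\dots,[a_{n-1}]_{\approx_i})=[f^{\mstr{A}}(a_0,\dots,a_{n-1})]_{\approx_i}$ used to define $\Seq(\mstr{A})$. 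This is all that is needed, since generation refers only to the basic operations.

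The finiteness input comes from Observation~\ref{precomp}. Since $K$ is compact it is in particular Cauchy-precompact, so $K/\approx_i$ is finite for every $i<\omega$. The classes of $K/\approx_i$ correspond bijectively to the elements of $q_i[K]=\{[k]_{\approx_i}\mid k\in K\}$, so $q_i[K]$ is a finite subset of $A_i$. It then remains only to show that $q_i[K]$ generates $\struc{A}_i$.

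For the generation step I would combine an algebraic and a topological observation. Algebraically, the image of a generated substructure under a homomorphism is the substructure generated by the image, so $q_i[\langle K\rangle]=\langle q_i[K]\rangle$, where $\langle\,\cdot\,\rangle$ denotes the substructure generated using the basic operations. Topologically, the classes of $\approx_i$ are the balls of radius $2^{-i}$, which are clopen because $\delta_{\mstr{A}}$ is an ultrametric; hence $q_i$ is locally constant, i.e.\ continuous into the discrete set $A_i$, and therefore $q_i[\ol S]=q_i[S]$ for every $S\subseteq A$ (as $q_i[\ol S]\subseteq\ol{q_i[S]}=q_i[S]$ in the discrete space). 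Since $K$ generates $\mstr{A}$ we have $A=\ol{\langle K\rangle}$, and thus
\[
A_i=q_i[A]=q_i\bigl[\,\ol{\langle K\rangle}\,\bigr]=q_i[\langle K\rangle]=\langle q_i[K]\rangle .
\]
As $q_i[K]$ is finite, $\struc{A}_i$ is finitely generated, as claimed.

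I expect the only genuine obstacle to be the mismatch between generation and completeness: the compact set $K$ generates $\mstr{A}$ only after taking a metric closure, whereas ``finitely generated'' for the discrete structure $\struc{A}_i$ is a purely algebraic property. The crux is therefore the local constancy of $q_i$, which shows that the quotient does not ``see'' the closure, so that the finite image $q_i[K]$ already generates $\struc{A}_i$ algebraically. (If ``compactly generated'' is instead read as the strictly algebraic $\langle K\rangle=A$, the same computation applies after deleting the closure step.)
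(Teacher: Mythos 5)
Your proof is correct and follows essentially the same route as the paper: Observation~\ref{precomp} gives finiteness of $K/\approx_i$, and the quotient map $x\mapsto[x]_{\approx_i}$ (the paper writes it as $\alpha_i^\infty\circ\func{U}(\eta_{\mstr{A}})$) is a surjective homomorphism, so it carries a generating set to a generating set. Your extra paragraph on local constancy, handling the possible topological reading of ``generates,'' is a point the paper's one-line proof silently elides, but it does not change the argument.
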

\begin{proof}
	Let $M\subseteq A$ be a compact generating system of $\mstr{A}$.  By Observation~\ref{precomp} we have that $M/{\approx_i}$ is finite for each $i<\omega$. Let $(\alpha_i^\infty)_{i<\omega}$ be the canonical cone for $\Seq(\mstr{A})$. Since $\alpha_i^\infty\circ U(\eta_{\mstr{A}})\colon U(\mstr{A})\to \struc{A}_i$ is a surjective homomorphism (cf.~\ref{etabij}), it follows that $M/\approx_i$ is a generating set of $\struc{A}_i$, for each $i<\omega$.
\end{proof}

Note that in general it is not true that $\omega$-cochains of finitely generated structures have a compactly generated canonical ultrametric structure, as the following example shows:
\begin{example}
	Consider the signature of monoids. It is given by $\Sigma=(\Phi,\Rho,\ar)$, where $\Rho=\emptyset$ and where $\Phi=\{\cdot,1\}$ with $\ar(\cdot)=2$ and $\ar(1)=0$. Our goal is to define an $\omega$-cochain $\cochain{A}=((\struc{A}_i)_{i<\omega}, (\alpha_i^j)_{i\le j<\omega})$ of finitely generated monoids in such a way that $\mlim\cochain{A}$ is not compactly generated.
	
	Let $X=\{x_i\mid i<\omega\}$ be a set of distinct letters. For each $i<\omega$, let $X_i\coloneqq\{x_0,\dots,x_{i-1}\}$. Further let $\struc{A}_i\coloneqq X_i^\ast$  be the free monoid freely generated by $X_i$ (in particuler, $X_i^\ast$ consists of all finite words over $X_i$, the concatenation of words is the monoid-multiplication, and the empty word $\varepsilon$ is the neutral element). Note that $\struc{A}_0=\{\varepsilon\}$. Define $\alpha^1_0$ to be the unique homomorphism that maps $x_0$  to $\varepsilon$. Similarly, for $i>0$ let  $\alpha^{i+1}_i$ be the unique monoid homomorphism that maps $x_j$ to itself for all $j < i$ and that maps $x_i$ to $x_0^i$ (the existence and uniqueness of all these homomorphisms is guaranteed by the fact that $\struc{A}_{i+1}$ is freely generated by $X_{i+1}$). Thus we have defined the $\omega$-cochain $\cochain{A}$:
	\[
	\begin{tikzcd}
		\struc{A}_0 & \struc{A}_1\ar[l,two heads,"\alpha^1_0"'] & \struc{A}_2\ar[l,two heads,"\alpha^2_1"'] & \struc{A}_3\ar[l,two heads,"\alpha^3_2"'] & \dots\ar[l,two heads,"\alpha^4_3"']
	\end{tikzcd}
	\] 
	Let $\mstr{A}_\infty\coloneqq\mlim\cochain{A}$. Let us show that $\mstr{A}_\infty$ is not compactly generated. A first important observation is that for each $i<\omega$ we have that every generating set of $\struc{A}_i$ has to contain $X_i$ as a subset. Let $M\subseteq A_\infty$ be any generating set of $\mstr{A}_\infty$. Then for every $i<\omega$ there exists $\tup{a}^{(i)}\in A_\infty$, such that $a^{(i)}_{i+1} = x_i$. But then $a^{(i)}_1 = x_1^i$. A consequence is that $\alpha^\infty_1(M)\supseteq\{x_1^i \mid 0< i<\omega\}$. Consequently, by Observation~\ref{precomp}, $M$ is not Cauchy-precompact.	
\end{example} 

The previous example suggests a  weaker variant of smallness:
\begin{definition}
	 An ultrametric structure $\mstr{A}$ is called \emph{pro-finitely generated} if in the image of $\Seq(\mstr{A})$ each structure is finitely generated.
\end{definition}
Clearly, if an ultrametric structure is compactly generated, then it is also pro-finitely generated. As the example above shows, the opposite is not true. However, in case of pure relational signatures, there is no difference between compactness, compact generatedness and pro-finite generatedness. Our notion of choice for smallness among ultrametric structures is the notion of pro-finite generatedness.

\subsection{Universal homogeneous metric structures through projective limits}
In this  section we start to explore the combined power of the operators $\sigma$ and $\pi$. Motivated by descriptive set theory we work towards the creation of something that could be called descriptive model theory or, less ambitiously, descriptive \Fraisse theory. The first steps in this direction are exploratory. We start with a \Fraisse class $\class{C}$ and consider the class $\pi\sigma\class{C}$. In particular we ask: Does $\pi\sigma\class{C}$ contain interesting structures? Here ``interesting'' means, e.g., to be highly symmetric and/or to be universal for $\pi\sigma\class{C}$. 
\begin{definition}
	Let $\class{M}\subseteq\class{U}_\Sigma$ be a hereditary class of ultrametric structures. A structure $\mstr{U}\in\class{M}$ is called \emph{$\class{M}$-universal} if every ultrametric structure from $\class{M}$ metrically embeds into $\mstr{U}$. 
	
	 A metric structure $\mstr{H}\in\class{U}_\Sigma$ is called \emph{homogeneous} if every metric isomorphism between pro-finitely generated  substructures of $\mstr{H}$ extends to a metric automorphism of $\mstr{H}$. 
\end{definition}
\begin{remark}
	Universal homogeneous metric structures have been studied by Ben~Yaacov in \cite{Yaa15}, where small structures  are finitely generated metric structures. \Fraisse limits are approximately homogeneous in the sense that finite partial metric isomorphisms extend to metric automorphism up to an arbitrarily small error.
	
	Our approach differs from the one of \cite{Yaa15} in that we are interested in ``sharply'' homogeneous structures (extensions are guaranteed without a margin for errors). Also, we use a different concept of smallness. While in \cite{Yaa15} small metric structures are finite, our small structures are much larger (pro-finitely generated).
	\end{remark}
	
In order to be able to formulate our result concerning homogeneous ultrametric structures we still need to introduce some terms.
\begin{definition}
	Let $\class{C}$ be a class of structures of the same type. We say that $\class{C}$ has the \emph{amalgamated extension property} (\AEP) if for all $\struc{A},\struc{B}_1,\struc{B}_2,\struc{T}\in\class{C}$, $f_1\colon \struc{A}\injto\struc{B}_1$, $f_2\colon \struc{A}\injto\struc{B}_2$, $h_1\colon \struc{B}_1\to\struc{T}$, $h_2\colon \struc{B}_2\to\struc{T}$, if $h_1\circ f_1=h_2\circ f_2$, then there exist $\struc{C},\struc{T}'\in\class{C}$, $g_1\colon \struc{B}_1\injto\struc{C}$, $g_2\colon \struc{B}_2\injto\struc{C}$, $h\colon \struc{C}\to\struc{T}'$, $k\colon \struc{T}\injto\struc{T}'$ such that the following diagram commutes:
	\[
	\begin{tikzcd}
		& & \struc{T}\arrow[r,dashed,hook,"k"] & \struc{T}'.\\
		\struc{B}_1\arrow[r,dashed,hook,"g_1"]\arrow[urr,bend left,"h_1"] & \struc{C}\arrow[rru,dashed,bend right,near end,"h"']\\
		\struc{A}\arrow[u,hook,"f_1"]\arrow[r,hook,"f_2"] & \struc{B}_2\arrow[u,dashed,hook,"g_2"]\arrow[uur,bend right,"h_2"']
	\end{tikzcd}
	\]
\end{definition}

Now we are ready to formulate our main results:
\begin{theorem}\label{mainthm}
	Let $\class{C}\subseteq\class{S}_\Sigma$ be an age.  Then $\class{C}$ has the \AP and the \AEP if and only if $\pi\sigma\class{C}$ contains a universal and homogeneous ultrametric structure.
	 Moreover, any two universal and  homogeneous structures in $\pi\sigma\class{C}$ are metrically isomorphic.
	 \end{theorem}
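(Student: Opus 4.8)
The plan is to transport the whole statement to the category of $\omega$-cochains via the adjoint equivalence of Proposition~\ref{CEquiv}: a universal homogeneous object of $\pi\sigma\class{C}$ corresponds, under $\Seq$, to an $\omega$-cochain over $\sigma\class{C}$ with surjective bonding maps that is itself ``universal and homogeneous'' among such cochains---universality meaning that every member of $[[\cat{\omega}^\op,\cat{\sigma\class{C}}]]$ admits a natural embedding into it, homogeneity meaning that isomorphisms between sub-cochains with finitely generated levels extend to automorphisms of the whole cochain. The single decisive ingredient is a \emph{universal homogeneous epimorphism} $\Omega\colon\struc{U}\to\struc{U}$, where $\struc{U}\in\sigma\class{C}$ is the \Fraisse limit of $\class{C}$; the constant cochain with all bonding maps equal to $\Omega$ will produce the desired structure, exactly as in the Rado-graph example of the introduction.

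For the direction from \AP and \AEP to existence I would first invoke classical \Fraisse theory: since $\class{C}$ is an age with the \AP it is a \Fraisse class, so its limit $\struc{U}$ exists and is $\class{C}$-universal and homogeneous. Next I would build $\Omega$ by running \Fraisse theory in the arrow category whose objects are homomorphisms between members of $\class{C}$ and whose morphisms are commuting squares of embeddings; the \AEP is precisely the amalgamation property needed there, and the resulting \Fraisse sequence yields a universal homogeneous epimorphism $\Omega\colon\struc{U}\to\struc{U}$ with the lifting property: for every homomorphism $f\colon\struc{X}\to\struc{Y}$ between finitely generated members of $\class{C}$ and every embedding $u\colon\struc{Y}\injto\struc{U}$ there is an embedding $v\colon\struc{X}\injto\struc{U}$ with $\Omega\circ v=u\circ f$. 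I would then set $\mstr{U}$ to be $\mlim$ of the constant cochain $\struc{U}\xleftarrow{\Omega}\struc{U}\xleftarrow{\Omega}\cdots$ and prove universality: given any $\cochain{A}=((\struc{A}_i),(\alpha_i^j))\in[[\cat{\omega}^\op,\cat{\sigma\class{C}}]]$, construct inductively embeddings $\iota_i\colon\struc{A}_i\injto\struc{U}$ with $\Omega\circ\iota_{i+1}=\iota_i\circ\alpha_i^{i+1}$ by applying the lifting property (after a cofinal reduction of each $\struc{A}_i$ to its finitely generated substructures), obtaining a natural embedding $\cochain{A}\Rightarrow(\struc{U},\Omega)$ and hence, through $\mlim$, a metric embedding into $\mstr{U}$.

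Homogeneity and uniqueness both rest on a back-and-forth argument at the level of cochains. For homogeneity, a metric isomorphism between pro-finitely generated substructures of $\mstr{U}$ becomes, under $\Seq$, an isomorphism between sub-cochains of $(\struc{U},\Omega)$ with finitely generated levels; I would extend it to an automorphism of $(\struc{U},\Omega)$ by alternately using the homogeneity of the \Fraisse limit $\struc{U}$ at each level and the homogeneity of $\Omega$ (two liftings of the same square are conjugate) so as to keep all squares commuting, then transport the result back through $\mlim$. The same machine, run between two candidate universal homogeneous structures, gives the uniqueness clause. For the converse direction I would start from a universal homogeneous $\mstr{U}$, apply $\Seq$ to obtain a surjective cochain with levels in $\sigma\class{C}$, and read off the two properties: a single level $\struc{U}_0$ turns out to be a countable homogeneous structure with age exactly $\class{C}$ (universality forces $\Age(\struc{U}_0)=\class{C}$, and its homogeneity is lifted from that of $\mstr{U}$ through the adjunction), so the classical theorem of \Fraisse forces the \AP; the \AEP is then obtained by realizing an arbitrary \AEP-diagram inside $\mstr{U}$, embedding the span $\struc{B}_1\injfrom\struc{A}\injto\struc{B}_2$ together with its compatible cone to $\struc{T}$ into finitely generated approximations at two consecutive levels and using the lifting together with the homogeneity of $\mstr{U}$ to close the diagram.

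The main obstacle I expect is the faithful translation between ``homogeneity of $\mstr{U}$ with respect to pro-finitely generated substructures'' and the combined homogeneity of the \Fraisse limit $\struc{U}$ and of the epimorphism $\Omega$. Establishing the existence of $\Omega$ from the \AEP (the arrow-category \Fraisse construction) and, above all, verifying that its homogeneity propagates through the infinite tower---so that partial isomorphisms of sub-cochains genuinely extend coherently, matching approximations across every application of $\Omega$ without breaking commutativity---is where the delicate bookkeeping lies; by contrast the universality argument, being a one-sided inductive lifting, is comparatively routine.
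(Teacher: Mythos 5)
Your overall architecture matches the paper's: transport the problem through the adjoint equivalence of Proposition~\ref{CEquiv}, build a tower of universal homogeneous epimorphisms, run inductions for universality and homogeneity, and use a back-and-forth for uniqueness. But there is a genuine gap at the step you yourself call decisive. You assume that \AP and \AEP alone yield a universal homogeneous \emph{endomorphism} $\Omega\colon\struc{U}\to\struc{U}$ of the \Fraisse limit, so that a single constant cochain does the job. That is not available here: running \Fraisse theory in the arrow category $(\func{F}\comma\func{F})$ (whose amalgamation property is indeed the \AEP) produces a universal homogeneous object $(\struc{U}',\Omega,\struc{U})$ whose \emph{domain} $\struc{U}'$ need not be homogeneous as a structure and hence need not be isomorphic to $\struc{U}$; obtaining a genuine endomorphism is exactly what the extra hypothesis \HAP buys in Theorem~\ref{secmainthm} (via \cite[Theorem 6.8]{PecPec18b}), and it is not assumed in Theorem~\ref{mainthm}. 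Moreover, the lifting property you state --- lift $u\circ f$ through $\Omega$ for a \emph{prescribed} embedding $u$ into the codomain --- is universality in the comma category $(\func{F}\comma\func{G}_{\struc{V}})$ with \emph{fixed} codomain $\struc{V}$, not in the arrow category with varying codomain; the amalgamation property of that fixed-codomain category is the $\struc{V}$-valued \AP, not the \AEP verbatim. The paper bridges this with Proposition~\ref{propertieseq} (\AEP is equivalent to the $\struc{V}$-valued \AP for the \Fraisse limit, which by Observation~\ref{vvalapcsub} is equivalent to the $\struc{U}$-valued \AP for \emph{every} $\struc{U}\in\sigma\class{C}$); the last equivalence is what allows the construction to be iterated over the varying domains $\struc{U}_1,\struc{U}_2,\dots$, yielding a generally non-constant tower $((\struc{U}_i)_{i<\omega},(\Omega_i^j)_{i\le j<\omega})$ with each $\Omega_i^{i+1}$ universal and skew-homogeneous. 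Without this your induction has no second step.

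Two smaller points. For homogeneity, ``two liftings of the same square are conjugate'' is homogeneity over the identity of the codomain; what the inductive step actually requires is \emph{skew}-homogeneity --- conjugacy over the automorphism $\varphi_i$ already constructed at the previous level --- and the equivalence of the two notions (Observation~\ref{equivunivhom}) is itself a lemma that needs proof. For uniqueness, the paper does not perform a cochain-level back-and-forth: it plays an Ehrenfeucht--\Fraisse-type game directly on the two ultrametric structures, using the one-point extension property (Observation~\ref{injective}) together with a density-enforcing strategy for player~1, and then completes the resulting isomorphism between dense substructures. A back-and-forth between cochains whose levels are only countably generated would require a nested back-and-forth inside every level, which your sketch does not address.
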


In a couple of interesting cases we are able to show that the universal homogeneous ultrametric structures are even more symmetric. This comes with the price of stronger assumptions on the signature and on the age $\class{C}$:
\begin{definition}
	Let $\class{C}$ be a class of structures of the same type. We say that $\class{C}$ has the \HAP if for all $\struc{A}$, $\struc{B}_1$, $\struc{B}_2$ from $\class{C}$, for all  $f_1\colon \struc{A}\injto\struc{B}_1$, and for all  $f_2\colon \struc{A}\to\struc{B}_2$ there exist $\struc{C}\in\class{C}$,  $g_1\colon \struc{B}_1\to\struc{C}$, and $g_2\colon \struc{B}_2\injto\struc{C}$ such that the following diagram commutes:
	\[
	\begin{tikzcd}
		\struc{B}_1 \arrow[r,dashed,"g_1"]& \struc{C} \\
		\struc{A}\arrow[u,hook',"f_1"]\arrow[r,"f_2"] & \struc{B}_2.\arrow[u,hook',dashed,"g_2"]
	\end{tikzcd}
	\]
\end{definition} 
\begin{remark}
	The acronym \HAP is usually translated as \emph{homo amalgamation property} stressing, that two morphisms in the commuting square are merely homomorphisms instead of embeddings. Given that in the definition of the \HAP diverse sorts of morphisms are used, another legitimate translation would be \emph{hetero amalgamation property}.   We should mention that the \HAP has been around in mathematical literature for quite a long time. We could trace it back to the paper \cite{Ban70} by Banaschewski, where it is called the \emph{transferability property}. It appeared as the $\operatorname{1PHEP}$ in  \cite{Dol11}, and as the \emph{mixed amalgamation property} in \cite{Kub15}.
	Be it as it may, in this paper we will stick with the by now standard  acronym \HAP and leave it to the reader to decide what the letter H is standing for.
\end{remark}

	\begin{theorem}\label{secmainthm}
		Let $\Sigma$ be a finite and purely relational signature, and let $\class{C}\subseteq\class{S}_\Sigma$  be an age with the \AP, the \AEP, and the \HAP. Let $\mstr{U}$ be the unique universal homogeneous metric structure in $\pi\sigma\class{C}$ postulated by Theorem~\ref{mainthm}. Then every isomorphism between finite substructures of its underlying structure $\struc{U}$ extends to an automorphism of\/ $\struc{U}$. Moreover, this extension may be chosen to be an isometry with respect to the metric of $\mstr{U}$ on every ball of radius $< 2^{-l}$ for some $l>0$.
	\end{theorem}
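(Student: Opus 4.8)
The plan is to exploit the fact that, under the \HAP, the $\omega$-cochain $\Seq(\mstr{U})$ is, up to isomorphism, the infinite iteration of the universal homogeneous epimorphism. Writing $\Seq(\mstr{U})=((\struc{U}_i)_{i<\omega},(\alpha_i^j)_{i\le j<\omega})$, I would first record that under the present hypotheses each level $\struc{U}_i$ is isomorphic to the \Fraisse limit $\struc{M}$ of $\class{C}$ and each bonding map $\alpha_i^{i+1}$ is a copy of the universal homogeneous epimorphism $\Omega\colon\struc{M}\to\struc{M}$ of \cite{Kub14}, whose existence is guaranteed precisely by the \HAP. This is where the extra hypothesis enters; by the uniqueness part of Theorem~\ref{mainthm} together with Observation~\ref{isoiso}(\ref{epsiso}) it suffices to know that the limit of the iterated-$\Omega$ cochain is universal and homogeneous in $\pi\sigma\class{C}$. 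Recall also that $\struc{U}=U(\mstr{U})$ is the inverse limit of this cochain, its elements being the branches $\tup{x}=(x_i)_{i<\omega}$, and that a tuple lies in $\varrho^{\struc{U}}$ iff it lies in $\varrho^{\struc{U}_i}$ for \emph{every} $i$; since the $\alpha_i^j$ are homomorphisms, the truth value of a relation on a fixed tuple is monotone non-increasing in the level, hence eventually constant and equal to its value on the limit.

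Second, given an isomorphism $\phi\colon\struc{F}\to\struc{F}'$ between finite substructures of $\struc{U}$, with $\struc{F}=\{\tup{a}^{(1)},\dots,\tup{a}^{(n)}\}$ and $\tup{b}^{(k)}=\phi(\tup{a}^{(k)})$, I would choose a level $m>0$ large enough that (i) the branches of $\struc{F}$, and those of $\struc{F}'$, are already separated at level $m$ (so $k\mapsto a^{(k)}_m$ and $k\mapsto b^{(k)}_m$ are injective), and (ii) for every relational symbol and every tuple from $\struc{F}$ (resp.\ from $\struc{F}'$) the level-$m$ truth value already equals the limit value. Both requirements stabilize at some finite level, and because $\Sigma$ is finite there are only finitely many tuples to control, so a single $m$ suffices. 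By the monotonicity noted above, (ii) then persists at \emph{every} level $i\ge m$. Consequently, for each $i\ge m$ the assignment $a^{(k)}_i\mapsto b^{(k)}_i$ is a genuine finite partial isomorphism between substructures of $\struc{U}_i\cong\struc{M}$, intertwined with the bonding maps in the sense that $\alpha^{i+1}_i(b^{(k)}_{i+1})=b^{(k)}_i$ and $\alpha^{i+1}_i(a^{(k)}_{i+1})=a^{(k)}_i$.

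The core of the argument is then a tower construction of automorphisms $\Psi_i\in\Aut(\struc{U}_i)$, for $i\ge m$, with $\Omega\circ\Psi_{i+1}=\Psi_i\circ\Omega$ and $\Psi_i(a^{(k)}_i)=b^{(k)}_i$ for all $k$. For the base case $i=m$ I would extend the partial isomorphism $a^{(k)}_m\mapsto b^{(k)}_m$ to an automorphism $\Psi_m$ of $\struc{U}_m$ using the homogeneity of the \Fraisse limit $\struc{M}$. For the inductive step I would invoke the homogeneity of $\Omega$: the finite partial isomorphism $a^{(k)}_{i+1}\mapsto b^{(k)}_{i+1}$ is compatible with the already built base automorphism $\Psi_i$ over $\Omega$, since $\Omega(b^{(k)}_{i+1})=b^{(k)}_i=\Psi_i(a^{(k)}_i)=\Psi_i(\Omega(a^{(k)}_{i+1}))$ by the induction hypothesis, so it extends to an automorphism $\Psi_{i+1}$ of $\struc{U}_{i+1}$ with $\Omega\circ\Psi_{i+1}=\Psi_i\circ\Omega$. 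Finally I would assemble the $\Psi_i$ into a map $\Phi$ on branches by $\Phi(\tup{x})_i=\Psi_i(x_i)$ for $i\ge m$ and $\Phi(\tup{x})_i=\alpha^m_i(\Psi_m(x_m))$ for $i<m$; the intertwining relations make this a well-defined branch, and since each $\Psi_i$ is an automorphism of $\struc{U}_i$ and relations of $\struc{U}$ are the level-wise eventual ones, $\Phi$ is a bijection preserving all basic relations in both directions, i.e.\ an automorphism of $\struc{U}$. By construction $\Phi(\tup{a}^{(k)})=\tup{b}^{(k)}$, so $\Phi$ extends $\phi$; and since $\Phi$ acts coordinatewise by the bijections $\Psi_i$ on all levels $\ge m$, it preserves the level of first disagreement of any two branches that already agree through level $m$, so it is an isometry on every ball of radius $<2^{-m}$, giving the final claim with $l=m$.

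The main obstacle I expect is twofold. The conceptually essential point is the inductive lifting step, which rests on the exact homogeneity statement for the universal homogeneous epimorphism $\Omega$ (a finite partial isomorphism upstairs that is compatible over $\Omega$ with a prescribed base automorphism extends to a full automorphism lifting it); verifying that $\Omega$ enjoys this property, and that it is available precisely because $\class{C}$ has the \HAP, is the heart of the matter. The second, more bookkeeping, difficulty is the identification of $\Seq(\mstr{U})$ with the iterated-$\Omega$ cochain and the careful choice of the stabilization level $m$, where finiteness of $\Sigma$ is exactly what lets a single $m$ simultaneously control all relational symbols and all tuples.
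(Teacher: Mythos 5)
Your proposal is correct, but the core extension step is carried out by a genuinely different mechanism than in the paper. Both arguments begin identically: the \HAP (together with the \AEP) yields a universal homogeneous \emph{endomorphism} $\Omega$ of the \Fraisse limit, so by uniqueness in Theorem~\ref{mainthm} one may take $\mstr{U}=\mlim\cochain{U}$ with the constant cochain $\struc{U}_i=\struc{U}_0$ and $\Omega_i^{i+1}=\Omega$, and both use finiteness of $\Sigma$ to find a single level beyond which the finitely many relevant tuples are separated and their relational truth values have stabilized (your level $m$; the paper's $l=\max\{m,n\}$ in Lemma~\ref{shiftmet}). From there the paper does \emph{not} rerun an inductive lifting: it introduces the mutually inverse shift automorphisms $T_L,T_R$ of $\struc{U}_\infty$, shows that $T_L^l\circ\alpha\circ T_R^l$ is already a \emph{metric} isomorphism between finite metric substructures, invokes the metric homogeneity of $\mstr{U}$ established in Theorem~\ref{mainthm} as a black box, and conjugates the resulting metric automorphism back by the shift; the isometry-on-small-balls claim then falls out of the Lipschitz behaviour of $T_L^l$ and $T_R^l$. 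You instead build the tower $\Psi_i\in\Aut(\struc{U}_i)$, $i\ge m$, directly from the homogeneity of $\struc{U}_m$ and the skew-homogeneity of $\Omega$, which is essentially the proof of Observation~\ref{seqhom} restarted at level $m$, and then assemble the coordinatewise automorphism. Your route is self-contained and makes the role of the cutoff $m$ transparent, at the cost of duplicating the lifting induction already done for homogeneity; the paper's route is more modular and produces the self-similarity automorphisms $T_L,T_R$ as a by-product of independent interest. One small attribution point: the inductive lifting property you isolate as ``the heart of the matter'' is the skew-homogeneity of $\Omega$, which comes from the \AEP (via the $\struc{V}$-valued \AP and Corollary~\ref{critunivskewhom}); what the \HAP buys is that the domain of this universal skew-homogeneous map can be taken equal to its codomain, i.e.\ that the cochain can be made constant, which both proofs need for the base case at level $m$.
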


At this point a formally correct and complete proof of these claims would be overwhelmingly technical and  cumbersome. 
We chose to proceed in a couple of smaller,  manageable steps. Let us start by giving a sketch of the general idea.  We start from a hereditary class $\class{C}$. Clearly, then also $\sigma\class{C}$ is hereditary.   By Proposition~\ref{CEquiv} the appropriate restrictions of the functors $\Seq$ and $\mlim$ induce a categorical equivalence between the category $[[\cat{\omega}^\op,\cat{\sigma C}]]$ of surjective $\omega$-cochains over $\sigma\class{C}$ and $\cat{\pi \sigma C}$. So instead of constructing a universal  homogeneous ultrametric structure $\mstr{U}$ in $\pi\sigma\class{C}$, we may construct an appropriate surjective $\omega$-cochain $\cochain{U}$ over $\sigma\class{C}$, such that $\mlim\cochain{U}\cong\mstr{U}$.  

In order to obtain a universal ultrametric structure $\mstr{U}\in\pi\sigma\class{C}$ we need to construct an $\omega$-cochain $\cochain{U}$ that is universal for $[[\cat{\omega}^\op,\cat{\sigma C}]]$.  To this end, we use the notion of universal homomorphisms:
\begin{definition}
Let $\class{C}$ be an age, and let $\struc{U},\struc{V}\in\sigma\class{C}$. A homomorphism $\Omega\colon\struc{U}\to\struc{V}$ is called \emph{universal in $\cat{\sigma{C}}$}  if for all $\struc{A}\in\sigma\class{C}$, and for all $h\colon\struc{A}\to\struc{V}$ there exists an embedding $\iota\colon\struc{A}\injto\struc{U}$, such that the following diagram commutes:
\[
\begin{tikzcd}[row sep = small]
	\struc{U} \arrow[rd,"\Omega"] \\
	& \struc{V}.\\
	\struc{A}\arrow[uu,hook',dashed,"\iota"]\arrow[ru,"h"] 
\end{tikzcd}
\]
\end{definition}
A first simple but very useful observation about universal homomorphisms is:
\begin{observation}
Every universal homomorphism in $\cat{\sigma\class{C}}$ is a retraction.	
\end{observation}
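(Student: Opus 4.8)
The plan is to extract a right inverse of $\Omega$ directly from its defining universal property; no construction is needed beyond a well-chosen instantiation. Recall that asserting $\Omega\colon\struc{U}\to\struc{V}$ is a retraction means exactly that there is a homomorphism $s\colon\struc{V}\to\struc{U}$ with $\Omega\circ s=\operatorname{id}_{\struc{V}}$, so it suffices to exhibit such a section.

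First I would instantiate universality at the test object $\struc{A}\coloneqq\struc{V}$ together with the test morphism $h\coloneqq\operatorname{id}_{\struc{V}}\colon\struc{V}\to\struc{V}$. This is a legitimate instance: by hypothesis $\struc{U},\struc{V}\in\sigma\class{C}$, so $\struc{V}$ is an admissible choice for $\struc{A}$, and $\operatorname{id}_{\struc{V}}$ is certainly a homomorphism into $\struc{V}$. The universal property of $\Omega$ in $\cat{\sigma C}$ then produces an embedding $\iota\colon\struc{V}\injto\struc{U}$ making the defining triangle commute, that is, $\Omega\circ\iota=\operatorname{id}_{\struc{V}}$.

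This $\iota$ is the desired section: it is in particular a homomorphism $\struc{V}\to\struc{U}$, and the commuting triangle says precisely that it is a right inverse of $\Omega$. Hence $\Omega$ is a retraction. There is no genuine obstacle here; the only hypothesis actually consumed is the membership $\struc{V}\in\sigma\class{C}$, which is built into the definition of a universal homomorphism and is what allows $\struc{V}$ itself to serve as a test object.
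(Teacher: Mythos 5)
Your proof is correct and is essentially the paper's own argument: both instantiate the universal property with $\struc{A}=\struc{V}$ and $h=1_{\struc{V}}$, obtaining an embedding $\iota$ with $\Omega\circ\iota=1_{\struc{V}}$, which is exactly a section. Nothing further is needed.
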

\begin{proof}
Let $\Omega\colon\struc{U}\to\struc{V}$ be universal in $\cat{\sigma\class{C}}$.
 Then there exists an embedding $\iota$, such that the following diagram commutes: 
\[
\begin{tikzcd}[row sep = small]
	\struc{U} \arrow[rd,"\Omega"]\\
	&\struc{V}\\
	\struc{V} \arrow[ru,"1_{\struc{V}}"']\arrow[uu,hook',dashed,"\iota"] 
\end{tikzcd}
\]
In other words, $1_{\struc{V}}=\Omega\circ\iota$. That means that $\iota$ is a section and that $\Omega$ is a retraction.	
\end{proof}

Let us postpone the question about the existence of universal homomorphisms for later. For now let us see, how they may be used for the construction of universal ultrametric structures:
\begin{observation}\label{sequniv}
	Let $\cochain{U}=((\struc{U}_i)_{i<\omega},(\Omega_i^j)_{i\le j<\omega})$ be an $\omega$-cochain over $\sigma\class{C}$, such that $\struc{U}_0$ is universal for $\sigma\class{C}$ and such that for all $i<\omega$ we have that $\Omega_i^{i+1}$ is universal in $\cat{\sigma C}$. Let $\mstr{U}\coloneqq\mlim\cochain{U}$. Then $\mstr{U}$ is universal in $\pi\sigma\class{C}$.  
\end{observation}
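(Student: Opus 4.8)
The plan is to reduce the universality of $\mstr{U}$ to the construction of a single natural embedding of $\omega$-cochains, which I then push forward through the functor $\mlim$. Fix an arbitrary $\mstr{A}\in\pi\sigma\class{C}$. By the definition of $\pi\sigma\class{C}$ (together with the equivalence of Proposition~\ref{CEquiv}) I may assume without loss of generality that $\mstr{A}=\mlim\cochain{A}$ for some $\omega$-cochain $\cochain{A}=((\struc{A}_i)_{i<\omega},(\alpha_i^j)_{i\le j<\omega})$ over $\sigma\class{C}$; it then suffices to produce a metric embedding $\mstr{A}\injto\mstr{U}$. Recall that the functor $\mlim$ sends every natural embedding to a metric embedding, so the entire task is to build a natural embedding $(h_i)_{i<\omega}\colon\cochain{A}\Rightarrow\cochain{U}$.

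I would construct the $h_i$ by induction on $i$, arranging at each stage that $h_i\colon\struc{A}_i\injto\struc{U}_i$ is an embedding and that the square
\[
\begin{tikzcd}
\struc{A}_i \ar[d,"h_i"'] & \struc{A}_{i+1}\ar[l,"\alpha_i^{i+1}"']\ar[d,"h_{i+1}"]\\
\struc{U}_i & \struc{U}_{i+1}\ar[l,"\Omega_i^{i+1}"']
\end{tikzcd}
\]
commutes. For the base case, since $\struc{A}_0\in\sigma\class{C}$ and $\struc{U}_0$ is universal for $\sigma\class{C}$, there is an embedding $h_0\colon\struc{A}_0\injto\struc{U}_0$. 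For the inductive step, suppose $h_i\colon\struc{A}_i\injto\struc{U}_i$ has already been built, and consider the homomorphism $h_i\circ\alpha_i^{i+1}\colon\struc{A}_{i+1}\to\struc{U}_i$. Since $\struc{A}_{i+1}\in\sigma\class{C}$ and $\Omega_i^{i+1}\colon\struc{U}_{i+1}\to\struc{U}_i$ is universal in $\cat{\sigma C}$, the defining property of universal homomorphisms yields an embedding $h_{i+1}\colon\struc{A}_{i+1}\injto\struc{U}_{i+1}$ with $\Omega_i^{i+1}\circ h_{i+1}=h_i\circ\alpha_i^{i+1}$, which is exactly the required commuting square. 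Commutativity of the squares for all $i\le j$, hence naturality of $(h_i)_{i<\omega}$, then follows by composing consecutive squares and invoking functoriality of $\cochain{A}$ and $\cochain{U}$.

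Finally, applying $\mlim$ to the natural embedding $(h_i)_{i<\omega}$ produces a metric embedding $\mlim\cochain{A}=\mstr{A}\to\mlim\cochain{U}=\mstr{U}$, so $\mstr{A}$ metrically embeds into $\mstr{U}$; as $\mstr{A}$ was an arbitrary member of $\pi\sigma\class{C}$, this shows that $\mstr{U}$ is universal in $\pi\sigma\class{C}$. I do not anticipate a serious obstacle here: the one point that must be spotted is that the correct homomorphism to feed into the universal property of $\Omega_i^{i+1}$ is the composite $h_i\circ\alpha_i^{i+1}$, so that the lifting triangle provided by the universal homomorphism becomes precisely the naturality square one needs. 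The only routine care required is the bookkeeping that consecutive commuting squares assemble into a genuine natural transformation and that $\mlim$ indeed turns natural embeddings into metric embeddings, both of which are already established earlier in the excerpt.
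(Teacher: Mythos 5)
Your proof is correct and follows essentially the same route as the paper's: an inductive construction of a natural embedding $(h_i)_{i<\omega}\colon\cochain{A}\Rightarrow\cochain{U}$, with the base case supplied by the universality of $\struc{U}_0$ and the inductive step by feeding $h_i\circ\alpha_i^{i+1}$ into the universal property of $\Omega_i^{i+1}$, followed by an application of $\mlim$. The only cosmetic difference is that the paper obtains the cochain representing the given structure via $\Seq$ and Proposition~\ref{CEquiv}, while you invoke the definition of $\pi\sigma\class{C}$ directly; both are fine.
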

\begin{proof}
	Let $\mstr{X}\in\pi\sigma\class{C}$. 	Let $\cochain{X}\coloneqq\Seq(\mstr{X})= ((\struc{X}_i)_{i<\omega}, (\chi_i^j)_{i\le j<\omega})\in[[\cat{\omega}^\op,\cat{\sigma C}]]$. Let $\iota_0\colon\struc{X}_0\injto\struc{U}_0$ be an arbitrary embedding (this exists because $\struc{U}_0$ is universal in $\sigma\class{C}$). Suppose that $\iota_i$ has already been defined. Consider $\iota_i\circ\chi_i^{i+1}\colon\struc{X}_{i+1}\to\struc{U}_i$.  Since $\Omega_i^{i+1}$ is universal, there exists $\iota_{i+1}\colon \struc{X}_{i+1}\injto\struc{U}_{i+1}$ such that the following diagram commutes:
	\[
	\begin{tikzcd}
		\struc{U}_i & \struc{U}_{i+1}\ar[l,twoheadrightarrow,"\Omega_i^{i+1}"']\\
		\struc{X}_i \ar[u,hook',"\iota_i"]& \struc{X}_{i+1}\ar[l,twoheadrightarrow,"\chi_i^{i+1}"]\ar[u,hook',dashed,"\iota_{i+1}"'].
	\end{tikzcd}
	\]
	Thus, $(\iota_i)_{i<\omega}\colon\cochain{X}\Rightarrow\cochain{U}$ is a natural embedding. Hence $\mlim((\iota_i)_{i<\omega})\colon\mlim\cochain{X}\injto\mlim\cochain{U}$ is a metric embedding. By Proposition~\ref{CEquiv} we have that $\mlim(\cochain{X})\cong\mstr{X}$. Thus the claim follows. 
\end{proof}

For homogeneity a similar induction should do the trick. The proper mode of homomorphisms in $\cat{\sigma C}$ is defined below:
\begin{definition}
	Let $\class{C}$ be an age, and let $\struc{U},\struc{V}\in\sigma\class{C}$. A homomorphism $\Omega\colon\struc{U}\to\struc{V}$ is called \emph{skew-homogeneous} in $\cat{\sigma C}$ if for each $\struc{A}\in\class{C}$,  for all $h\colon \struc{A}\to\struc{V}$, for all $\iota,\kappa\colon\struc{A}\injto\struc{U}$, and for all $\psi\in\Aut(\struc{V})$ such that 
	\[
	\begin{tikzcd}
		\struc{U}\ar[r,"\Omega"] & \struc{V}\\
		\struc{A} \ar[d,hook',"\kappa"']\ar[u,hook',"\iota"]\ar[r,"h"]& \struc{V}\ar[u,equals]\ar[d,"\psi","\cong"']\\
		\struc{U} \ar[r,"\Omega"] & \struc{V}
	\end{tikzcd},
	\]
	commute, there exists $\varphi\in\Aut(\struc{U})$, such that the following diagram commutes:
	\[
	\begin{tikzcd}
		\struc{U}\ar[r,"\Omega"]\ar[dd,bend right=40,dashed,"\varphi"',"\cong"] & \struc{V}\\
		\struc{A} \ar[d,hook',"\kappa"']\ar[u,hook',"\iota"]\ar[r,"h"]& \struc{V}\ar[u,equals]\ar[d,"\psi","\cong"']\\
		\struc{U} \ar[r,"\Omega"] & \struc{V}
	\end{tikzcd},
	\]
\end{definition}
Again, let us not worry for now about the existence of skew-homogeneous homomorphisms, but let us instead have a look onto their usefulness:
\begin{observation}\label{seqhom}
	Let $\class{C}$ be a \Fraisse class. Let $\cochain{U}=((\struc{U}_i)_{i<\omega}, (\Omega_i^j)_{i\le j<\omega})$ be an $\omega$-cochain over $\sigma\class{C}$, such that $\struc{U}_0$ is homogeneous and such that for all $i<\omega$ we have that $\Omega_i^{i+1}$ is skew-homogeneous in $\cat{\sigma C}$. Let $\mstr{U}\coloneqq\mlim\cochain{U}$. Then $\mstr{U}$ is homogeneous. 
\end{observation}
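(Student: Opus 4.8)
The plan is to transport the problem to the level of $\omega$-cochains via the adjoint equivalence of Proposition~\ref{CEquiv}, and to realize the extension as $\mlim$ of a natural automorphism of $\cochain{U}$ built level by level, feeding skew-homogeneity of each bonding map $\Omega_i^{i+1}$ into an induction. Concretely, given a metric isomorphism $\phi\colon\mstr{X}\to\mstr{Y}$ between pro-finitely generated substructures of $\mstr{U}$, I would construct automorphisms $\varphi_i\in\Aut(\struc{U}_i)$ forming a natural transformation of $\cochain{U}$ and ``extending'' $\phi$ at every level, and then apply $\mlim$.

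\textbf{Translation to cochains.}
First I would record the data. Since $\sigma\class{C}$ is hereditary, Observation~\ref{hereditarypi} (applied to $\sigma\class{C}$) shows $\pi\sigma\class{C}$ has the \HP in $\class{U}_\Sigma$, so the ultrametric substructures $\mstr{X},\mstr{Y}\le\mstr{U}$ lie in $\pi\sigma\class{C}$; write $\Seq(\mstr{X})=((\struc{X}_i)_{i<\omega},(\chi_i^j))$ and $\Seq(\mstr{Y})=((\struc{Y}_i)_{i<\omega},(\upsilon_i^j))$, which by Proposition~\ref{CEquiv} lie in $[[\cat{\omega}^\op,\cat{\sigma C}]]$. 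Pro-finite generatedness makes each $\struc{X}_i,\struc{Y}_i$ finitely generated, hence—being finitely generated members of $\sigma\class{C}$, and therefore members of their own age—actually members of $\class{C}$; this is exactly what the definition of skew-homogeneity will require. Applying $\Seq$ to the inclusions (Observation~\ref{seqpresemb}) and identifying $\Seq(\mstr{U})$ with $\cochain{U}$ through the natural isomorphism $\varepsilon_{\cochain{U}}$ (Observation~\ref{isoiso}(\ref{epsiso}), valid since the $\Omega_i^j$ are retractions, hence surjective), I obtain natural embeddings $(\xi_i)\colon\Seq(\mstr{X})\Rightarrow\cochain{U}$ and $(\eta_i)\colon\Seq(\mstr{Y})\Rightarrow\cochain{U}$, while $\Seq(\phi)=(\phi_i)$ is a natural isomorphism, so $\upsilon_i^{i+1}\circ\phi_{i+1}=\phi_i\circ\chi_i^{i+1}$.

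\textbf{The induction.}
I would build $\varphi_i\in\Aut(\struc{U}_i)$ satisfying (a) $\varphi_i\circ\xi_i=\eta_i\circ\phi_i$ and (b) $\Omega_{i-1}^i\circ\varphi_i=\varphi_{i-1}\circ\Omega_{i-1}^i$ (for $i\ge1$). For the base case, $\eta_0\circ\phi_0\circ\xi_0^{-1}$ is an isomorphism between the finitely generated substructures $\xi_0(\struc{X}_0)$ and $\eta_0(\struc{Y}_0)$ of $\struc{U}_0$, and homogeneity of $\struc{U}_0$ extends it to $\varphi_0$, giving (a). For the step, I apply skew-homogeneity of $\Omega_i^{i+1}$ with $\struc{A}=\struc{X}_{i+1}\in\class{C}$, $\iota=\xi_{i+1}$, $\kappa=\eta_{i+1}\circ\phi_{i+1}$, $h=\Omega_i^{i+1}\circ\xi_{i+1}=\xi_i\circ\chi_i^{i+1}$, and $\psi=\varphi_i$. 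The nontrivial hypothesis $\Omega_i^{i+1}\circ\kappa=\psi\circ h$ is verified by chaining naturality of $(\eta_i)$, then of $(\phi_i)$, and then the inductive hypothesis (a): $\Omega_i^{i+1}\circ\eta_{i+1}\circ\phi_{i+1}=\eta_i\circ\upsilon_i^{i+1}\circ\phi_{i+1}=\eta_i\circ\phi_i\circ\chi_i^{i+1}=\varphi_i\circ\xi_i\circ\chi_i^{i+1}=\psi\circ h$. Skew-homogeneity then yields $\varphi_{i+1}\in\Aut(\struc{U}_{i+1})$ with $\varphi_{i+1}\circ\xi_{i+1}=\eta_{i+1}\circ\phi_{i+1}$ (condition (a)) and $\Omega_i^{i+1}\circ\varphi_{i+1}=\varphi_i\circ\Omega_i^{i+1}$ (condition (b)).

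\textbf{Assembly and the main obstacle.}
By (b) the family $(\varphi_i)_{i<\omega}$ is a natural automorphism of $\cochain{U}$, so $\varphi\coloneqq\mlim((\varphi_i)_{i<\omega})$ is a metric automorphism of $\mstr{U}$. Finally, functoriality of $\mlim$ turns the level-wise identities (a) into the single identity $\varphi\circ e_{\mstr{X}}=e_{\mstr{Y}}\circ\phi$, where $e_{\mstr{X}},e_{\mstr{Y}}$ are the inclusions (using $\eta_{\mstr{X}}$ from Observation~\ref{etabij} and Observation~\ref{isoiso}(\ref{etaiso}) to identify $\mlim\Seq(\mstr{X})$ with $\mstr{X}$); that is, $\varphi$ extends $\phi$, proving homogeneity of $\mstr{U}$. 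I expect the main obstacle to be precisely the inductive step: correctly packaging the level-$(i{+}1)$ data into the shape demanded by skew-homogeneity and verifying the compatibility square $\Omega_i^{i+1}\circ\kappa=\psi\circ h$, together with the bookkeeping needed to guarantee $\struc{X}_{i+1}\in\class{C}$ so that the definition of skew-homogeneity is applicable.
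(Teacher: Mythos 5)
Your proposal is correct and follows essentially the same route as the paper: transport everything through $\Seq$ and the natural embedding $\varepsilon_{\cochain{U}}$, build the level-wise automorphisms by induction (homogeneity of $\struc{U}_0$ for the base case, skew-homogeneity of $\Omega_i^{i+1}$ for the step, with exactly the compatibility computation you give), then apply $\mlim$ and the adjunction/triangle identities to assemble the extension. The only blemish is your claim that $\varepsilon_{\cochain{U}}$ is a natural \emph{isomorphism} because the $\Omega_i^j$ are retractions---skew-homogeneity alone does not make them retractions (that was proved for \emph{universal} homomorphisms)---but this is harmless, since, as in the paper's proof, only the fact that $\varepsilon_{\cochain{U}}$ is a natural embedding is actually used.
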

\begin{proof}
	Let $\mstr{X}$ be a pro-finitely generated ultrametric structure and let $\iota,\kappa$ be metric embeddings of $\mstr{X}$ into $\mstr{U}$.  By Observation~\ref{hereditarypi}, $\pi\sigma\class{C}$ is hereditary.  Since $\mstr{U}\in\pi\sigma\class{C}$ it follows that $\mstr{X}\in\pi\sigma\class{C}$. Without loss of generality, $\mstr{X}=\mlim(\cochain{X})$, where $\cochain{X} = ((\struc{X}_i)_{i<\omega},(x_i^j)_{i\le j<\omega})$.  
	
	Denote $\Seq(\mstr{U}) = ((\struc{V}_i)_{i<\omega},(v_i^j)_{i\le j<\omega})$, $\Seq(\mstr{X})=((\struc{Y}_i)_{i<\omega},(y_i^j)_{i\le j<\omega})$, $\Seq(\iota)=(\iota_i)_{i<\omega}$, and $\Seq(\kappa)=(\kappa_i)_{i<\omega}$. By Observation~\ref{seqpresemb}, $\Seq(\iota)$ and $\Seq(\kappa)$ are natural embeddings.  By Observation~\ref{isoiso}(\ref{epsiso}), $\varepsilon_{\cochain{U}}\colon\Seq(\mstr{U})\Rightarrow\cochain{U}$ is a natural embedding. Thus, $\varepsilon_{\cochain{U}}\circ\Seq(\iota)$ and  $\varepsilon_{\cochain{U}}\circ\Seq(\kappa)$ are natural embeddings, too. By induction we are going to construct a natural automorphism $\varphi=(\varphi_i)_{i<\omega}$ of $\cochain{U}$, such that $\varphi\circ\varepsilon_{\cochain{U}}\circ \Seq(\iota) = \varepsilon_{\cochain{U}}\circ\Seq(\kappa)$. 
	
	Since $\mstr{X}$ is pro-finitely generated, $\struc{Y}_0$ is finitely generated. Since $\struc{U}_0$ is homogeneous, there exists $\varphi_0\in\Aut(\struc{U}_0)$, such that the following diagram commutes:
	\[
	\begin{tikzcd}
		\struc{U}_0 \ar[r,"\varphi_0","\cong"',dashed]& \struc{U}_0\\
		\struc{Y}_0 \ar[r,equals]\ar[u,hook',"\varepsilon_{\cochain{U},0}\circ\iota_0"]& \struc{Y}_0
		\ar[u,hook',"\varepsilon_{\cochain{U},0}\circ\kappa_0"']
	\end{tikzcd}
	\]
	Suppose that $\varphi_i\in\Aut(\struc{U}_i)$ is already constructed. In particular, the following diagram commutes:
	\[
	\begin{tikzcd}
		  \struc{U}_i\ar[dddd,bend right,"\varphi_i"',"\cong"] & \struc{U}_{i+1}\ar[l,"\Omega^{i+1}_i"']\\
		 \struc{V}_i \ar[u,hook',"\varepsilon_{\cochain{U},i}"']& \struc{V}_{i+1}\ar[u,hook',"\varepsilon_{\cochain{U},i+1}"]\ar[l,two heads,"v^{i+1}_i"']\\
		\struc{Y}_i \ar[u,hook',"\iota_i"']\ar[d,hook',"\kappa_i"]& \struc{Y}_{i+1}\ar[u,hook',"\iota_{i+1}"]\ar[d,hook',"\kappa_{i+1}"']\ar[l,two heads,"y^{i+1}_i"']\\
		\struc{V}_i\ar[d,hook',"\varepsilon_{\cochain{U},i}"] & \struc{V}_{i+1}\ar[d,hook',"\varepsilon_{\cochain{U},i+1}"']\ar[l,two heads,"v^{i+1}_i"']\\
		\struc{U}_i & \struc{U}_{i+1}\ar[l,"\Omega^{i+1}_i"']\\
	\end{tikzcd}
	\]
	In particular, the following diagram commutes:
	\[
	\begin{tikzcd}[column sep=12ex]
		\struc{U}_i\ar[d,equals] & \struc{U}_{i+1}\ar[l,"\Omega^{i+1}_i"']\\
		\struc{U}_i \ar[d,"\varphi_i"',"\cong"]& \struc{Y}_{i+1}\ar[l,"\varepsilon_{\cochain{U},i}\circ\iota_i\circ y_i^{i+1}"]\ar[u,hook',"\varepsilon_{\cochain{U},i+1}\circ\iota_{i+1}"']\ar[d,hook',"\varepsilon_{\cochain{U},i+1}\circ\kappa_{i+1}"]\\
		\struc{U}_i & \struc{U}_{i+1}.\ar[l,"\Omega^{i+1}_i"']
	\end{tikzcd}
	\]
	Since $\mstr{X}$ is pro-finitely generated, $\struc{Y}_{i+1}$ is finitely generated. Since $\Omega_i^{i+1}$ is skew-homogeneous, there exists $\varphi_{i+1}\in\Aut(\struc{U}_{i+1})$, such that the following diagram commutes:
	\[
	\begin{tikzcd}[column sep=12ex]
		\struc{U}_i\ar[d,equals] & \struc{U}_{i+1}\ar[l,"\Omega^{i+1}_i"']\ar[dd,bend left=30,"\varphi_{i+1}",dashed]\\
		\struc{U}_i \ar[d,"\varphi_i"',"\cong"]& \struc{Y}_{i+1}\ar[l,"\varepsilon_{\cochain{U},i}\circ\iota_i\circ y_i^{i+1}"]\ar[u,hook',"\varepsilon_{\cochain{U},i+1}\circ\iota_{i+1}"]\ar[d,hook',"\varepsilon_{\cochain{U},i+1}\circ\kappa_{i+1}"']\\
		\struc{U}_i & \struc{U}_{i+1}.\ar[l,"\Omega^{i+1}_i"']
	\end{tikzcd}
	\]
	In particular the following diagram commutes:
	\[
	\begin{tikzcd}
		  \struc{U}_i\ar[dddd,bend right,"\varphi_i"',"\cong"] & \struc{U}_{i+1}\ar[l,"\Omega^{i+1}_i"']\ar[dddd,bend left,dashed,"\varphi_{i+1}","\cong"']\\
		 \struc{V}_i \ar[u,hook',"\varepsilon_{\cochain{U},i}"']& \struc{V}_{i+1}\ar[u,hook',"\varepsilon_{\cochain{U},i+1}"]\ar[l,two heads,"v^{i+1}_i"']\\
		\struc{Y}_i \ar[u,hook',"\iota_i"']\ar[d,hook',"\kappa_i"]& \struc{Y}_{i+1}\ar[u,hook',"\iota_{i+1}"]\ar[d,hook',"\kappa_{i+1}"']\ar[l,two heads,"y^{i+1}_i"']\\
		\struc{V}_i\ar[d,hook',"\varepsilon_{\cochain{U},i}"] & \struc{V}_{i+1}\ar[d,hook',"\varepsilon_{\cochain{U},i+1}"']\ar[l,two heads,"v^{i+1}_i"']\\
		\struc{U}_i & \struc{U}_{i+1}\ar[l,"\Omega^{i+1}_i"']\\
	\end{tikzcd}
	\]
	Thus $\varphi$ is completely specified. Applying the functor $\mlim$ to this situation, we obtain 
	\begin{equation}\label{starr}
	\mlim(\varphi)\circ\mlim(\varepsilon_{\cochain{U}})\circ \mlim(\Seq(\iota)) = \mlim(\varepsilon_{\cochain{U}})\circ\mlim(\Seq(\kappa)).
	\end{equation}
	Note that the following diagram commutes:
	\[
	\begin{tikzcd}[column sep=12ex]
		\mlim(\Seq(\mlim(\cochain{X})))\ar[r,hook,"\mlim(\Seq(\iota))"] & \mlim(\Seq(\mlim(\cochain{U})))\ar[r,"(\mlim\ast\varepsilon)_{\cochain{U}}"] & \mlim(\cochain{U})\\
		\mlim(\cochain{X})\ar[u,"(\eta\ast\mlim)_{\cochain{X}}"]\ar[r,"\iota",hook] & \mlim(\cochain{U}).\ar[u,"(\eta\ast\mlim)_{\cochain{U}}"]  \ar[ur,equals]
	\end{tikzcd}
	\]
	Here the left quadrangle commutes because $(\eta\ast\mlim)$ is a natural transformation and the right hand triangle commutes because of one of the triangle identities. Analogously we have that the following diagram commutes:
	\[
	\begin{tikzcd}[column sep=12ex]
		\mlim(\Seq(\mlim(\cochain{X})))\ar[r,hook,"\mlim(\Seq(\kappa))"] & \mlim(\Seq(\mlim(\cochain{U})))\ar[r,"(\mlim\ast\varepsilon)_{\cochain{U}}"] & \mlim(\cochain{U})\\
		\mlim(\cochain{X})\ar[u,"(\eta\ast\mlim)_{\cochain{X}}"]\ar[r,"\kappa",hook] & \mlim(\cochain{U}).\ar[u,"(\eta\ast\mlim)_{\cochain{U}}"]  \ar[ur,equals]
	\end{tikzcd}
	\]
	Thus, keeping in mind that $(\mlim\ast\varepsilon)_{\cochain{U}} = \mlim(\varepsilon_{\cochain{U}})$, we may compute
	\begin{align*}
		\mlim(\varphi)\circ\iota &= \mlim(\varphi)\circ (\mlim\ast\varepsilon)_{\cochain{U}}\circ\mlim(\Seq(\iota))\circ(\eta\ast\mlim)_{\cochain{X}}\\
		&\overset{\eqref{starr}}{=} (\mlim\ast\varepsilon)_{\cochain{U}}\circ\mlim(\Seq(\kappa))\circ(\eta\ast\mlim)_{\cochain{X}}= \mlim(\kappa)
	\end{align*}
	Thus, $\mstr{U}$ is homogeneous.
\end{proof}

\subsection{Universal homogeneous homomorphisms}
Abstract \Fraisse theory arose in a number of steps done in papers by Droste, G\"obel, and \Kubis (see \cite{DroGoe90,DroGoe92,Kub14}). Let us recall some basic facts from abstract \Fraisse theory needed in this paper. 

	 An object $A$ of a category $\cat{X}$ is called \emph{$\omega$-small} if for every $\omega$-chain $((C_i)_{i<\omega},(c_i^j)_{i\le j<\omega})$  with limiting cocone $(C_\infty,(c_i^\infty)_{i<\omega})$ and for every morphism $h\colon A\to C_\infty$ there exists $i<\omega$ and $h'\colon A\to C_i$, such that $h=c^\infty_i\circ h'$. The full subcategory of $\cat{X}$ that is induced by all $\omega$-small objects is denoted by $\cats{X}$. 

	\begin{definition}
	A category $\cat{X}$ is called \emph{semi-algebroidal} if all $\omega$-chains in $\cats{X}$ have a colimit in $\cat{X}$ and if every object of $\cat{X}$ is the colimit of an $\omega$-chain in $\cats{X}$.
	\end{definition}

	\begin{definition}
	A category $\cat{X}$ is called a \emph{\Fraisse category} if 
	\begin{enumerate}
		\item all its morphisms are monomorphisms,
		\item $\cat{X}$ has a \emph{countable dominating subcategory}, i.e., it has a subcategory $\cat{D}$ with countably many objects and morphisms such that 
		\begin{enumerate}
			\item $\cat{D}$ is cofinal in $\cat{X}$, i.e., for all $A\in\cat{X}$ there exists $B\in\cat{D}$, such that $\cat{X}(A,B)\neq\emptyset$,
			\item for all $A\in\cat{D}$, $B\in\cat{X}$, $f\in\cat{X}(A,B)$ there exists $C\in\cat{D}$, $g\in\cat{X}(B,C)$, such that $g\circ f\in\cat{D}(A,C)$, 
		\end{enumerate}
		\item $\cat{X}$ is \emph{directed}, i.e., for all $A,B\in\cat{X}$ there exists $C\in\cat{X}$ such that $\cat{X}(A,C)\neq\emptyset$, and $\cat{X}(B,C)\neq\emptyset$. 
		\item $\cat{X}$ has the \emph{amalgamation property} (\AP), i.e., for all $A,B_1,C_1\in\cat{X}$ and for all $f_1\colon A\to B_1$, $f_2\colon A\to B_2$ there exist $C\in\cat{X}$, $g_1\colon B_1\to C$, and $g_2\colon B_2\to C$, such that the following diagram commutes:
	\[
	\begin{tikzcd}
		B_1 \ar[r,"g_1",dashed]& C\\
		A \ar[u,"f_1"]\ar[r,"f_2"]& B_2.\ar[u,"g_2"',dashed]
	\end{tikzcd}
	\]
	\end{enumerate}		
	\end{definition}
	
	One of the fundamental notions in abstract \Fraisse theory is that of \Fraisse sequences:
	\begin{definition}
		Let $\cat{X}$ be a category. An $\omega$-chain $\chain{A}=((A_i)_{i<\omega},(\alpha_i^j)_{i\le j<\omega})\in[\cat{\omega},\cat{X}]$ is called a \emph{\Fraisse sequence} if 
		\begin{enumerate}
			\item the image of $\chain {A}$ is cofinal in $\cat{X}$, i.e., for all $B\in\cat{X}$ there exists $i<\omega$, such that $\cat{X}(B,A_i)\neq\emptyset$,
			\item $\chain{A}$ has the \emph{absorption property}, i.e., for all $B\in\cat{X}$, $i<\omega$, $f\in\cat{X}(A_i, B)$ there exist $j\ge i$, $g\in\cat{X}(B,A_j)$ such that $g\circ f = \alpha_i^j$. 
		\end{enumerate}
	\end{definition}
	Note that if $\cat{X}$ has a \Fraisse sequence then the image of this $\omega$-chain forms a countable dominating subcategory of $\cat{X}$. Finally, let us formally introduce the notions of universality and homogeneity in the abstract setting of categories:
	\begin{definition}
		Let $\cat{X}$ be a category and let $\cat{Y}$ be a full subcategory of $\cat{X}$.    An object $U$ of $\cat{X}$ is called \emph{$\cat{Y}$-universal} if for all $Y\in\cat{Y}$ we have $\cat{X}(Y,U)\neq\emptyset$.  Moreover, $U$ is called \emph{$\cat{Y}$-homogeneous} if for all $Y\in\cat{Y}$ and for all morphisms $f,g\colon Y\to U$ there exists $\varphi\in\Aut(U)$, such that $\varphi\circ g = f$. 
	\end{definition}
	\begin{remark}
		In case that $\cat{Y}=\cat{X}$, then instead of ``$\cat{Y}$-universal'' we just say ``universal''. Moreover if $\cat{Y}=\cats{X}$, then instead of ``$\cat{Y}$-homogeneous'' we just say ``homogeneous''.
	\end{remark}

	Of particular interest in abstract \Fraisse theory are semi-algebroidal categories $\cat{X}$ with all morphisms monic, for which $\cats{X}$ is a \Fraisse category. 

In the following we collect the results from abstract \Fraisse theory that are needed in the present context:
\begin{theorem}[Droste,G\"obel \cite{DroGoe90}, Kubi\'s \cite{Kub14}]\label{FraisseCat}
	Let $\cat{X}$ be a semi-algebroidal category all of whose morphisms are monic. Suppose that $\cats{X}$ is a \Fraisse category. Let $\cat{D}$ be a countable dominating subcategory of $\cats{X}$. Then
	\begin{enumerate}
		\item\label{fseqexist} $\cats{X}$ affords a \Fraisse sequence $\chain{F}$ whose image lies completely in  $\cat{D}$,
		\item\label{univhomobj} if  $U$ is a colimit of $\chain{F}$ in $\cat{X}$, then it is universal and homogeneous in $\cat{X}$,
		\item\label{univhomunique} any two universal homogeneous objects of $\cat{X}$ are isomorphic. 
	\end{enumerate}
\end{theorem}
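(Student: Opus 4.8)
The plan is to establish the three items in order, using throughout the \AP, directedness, and the defining property of the countable dominating subcategory $\cat{D}$, together with $\omega$-smallness and the standing hypothesis that every morphism of $\cat{X}$ is monic. For part~(\ref{fseqexist}) I would build $\chain{F}=((F_i)_{i<\omega},(\phi_i^j)_{i\le j<\omega})$ entirely inside $\cat{D}$ by an $\omega$-length bookkeeping recursion. Since $\cat{D}$ has only countably many objects and morphisms, I enumerate two countable families of tasks: one demanding, for each object $C\in\cat{D}$, a morphism $C\to F_j$ for some $j$ (these force cofinality), and one demanding absorption of each $\cat{D}$-morphism that issues from some $F_i$. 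A cofinality task is met by amalgamating $F_n$ and $C$ via directedness and then applying the domination property of $\cat{D}$ to the leg out of $F_n$ so as to return to $\cat{D}$; an absorption task for $f\colon F_i\to B$ is met by amalgamating the span $F_n\xleftarrow{\phi_i^n}F_i\xrightarrow{f}B$ inside the \Fraisse category $\cats{X}$ and again applying domination to the leg out of $F_n$, yielding $F_{n+1}\in\cat{D}$ and an absorbing morphism $B\to F_{n+1}$. The one subtlety is that absorption must hold for \emph{all} $B\in\cats{X}$, not just for $\cat{D}$-morphisms; this is repaired by the domination property, which lets me postcompose an arbitrary $f$ with a map into $\cat{D}$, reducing its absorption to that of a $\cat{D}$-morphism.

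For part~(\ref{univhomobj}) I treat universality first. Given $Y\in\cat{X}$, semi-algebroidality presents $Y$ as the colimit of an $\omega$-chain $(Y_n)_{n<\omega}$ with each $Y_n\in\cats{X}$. I construct a compatible cocone $(h_n\colon Y_n\to U)$ by recursion: cofinality of $\chain{F}$ places $Y_0$ into some $F_{j_0}$, and in the inductive step I amalgamate the span $F_{j_n}\xleftarrow{a_n}Y_n$ with the bonding map $Y_n\to Y_{n+1}$ and apply the absorption property to the leg out of $F_{j_n}$, obtaining $a_{n+1}\colon Y_{n+1}\to F_{j_{n+1}}$ whose image in $U$ extends $h_n$; the universal property of $Y=\operatorname{colim}Y_n$ glues the $h_n$ into a morphism $Y\to U$. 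Homogeneity is a back-and-forth. Given $\omega$-small $Y$ and $f,g\colon Y\to U$, I factor $f=\phi_a f'$ and $g=\phi_b g'$ through the sequence and build a commuting staircase $u_k\colon F_{m_k}\to F_{n_k}$, $v_k\colon F_{n_k}\to F_{m_{k+1}}$ with $v_k u_k=\phi_{m_k}^{m_{k+1}}$ and $u_{k+1}v_k=\phi_{n_k}^{n_{k+1}}$. The initial rung is pinned down by one amalgamation of $F_b\xleftarrow{g'}Y\xrightarrow{f'}F_a$ followed by one absorption, arranged so that $u_0 g'=\phi_a^{n_0}f'$ with $m_0=b$; every later rung comes purely from absorption. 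Passing to colimits over the cofinal subsequences $(F_{m_k})$ and $(F_{n_k})$, the staircase relations force the two assembled endomorphisms of $U$ to be mutually inverse, and the choice of $u_0$ gives the desired $\varphi$ with $\varphi\circ g=f$.

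For part~(\ref{univhomunique}), let $U,U'$ be universal and homogeneous, and present them via semi-algebroidality as colimits of $\omega$-chains $(P_n)$ and $(Q_n)$ of $\omega$-small objects, with colimiting legs $p_n$ and $q_n$. I run a back-and-forth producing maps $a_n\colon P_{k_n}\to Q_{l_n}$ and $b_n\colon Q_{l_n}\to P_{k_{n+1}}$ that obey the same staircase relations $b_n a_n=\pi_{k_n}^{k_{n+1}}$ and $a_{n+1}b_n=\rho_{l_n}^{l_{n+1}}$ (bonding maps of the two chains). A representative step builds $b_n$ from $a_n$: I map the $\omega$-small $Q_{l_n}$ into $U$ by universality, compare the resulting composite $P_{k_n}\to U$ with the colimit leg $p_{k_n}$, use homogeneity of $U$ to produce an automorphism carrying one to the other, factor the corrected map through some $P_{k_{n+1}}$ by $\omega$-smallness, and finally cancel the \emph{monic} leg $p_{k_{n+1}}$ to read off $b_n a_n=\pi_{k_n}^{k_{n+1}}$; the $a_n$ are built symmetrically using $U'$. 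Assembling over cofinal index subsequences yields mutually inverse isomorphisms, whence $U\cong U'$.

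The main obstacle lies in the two back-and-forth arguments of parts~(\ref{univhomobj}) and~(\ref{univhomunique}). One must ensure that the staircase identities hold exactly at each finite stage --- which repeatedly depends on factoring morphisms out of $\omega$-small objects and on cancelling monic colimit legs to push equalities from $U$ down to finite levels --- and that passage to cofinal subsequences genuinely recovers $U$ (and $U'$) as the colimit, so that the assembled pair of maps is an honest automorphism (respectively isomorphism) rather than a merely compatible endomorphism. The bookkeeping in part~(\ref{fseqexist}) is routine by comparison, once the reduction of absorption to $\cat{D}$-morphisms is in place.
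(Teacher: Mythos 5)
Your proposal is correct and follows essentially the same route as the paper: the paper delegates parts (1) and (3) to \cite{Kub14} and \cite{DroGoe90} respectively (whose proofs are precisely the bookkeeping recursion and the universality-plus-homogeneity back-and-forth you sketch), and proves part (2) by exactly the kind of zig-zag you describe --- a staircase interleaving the \Fraisse sequence with amalgams, assembled over cofinal subsequences into mutually inverse morphisms of the colimit. The only cosmetic difference is in the homogeneity step: the paper re-amalgamates at every rung (producing intermediate objects $B_i$) and composes two resulting isomorphisms $\iota_\infty\circ\mu_\infty$, whereas you amalgamate once at the initial rung and let the absorption property drive all subsequent rungs; the two versions are interchangeable.
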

\begin{proof}
	(\ref{fseqexist}) is (the proof of) \cite[Corollary 3.8]{Kub14}.
	(\ref{univhomunique}) is \cite[Theorem 1.1]{DroGoe90}.
	
	For (\ref{univhomobj}) is implicit in \cite{Kub14}. For the convenience of the reader, let us give the technical details: Suppose that $\chain{F}=((A_i)_{i<\omega},(\alpha_i^j)_{i\le j<\omega})$. Let $(U,(\alpha_i^\infty)_{i<\omega})$ be a limiting cocone.

	First we show that $U$ is homogeneous. Let $B\in\cats{X}$, and let $\iota,\kappa\colon B\to U$. By induction we are going to construct the following commuting diagram:
	\[
	\begin{tikzcd}[column sep=small]
		& A_{n_0} \ar[dr,"\mu_1"] \ar[rr,"\alpha_{n_0}^{n_1}"]& & A_{n_1}\ar[dr,"\mu_2"] \ar[rr,"\alpha_{n_1}^{n_2}"] & & A_{n_2} \ar[r]& \dots\\
		B=B_0 \ar[ur,"\kappa_0"] \ar[dr,"\iota_0"'] \ar[rr,"\beta_0^1"]& & B_1\ar[ur,"\kappa_1"] \ar[dr,"\iota_1"'] \ar[rr,"\beta_1^2"] & & B_2\ar[ur,"\kappa_2"] \ar[dr,"\iota_2"'] \ar[r] &\dots\\
		& A_{n_0} \ar[ur,"\lambda_1"']\ar[rr,"\alpha_{n_0}^{n_1}"']& & A_{n_1}\ar[ur,"\lambda_2"']\ar[rr,"\alpha_{n_1}^{n_2}"'] & & A_{n_2} \ar[r]& \dots
	\end{tikzcd}
	\]
	Where $(n_i)_{i<\omega}$ is a strictly increasing sequence of non-negative integers.
	
	Since $\cat{X}$ is semi-algebroidal, there exists $n_0<\omega$, $\iota_0,\kappa_0\colon B\to A_{n_0}$, such that $\iota=\alpha_{n_0}^\infty\circ\iota_0$ and $\kappa=\alpha_{n_0}^\infty\circ\kappa_0$. Define $B_0\coloneqq B$. Suppose that $\iota_i,\kappa_i\colon B_i\to A_{n_i}$ are already constructed.   By the \AP there exists $B_{i+1}\in\cats{X}$, $\mu_{i+1}\colon A_{n_i}\to B_{i+1}$, $\lambda_{i+1}\colon A_{n_i}\to B_{i+1}$, such that $\mu_{i+1}\circ\kappa_i = \lambda_{i+1}\circ\iota_i$. Define $\beta^{i+1}_i\coloneqq\lambda_{i+1}\circ\iota_i$. Since $\chain{F}$ is a \Fraisse sequence, there exists $n_{i+1}> n_i$ and $k_{i+1},\iota_{i+1}\colon B_{i+1}\to A_{n_{i+1}}$, such that $\alpha^{n_{i+1}}_{n_i} = \iota_{i+1}\circ\lambda_{i+1} = \kappa_{i+1}\circ\mu_{i+1}$. Thus the diagram is constructed.
	
	By \cite[Proposition 3.3(a)]{Kub14}, the chain $((A_{n_i})_{i<\omega}, (\alpha_{n_i}^{n_j})_{i\le j<\omega})$ is a \Fraisse sequence, too. Without loss of generality we may assume that for each $i<\omega$ we have that $n_i=i$. Under this assumption the commuting diagram from above looks as follows  
	\[
	\begin{tikzcd}[column sep=small]
		& A_{0} \ar[dr,"\mu_1"] \ar[rr,"\alpha_{0}^{1}"]& & A_{1}\ar[dr,"\mu_2"] \ar[rr,"\alpha_{1}^{2}"] & & A_{2} \ar[r]& \dots\\
		B_0 \ar[ur,"\kappa_0"] \ar[dr,"\iota_0"'] \ar[rr,"\beta_0^1"]& & B_1\ar[ur,"\kappa_1"] \ar[dr,"\iota_1"'] \ar[rr,"\beta_1^2"] & & B_2\ar[ur,"\kappa_2"] \ar[dr,"\iota_2"'] \ar[r] &\dots\\
		& A_{0} \ar[ur,"\lambda_1"']\ar[rr,"\alpha_{0}^{1}"']& & A_{1}\ar[ur,"\lambda_2"']\ar[rr,"\alpha_{1}^{2}"'] & & A_{2} \ar[r]& \dots
	\end{tikzcd}
	\]
	Let $(B_\infty,(\beta^\infty_i)_{i<\omega})$ be a limiting cocone for $\chain{B}=((B_i)_{i<\omega},(\beta_i^j)_{i\le j<\omega})$. Note that $(U, (\alpha^\infty_i\circ\iota_i)_{i<\omega})$ and $(U, (\alpha_i^\infty\circ\kappa_i)_{i<\omega})$ are compatible cocones for $\chain{B}$. Let $\iota_\infty,\kappa_\infty\colon B_\infty\to U$ be the respective mediating morphisms. In particular we have for each $i<\omega$ that $\iota_\infty\circ\beta^\infty_i =  \alpha^\infty_i\circ\iota_i$. and $\kappa_\infty\circ\beta^\infty_i =  \alpha^\infty_i\circ\kappa_i$.

	Similarly note that $(B_\infty, (\beta^\infty_{i+1}\circ\mu_{i+1})_{i<\omega})$ and $(B_\infty, (\beta^\infty_{i+1}\circ\lambda_{i+1})_{i<\omega})$ are compatible cocones for $\chain{F}$.  Let $\mu_\infty,\lambda_\infty\colon U\to B_\infty$ be the respective mediating morphisms.  In particular we have for each $i<\omega$ that $\mu_\infty\circ \alpha^\infty_i = \beta_{i+1}^\infty\circ\mu_{i+1}$, and $\lambda_\infty\circ \alpha^\infty_i = \beta_{i+1}^\infty\circ\lambda_{i+1}$.
	
	Next we compute for every $i<\omega$ that 
	\[
	\lambda_\infty\circ\iota_\infty\circ\beta^\infty_i = \lambda_\infty\circ \alpha^\infty_i\circ\iota_i = \beta^\infty_{i+1}\circ\lambda_{i+1}\circ\iota_i = \beta^\infty_{i+1}\circ\beta_i^{i+1} = \beta^\infty_i.
	\]
	Thus, $\lambda_\infty\circ\iota_\infty= 1_{B_\infty}$. Similarly we may compute for each $i<\omega$ that 
	\[
	\iota_\infty\circ\lambda_\infty\circ\alpha_i^\infty = \iota_\infty\circ \beta_{i+1}^\infty\circ\lambda_{i+1} = \alpha_{i+1}^\infty\circ \iota_{i+1}\circ\lambda_{i+1} = \alpha^\infty_{i+1}\circ\alpha_i^{i+1}= \alpha_i^\infty. 
	\]
	Hence, $\iota_\infty\circ\lambda_\infty = 1_U$. It follows that both $\iota_\infty$ and $\lambda_\infty$ are mutually inverse isomorphisms. Similarly it can be shown that $\kappa_\infty$ and $\mu_\infty$ are mutually inverse isomorphisms. Define $\varphi\coloneqq\iota_\infty\circ\mu_\infty$. Then 
	\begin{align*}
	\varphi\circ\kappa &= \varphi\circ\alpha^\infty_0\circ\kappa_0 = \iota_\infty\circ\mu_\infty\circ\alpha^\infty_0\circ\kappa_0 = \iota_\infty\circ \beta^\infty_{1}\circ\mu_{1}\circ\kappa_0= \alpha_1^\infty\circ\iota_1\circ\mu_1\circ\kappa_0 = \alpha_1^\infty\circ\iota_1\circ\beta^0_1\\
	&= \alpha_1^\infty\circ \iota_1\circ\lambda_1\circ\iota_0= \alpha_1^\infty\circ \alpha_0^1\circ\iota_0 = \iota_0\circ\alpha_0^\infty = \iota.
	\end{align*}
	Thus $U$ is homogeneous. Universality of $U$ follows easily from the cofinality of the image of $\chain{F}$ in $\cats{X}$. 
\end{proof}
The previous, \Fraisse-type theorem  may be formulated even stronger:
\begin{observation}
	Let $\cat{C}$ be a category all of whose morphisms are monomorphisms, such that $\cat{C}$ affords a \Fraisse sequence. Then $\cat{C}$ is a \Fraisse category.
\end{observation}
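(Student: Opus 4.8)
The plan is to verify directly the four defining clauses of a \Fraisse category for $\cat{C}$, using the given \Fraisse sequence $\chain{A}=((A_i)_{i<\omega},(\alpha_i^j)_{i\le j<\omega})$. Write $\cat{D}$ for the image of $\chain{A}$, i.e.\ the subcategory whose objects are the $A_i$ and whose morphisms are the bonding maps $\alpha_i^j$ (including the identities $\alpha_i^i$). Clause~(1), that every morphism is monic, holds by hypothesis. For clause~(2) I would take $\cat{D}$ as the countable dominating subcategory: it has countably many objects and morphisms, its cofinality in $\cat{C}$ is exactly the cofinality clause of a \Fraisse sequence, and the ``push-back'' requirement that for every $f\colon A_i\to B$ there are $A_j\in\cat{D}$ and $g\colon B\to A_j$ with $g\circ f\in\cat{D}(A_i,A_j)$ is precisely the absorption property, witnessed by $g\circ f=\alpha_i^j$. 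This is the content of the remark following the definition of a \Fraisse sequence.

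Clause~(3), directedness, is immediate: given $A,B\in\cat{C}$, cofinality yields $f\colon A\to A_i$ and $g\colon B\to A_j$, and with $k=\max(i,j)$ the composites $\alpha_i^k\circ f$ and $\alpha_j^k\circ g$ exhibit $A_k$ as a common target. The workable core of clause~(4) is amalgamation of spans whose apex lies in $\cat{D}$: given $p\colon A_i\to B_1$ and $q\colon A_i\to B_2$, absorption applied to $p$ gives $r\colon B_1\to A_{j_1}$ with $r\circ p=\alpha_i^{j_1}$, and applied to $q$ gives $s\colon B_2\to A_{j_2}$ with $s\circ q=\alpha_i^{j_2}$; composing both into $A_j$ for $j=\max(j_1,j_2)$ produces $g_1\colon B_1\to A_j$ and $g_2\colon B_2\to A_j$ with $g_1\circ p=\alpha_i^j=g_2\circ q$. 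So amalgamation always succeeds when the apex is a sequence object.

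The step I expect to be the main obstacle is passing from this to a \emph{general} span $f_1\colon A\to B_1$, $f_2\colon A\to B_2$. First I would route the outer objects into a common level: by cofinality pick $u_1\colon B_1\to A_n$ and $u_2\colon B_2\to A_n$ (pass to the maximum and compose with bonding maps). Since a cocone $\sigma,\tau\colon A_n\to C$ with $\sigma\circ(u_1\circ f_1)=\tau\circ(u_2\circ f_2)$ would amalgamate the original span via $\sigma\circ u_1$ and $\tau\circ u_2$, the whole problem collapses to \emph{co-amalgamating the parallel pair} $u_1\circ f_1,\;u_2\circ f_2\colon A\to A_n$. Here absorption is of no direct help, because it only governs morphisms emanating from a sequence object while the apex $A$ need not lie in $\cat{D}$; and as all morphisms are monic, the two legs cannot be forced to coincide further along the chain. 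The natural remedy is to promote $A$ to a sequence object through the colimit $U$ of $\chain{A}$ with cocone $(\alpha_i^\infty)_{i<\omega}$ (Theorem~\ref{FraisseCat}): embed $B_1,B_2$ into $U$, align the two images of $A$ by an automorphism of $U$, and then use that $A,B_1,B_2$ are $\omega$-small to pull the resulting commuting square back to a single finite stage $A_N$, where monicity of $\alpha_N^\infty$ turns the equality in $U$ into an equality in $\cat{C}$. The delicate point — and the place where I expect the hypotheses on $\cat{C}$ to be genuinely needed — is that the alignment of the two copies of $A$ is exactly a homogeneity statement about $U$, which in turn is normally derived \emph{from} amalgamation; so the alignment must instead be manufactured by hand from cofinality, absorption, and $\omega$-smallness, and one must check that the maps so obtained at stage $A_N$ really are morphisms of $\cat{C}$. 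Establishing this co-amalgamation of parallel pairs without circularity is the crux of the proof.
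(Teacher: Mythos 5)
Your handling of the first three clauses is correct and is exactly what the paper does: monicity is the hypothesis, the image of the \Fraisse sequence serves as the countable dominating subcategory (cofinality and the absorption property are verbatim the two requirements on a dominating subcategory), and directedness follows by routing any two objects into a common $A_k$. Your amalgamation of spans whose apex is a sequence object, via two applications of absorption followed by composition into $A_{\max(j_1,j_2)}$, is also correct, and so is the reduction of a general span $f_1\colon A\to B_1$, $f_2\colon A\to B_2$ to the co-amalgamation of the parallel pair $u_1\circ f_1,\,u_2\circ f_2\colon A\to A_n$.

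However, the argument stops exactly where the statement becomes non-trivial. You never establish the amalgamation property for a general apex: your final paragraph is a description of an intended strategy (pass to the colimit $U$, align the two images of $A$ by an automorphism of $U$, pull the resulting square back to a finite stage by $\omega$-smallness) together with an accurate diagnosis of why that strategy is in danger of being circular --- in this paper the homogeneity of the colimit (Theorem~\ref{FraisseCat}) is itself derived from the \AP of $\cats{X}$, so it cannot be invoked here. Saying that the alignment ``must be manufactured by hand from cofinality, absorption, and $\omega$-smallness'' names the problem rather than solving it; neither the existence of the colimit $U$ inside $\cat{C}$ nor the required automorphism is available from the stated hypotheses without further work, and the absorption property by itself only governs arrows emanating from sequence objects, so it gives no direct handle on a parallel pair into $A_n$. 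The paper's own proof is a one-line delegation: directedness and the \AP are quoted from \cite[Proposition 3.1]{Kub14}, and the remaining content is your clause~(2). Thus the entire substance of the observation is concentrated in the step you leave open; to complete the proof you must either reproduce the argument behind that citation or cite it.
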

\begin{proof}
	\cite[Proposition 3.1]{Kub14} entails that $\cat{C}$ is directed and has the \AP. The image of a \Fraisse sequence in $\cat{C}$ is a countable dominating subcategory of $\cat{C}$. 
\end{proof}

\subsection{Universal and skew-homogeneous homomorphisms through comma-categories}\label{commacats}
The question of existence of universal and/or homogeneous homomorphisms of various kinds was treated in \cite{PecPec18b}, where, more generally, universal homogeneous objects in comma categories are studied. 

Recall that a comma category $(\func{F}\comma \func{G})$ is specified through two functors $\func{F}\colon\cat{A}\to\cat{C}$ and $\func{G}\colon\cat{B}\to\cat{C}$. Its objects are triples $(A,f,B)$, where $A$ and $B$ are objects of $\cat{A}$ and $\cat{B}$, respectively, and where $f\colon \func{F}A\to\func{G}B$ is a morphism in $\cat{C}$. The morphisms from $(A,f,B)$ to $(A',f',B')$ are pairs $(a,b)$, where $a\colon A\to A'$ in $\cat{A}$, $b\colon B\to B'$ in $\cat{B}$, such that the following diagram is commutative:
\[
\begin{tikzcd}
	\func{F}A' \ar[r,"f'"]& \func{G}{B'} \\
	\func{F}A \ar[r,"f"]\ar[u,"\func{F}a"]& \func{G}{B}\ar[u,"\func{G}b"'].
\end{tikzcd}
\] 
The composition of two arrows is defined component wise, i.e., $(c,d)\circ (a,b)\coloneqq (c\circ a,d\circ b)$.

In terms of comma categories the previously defined notions of universal and skew-homogeneous homomorphisms  appear as follows: With $(\cat{\sigma\class{C}},\injto)$  we denote the subcategory of $\cat{\sigma\class{C}}$ that is spanned by all embeddings. Moreover, for any $\struc{V}\in\sigma\class{C}$ and for every subgroup $H$ of $\Aut(\struc{V})$ we denote by $(\struc{V},H)$ the category with exactly one object $\struc{V}$, such that the morphisms are the elements of $H$. By $\func{F}$ we will denote the identical embedding functor of $(\cat{\sigma C},\injto)$ into $\cat{\sigma C}$. The identical embedding functor of $(\struc{V},H)$ into $\cat{\sigma C}$ is denoted by $\func{G}_{(\struc{V},H)}$. In the special case that $H$ consists just of the identity automorphism of $\struc{V}$, instead of $\func{G}_{(\struc{V},\{1_{\struc{U}}\})}$ we just write $\func{G}_{\struc{V}}$.  
\begin{observation}\label{univhomcomma}
	Let $\struc{U},\struc{V}\in\sigma\class{C}$, and let  $\Omega\colon\struc{U}\to\struc{V}$. Then
	\begin{enumerate}
		\item $\Omega$ is universal if and only if $(\struc{U},\Omega,\struc{V})$ is a universal object in $(\func{F}\comma \func{G}_{\struc{V}})$,
		\item $\Omega$ is skew-homogeneous if and only if $(\struc{U},\Omega,\struc{V})$ is a  homogeneous object in $(\func{F}\comma \func{G}_{(\struc{V},\Aut(\struc{V}))})$. 
	\end{enumerate}
\end{observation}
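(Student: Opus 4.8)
The proof is a matter of translating the two defining conditions into the language of comma categories; the plan is to unravel the object and morphism structure of $(\func{F}\comma\func{G}_{\struc{V}})$ and $(\func{F}\comma\func{G}_{(\struc{V},\Aut(\struc{V}))})$ and to read off the equivalences directly. First I would record the general shape of these categories. Because the base category on the right-hand side has a single object, every object is of the form $(\struc{A},h,\struc{V})$ with $\struc{A}\in\sigma\class{C}$ and $h\colon\struc{A}\to\struc{V}$ a homomorphism, and a morphism $(\struc{A},h,\struc{V})\to(\struc{A}',h',\struc{V})$ is a pair $(a,\psi)$ in which $a\colon\struc{A}\injto\struc{A}'$ is an embedding and $\psi$ is a morphism of the right-hand category---forced to be $1_{\struc{V}}$ in the first case and an arbitrary element of $\Aut(\struc{V})$ in the second---subject to $h'\circ a=\psi\circ h$. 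In particular, a morphism from $(\struc{A},h,\struc{V})$ into $(\struc{U},\Omega,\struc{V})$ is an embedding $a\colon\struc{A}\injto\struc{U}$ together with such a $\psi$ satisfying $\Omega\circ a=\psi\circ h$.

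For part (1), recall from the Remark following the definition of universality that ``universal'' here means universal with respect to the entire comma category. Thus $(\struc{U},\Omega,\struc{V})$ is universal exactly when every object $(\struc{A},h,\struc{V})$, i.e.\ every $\struc{A}\in\sigma\class{C}$ equipped with an arbitrary $h\colon\struc{A}\to\struc{V}$, admits a morphism into $(\struc{U},\Omega,\struc{V})$. Since in $(\func{F}\comma\func{G}_{\struc{V}})$ such a morphism is precisely an embedding $\iota\colon\struc{A}\injto\struc{U}$ with $\Omega\circ\iota=h$, this is word for word the statement that $\Omega$ is universal. No further work is needed here.

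For part (2) the matching is the same in spirit, but two points need care. The first is that ``homogeneous'' means $\cats{X}$-homogeneous, so one must identify the $\omega$-small objects of the comma category. Colimits of $\omega$-chains here are computed componentwise: the right-hand component stays $\struc{V}$, while the left-hand component is the union, because the inclusion $\func{F}\colon(\cat{\sigma C},\injto)\to\cat{\sigma C}$ preserves colimits of $\omega$-chains of embeddings. Hence $(\struc{A},h,\struc{V})$ is $\omega$-small iff $\struc{A}$ is $\omega$-small in $(\cat{\sigma C},\injto)$, that is, iff $\struc{A}\in\class{C}$; this is exactly the range of quantification in the definition of skew-homogeneity. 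The second point is the twisting by automorphisms of $\struc{V}$. An automorphism of $(\struc{U},\Omega,\struc{V})$ is a pair $(\varphi,\rho)$ with $\varphi\in\Aut(\struc{U})$, $\rho\in\Aut(\struc{V})$ and $\Omega\circ\varphi=\rho\circ\Omega$. Given an $\omega$-small $(\struc{A},h',\struc{V})$ and two morphisms $(\iota,\psi_1),(\kappa,\psi_2)$ into $(\struc{U},\Omega,\struc{V})$, I would normalise by setting $h\coloneqq\psi_1\circ h'$ and $\psi\coloneqq\psi_2\circ\psi_1^{-1}$, so that $\Omega\circ\iota=h$ and $\Omega\circ\kappa=\psi\circ h$ become exactly the hypotheses of skew-homogeneity. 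The automorphism $\varphi$ it produces, paired with $\psi$, yields an automorphism $(\varphi,\psi)$ of $(\struc{U},\Omega,\struc{V})$ carrying $(\iota,\psi_1)$ to $(\kappa,\psi_2)$; conversely, feeding the pair $(\iota,1_{\struc{V}}),(\kappa,\psi)$ into homogeneity recovers skew-homogeneity verbatim.

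The only genuinely non-formal steps are these last two: pinning down the $\omega$-small objects of the comma category, and checking that the normalisation $h\mapsto\psi_1\circ h'$, $\psi\mapsto\psi_2\circ\psi_1^{-1}$ makes the correspondence bidirectional. Both are routine bookkeeping, but the argument must be run in both directions to secure the ``if and only if'', so I would present the two implications separately rather than trusting a single diagram chase.
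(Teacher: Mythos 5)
Your proposal is correct and matches the paper's intent exactly: the paper dismisses this observation with ``Clear.'', and your unfolding of the comma-category morphisms, the normalisation $h\mapsto\psi_1\circ h'$, $\psi\mapsto\psi_2\circ\psi_1^{-1}$, and the identification of the $\omega$-small objects (which the paper records separately in Observation~\ref{algebroidal} via a citation) is precisely the bookkeeping being waved away. Nothing is missing.
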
 
\begin{proof}
	Clear.
\end{proof}
	
\begin{observation}\label{algebroidal}
	The category $(\func{F}\comma\func{G}_{(\struc{V},\Aut(\struc{V}))})$ is semi-algebroidal. Moreover, an object $(\struc{A},h,\struc{V})$ of $(\func{F}\comma\func{G}_{(\struc{V},\Aut(\struc{V}))})$ is $\omega$-small if and only if $\struc{A}$ is finitely generated (i.e., it is $\omega$-small in $\cat{\sigma C}$).
\end{observation}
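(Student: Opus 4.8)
The plan is to first produce an explicit colimit for every $\omega$-chain in the comma category $\cat{X}\coloneqq(\func{F}\comma\func{G}_{(\struc{V},\Aut(\struc{V}))})$, then read off which of its objects are $\omega$-small, and finally verify the two clauses defining semi-algebroidality. Recall that an object of $\cat{X}$ is a homomorphism $h\colon\struc{A}\to\struc{V}$ with $\struc{A}\in\sigma\class{C}$, and a morphism $(\struc{A},h,\struc{V})\to(\struc{A}',h',\struc{V})$ is a pair $(a,\psi)$ with $a\colon\struc{A}\injto\struc{A}'$ an embedding, $\psi\in\Aut(\struc{V})$, and $h'\circ a=\psi\circ h$.

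For the colimit I would work coordinatewise, using the standard fact that a comma category $(\func{F}\comma\func{G})$ has every colimit that exists in both factors and is preserved by its left leg $\func{F}$. Given an $\omega$-chain $((\struc{A}_i,h_i,\struc{V})_{i<\omega},((a_i^j,\psi_i^j))_{i\le j<\omega})$, the first coordinate is a chain of embeddings in $(\cat{\sigma C},\injto)$ whose colimit is the directed union $\struc{A}_\infty=\bigcup_{i<\omega}\struc{A}_i\in\sigma\class{C}$, with the inclusions $a_i^\infty\colon\struc{A}_i\injto\struc{A}_\infty$ as legs; since this union is also the colimit in $\cat{\sigma C}$, the functor $\func{F}$ preserves it. The second coordinate lives in the one-object groupoid $(\struc{V},\Aut(\struc{V}))$, where the chain $(\psi_i^j)$ has colimit $\struc{V}$ with legs $\lambda_i\coloneqq(\psi_0^i)^{-1}$, where $\psi_0^i\coloneqq\psi_{i-1}^i\circ\cdots\circ\psi_0^1$ (and $\psi_0^0\coloneqq1_{\struc{V}}$); functoriality gives $\lambda_j\circ\psi_i^j=\lambda_i$. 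Then $(\lambda_i\circ h_i)_{i<\omega}$ is a cocone over $\struc{A}_\infty$, because $\lambda_j\circ h_j\circ a_i^j=\lambda_j\circ\psi_i^j\circ h_i=\lambda_i\circ h_i$, so it induces a unique $h_\infty\colon\struc{A}_\infty\to\struc{V}$ with $h_\infty\circ a_i^\infty=\lambda_i\circ h_i$. A direct check shows that $(\struc{A}_\infty,h_\infty,\struc{V})$ together with the legs $(a_i^\infty,\lambda_i)$ is a colimit of the chain in $\cat{X}$; in particular every $\omega$-chain in $\cat{X}$ has a colimit there.

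Next I would characterise the $\omega$-small objects. If $\struc{A}$ is finitely generated and $(a,\psi)\colon(\struc{A},h,\struc{V})\to(\struc{C}_\infty,k_\infty,\struc{V})$ is a morphism into a colimit with legs $(c_i^\infty,\lambda_i)$ as above, then $a(\struc{A})$ is finitely generated and hence contained in some $\struc{C}_i$, giving a corestriction $a'\colon\struc{A}\injto\struc{C}_i$ with $c_i^\infty\circ a'=a$. Setting $\psi'\coloneqq\lambda_i^{-1}\circ\psi$ and using $k_i=\lambda_i^{-1}\circ k_\infty\circ c_i^\infty$, one computes $k_i\circ a'=\lambda_i^{-1}\circ k_\infty\circ a=\lambda_i^{-1}\circ\psi\circ h=\psi'\circ h$, so $(a',\psi')$ is a morphism of $\cat{X}$ with $(c_i^\infty,\lambda_i)\circ(a',\psi')=(a,\psi)$; thus $(\struc{A},h,\struc{V})$ is $\omega$-small. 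Conversely, if $\struc{A}$ is not finitely generated, write it as a non-stabilising union $\struc{A}=\bigcup_{i<\omega}\struc{A}_i$ of members of $\class{C}$ and form the chain $(\struc{A}_i,h\restr_{\struc{A}_i},\struc{V})$ with bonding maps $(\iota_i^j,1_{\struc{V}})$; its colimit is $(\struc{A},h,\struc{V})$, and $\omega$-smallness would factor the identity through some stage, producing an embedding $a'\colon\struc{A}\injto\struc{A}_i$ with $\iota_i^\infty\circ a'=1_{\struc{A}}$ and hence $\struc{A}\subseteq\struc{A}_i$, a contradiction. So $(\struc{A},h,\struc{V})$ is $\omega$-small precisely when $\struc{A}$ is finitely generated, i.e.\ $\omega$-small in $\cat{\sigma C}$.

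Semi-algebroidality then follows. The existence of colimits of $\omega$-chains in $\cats{X}$ is a special case of the construction in the second paragraph: for objects with finitely generated first coordinate the union $\struc{A}_\infty$ is again in $\sigma\class{C}$, hence an object of $\cat{X}$. For the remaining clause, any object $(\struc{A},h,\struc{V})$ is the colimit of the $\omega$-chain $(\struc{A}_i,h\restr_{\struc{A}_i},\struc{V})$ with bonding maps $(\iota_i^j,1_{\struc{V}})$, where $\struc{A}=\bigcup_{i<\omega}\struc{A}_i$ with $\struc{A}_i\in\class{C}$; each term is $\omega$-small by the characterisation just proved, and its colimit is $(\struc{A},h,\struc{V})$. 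I expect the one genuinely delicate point to be the colimit construction itself---getting the twist $\lambda_i$ in the $\struc{V}$-coordinate right and confirming that $\func{F}$ preserves directed unions, so that the coordinatewise recipe really yields a colimit in $\cat{X}$. Once that is secured, the $\omega$-small characterisation and both clauses of semi-algebroidality reduce to the straightforward diagram chases indicated above.
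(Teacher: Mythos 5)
Your argument is correct, but it takes a different route from the paper. The paper's proof of this observation is a one-line appeal to an external general result (Proposition~4.4 of the cited paper of Pech--Pech on comma categories), reducing the claim to checking a list of axioms $P1,\dots,P7$ from that reference; no construction is carried out in the text. You instead rebuild the content from scratch: an explicit coordinatewise colimit of $\omega$-chains in $(\func{F}\comma\func{G}_{(\struc{V},\Aut(\struc{V}))})$, with the union $\struc{A}_\infty$ in the first coordinate and the twisted legs $\lambda_i=(\psi_0^i)^{-1}$ in the one-object groupoid $(\struc{V},\Aut(\struc{V}))$, followed by the two-way characterisation of $\omega$-smallness and the decomposition of an arbitrary object as a colimit of restrictions $(\struc{A}_i,h\restr_{\struc{A}_i},\struc{V})$. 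All the individual verifications check out: the cocone identity $\lambda_j\circ\psi_i^j=\lambda_i$, the induced $h_\infty$, the factorisation $(a',\lambda_i^{-1}\circ\psi)$ for finitely generated $\struc{A}$, and the non-stabilising-union argument for the converse. What your approach buys is self-containedness and transparency about the only genuinely non-trivial point (the automorphism twist in the $\struc{V}$-coordinate, which is exactly why homogeneity rather than mere universality transfers later); what the paper's approach buys is brevity and uniformity, since the same cited proposition is reused verbatim for the comma category $(\func{F}\comma\func{F})$ in the proof of Theorem~4.14. One small point worth making explicit if you write this up: the union of an $\omega$-chain of members of $\sigma\class{C}$ is again in $\sigma\class{C}$ (a diagonalisation using the hereditary property of the age $\class{C}$), which you use tacitly when asserting $\struc{A}_\infty\in\sigma\class{C}$.
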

\begin{proof}
	This follows directly from \cite[Proposition 4.4]{PecPec18b}. The conditions $P1,\dots P7$ from \cite[Definition 4.1]{PecPec18b} are easily verified in the present case. 
\end{proof}

For the  category $(\func{F}\comma\func{G}_{(\struc{V},\Aut(\struc{V}))})_{<\omega}$ a criterion for the \AP is:
\begin{observation}\label{skewhom}
	The category $(\func{F}\comma\func{G}_{(\struc{V},\Aut(\struc{V}))})_{<\omega}$ has the \AP if and only if for all $\struc{A},\struc{B}_1,\struc{B}_2\in\class{C}$, $f_1\colon \struc{A}\injto\struc{B}_1$, $f_2\colon \struc{A}\injto\struc{B}_2$, $h_1\colon \struc{B}_1\to\struc{V}$, $h_2\colon \struc{B}_2\to\struc{V}$, if $h_1\circ f_1=h_2\circ f_2$, then there exist $\struc{C}\in\class{C}$, $g_1\colon \struc{B}_1\injto\struc{C}$, $g_2\colon \struc{B}_2\injto\struc{C}$, $h\colon \struc{C}\to\struc{V}$, $k\in\Aut(\struc{V})$ such that the following diagram commutes:
	\[
	\begin{tikzcd}
		& & \struc{V}\arrow[r,dashed,"k","\cong"'] & \struc{V}.\\
		\struc{B}_1\arrow[r,dashed,hook,"g_1"]\arrow[urr,bend left,"h_1"] & \struc{C}\arrow[rru,dashed,bend right,near end,"h"']\\
		\struc{A}\arrow[u,hook',"f_1"]\arrow[r,hook,"f_2"] & \struc{B}_2\arrow[u,dashed,hook',"g_2"]\arrow[uur,bend right,"h_2"']
	\end{tikzcd}
	\]
\end{observation}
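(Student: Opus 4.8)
The plan is to unfold the definition of the amalgamation property in the comma category $(\func{F}\comma\func{G}_{(\struc{V},\Aut(\struc{V}))})_{<\omega}$ and match it, term by term, against the displayed diagrammatic condition, the only genuine work being the bookkeeping of the automorphism components of the comma-category morphisms. First I would record, via Observation~\ref{algebroidal}, that the objects of $(\func{F}\comma\func{G}_{(\struc{V},\Aut(\struc{V}))})_{<\omega}$ are exactly the triples $(\struc{A},h,\struc{V})$ with $\struc{A}\in\class{C}$ finitely generated and $h\colon\struc{A}\to\struc{V}$ a homomorphism, and that a morphism $(a,k)\colon(\struc{A},h,\struc{V})\to(\struc{A}',h',\struc{V})$ consists of an embedding $a\colon\struc{A}\injto\struc{A}'$ together with $k\in\Aut(\struc{V})$ satisfying $h'\circ a=k\circ h$, with composition $(a',k')\circ(a,k)=(a'\circ a,\,k'\circ k)$. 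In particular the embedding constraint on first components and the finite-generation constraint on objects match the $\injto$-arrows and the membership $\struc{A},\struc{B}_1,\struc{B}_2,\struc{C}\in\class{C}$ appearing in the statement.

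For the ``only if'' direction I assume the \AP and derive the displayed condition. Given $\struc{A},\struc{B}_1,\struc{B}_2\in\class{C}$, embeddings $f_1,f_2$ and homomorphisms $h_1,h_2$ with $h_1\circ f_1=h_2\circ f_2=:h_0$, I regard $(\struc{A},h_0,\struc{V})$, $(\struc{B}_1,h_1,\struc{V})$, $(\struc{B}_2,h_2,\struc{V})$ as objects and observe that $(f_1,1_{\struc{V}})$ and $(f_2,1_{\struc{V}})$ are legitimate comma-morphisms out of $(\struc{A},h_0,\struc{V})$, precisely because $h_0=h_i\circ f_i$. Applying the \AP yields an apex object $(\struc{C},h,\struc{V})$ (so $\struc{C}\in\class{C}$) and morphisms $(g_1,l_1)$, $(g_2,l_2)$ whose composites with $(f_i,1_{\struc{V}})$ agree; unfolding the componentwise composition gives $g_1\circ f_1=g_2\circ f_2$ and $l_1=l_2=:k$, while the morphism conditions give $h\circ g_1=k\circ h_1$ and $h\circ g_2=k\circ h_2$. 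These are exactly the commutativities of the displayed diagram, with $g_1,g_2$ embeddings as required.

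For the ``if'' direction the subtle point, and the only place requiring an idea, is that an arbitrary amalgamation datum in the comma category carries nontrivial automorphisms $k_1,k_2$ on the two legs, so that $h_1\circ f_1=k_1\circ h_0$ and $h_2\circ f_2=k_2\circ h_0$ need \emph{not} coincide, whereas the displayed condition applies only when the two composites into $\struc{V}$ are literally equal. I would untwist this by replacing $h_i$ with $h_i':=k_i^{-1}\circ h_i$; then $h_1'\circ f_1=h_0=h_2'\circ f_2$, so the displayed condition applies to the datum $(\struc{A},\struc{B}_1,\struc{B}_2,f_1,f_2,h_1',h_2')$ and produces $\struc{C}\in\class{C}$, embeddings $g_1,g_2$, a homomorphism $h'$ and $k'\in\Aut(\struc{V})$ with $g_1\circ f_1=g_2\circ f_2$, $h'\circ g_1=k'\circ h_1'$ and $h'\circ g_2=k'\circ h_2'$. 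Setting $h:=h'$ and $l_i:=k'\circ k_i^{-1}\in\Aut(\struc{V})$ retwists everything back: one checks $h\circ g_i=k'\circ k_i^{-1}\circ h_i=l_i\circ h_i$, so that $(g_i,l_i)$ are genuine comma-morphisms, and $l_1\circ k_1=k'=l_2\circ k_2$, which together with $g_1\circ f_1=g_2\circ f_2$ is precisely the equality $(g_1,l_1)\circ(f_1,k_1)=(g_2,l_2)\circ(f_2,k_2)$ demanded by the \AP. Since $\struc{C}\in\class{C}$ is finitely generated, the apex object $(\struc{C},h,\struc{V})$ is $\omega$-small, so the amalgam lives in $(\func{F}\comma\func{G}_{(\struc{V},\Aut(\struc{V}))})_{<\omega}$, completing the verification.
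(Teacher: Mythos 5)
Your argument is correct: the identification of objects and morphisms of $(\func{F}\comma\func{G}_{(\struc{V},\Aut(\struc{V}))})_{<\omega}$ is accurate, the ``only if'' direction correctly specializes the \AP to cocones with identity second components, and the ``untwisting'' $h_i'\coloneqq k_i^{-1}\circ h_i$ followed by the ``retwisting'' $l_i\coloneqq k'\circ k_i^{-1}$ is exactly the right manipulation to reduce a general comma-category span (whose two legs carry possibly distinct automorphisms $k_1,k_2$) to the stated condition, in which the two composites into $\struc{V}$ are literally equal. The route is, however, different from the paper's: there the statement is dispatched in one line by noting that the one-object category $(\struc{V},H)$ has the \AP for every subgroup $H\le\Aut(\struc{V})$ and then invoking the general amalgamation criterion for comma categories, \cite[Proposition 5.2(2)]{PecPec18b}, which packages precisely the bookkeeping you carry out by hand. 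What your version buys is self-containedness -- a reader sees explicitly where the automorphism $k$ in the displayed diagram comes from and why a single $k$ suffices (because $l_1=l_2$ is forced when both legs of the span have identity second component) -- at the cost of redoing a computation that the cited proposition was designed to encapsulate; the paper's version buys brevity and uniformity, since the same citation is reused for Observation~\ref{apvvap}. One cosmetic remark: it would be worth stating explicitly that $(\struc{A},h_0,\struc{V})$ and $(\struc{B}_i,h_i,\struc{V})$ are $\omega$-small objects (which holds because $\class{C}$ is an age, so its members are finitely generated), mirroring the check you already make for the apex $(\struc{C},h,\struc{V})$.
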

\begin{proof}
	This follows directly from the observation that for any subgroup $H\le\Aut(\struc{V})$ the category $(\struc{V},H)$ has the \AP, in conjunction with \cite[Proposition 5.2(2)]{PecPec18b}.
\end{proof}
\begin{remark}\label{vvalapdef}
	Note how the formulation of the condition in Observation~\ref{skewhom} is redundant. Since $k$ is an automorphism of $\struc{V}$, it can be removed and $h$ may be replaced by $k^{-1}\circ h$. The reformulation of the condition is then the following:
	
	For all $\struc{A},\struc{B}_1,\struc{B}_2\in\class{C}$, $f_1\colon \struc{A}\injto\struc{B}_1$, $f_2\colon \struc{A}\injto\struc{B}_2$, $h_1\colon \struc{B}_1\to\struc{V}$, $h_2\colon \struc{B}_2\to\struc{V}$, if $h_1\circ f_1=h_2\circ f_2$, then there exist $\struc{C}\in\class{C}$, $g_1\colon \struc{B}_1\injto\struc{C}$, $g_2\colon \struc{B}_2\injto\struc{C}$, $h\colon \struc{C}\to\struc{V}$ such that the following diagram commutes:
	\[
	\begin{tikzcd}
		& & \struc{V}.\\
		\struc{B}_1\arrow[r,dashed,hook,"g_1"]\arrow[urr,bend left,"h_1"] & \struc{C}\arrow[ru,dashed,near end,"h"']\\
		\struc{A}\arrow[u,hook',"f_1"]\arrow[r,hook,"f_2"] & \struc{B}_2\arrow[u,dashed,hook',"g_2"]\arrow[uur,bend right,"h_2"']
	\end{tikzcd}
	\]
	If $\class{C}$ satisfies this condition for a given fixed $\struc{V}\in\sigma\class{C}$, then we say that $\class{C}$ has the \emph{$\struc{V}$-valued amalgamation property}.
\end{remark}
This leads us immediately to the next observation:
\begin{observation}\label{apvvap}
	The category $(\func{F}\comma \func{G}_{\struc{V}})$ has the \AP if and only if $\class{C}$ has the $\struc{V}$-valued amalgamation property. 
\end{observation}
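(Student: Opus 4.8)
The plan is to read the amalgamation property of $(\func{F}\comma\func{G}_{\struc{V}})$ straight off the definition of the comma category and then to recognise it, through the same criterion used for Observation~\ref{skewhom}, as the $\struc{V}$-valued amalgamation property. First I would unwind the morphisms of $(\func{F}\comma\func{G}_{\struc{V}})$: since $\func{G}_{\struc{V}}=\func{G}_{(\struc{V},\{1_{\struc{V}}\})}$ has as its domain the one-object, one-morphism category $(\struc{V},\{1_{\struc{V}}\})$, a morphism $(\struc{A},h,\struc{V})\to(\struc{A}',h',\struc{V})$ is forced to be a pair $(a,1_{\struc{V}})$ with $a\colon\struc{A}\injto\struc{A}'$ an embedding and $h=h'\circ a$. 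Hence the \AP of $(\func{F}\comma\func{G}_{\struc{V}})$ asserts precisely that any two comma morphisms $(f_1,1_{\struc{V}})\colon(\struc{A},h,\struc{V})\to(\struc{B}_1,h_1,\struc{V})$ and $(f_2,1_{\struc{V}})\colon(\struc{A},h,\struc{V})\to(\struc{B}_2,h_2,\struc{V})$ --- equivalently, embeddings $f_1,f_2$ of $\struc{A}$ with $h_1\circ f_1=h=h_2\circ f_2$ --- admit an amalgamating object $(\struc{C},h',\struc{V})$ together with embeddings $g_1,g_2$ satisfying $g_1\circ f_1=g_2\circ f_2$, $h'\circ g_1=h_1$ and $h'\circ g_2=h_2$. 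This is exactly the diagram of Remark~\ref{vvalapdef}, a priori with $\struc{A},\struc{B}_1,\struc{B}_2,\struc{C}$ ranging over $\sigma\class{C}$.

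The engine is the same as for Observation~\ref{skewhom}: the comma-category amalgamation criterion \cite[Proposition 5.2(2)]{PecPec18b}, combined with the \AP of the group category occurring as the domain of $\func{G}_{\struc{V}}$. Here that category is $(\struc{V},\{1_{\struc{V}}\})$, which has the \AP vacuously, so we are exactly in the specialisation $H=\{1_{\struc{V}}\}$ of the situation of Observation~\ref{skewhom}. By Observation~\ref{algebroidal} the comma category is semi-algebroidal, with $\omega$-small objects precisely the $(\struc{A},h,\struc{V})$ for which $\struc{A}$ is finitely generated, so the criterion reduces the \AP to a condition on finitely generated $\struc{A},\struc{B}_1,\struc{B}_2$. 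The only change relative to Observation~\ref{skewhom} is that the automorphism $k$ appearing there is now forced to equal $1_{\struc{V}}$ and therefore drops out; what remains is verbatim the condition of Remark~\ref{vvalapdef}, namely the $\struc{V}$-valued \AP.

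It then remains only to match the quantifiers, and this is the one place where something beyond unwinding is happening. For the forward implication, an amalgam $(\struc{C},h',\struc{V})$ obtained in $\sigma\class{C}$ from finitely generated $\struc{A},\struc{B}_1,\struc{B}_2$ may be replaced by the substructure $\langle g_1(\struc{B}_1)\cup g_2(\struc{B}_2)\rangle\le\struc{C}$; this is finitely generated and a substructure of $\struc{C}\in\sigma\class{C}$, hence lies in $\Age(\struc{C})\subseteq\class{C}$ by the \HP, and restricting $h'$ to it keeps all the required triangles and squares commuting, giving a $\struc{V}$-valued amalgam inside $\class{C}$. For the converse one must lift finite $\struc{V}$-valued amalgamation to arbitrary objects of $\sigma\class{C}$, writing $\struc{A},\struc{B}_1,\struc{B}_2$ as colimits of $\omega$-chains of finitely generated structures and building the amalgam as a colimit of the finite amalgams. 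I expect this lifting to be the main (and indeed only non-formal) obstacle; it is the standard directed-colimit argument over the semi-algebroidal category set up in Observation~\ref{algebroidal}, which is exactly what \cite[Proposition 5.2(2)]{PecPec18b} packages.
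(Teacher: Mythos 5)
Your argument is correct and follows essentially the same route as the paper, which disposes of this observation in one line by citing \cite[Proposition 5.2(2)]{PecPec18b} together with the trivial \AP of the one-morphism category $(\struc{V},\{1_{\struc{V}}\})$ — exactly the specialisation $H=\{1_{\struc{V}}\}$ of the argument for Observation~\ref{skewhom} that you identify. The additional detail you supply (unwinding the comma morphisms, and the passage between amalgams over $\class{C}$ and over $\sigma\class{C}$ via the \HP and directed colimits) is precisely what the cited proposition packages, so nothing is missing and nothing is genuinely different.
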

\begin{proof}
	This follows directly from \cite[Proposition 5.2(2)]{PecPec18b}.
\end{proof}

An immediate consequence is that $(\func{F}\comma\func{G}_{(\struc{V},\Aut(\struc{V}))})_{<\omega}$ has the \AP if and only if $(\func{F}\comma\func{G}_{\struc{V}})_{<\omega}$ does.

Our next observation regards directedness.
\begin{observation}\label{apdir}
	Suppose that $(\func{F}\comma\func{G}_{(\struc{V},\Aut(\struc{V}))})_{<\omega}$ has the \AP. Then it is also directed. The same holds for $(\func{F}\comma\func{G}_{\struc{V}})_{<\omega}$.
\end{observation}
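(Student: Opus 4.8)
The plan is to deduce directedness from the \AP together with the existence of a \emph{weakly initial} object, using the elementary categorical fact that any category possessing a weakly initial object and the \AP is directed. Indeed, if $W$ is weakly initial and $X,Y$ are arbitrary objects, then choosing morphisms $W\to X$ and $W\to Y$ yields a span with apex $W$, and amalgamating it (via the \AP) produces a common upper bound of $X$ and $Y$. Thus the whole argument reduces to producing a single weakly initial object of $(\func{F}\comma\func{G}_{(\struc{V},\Aut(\struc{V}))})_{<\omega}$.

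First I would isolate the ``bottom'' of the age: I claim that $\class{C}$, being an age, has an initial object $\struc{A}_0$, namely the substructure generated by the empty set (in the purely relational case this is simply the empty structure). The point is that the \HP and the \JEP force the $\emptyset$-generated substructures of all members of $\class{C}$ to be mutually isomorphic and canonically embedded: given $\struc{B}\in\class{C}$, amalgamating (via \JEP) a fixed copy of $\struc{A}_0$ with $\struc{B}$ inside some $\struc{W}\in\class{C}$ shows that $\struc{A}_0$ and $\langle\emptyset\rangle_{\struc{B}}$ are both isomorphic to $\langle\emptyset\rangle_{\struc{W}}$, whence $\struc{A}_0\cong\langle\emptyset\rangle_{\struc{B}}\injto\struc{B}$. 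This embedding is moreover the \emph{unique} homomorphism $\struc{A}_0\to\struc{B}$, since $\struc{A}_0$ is generated by $\emptyset$; so $\struc{A}_0$ is genuinely initial in $(\sigma\class{C},\injto)$. In particular $\struc{A}_0$ is finitely generated, hence by Observation~\ref{algebroidal} the triple $(\struc{A}_0,h_0,\struc{V})$ is $\omega$-small, where $h_0\colon\struc{A}_0\to\struc{V}$ is the unique homomorphism into $\struc{V}\in\sigma\class{C}$ (it exists because $\struc{A}_0$ is isomorphic to the $\emptyset$-generated substructure of $\struc{V}$, and the inclusion furnishes $h_0$).

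Next I would verify that $(\struc{A}_0,h_0,\struc{V})$ is weakly initial in $(\func{F}\comma\func{G}_{(\struc{V},\Aut(\struc{V}))})_{<\omega}$. Given any object $(\struc{B},g,\struc{V})$, take $a\colon\struc{A}_0\injto\struc{B}$ to be the unique embedding and put $\psi\coloneqq 1_{\struc{V}}$. Then $g\circ a$ and $h_0$ are both homomorphisms $\struc{A}_0\to\struc{V}$, and since $\struc{A}_0$ is generated by $\emptyset$ they coincide; hence $g\circ a=\psi\circ h_0$, so $(a,\psi)$ is a morphism of the comma category. Combined with the reduction of the first paragraph and the hypothesised \AP (in the concrete form of Observation~\ref{skewhom}, applied to the span emanating from $(\struc{A}_0,h_0,\struc{V})$), this yields directedness. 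The identical argument applies verbatim to $(\func{F}\comma\func{G}_{\struc{V}})_{<\omega}$: there the second components of morphisms are forced to be $1_{\struc{V}}$, and the equality $g\circ a=h_0$ needed for weak initiality is again exactly the uniqueness of homomorphisms out of $\struc{A}_0$.

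The step I expect to be the main obstacle is the first one: pinning down the bottom object $\struc{A}_0$ and its homomorphism into $\struc{V}$ in the presence of operation symbols. When constants or non-nullary operations occur, the $\emptyset$-generated subalgebras of two structures need not \emph{a priori} agree, so one must genuinely exploit the \HP and \JEP of the age (rather than a naive ``empty structure'' argument) to see that they are compatible and that a unique homomorphism $\struc{A}_0\to\struc{V}$ exists. Once this is settled, the remainder is the routine weakly-initial-plus-\AP implication.
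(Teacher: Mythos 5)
Your proof is correct and follows essentially the same route as the paper: both arguments produce the $\emptyset$-generated structure $\langle\emptyset\rangle$ as a (weakly) initial object of the comma category via the \JEP of the age, and then invoke the elementary fact that a weakly initial object together with the \AP yields directedness. Your write-up merely spells out in more detail the points the paper leaves as ``clearly'' and ``easy to see''.
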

\begin{proof}
	Since $\class{C}$ is an age, it has in particular the \JEP. Consequently, for any two structures $\struc{A},\struc{B}\in\class{C}$ we have that $\langle \emptyset\rangle_{\struc{A}}\cong \langle \emptyset\rangle_{\struc{B}}$. Let us denote by $\langle\emptyset\rangle$ this unique (up to isomorphism) joint smallest substructure for all elements of $\class{C}$. Clearly, $\langle\emptyset\rangle$ is an initial object in $\cat{C}$. Let $h\colon \langle\emptyset\rangle\to\struc{V}$ be the unique homomorphism. Then $(\langle\emptyset\rangle,h,\struc{V})$ is weakly initial in $(\func{F}\comma\func{G}_{\struc{V},\Aut(\struc{V})})_{<\omega}$. It is easy to see that under these circumstances the \AP entails directedness.
	
	The proof for   $(\func{F}\comma\func{G}_{\struc{V}})_{<\omega}$ goes completely analogously.
\end{proof}

It remains to check for the existence of a countable dominating subcategory:
\begin{observation}\label{countdom}
	It should be noted that $(\func{F}\comma \func{G}_{\struc{V}})$ forms a subcategory of $(\func{F}\comma \func{G}_{(\struc{V},\Aut(\struc{V}))})$. Since $\class{C}$ is an age, It is not hard to see that any skeleton of  $(\func{F}\comma \func{G}_{\struc{V}})_{<\omega}$ is countable. However, every category is dominated by any of its skeletons.  For $(\func{F}\comma \func{G}_{(\struc{V},\Aut(\struc{V}))})$ we need to take care that the category of $\omega$-small objects may be uncountable, because $|\Aut(\struc{V})|$ might be uncountable. However,  $(\func{F}\comma \func{G}_{\struc{V}})_{<\omega}$ is a  dominating subcategory of $(\func{F}\comma \func{G}_{(\struc{V},\Aut(\struc{V}))})_{<\omega}$. Thus, clearly,  every countable dominating subcategory of $(\func{F}\comma \func{G}_{\struc{V}})_{<\omega}$ dominates $(\func{F}\comma \func{G}_{(\struc{V},\Aut(\struc{V}))})_{<\omega}$, too.
\end{observation}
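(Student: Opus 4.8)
The plan is to establish what the observation really needs for the \Fraisse machinery, namely that $\cat{X}\coloneqq(\func{F}\comma\func{G}_{(\struc{V},\Aut(\struc{V}))})_{<\omega}$ admits a countable dominating subcategory, and to obtain it by routing through the smaller category $\cat{E}\coloneqq(\func{F}\comma\func{G}_{\struc{V}})_{<\omega}$. First I would observe that $\cat{E}$ and $\cat{X}$ have exactly the same objects---by Observation~\ref{algebroidal} these are the pairs $(\struc{A},h)$ with $\struc{A}$ finitely generated and $h\colon\struc{A}\to\struc{V}$---and that $\cat{E}$ differs from $\cat{X}$ only in that its morphisms carry the trivial automorphism component $1_{\struc{V}}$ rather than an arbitrary element of $\Aut(\struc{V})$. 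The proof then splits into three independent tasks: $\cat{E}$ has a countable skeleton; $\cat{E}$ is a dominating subcategory of $\cat{X}$; and domination is transitive.

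For the first task, since $\class{C}$ is an age it has only countably many isomorphism types of finitely generated structures, and the structures of $\sigma\class{C}$ are countable, so for each such $\struc{A}$ there are at most countably many homomorphisms $\struc{A}\to\struc{V}$ and at most countably many embeddings between any two of them. Hence a skeleton of $\cat{E}$---one object per isomorphism class, with all morphisms between chosen representatives---has countably many objects and morphisms. That a skeleton dominates its category is a general fact whose two clauses are verified by post-composing with the appropriate isomorphism and using that a skeleton is full.

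The crux, and the only genuinely non-formal point, is showing that $\cat{E}$ dominates $\cat{X}$. Cofinality is immediate, as the two categories share their objects, so identities witness it. For the absorption clause, suppose $(\struc{A},h)\in\cat{E}$, $(\struc{B},h'')\in\cat{X}$, and $(a,b)\colon(\struc{A},h)\to(\struc{B},h'')$ is a morphism of $\cat{X}$, so that $a\colon\struc{A}\injto\struc{B}$, $b\in\Aut(\struc{V})$ and $b\circ h=h''\circ a$. I would cancel the automorphism by post-composing with $(1_{\struc{B}},b^{-1})$: this is a morphism of $\cat{X}$ from $(\struc{B},h'')$ to the object $(\struc{B},b^{-1}\circ h'')$, the latter again lying in $\cat{E}$, and the composite $(1_{\struc{B}},b^{-1})\circ(a,b)=(a,1_{\struc{V}})$ is a morphism of $\cat{E}$ precisely because $b\circ h=h''\circ a$ rearranges to $h=(b^{-1}\circ h'')\circ a$. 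Thus every arrow of $\cat{X}$ out of an object of $\cat{E}$ can be completed, through a morphism of $\cat{X}$, to an arrow that lands back inside $\cat{E}$.

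Finally I would record that domination is transitive along $\cat{D}\subseteq\cat{E}\subseteq\cat{X}$: chaining the cofinality witnesses and applying the absorption clause first for $\cat{E}$ inside $\cat{X}$ and then for $\cat{D}$ inside $\cat{E}$ shows that a countable skeleton $\cat{D}$ of $\cat{E}$ is a countable dominating subcategory of $\cat{X}$, as required. The main obstacle is purely the bookkeeping of the third paragraph---recognising that the automorphism component can always be absorbed on the target object so that the composite returns to $\cat{E}$; the countability and transitivity steps are routine.
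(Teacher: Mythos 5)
Your proposal is correct and follows essentially the same route as the paper: the crux in both is the absorption step, where the automorphism component of a morphism $(f,\varphi)$ is cancelled by post-composing with $(1_{\struc{A}_2},\varphi^{-1})$ to land on the object $(\struc{A}_2,\varphi^{-1}\circ h_2,\struc{V})$ with composite $(f,1_{\struc{V}})$ back in $(\func{F}\comma\func{G}_{\struc{V}})_{<\omega}$, while cofinality is trivial since the two categories share their objects. The countability of the skeleton and the transitivity of domination that you spell out are exactly the parts the paper treats as routine and leaves implicit.
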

\begin{proof}
We have to show that that $(\func{F}\comma \func{G}_{\struc{V}})_{<\omega}$ dominates $(\func{F}\comma \func{G}_{(\struc{V},\Aut(\struc{V}))})_{<\omega}$.

	Since $(\func{F}\comma \func{G}_{(\struc{V},\Aut(\struc{V}))})_{<\omega}$ and $(\func{F}\comma \func{G}_{\struc{V}})_{<\omega}$ have the same objects, cofinality is trivially given.

	Let $(f,\varphi) \colon (\struc{A}_1,h_1,\struc{V})\to (\struc{A}_2,h_2,\struc{V})$ be a morphism of  $(\func{F}\comma \func{G}_{(\struc{V},\Aut(\struc{V}))})_{<\omega}$. In particular the following diagram commutes:
	\[
	\begin{tikzcd}
		\struc{A}_2 \ar[r,"h_2"]& \struc{V}\\
		\struc{A}_1\ar[u,hook',"f"]\ar[r,"h_1"] & \struc{V}\ar[u,"\varphi"',"\cong"]
	\end{tikzcd}
	\]
	Note that also the following diagram commutes:
	\[
	\begin{tikzcd}
		\struc{A}_2 \ar[r,"\varphi^{-1}\circ h_2"]& \struc{V}\\
		\struc{A}_2 \ar[r,"h_2"]\ar[u,"1_{\struc{A}_2}","\cong"']& \struc{V}\ar[u,"\varphi^{-1}"',"\cong"]\\
		\struc{A}_1\ar[u,hook',"f"]\ar[r,"h_1"] & \struc{V}.\ar[u,"\varphi"',"\cong"]
	\end{tikzcd}
	\]
	In particular $(1_{\struc{A}_2},\varphi^{-1})\circ (f,\varphi)=(f,1_{\struc{V}})$ is a morphism of $(\func{F}\comma \func{G}_{\struc{V}})_{<\omega}$.
\end{proof}

\begin{corollary}\label{corvvap}
		Let $\class{C}$ be an age, and let $\struc{V}\in\sigma\class{C}$. Then the following are equivalent:
		\begin{enumerate}
			\item\label{aep} $\class{C}$ has the $\struc{V}$-valued \AP,
			\item\label{largecommafraisse} $(\func{F}\comma \func{G}_{(\struc{V},\Aut(\struc{V}))})$ is a \Fraisse category,
			\item\label{smallcommafraisse} $(\func{F}\comma \func{G}_{\struc{V}})$ is a \Fraisse category.
		\end{enumerate}
\end{corollary}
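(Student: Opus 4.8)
The plan is to reduce each of the three conditions to the amalgamation property of the relevant comma category and then to read the equivalences off the observations already established. Because both $(\func{F}\comma\func{G}_{\struc{V}})$ and $(\func{F}\comma\func{G}_{(\struc{V},\Aut(\struc{V}))})$ are semi-algebroidal with all morphisms monic (Observation~\ref{algebroidal}), I would verify the \Fraisse-category axioms on their subcategories of $\omega$-small objects $(\func{F}\comma\func{G}_{\struc{V}})_{<\omega}$ and $(\func{F}\comma\func{G}_{(\struc{V},\Aut(\struc{V}))})_{<\omega}$ --- whose objects are the triples with finitely generated first component --- which is the form needed for the subsequent application of Theorem~\ref{FraisseCat} through Observation~\ref{univhomcomma}. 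My first step is the remark that, for either of these categories, three of the four defining clauses of a \Fraisse category hold for free or follow from the \AP alone: every morphism is a pair whose components are an embedding in $\cat{\sigma C}$ and an automorphism (or the identity) of $\struc{V}$, hence is a monomorphism; a countable dominating subcategory is supplied by a skeleton of $(\func{F}\comma\func{G}_{\struc{V}})_{<\omega}$, which is countable and dominates both categories by Observation~\ref{countdom}; and directedness follows from the \AP by Observation~\ref{apdir}. Consequently each of these comma categories is a \Fraisse category if and only if it has the \AP.

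The second step is to identify the \AP of the comma categories with condition~\ref{aep}. By Observation~\ref{apvvap}, $(\func{F}\comma\func{G}_{\struc{V}})$ has the \AP exactly when $\class{C}$ has the $\struc{V}$-valued \AP, that is, precisely~\ref{aep}; since the $\struc{V}$-valued \AP is already a condition on finitely generated $\struc{A},\struc{B}_1,\struc{B}_2$, this equally records the \AP of $(\func{F}\comma\func{G}_{\struc{V}})_{<\omega}$. The consequence noted immediately after Observation~\ref{apvvap} then gives that $(\func{F}\comma\func{G}_{(\struc{V},\Aut(\struc{V}))})_{<\omega}$ has the \AP if and only if $(\func{F}\comma\func{G}_{\struc{V}})_{<\omega}$ does; the same equivalence is visible directly from Observation~\ref{skewhom} together with Remark~\ref{vvalapdef}, where the amalgamation condition of Observation~\ref{skewhom} is turned into the $\struc{V}$-valued \AP by absorbing the automorphism $k$ into the homomorphism $h$. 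Thus the $\struc{V}$-valued \AP of $\class{C}$ and the \AP of both comma categories are one and the same statement.

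Finally I would assemble the cycle. For \ref{aep}$\Rightarrow$\ref{largecommafraisse} and \ref{aep}$\Rightarrow$\ref{smallcommafraisse}: assuming the $\struc{V}$-valued \AP, the comma category has the \AP by the second step, is therefore directed by Observation~\ref{apdir}, and --- together with monicity and the countable domination of Observation~\ref{countdom} --- satisfies all four axioms of a \Fraisse category. For the two converse implications one merely extracts the \AP from the \Fraisse-category hypothesis and feeds it back through Observation~\ref{apvvap}, respectively Observation~\ref{skewhom}, to recover~\ref{aep}. This closes \ref{aep}$\Leftrightarrow$\ref{largecommafraisse}$\Leftrightarrow$\ref{smallcommafraisse}. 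I expect the only genuinely delicate point to be the bookkeeping of the second step --- confirming that the categorical \AP of the comma categories is faithfully captured at the finitely generated level by the $\struc{V}$-valued \AP of $\class{C}$, and that one may stay throughout within the $\omega$-small objects of Observation~\ref{algebroidal}. This, however, is exactly what Observations~\ref{apvvap} and~\ref{skewhom} (resting on \cite[Proposition~5.2(2)]{PecPec18b}) already deliver, so that the corollary reduces to collecting these facts.
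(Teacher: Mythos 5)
Your proposal is correct and follows essentially the same route as the paper's proof: both reduce the \Fraisse-category conditions to the \AP of the comma categories via Observations~\ref{apdir} and~\ref{countdom}, and then identify that \AP with the $\struc{V}$-valued \AP through Observation~\ref{apvvap} for $(\func{F}\comma\func{G}_{\struc{V}})$ and Observation~\ref{skewhom} with its following remark for $(\func{F}\comma\func{G}_{(\struc{V},\Aut(\struc{V}))})$. Your reorganization (first showing all three amalgamation properties coincide, then assembling the cycle) is only a cosmetic difference from the paper's implication-by-implication presentation.
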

\begin{proof}
	``(\ref{aep})$\Rightarrow$(\ref{largecommafraisse})'': By Observation~\ref{skewhom} and the remark following it,  $(\func{F}\comma \func{G}_{(\struc{V},\Aut(\struc{V}))})_{<\omega}$ has the \AP.  By Observation~\ref{apdir}, $(\func{F}\comma \func{G}_{(\struc{V},\Aut(\struc{V}))})_{<\omega}$ is directed. By Observation~\ref{countdom}, $(\func{F}\comma \func{G}_{(\struc{V},\Aut(\struc{V}))})_{<\omega}$ has a countable dominating subcategory. Hence it is a \Fraisse category. 
	
	``(\ref{largecommafraisse})$\Rightarrow$(\ref{aep})'': This is a direct consequence of Observation~\ref{skewhom} and the remark following it.
	
	``(\ref{aep})$\Leftrightarrow$(\ref{smallcommafraisse})'': This is proved analogously, using Observation~\ref{apvvap}.
\end{proof}

Now we are ready to compare universal homogeneous objects in  $(\func{F}\comma\func{G}_{\struc{V}})$  and  $(\func{F}\comma \func{G}_{(\struc{V},\Aut(\struc{V}))})$:
\begin{observation}\label{equivunivhom}
	Let $\class{C}$ be an age, $\struc{V}\in\class{C}$, such that $\class{C}$ has the $\struc{V}$-valued \AP. Let $(\struc{U},\Omega,\struc{V})\in (\func{F}\comma \func{G}_{\struc{V}})$. Then  $(\struc{U},\Omega,\struc{V})$  universal homogeneous in $(\func{F}\comma \func{G}_{\struc{V}})$ if and only if it is universal homogeneous in $(\func{F}\comma \func{G}_{(\struc{V},\Aut(\struc{V}))})$.
\end{observation}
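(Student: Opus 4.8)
The plan is to exploit that the two comma categories have the \emph{same objects} --- triples $(\struc{A},h,\struc{V})$ --- and differ only in their morphisms: a morphism of $(\func{F}\comma\func{G}_{\struc{V}})$ is exactly a morphism $(a,\varphi)$ of $(\func{F}\comma\func{G}_{(\struc{V},\Aut(\struc{V}))})$ whose $\struc{V}$-component $\varphi$ equals $1_{\struc{V}}$; in particular every morphism of the smaller category is one of the larger. Since $\class{C}$ has the $\struc{V}$-valued \AP, Corollary~\ref{corvvap} tells us that both $(\func{F}\comma\func{G}_{\struc{V}})_{<\omega}$ and $(\func{F}\comma\func{G}_{(\struc{V},\Aut(\struc{V}))})_{<\omega}$ are \Fraisse categories, and by Observation~\ref{algebroidal} (together with its verbatim analogue for $\func{G}_{\struc{V}}$, obtained from \cite{PecPec18b} with $H=\{1_{\struc{V}}\}$) both ambient categories are semi-algebroidal with monic morphisms, so that in both the $\omega$-small objects are those $(\struc{A},h,\struc{V})$ with $\struc{A}$ finitely generated. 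Hence Theorem~\ref{FraisseCat} applies to each: each possesses a universal homogeneous object, unique up to isomorphism in its own category.

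The heart of the argument is a transfer lemma that I would prove first: \emph{if $X=(\struc{U},\Omega,\struc{V})$ is universal and homogeneous in the larger category, then it is so in the smaller category}. Homogeneity transfers for free: given an $\omega$-small $Y$ and two smaller-category morphisms $f=(a_1,1_{\struc{V}})$, $g=(a_2,1_{\struc{V}})\colon Y\to X$, these are also larger-category morphisms, so larger-homogeneity yields $(\theta,\tau)\in\Aut(X)$ with $(\theta,\tau)\circ g=f$; comparing $\struc{V}$-components forces $\tau=1_{\struc{V}}$, so $(\theta,1_{\struc{V}})$ is the desired smaller automorphism. Universality is the delicate point. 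Given any $Y=(\struc{A},h,\struc{V})$, larger-universality supplies $(a,\varphi)\colon Y\to X$, i.e.\ $\Omega\circ a=\varphi\circ h$; to turn this into a smaller-category morphism I must cancel $\varphi$, and for this I would produce a \emph{twisting automorphism} $(\theta,\varphi^{-1})\in\Aut(X)$, whence $(\theta,\varphi^{-1})\circ(a,\varphi)=(\theta\circ a,1_{\struc{V}})$ lands in the smaller category.

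To construct the twisting automorphism I would use the weakly initial object $I=(\langle\emptyset\rangle,e,\struc{V})$ of Observation~\ref{apdir}, where $e$ is the unique homomorphism $\langle\emptyset\rangle\to\struc{V}$. Because $\langle\emptyset\rangle$ is initial the homomorphism $\langle\emptyset\rangle\to\struc{U}$ is unique, call it $j$, and for \emph{every} $\psi\in\Aut(\struc{V})$ one has $\Omega\circ j=e=\psi\circ e$, so that $(j,1_{\struc{V}})$ and $(j,\varphi^{-1})$ are both larger-category morphisms $I\to X$. Larger-homogeneity (applied to the $\omega$-small object $I$) then yields $(\theta,\tau)\in\Aut(X)$ with $(\theta,\tau)\circ(j,1_{\struc{V}})=(j,\varphi^{-1})$, and comparing $\struc{V}$-components gives $\tau=\varphi^{-1}$, as required. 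This twisting step, resting on a weakly initial object and on larger-homogeneity to realize arbitrary $\struc{V}$-components as automorphisms of $X$, is the main obstacle; everything else is bookkeeping.

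With the lemma in hand the two implications follow quickly. The implication ``universal homogeneous in the larger $\Rightarrow$ in the smaller'' \emph{is} the lemma. For the converse, larger-universality of a smaller-universal object is immediate, since smaller-category morphisms are larger-category morphisms; for larger-homogeneity I would fix, via Theorem~\ref{FraisseCat}(\ref{univhomobj}), an object $W$ that is universal homogeneous in the larger category, observe by the lemma that $W$ is then also universal homogeneous in the smaller category, and invoke uniqueness in the smaller category (Theorem~\ref{FraisseCat}(\ref{univhomunique})) to obtain a smaller-category isomorphism $X\cong W$. As a smaller-category isomorphism is in particular a larger-category isomorphism and larger universal homogeneity is invariant under such isomorphisms, $X$ inherits universal homogeneity in the larger category from $W$, closing the equivalence.
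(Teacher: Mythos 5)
Your proposal is correct, and its overall architecture (a transfer lemma from the larger category $(\func{F}\comma\func{G}_{(\struc{V},\Aut(\struc{V}))})$ down to $(\func{F}\comma\func{G}_{\struc{V}})$, followed by a uniqueness argument to go back up) matches the paper's; but the universality half of the transfer lemma is argued by a genuinely different device. The paper fixes a universal homogeneous object $(\widetilde{\struc{U}},\widetilde{\Omega},\struc{V})$ of $(\func{F}\comma\func{G}_{\struc{V}})$ (via Corollary~\ref{corvvap} and Theorem~\ref{FraisseCat}), takes a single morphism $(\iota,\varphi)\colon(\widetilde{\struc{U}},\widetilde{\Omega},\struc{V})\to(\struc{U},\Omega,\struc{V})$ supplied by larger-universality, and then, for each $(\struc{A},h,\struc{V})$, \emph{pre}-twists the object to $(\struc{A},\varphi^{-1}\circ h,\struc{V})$, maps it into $\widetilde{\struc{U}}$ with trivial $\struc{V}$-component, and composes. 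You instead \emph{post}-compose: for each object you take the morphism $(a,\varphi)$ into $(\struc{U},\Omega,\struc{V})$ provided by larger-universality and cancel $\varphi$ by an automorphism $(\theta,\varphi^{-1})$ of $(\struc{U},\Omega,\struc{V})$, which you manufacture by applying larger-homogeneity to the two morphisms $(j,1_{\struc{V}})$ and $(j,\varphi^{-1})$ out of the $\omega$-small weakly initial object $(\langle\emptyset\rangle,e,\struc{V})$ of Observation~\ref{apdir} (both are indeed morphisms, since initiality forces $\Omega\circ j=e=\psi\circ e$ for every $\psi\in\Aut(\struc{V})$). Your route avoids invoking the existence of a universal homogeneous object of the smaller category for this direction, at the cost of using the weakly initial object together with larger-homogeneity, and it yields the slightly stronger by-product that every element of $\Aut(\struc{V})$ occurs as the $\struc{V}$-component of an automorphism of $(\struc{U},\Omega,\struc{V})$. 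The homogeneity half of the transfer lemma and the entire converse direction coincide with the paper's argument.
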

\begin{proof}
	``$\Leftarrow$'':
	Let $(\struc{A},h,\struc{V})\in (\func{F}\comma \func{G}_{\struc{V}})_{<\omega}$, and let $(\iota_1,1_{\struc{V}}),(\iota_2,1_{\struc{V}})\colon (\struc{A},h,\struc{V})\injto(\struc{U},\Omega,\struc{V})$. Since $(\struc{U},\Omega,\struc{V})$ is homogeneous in $(\func{F}\comma \func{G}_{(\struc{V},\Aut(\struc{V}))})$, there exists $(\varphi,\psi)\in\Aut(\struc{U},\Omega,\struc{V})$, such that $(\varphi,\psi)\circ(\iota_1,1_{\struc{V}}) = (\iota_2,1_{\struc{V}})$. In particular, $\psi=1_{\struc{V}}$. This shows that $(\struc{U},\Omega,\struc{V})$ is homogeneous in $(\func{F}\comma \func{G}_{\struc{V}})$. 
	
	Let now $(\widetilde{\struc{U}},\widetilde{\Omega},\struc{V})$ be universal homogeneous in $(\func{F}\comma \func{G}_{\struc{V}})$ (this exists by Corollary~\ref{corvvap} in conjunction with  Theorem~\ref{FraisseCat}). Since $(\struc{U},\Omega,\struc{V})$ is universal in $(\func{F}\comma \func{G}_{(\struc{V},\Aut(\struc{V}))})$, there exists some  $(\iota,\varphi)\colon(\widetilde{\struc{U}},\widetilde{\Omega},\struc{V})\to(\struc{U},\Omega,\struc{V})$. 
	Let $(\struc{A},h,\struc{V})\in (\func{F}\comma \func{G}_{\struc{V}})_{<\omega}$. 
	Since $(\widetilde{\struc{U}},\widetilde{\Omega},\struc{V})$ is universal in $(\func{F}\comma \func{G}_{\struc{V}})$, there exists $(\kappa,1_{\struc{V}})\colon (\struc{A},\varphi^{-1}\circ h,\struc{V})\to (\widetilde{\struc{U}},\widetilde{\Omega},\struc{V})$. Note that then $(\iota\circ\kappa, 1_{\struc{V}})\colon (\struc{A},h,\struc{V}) \to(\struc{U},\Omega,\struc{V})$.   

	``$\Rightarrow$'': Let $(\widetilde{\struc{U}},\widetilde{\Omega},\struc{V})$ be universal homogeneous in $(\func{F}\comma \func{G}_{(\struc{V},\Aut(\struc{V}))})$ (this exists by Corollary~\ref{corvvap} in conjunction with  Theorem~\ref{FraisseCat}). By the other part of the proof, $(\widetilde{\struc{U}},\widetilde{\Omega},\struc{V})$ is homogeneous in $(\func{F}\comma \func{G}_{\struc{V}})$. By Theorem~\ref{FraisseCat}, there exists an isomorphism $(\varphi,1_{\struc{V}})\colon(\struc{U},\Omega,\struc{V})\to(\widetilde{\struc{U}},\widetilde{\Omega},\struc{V})$. Since $(\varphi,1_{\struc{V}})$ is also an isomorphism in $(\func{F}\comma \func{G}_{(\struc{V},\Aut(\struc{V}))})$, it follows that $(\struc{U},\Omega,\struc{V})$ is homogeneous in $(\func{F}\comma \func{G}_{(\struc{V},\Aut(\struc{V}))})$. 
	
	Universality of $(\struc{U},\Omega,\struc{V})$ in $(\func{F}\comma \func{G}_{(\struc{V},\Aut(\struc{V}))})$ is immediate.
\end{proof}
Let us now connect our findings with the previous section: 
\begin{corollary}\label{critunivskewhom}
	Let $\class{C}$ be an age and let $\struc{V}\in\sigma\class{C}$. Then a universal skew-homogeneous homomorphism $\Omega\colon \struc{U}\to\struc{V}$ exists for some $\struc{U}\in\sigma\class{C}$ if and only if $\class{C}$ has the $\struc{V}$-valued \AP. 
\end{corollary}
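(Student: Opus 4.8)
The plan is to treat this corollary as the translation between the concrete notions of \emph{universal} and \emph{skew-homogeneous} homomorphism into $\struc{V}$ and the existence of a universal homogeneous object in the comma categories $(\func{F}\comma\func{G}_{\struc{V}})$ and $(\func{F}\comma\func{G}_{(\struc{V},\Aut(\struc{V}))})$, a translation already supplied object-by-object by Observation~\ref{univhomcomma}. I would prove the two implications separately, leaning on the comma-category machinery for one direction and on a direct amalgamation for the other.

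For the implication from the $\struc{V}$-valued \AP to existence, I would work entirely on the comma side. Corollary~\ref{corvvap} converts the $\struc{V}$-valued \AP into the assertion that $(\func{F}\comma\func{G}_{(\struc{V},\Aut(\struc{V}))})$ is a \Fraisse category; combined with its semi-algebroidality and the description of its $\omega$-small objects from Observation~\ref{algebroidal}, together with the fact that each of its morphisms is monic (being a pair of an embedding and an automorphism), this verifies the hypotheses of Theorem~\ref{FraisseCat}. The theorem then yields a universal homogeneous object, necessarily of the form $(\struc{U},\Omega,\struc{V})$ with $\struc{U}\in\sigma\class{C}$ since every object of this comma category has its first coordinate in $\sigma\class{C}$. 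Homogeneity of this object is precisely skew-homogeneity of $\Omega$ by Observation~\ref{univhomcomma}; to obtain universality of $\Omega$, which corresponds to universality in the \emph{smaller} category $(\func{F}\comma\func{G}_{\struc{V}})$, I would pass the object through Observation~\ref{equivunivhom}, whose role is exactly to transfer universality–homogeneity between the two comma categories under the standing $\struc{V}$-valued \AP.

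For the converse I would argue directly, without comma categories. Given amalgamation data $f_1\colon\struc{A}\injto\struc{B}_1$, $f_2\colon\struc{A}\injto\struc{B}_2$ in $\class{C}$ and homomorphisms $h_1\colon\struc{B}_1\to\struc{V}$, $h_2\colon\struc{B}_2\to\struc{V}$ with $h_1\circ f_1=h_2\circ f_2$, universality of $\Omega$ first lifts $h_1,h_2$ to embeddings $\iota_1\colon\struc{B}_1\injto\struc{U}$, $\iota_2\colon\struc{B}_2\injto\struc{U}$ with $\Omega\circ\iota_j=h_j$. The embeddings $\iota_1\circ f_1$ and $\iota_2\circ f_2$ of $\struc{A}$ into $\struc{U}$ then satisfy $\Omega\circ\iota_1\circ f_1=\Omega\circ\iota_2\circ f_2$, so skew-homogeneity applied with $\psi=1_{\struc{V}}$ produces $\varphi\in\Aut(\struc{U})$ with $\varphi\circ\iota_1\circ f_1=\iota_2\circ f_2$ and $\Omega\circ\varphi=\Omega$. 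Putting $g_1\coloneqq\varphi\circ\iota_1$ and $g_2\coloneqq\iota_2$ yields an amalgam of $\struc{B}_1$ and $\struc{B}_2$ over $\struc{A}$ inside $\struc{U}$, over which $\Omega$ restricts to a cocone recovering $h_1$ and $h_2$.

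The one genuine obstacle is that this amalgam lives in $\struc{U}\in\sigma\class{C}$, whereas the $\struc{V}$-valued \AP demands a \emph{finitely generated} apex in $\class{C}$. I would settle this by descending to $\struc{C}\coloneqq\langle g_1(\struc{B}_1)\cup g_2(\struc{B}_2)\rangle_{\struc{U}}$: it is finitely generated because $\struc{B}_1$ and $\struc{B}_2$ are, and it lies in $\class{C}$ because the embedding $\struc{C}\injto\struc{U}$ forces $\struc{C}\in\Age(\struc{U})\subseteq\class{C}$, an age being hereditary. Corestricting $g_1,g_2$ to $\struc{C}$ and restricting $\Omega$ to $\struc{C}$ then supplies exactly the witnesses required by the $\struc{V}$-valued \AP. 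Beyond this descent, the only delicate point is the correct reading of the skew-homogeneity square, namely that taking $\psi=1_{\struc{V}}$ is legitimate precisely because the two lifts of $\struc{A}$ agree after composing with $\Omega$.
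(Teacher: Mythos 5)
Your proof is correct, and it splits into one half that mirrors the paper and one half that takes a genuinely different, more hands-on route. For the direction from the $\struc{V}$-valued \AP to the existence of $\Omega$ you do exactly what the paper does: Corollary~\ref{corvvap} and Observation~\ref{algebroidal} verify the hypotheses of Theorem~\ref{FraisseCat} for $(\func{F}\comma\func{G}_{(\struc{V},\Aut(\struc{V}))})$, and Observations~\ref{equivunivhom} and~\ref{univhomcomma} then convert the resulting universal homogeneous object into a universal skew-homogeneous homomorphism. For the converse, the paper never leaves the comma-category formalism: it applies Observation~\ref{univhomcomma}, cites \cite[Proposition 2.2(c)]{DroGoe92} to get the \AP for $(\func{F}\comma\func{G}_{(\struc{V},\Aut(\struc{V}))})_{<\omega}$, and translates back through Observation~\ref{skewhom} and the remark following it. You instead unwind that abstract citation into a direct three-step argument: lift $h_1,h_2$ by universality, match the two images of $\struc{A}$ by skew-homogeneity with $\psi=1_{\struc{V}}$ (legitimate precisely because $\Omega\circ\iota_1\circ f_1=h_1\circ f_1=h_2\circ f_2=\Omega\circ\iota_2\circ f_2$), and use the identity $\Omega\circ\varphi=\Omega$ coming out of the skew-homogeneity square to see that the amalgam remains compatible with $h_1,h_2$. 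The price of this elementary route is that the amalgam initially lives in $\struc{U}\in\sigma\class{C}$, so you must descend to $\langle g_1(B_1)\cup g_2(B_2)\rangle_{\struc{U}}$ and argue it lies in $\class{C}$ by heredity of the age --- a step the paper's route gets for free, since by Observation~\ref{algebroidal} the $\omega$-small comma objects already have finitely generated first coordinates. You identify and execute that descent correctly, so both directions go through; your version is more self-contained, while the paper's is shorter because it delegates the matching argument to the cited result of Droste and G\"obel.
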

\begin{proof}
	``$\Rightarrow$'': Let $\Omega\colon \struc{U}\to\struc{V}$ be a universal skew-homogeneous homomorphism in $\cat{\sigma C}$.  By Observation~\ref{univhomcomma}, $\Omega$ is universal in $(\func{F}\comma \func{G}_{\struc{V}})$ and homogeneous in $(\func{F}\comma \func{G}_{(\struc{V},\Aut(\struc{V}))})$. Clearly, universality in $(\func{F}\comma \func{G}_{\struc{V}})$ entails universality in $(\func{F}\comma \func{G}_{(\struc{V},\Aut(\struc{V}))})$. By \cite[Proposition 2.2(c)]{DroGoe92}, $(\func{F}\comma \func{G}_{(\struc{V},\Aut(\struc{V}))})_{<\omega}$ has the \AP. Finally, by the remark after Observation~\ref{skewhom}, $\class{C}$ has the $\struc{V}$-valued \AP.
	
	``$\Leftarrow$'': Suppose that $\class{C}$ has the $\struc{V}$-valued \AP. By Corollary~\ref{corvvap}, $(\func{F}\comma \func{G}_{(\struc{V},\Aut(\struc{V}))})$ is a \Fraisse category. By Theorem~\ref{FraisseCat}, $(\func{F}\comma \func{G}_{(\struc{V},\Aut(\struc{V}))})$ has a universal homogeneous object $(\struc{U},\Omega,\struc{V})$. By Observation~\ref{equivunivhom}, $(\struc{U},\Omega,\struc{V})$ is also universal homogeneous in $(\func{F}\comma \func{G}_{\struc{V}})$. Finally, by Observation~\ref{univhomcomma}, $\Omega$ is skew-homogeneous and universal. 
\end{proof}

\subsection{Concerning the \texorpdfstring{$\struc{V}$}{V}-valued amalgamation property}
Our previous findings suggest that in order to be able to construct $\omega$-cochains as in Observations~\ref{sequniv} and \ref{seqhom}, we need to check the validity of the $\struc{V}$-valued \AP for many, potentially different, structures $\struc{V}$. On the positive side, since we would like to construct an $\omega$-cochain that satisfies the requirements of both Observations, the first structure $\struc{U}_0$ in the sequence should be universal and homogeneous in $\sigma\class{C}$. That means that from the beginning we may assume that $\class{C}$ is not only an age but indeed a \Fraisse class. Before coming to the main result of this section, let us observe that the $\struc{V}$-valued \AP is in some sense hereditary:

\begin{observation}\label{vvalapcsub}
	Let $\class{C}$ be an age, let $\struc{U},\struc{V}\in\sigma\class{C}$, and let $\iota\colon\struc{U}\injto\struc{V}$. If $\class{C}$ has the $\struc{V}$-valued \AP, then it also has the $\struc{U}$-valued \AP.
\end{observation}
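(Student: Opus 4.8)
The plan is to reduce the $\struc{U}$-valued \AP directly to the $\struc{V}$-valued \AP by post-composing the given homomorphisms with the embedding $\iota$. Concretely, suppose we are given $\struc{A},\struc{B}_1,\struc{B}_2\in\class{C}$, embeddings $f_1\colon\struc{A}\injto\struc{B}_1$, $f_2\colon\struc{A}\injto\struc{B}_2$, and homomorphisms $h_1\colon\struc{B}_1\to\struc{U}$, $h_2\colon\struc{B}_2\to\struc{U}$ with $h_1\circ f_1=h_2\circ f_2$. First I would form $\iota\circ h_1\colon\struc{B}_1\to\struc{V}$ and $\iota\circ h_2\colon\struc{B}_2\to\struc{V}$; these still agree on $\struc{A}$, so the $\struc{V}$-valued \AP (in the reformulation of Remark~\ref{vvalapdef}) furnishes $\struc{C}\in\class{C}$, embeddings $g_1\colon\struc{B}_1\injto\struc{C}$, $g_2\colon\struc{B}_2\injto\struc{C}$ and a homomorphism $\hat h\colon\struc{C}\to\struc{V}$ with $\hat h\circ g_1=\iota\circ h_1$, $\hat h\circ g_2=\iota\circ h_2$, and $g_1\circ f_1=g_2\circ f_2$.

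The hard part is that $\hat h$ lands in $\struc{V}$ rather than in $\struc{U}$, and in general its image need not be contained in $\iota(\struc{U})$, so one cannot simply set $h=\iota^{-1}\circ\hat h$. To fix this I would pass to the substructure $\struc{C}'\coloneqq\langle g_1(B_1)\cup g_2(B_2)\rangle$ of $\struc{C}$ generated by the two images. Since $\class{C}$ is an age it has the \HP, so $\struc{C}'\in\class{C}$; moreover $g_1,g_2$ corestrict to embeddings into $\struc{C}'$ and $\hat h$ restricts to a homomorphism $\hat h'\colon\struc{C}'\to\struc{V}$. The key observation is then that $\hat h'(C')$ is exactly the substructure of $\struc{V}$ generated by $\hat h'(g_1(B_1)\cup g_2(B_2))$, and each such generator equals $\iota(h_1(b))$ or $\iota(h_2(b))$ for some $b$, hence lies in the substructure $\iota(\struc{U})$. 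Because $\iota(\struc{U})$ is closed under the basic operations, the generated substructure---that is, all of $\hat h'(C')$---is contained in $\iota(\struc{U})$.

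Finally, since $\iota$ is an embedding it is an isomorphism onto its image, so $\iota^{-1}\colon\iota(\struc{U})\to\struc{U}$ is a homomorphism, and I would define $h\coloneqq\iota^{-1}\circ\hat h'\colon\struc{C}'\to\struc{U}$. A routine check gives $h\circ g_1=\iota^{-1}\circ\iota\circ h_1=h_1$ and $h\circ g_2=h_2$, while $g_1\circ f_1=g_2\circ f_2$ is inherited from the previous step. Thus $\struc{C}'$, $g_1$, $g_2$, $h$ witness the $\struc{U}$-valued \AP. The only genuine subtlety is the image-containment argument of the second paragraph; everything else is a direct transport along $\iota$.
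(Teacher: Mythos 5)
Your proposal is correct and follows essentially the same route as the paper: post-compose with $\iota$, apply the $\struc{V}$-valued \AP, cut $\struc{C}$ down to the substructure generated by $g_1(B_1)\cup g_2(B_2)$, observe that the image of the amalgam under the resulting homomorphism is the substructure of $\struc{V}$ generated by the images of the generators and hence lies in $\iota(\struc{U})$, and then pull back along $\iota^{-1}$. The paper phrases the key containment as the equality $\iota(\langle h_1(B_1)\cup h_2(B_2)\rangle_{\struc{U}})=h(\struc{C})$ and works with image restrictions, but this is the same argument as your second paragraph.
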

\begin{proof}
	Given the following commuting diagram (where $\struc{A},\struc{B}_1,\struc{B}_2\in\class{C}$):
	\[
	\begin{tikzcd}
		& & \struc{U}\ar[r,hook',"\iota"] & \struc{V}\\
		\struc{B}_1\arrow[urr,bend left,"h_1"]\\
		\struc{A} \ar[u,hook',"f_1"]\ar[r,hook,"f_2"]& \struc{B}_2\arrow[uur,bend right,"h_2"']
	\end{tikzcd}
	\]
	Since $\class{C}$ has the $\struc{V}$-valued \AP, there exist $\struc{C}\in\class{C}$, and $g_1,g_2,h$, such that the following diagram commutes:
	\[
	\begin{tikzcd}
		& & \struc{U}\ar[r,hook',"\iota"] & \struc{V}\\
		\struc{B}_1\arrow[urr,bend left,"h_1"]\ar[r,hook,"g_1",dashed] & \struc{C}\ar[rru,near end,"h"',bend right,dashed]\\
		\struc{A} \ar[u,hook',"f_1"]\ar[r,hook,"f_2"]& \struc{B}_2\arrow[uur,bend right,"h_2"']\ar[u,hook',"g_2",dashed]
	\end{tikzcd}
	\]
	Without loss of generality, $\struc{C} = \langle g_1(B_1)\cup g_2(B_2)\rangle_{\struc{C}}$. 
	Let $\struc{D}\coloneqq\langle h_1(B_1)\cup h_2(B_2)\rangle_{\struc{U}}$. Now we compute:
	\begin{align*}
		\iota(\struc{D}) &= \iota(\langle h_1(B_1)\cup h_2(B_2)\rangle_{\struc{U}}) = \langle\iota(h_1(B_1))\cup\iota(h_2(B_2))\rangle_{\struc{V}}\\
		&=\langle h(g_1(B_1))\cup h(g_2(B_2))\rangle_{\struc{V}} = h(\langle g_1(B_2)\cup g_2(B_2)\rangle_{\struc{U}}) = h(\struc{C}).
	\end{align*}
	Let $\widetilde{\struc{D}}\coloneqq\iota(\struc{D})$, and let $\tilde\iota$ be the image restriction of $\iota$ to $\widetilde{\struc{D}}$. Then, in particular, $\tilde\iota$ is an isomorphism. Let $\tilde{h}$ be the image restriction of $h$ to $\widetilde{\struc{D}}$. Then for each $i\in\{1,2\}$ the following diagram commutes:
	\[
	\begin{tikzcd}
		& \struc{B}_i \ar[r,"h_i"]\ar[d,"g_i"',hook'] & \struc{U}\ar[d,hook',"\iota"'] & \struc{D}\ar[l,"\kappa"',hook',"="]\ar[d,"\tilde\iota","\cong"']\\
		& \struc{C}\ar[r,"h"]\ar[rr,bend right=30,"\tilde{h}"'] & \struc{V} & \widetilde{\struc{D}}\ar[l,"=","\lambda"',hook']
	\end{tikzcd}
	\]
	Define $h'\coloneqq\kappa\circ{\tilde\iota}^{\,-1}\circ\tilde{h}$. Then we may compute:
	\[
	\iota\circ h'\circ g_i = \iota\circ \kappa\circ\tilde\iota^{\,-1}\circ\tilde{h}\circ g_i  = \lambda\circ \tilde\iota\circ\tilde\iota^{\,-1}\circ\tilde{h}\circ g_i = \lambda\circ\tilde{h}\circ g_i = h\circ g_i = \iota\circ h_i. 
	\]
	Since $\iota$ is injective, it follows that $h'\circ g_i = h_i$, for each $i\in\{1,2\}$. Consequently, $\class{C}$ has the $\struc{U}$-valued \AP.  
\end{proof}

Now we can sum up all our findings concerning the $\struc{V}$-valued amalgamation property:
\begin{proposition}\label{propertieseq}
	Let $\class{C}$ be an \Fraisse class with \Fraisse limit $\struc{V}$. Then the following are equivalent 
	\begin{enumerate}
		\item\label{aepc} $\class{C}$ has the \AEP,
		\item\label{vvalpc} $\class{C}$ has the $\struc{V}$-valued \AP. 
		\item\label{uvalapc} $\class{C}$ has the $\struc{U}$-valued \AP, for every $\struc{U}\in\sigma\class{C}$,
		\item\label{avalapc} $\class{C}$ has the $\struc{A}$-valued \AP, for every $\struc{A}\in\class{C}$.
	\end{enumerate}
\end{proposition}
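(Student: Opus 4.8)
The plan is to prove all four equivalences by running the single cycle $(\ref{aepc})\Rightarrow(\ref{vvalpc})\Rightarrow(\ref{uvalapc})\Rightarrow(\ref{avalapc})\Rightarrow(\ref{aepc})$, in which three of the arrows are essentially immediate and all the genuine content sits in the first one. For $(\ref{vvalpc})\Rightarrow(\ref{uvalapc})$ I would invoke universality of the \Fraisse limit: since $\class{C}$ is a \Fraisse class with limit $\struc{V}$, every $\struc{U}\in\sigma\class{C}$ embeds into $\struc{V}$ (it is countable and its age is contained in $\class{C}$). Fixing such an embedding $\struc{U}\injto\struc{V}$, Observation~\ref{vvalapcsub} converts the $\struc{V}$-valued \AP into the $\struc{U}$-valued \AP. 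The implication $(\ref{uvalapc})\Rightarrow(\ref{avalapc})$ is trivial, since $\class{C}\subseteq\sigma\class{C}$ (each $\struc{A}\in\class{C}$ is the union of its constant $\omega$-tower). For $(\ref{avalapc})\Rightarrow(\ref{aepc})$ one simply feeds an instance of the \AEP configuration, whose target $\struc{T}$ lies in $\class{C}$, into the $\struc{T}$-valued \AP and reads the output off as an \AEP witness by setting $\struc{T}'\coloneqq\struc{T}$ and $k\coloneqq 1_{\struc{T}}$.

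The heart of the proof is $(\ref{aepc})\Rightarrow(\ref{vvalpc})$. Starting from $\struc{A},\struc{B}_1,\struc{B}_2\in\class{C}$ with embeddings $f_1,f_2$ and homomorphisms $h_1\colon\struc{B}_1\to\struc{V}$, $h_2\colon\struc{B}_2\to\struc{V}$ satisfying $h_1\circ f_1=h_2\circ f_2$, my first move is to replace the infinite target $\struc{V}$ by a finite one. Put $\struc{T}\coloneqq\langle h_1(B_1)\cup h_2(B_2)\rangle_{\struc{V}}$. Since $\struc{B}_1,\struc{B}_2$ are finitely generated, so is $\struc{T}$, whence $\struc{T}\in\Age(\struc{V})=\class{C}$; writing $\iota\colon\struc{T}\injto\struc{V}$ for the inclusion, the maps $h_1,h_2$ corestrict to homomorphisms $\tilde h_1\colon\struc{B}_1\to\struc{T}$ and $\tilde h_2\colon\struc{B}_2\to\struc{T}$ with $\tilde h_1\circ f_1=\tilde h_2\circ f_2$ (injectivity of $\iota$ gives the last equation). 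Applying the \AEP to the configuration $(\struc{A},\struc{B}_1,\struc{B}_2,\struc{T})$ now yields $\struc{C},\struc{T}'\in\class{C}$, embeddings $g_1\colon\struc{B}_1\injto\struc{C}$, $g_2\colon\struc{B}_2\injto\struc{C}$, $k\colon\struc{T}\injto\struc{T}'$ and a homomorphism $h'\colon\struc{C}\to\struc{T}'$ with $h'\circ g_i=k\circ\tilde h_i$ for $i=1,2$ and $g_1\circ f_1=g_2\circ f_2$.

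The remaining step is to transport the abstract extension $\struc{T}'$ back into a homomorphism into $\struc{V}$ that is compatible with $\iota$. Here I would use the extension property of the \Fraisse limit. Given the two embeddings $\iota\colon\struc{T}\injto\struc{V}$ and $k\colon\struc{T}\injto\struc{T}'$ with $\struc{T}'\in\class{C}=\Age(\struc{V})$, universality supplies some embedding $e_0\colon\struc{T}'\injto\struc{V}$, and homogeneity of $\struc{V}$ supplies $\psi\in\Aut(\struc{V})$ with $\psi\circ e_0\circ k=\iota$; hence $e\coloneqq\psi\circ e_0\colon\struc{T}'\injto\struc{V}$ satisfies $e\circ k=\iota$. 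Setting $h\coloneqq e\circ h'\colon\struc{C}\to\struc{V}$ then gives $h\circ g_i=e\circ k\circ\tilde h_i=\iota\circ\tilde h_i=h_i$ for $i=1,2$, together with $g_1\circ f_1=g_2\circ f_2$, which is precisely a witness for the $\struc{V}$-valued \AP.

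I expect the main obstacle to be exactly this last step. The \AEP guarantees amalgamation only into a freshly produced finite target extension $\struc{T}'$, so the proof hinges on the fact that the \Fraisse limit is simultaneously universal (to realise $\struc{T}'$ inside $\struc{V}$) and homogeneous (to adjust that realisation so that the already-fixed copy $\iota(\struc{T})$ of $\struc{T}$ is preserved). Everything else — the finite-generation of $\struc{T}$, the corestrictions of $h_1,h_2$, and the closing diagram chase — is routine; the conceptual work lies in recognising that the gap between a finite-valued amalgamation (the \AEP) and a $\struc{V}$-valued amalgamation is bridged by nothing more than the defining properties of the \Fraisse limit.
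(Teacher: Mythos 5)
Your proposal is correct and follows essentially the same route as the paper: the cycle $(1)\Rightarrow(2)\Rightarrow(3)\Rightarrow(4)\Rightarrow(1)$, with the content concentrated in $(1)\Rightarrow(2)$ via corestriction to $\struc{T}=\langle h_1(B_1)\cup h_2(B_2)\rangle_{\struc{V}}$, an application of the \AEP, and then the extension property of the \Fraisse limit to realise $\struc{T}'$ inside $\struc{V}$ over the fixed copy of $\struc{T}$. The only cosmetic difference is that you unpack the extension property into its universality-plus-homogeneity proof, whereas the paper invokes it directly.
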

\begin{proof}
	``(\ref{aepc})$\Rightarrow$(\ref{vvalpc})'': Given $\struc{A},\struc{B}_1,\struc{B}_2,f_1,f_2,h_1,h_2$ as required in the definition of the $\struc{V}$-valued \AP (see page~\pageref{vvalapdef}). Let $\struc{T}\coloneqq\langle h_1(B_1)\cup h_2(B_2)\rangle_{\struc{V}}$, and let $\tilde{h}_1$ and $\tilde{h}_2$ be the image restrictions of $h_1$ and $h_2$ to $\struc{T}$, respectively. By the \AEP there exist $\struc{C},\struc{T}', g_1,g_2,h,k$, such that the following diagram commutes: 
	\[
	\begin{tikzcd}
		& & \struc{T}\arrow[r,dashed,hook,"k"] & \struc{T}'.\\
		\struc{B}_1\arrow[r,dashed,hook,"g_1"]\arrow[urr,bend left,"\tilde{h}_1"] & \struc{C}\arrow[rru,dashed,bend right,near end,"h"']\\
		\struc{A}\arrow[u,hook,"f_1"]\arrow[r,hook,"f_2"] & \struc{B}_2\arrow[u,dashed,hook',"g_2"]\arrow[uur,bend right,"\tilde{h}_2"']
	\end{tikzcd}
	\]
	Since $\struc{V}$ is the \Fraisse limit of $\class{C}$, there exists $\iota\colon\struc{T}'\injto\struc{V}$, such that 
	\[
	\begin{tikzcd}[column sep = small]
		& \struc{V} \\
		\struc{T} \arrow[ur,hook',"="] \arrow[rr,hook,"k"]& & \struc{T}'\arrow[ul,hook',dashed,"\iota"']
	\end{tikzcd}
	\] 
	commutes. Together this gives the following commuting diagram:
	\[
	\begin{tikzcd}
		& & \struc{V} \\
		\struc{B}_1\arrow[urr,bend left,"h_1"]\arrow[r,hook,"g_1"] & \struc{C}\arrow[ur,"\iota\circ h"] \\
		\struc{A} \arrow[u,hook',"f_1"] \arrow[r,hook,"f_2"]& \struc{B}_2\arrow[u,hook',"g_2"']\arrow[ruu,bend right,"h_2"']
	\end{tikzcd}
	\]
	This shows that $\class{C}$ has the $\struc{V}$-valued \AP.

 ``(\ref{vvalpc})$\Rightarrow$(\ref{uvalapc})'':  This follows from the universality of $\struc{V}$ in conjunction with Observation~\ref{vvalapcsub}.
 
 ``(\ref{uvalapc})$\Rightarrow$(\ref{avalapc})$\Rightarrow$(\ref{aepc})'': Clear.
\end{proof}

\section{Proofs of the main results}

\subsection{Proof of Theorem~\ref{mainthm}}
For convenience, let us repeat the formulation of Theorem~\ref{mainthm}:
\begin{mainthm}
	Let $\class{C}\subseteq\class{S}_\Sigma$ be an age.  Then $\class{C}$ has the \AP and the \AEP if and only if $\pi\sigma\class{C}$ contains a universal and homogeneous ultrametric structure.
	 Moreover, any two universal and  homogeneous structures in $\pi\sigma\class{C}$ are metrically isomorphic.
\end{mainthm}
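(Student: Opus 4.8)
The plan is to build the witness as a cochain and read off its properties from the already established observations. Assuming \AP and \AEP, note first that an age with \AP is a \Fraisse class, so $\sigma\class{C}$ has a \Fraisse limit $\struc{V}$ that is universal and homogeneous in $\sigma\class{C}$. I would then construct a surjective $\omega$-cochain $\cochain{U}=((\struc{U}_i)_{i<\omega},(\Omega_i^{j}))$ over $\sigma\class{C}$ by recursion: set $\struc{U}_0\coloneqq\struc{V}$, and, given $\struc{U}_i\in\sigma\class{C}$, invoke Proposition~\ref{propertieseq} (so that \AEP yields the $\struc{U}_i$-valued \AP for every $\struc{U}_i\in\sigma\class{C}$) together with Corollary~\ref{critunivskewhom} to obtain a universal skew-homogeneous homomorphism $\Omega_i^{i+1}\colon\struc{U}_{i+1}\to\struc{U}_i$ with $\struc{U}_{i+1}\in\sigma\class{C}$. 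Since universal homomorphisms are retractions they are surjective, so $\cochain{U}\in[[\cat{\omega}^\op,\cat{\sigma C}]]$ and $\mstr{U}\coloneqq\mlim\cochain{U}\in\pi\sigma\class{C}$. Observation~\ref{sequniv} (using that $\struc{U}_0$ is universal and each $\Omega_i^{i+1}$ is universal) and Observation~\ref{seqhom} (using that $\struc{U}_0$ is homogeneous, each $\Omega_i^{i+1}$ is skew-homogeneous, and $\class{C}$ is \Fraisse) then give that $\mstr{U}$ is universal and homogeneous. This implication is routine once the machinery is in place.

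\textbf{Necessity.} Conversely, suppose $\mstr{U}\in\pi\sigma\class{C}$ is universal and homogeneous. To recover \AP, given embeddings $f_1\colon\struc{A}\injto\struc{B}_1$ and $f_2\colon\struc{A}\injto\struc{B}_2$ in $\class{C}$, I would regard $\struc{A},\struc{B}_1,\struc{B}_2$ as discrete ultrametric structures through $\func{Met}$ (these are pro-finitely generated and lie in $\pi\sigma\class{C}$), embed $\func{Met}(\struc{B}_1)$ and $\func{Met}(\struc{B}_2)$ into $\mstr{U}$ by universality, and use homogeneity to align the two copies of $\func{Met}(\struc{A})$ by a metric automorphism. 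Applying the forgetful functor $\func{U}$ yields structure embeddings of $\struc{B}_1,\struc{B}_2$ into $\func{U}(\mstr{U})$ agreeing on $\struc{A}$; the finitely generated substructure they generate is the desired amalgam, and it lies in $\class{C}$ because $\Age(\func{U}(\mstr{U}))\subseteq\class{C}$ — for a cochain over $\sigma\class{C}$ the projections $\alpha_i^\infty$ restrict to embeddings of a finitely generated substructure into some $\struc{U}_i\in\sigma\class{C}$ at a sufficiently high level. Thus $\class{C}$ is a \Fraisse class; let $\struc{V}$ be its limit. For \AEP I would use Proposition~\ref{propertieseq}, reducing it to the $\struc{V}$-valued \AP, and then Corollary~\ref{critunivskewhom}, reducing that to the existence of a universal skew-homogeneous homomorphism onto $\struc{V}$. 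Writing $\Seq(\mstr{U})=((\struc{U}_i)_{i<\omega},(\Omega_i^j))$, the remaining task is to show that $\struc{U}_0\cong\struc{V}$ and that the first bonding map $\Omega_0^1\colon\struc{U}_1\to\struc{U}_0$ is universal and skew-homogeneous, transporting universality of $\mstr{U}$ to level $0$ through $\func{Met}$ and $\Seq$, and homogeneity of $\mstr{U}$ to skew-homogeneity of $\Omega_0^1$.

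\textbf{Uniqueness.} For the final clause I would again pass to cochains. If $\mstr{U}_1,\mstr{U}_2\in\pi\sigma\class{C}$ are both universal and homogeneous, then $\Seq(\mstr{U}_1),\Seq(\mstr{U}_2)$ are universal and homogeneous objects of the embedding subcategory of $[[\cat{\omega}^\op,\cat{\sigma C}]]$, since by Observation~\ref{seqpresemb} and Observation~\ref{isoiso} metric embeddings and metric isomorphisms correspond to natural embeddings and natural isomorphisms. A back-and-forth argument between the two cochains — using universality for the ``forth'' steps and homogeneity for the ``back'' steps, exactly in the style of the proof of Theorem~\ref{FraisseCat}(\ref{univhomunique}) — produces a natural isomorphism $\Seq(\mstr{U}_1)\cong\Seq(\mstr{U}_2)$; applying $\mlim$ and using that $\eta$ is a metric isomorphism on $\pi\sigma\class{C}$ (Observation~\ref{isoiso}(\ref{etaiso})) yields the metric isomorphism $\mstr{U}_1\cong\mstr{U}_2$.

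\textbf{Main obstacle.} The hardest step is the \AEP half of necessity: passing from the global statement that $\mstr{U}$ is universal and homogeneous to the local statement that a bonding map of $\Seq(\mstr{U})$ is universal and skew-homogeneous (equivalently, that $\class{C}$ has the $\struc{V}$-valued \AP). The difficulty is that homogeneity of $\mstr{U}$ concerns metric automorphisms of the whole limit, while skew-homogeneity of $\Omega_0^1$ concerns automorphisms of a single level that are compatible with a prescribed automorphism of $\struc{V}$; bridging these requires lifting level-wise data to the limit and back, and, in the operational case, controlling the age of the inverse limit as indicated above.
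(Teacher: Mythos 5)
Your sufficiency argument matches the paper's proof of existence essentially verbatim, and your uniqueness clause, while routed through $\Seq$ and a cochain-level back-and-forth rather than the paper's direct Ehrenfeucht--\Fraisse game on the points of $A_\infty$ and $B_\infty$ (followed by completion of a dense substructure), is a reasonable alternative. The problem is the necessity direction, where two things go wrong.

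First, a repairable flaw in your \AP argument: you amalgamate inside $\func{U}(\mstr{U})=\struc{U}_\infty$ and justify membership in $\class{C}$ by claiming $\Age(\struc{U}_\infty)\subseteq\class{C}$ because ``the projections $\alpha_i^\infty$ restrict to embeddings of a finitely generated substructure at a sufficiently high level.'' For an operational signature a finitely generated substructure can be infinite, and there need not be any single level at which the projection is injective, let alone an embedding. The paper avoids this entirely by transporting everything to level $0$ first (the discreteness of $\func{Met}(\struc{B}_j)$ forces the images to be separated and the relations to be decided already at level $0$) and generating the amalgam inside $\struc{U}_0\in\sigma\class{C}$, whose age is contained in $\class{C}$ by definition. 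You should do the same.

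Second, and more seriously, your \AEP argument is a plan rather than a proof, and the step you yourself flag as ``the hardest'' is exactly the one that is missing: deducing from universality and homogeneity of $\mstr{U}$ that $\struc{U}_0$ is the \Fraisse limit of $\class{C}$ and that $\Omega_0^1$ is universal and skew-homogeneous. Neither claim is routine. Homogeneity of $\struc{U}_0$ would require lifting an isomorphism of finitely generated substructures of $\struc{U}_0=\struc{U}_\infty/\!\approx_0$ to a metric isomorphism of pro-finitely generated substructures of $\mstr{U}$, and universality of $\Omega_0^1$ would require realizing an arbitrary (not necessarily surjective) homomorphism $h\colon\struc{A}\to\struc{U}_0$ as the level-$0$ data of a surjective cochain embedded into $\Seq(\mstr{U})$ \emph{with the level-$0$ component equal to the inclusion of $h(\struc{A})$}, which needs a further homogeneity correction. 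You cannot shortcut this by comparing $\Seq(\mstr{U})$ with the cochain built in the existence proof, since that cochain only exists once \AEP is known --- the argument would be circular. The paper takes a genuinely different route here: it works in the comma category $(\func{F}\comma\func{F})$, verifies the \AP of $(\func{F}\comma\func{F})_{<\omega}$ directly by embedding eventually constant two-level cochains such as $\struc{B}_0\twoheadleftarrow\struc{B}_1\twoheadleftarrow\struc{B}_1\twoheadleftarrow\cdots$ into $\cochain{U}$ and applying the homogeneity of $\mstr{U}$ (first for surjective objects, then reducing the general case by epi--mono factorization and three further applications of the \AP of $\class{C}$), and only then invokes \cite[Proposition 5.2(2)]{PecPec18b} to conclude the \AEP. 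Until you supply an actual argument for the universality and skew-homogeneity of $\Omega_0^1$, or switch to an argument of the paper's type, the backwards implication is not established.
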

\begin{proof}[Proof of existence]\label{poe}
	Suppose that $\class{C}$ has the \AP and the \AEP. 
	
	Let $\struc{U}_0$ be a \Fraisse limit of $\class{C}$. It follows from Proposition~\ref{propertieseq} that $\class{C}$ has the $\struc{V}$-valued \AP, for every $\struc{V}\in\sigma\class{C}$. By Corollary~\ref{critunivskewhom}, there exists a universal skew-homogeneous homomorphism $\omega^1_0\colon\struc{U}_1\to\struc{U}_0$. By induction, this may be extended into an $\omega$-cochain $\cochain{U}= ((\struc{U}_i)_{i<\omega},(\Omega^j_i)_{i\le j<\omega})$, such that for all $i<\omega$ we have that $\Omega^{i+1}_i$ is universal and skew-homogeneous. The other homomorphisms $\Omega^{i+k}_i$ are defined naturally by suitably composing homomorphisms of the shape $\Omega^{i+1}_i$.  Let $\mstr{U}\coloneqq\mlim \cochain{U}$. By 	Observations~\ref{sequniv} and \ref{seqhom},  we have that $\mstr{U}$ is universal in $\pi\sigma\class{C}$ and  homogeneous.
\end{proof}

Before giving the proof of uniqueness of Theorem ~\ref{mainthm}, we need to move a standard observation from \Fraisse theory to the context of ultrametric structures:
\begin{observation}\label{injective}
	Let $\mstr{V}\in\pi\sigma\class{C}$ be universal and  homogeneous. Let $\mstr{A},\mstr{B}\in\pi\sigma\class{C}$ be pro-finitely generated, and suppose that $\mstr{A}$ is an isometric substructure of $\mstr{B}$. Let $\iota\colon\mstr{A}\injto\mstr{V}$ be a metric embedding. Then there is a metric embedding $\hat\iota\colon\mstr{B}\injto\mstr{V}$ that makes the following diagram commutative:
	\[
	\begin{tikzcd}
		\mstr{B} \arrow[r,hook,"\hat\iota"]& \mstr{V} \\
		\mstr{A}\ar[u,hook',"\le"]\ar[r,hook,"\iota"] & \mstr{V}\ar[u,equal].
	\end{tikzcd}
	\]
\end{observation}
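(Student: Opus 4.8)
The plan is to run the standard ``one-step back-and-forth'' argument from classical \Fraisse theory, transported to the metric setting: use universality to produce \emph{some} embedding of $\mstr{B}$ into $\mstr{V}$, and then use homogeneity to correct it by an automorphism so that the corrected embedding restricts to $\iota$ on $\mstr{A}$.

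First I would invoke universality of $\mstr{V}$. Since $\mstr{B}\in\pi\sigma\class{C}$, there is a metric embedding $j\colon\mstr{B}\injto\mstr{V}$. Composing $j$ with the inclusion $\mstr{A}\le\mstr{B}$ (which is a metric embedding because $\mstr{A}$ is an isometric substructure of $\mstr{B}$) yields a metric embedding $j'\coloneqq j\restr\mstr{A}\colon\mstr{A}\injto\mstr{V}$. At this point I have two metric embeddings $\iota,j'\colon\mstr{A}\injto\mstr{V}$ of the same pro-finitely generated structure, and I must convert this pair into the single isomorphism-between-substructures picture in which homogeneity is stated. The images $\iota(\mstr{A})$ and $j'(\mstr{A})$, equipped with the structure induced from $\mstr{V}$, are metric substructures of $\mstr{V}$ that are isometrically isomorphic to $\mstr{A}$; since pro-finite generatedness is read off from $\Seq$ and is therefore invariant under metric isomorphism, both images are pro-finitely generated substructures of $\mstr{V}$. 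Consequently $\iota\circ(j')^{-1}\colon j'(\mstr{A})\to\iota(\mstr{A})$ is a metric isomorphism between pro-finitely generated substructures of $\mstr{V}$, and homogeneity of $\mstr{V}$ provides $\varphi\in\Aut(\mstr{V})$ extending it, so that $\varphi\circ j'=\iota$.

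Finally I would set $\hat\iota\coloneqq\varphi\circ j$. As a composite of the metric embedding $j$ with the metric automorphism $\varphi$, it is again a metric embedding $\mstr{B}\injto\mstr{V}$, and $\hat\iota\restr\mstr{A}=\varphi\circ j'=\iota$, which is exactly the commutativity asserted by the diagram. The only point requiring any care—and the closest thing to an obstacle—is this bookkeeping that mediates between the two-embeddings formulation and the isomorphism-of-substructures formulation of homogeneity, together with the observation that images of pro-finitely generated structures under metric embeddings remain pro-finitely generated; everything else is a direct application of the already-established universality and homogeneity of $\mstr{V}$.
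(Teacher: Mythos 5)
Your argument is correct and is essentially the paper's own proof: universality supplies an embedding $j\colon\mstr{B}\injto\mstr{V}$, and homogeneity (applied to the metric isomorphism between the two copies of $\mstr{A}$ inside $\mstr{V}$, which are pro-finitely generated since that property is preserved by metric isomorphism) yields an automorphism $\varphi$ with $\hat\iota\coloneqq\varphi\circ j$ restricting to $\iota$. The paper compresses your explicit bookkeeping into a ``without loss of generality, $\mstr{B}$ is a metric substructure of $\mstr{V}$'', but the content is identical.
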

\begin{proof}
	By universality, there exists an isometric embedding $\kappa$ of $\mstr{B}$ into $\mstr{V}$. Without loss of generality, we may assume that $\mstr{B}$ is a metric substructure of $\mstr{V}$. By isometric homogeneity, there exists a metric automorphism $f$ of $\mstr{V}$ such that the following diagram commutes:
	\[
	\begin{tikzcd}
		\mstr{B} \ar[r,hook,"\le"',"\kappa"] & \mstr{V}\ar[d,"f"] \\
		\mstr{A} \ar[u,hook',"\le"] \ar[r,hook,"\iota"]& \mstr{V}.
	\end{tikzcd}
	\] 
	With $\hat\iota\coloneqq f\circ\kappa$ the claim follows.
\end{proof}

\begin{proof}[Proof of uniqueness in Theorem~\ref{mainthm}]
	Let $\mstr{A},\mstr{B}\in\pi\sigma\class{C}$ be universal  homogeneous ultrametric structures.   Without loss of generality, suppose that $\mstr{A}=\mlim\cochain{A}$ where $\cochain{A}$ is given by
	\[
	\begin{tikzcd}
		\struc{A}_0 & \struc{A}_1\arrow[l,two heads,"\alpha^1_0"'] & \struc{A}_2\arrow[l,two heads,"\alpha^2_1"'] & \struc{A}_3\arrow[l,two heads,"\alpha^3_2"'] & \arrow[l,two heads,"\alpha_3^4"']\dots.
	\end{tikzcd}
	\]
	Let $(\struc{A}_\infty,(\alpha^\infty_i)_{i<\omega})$ be the corresponding canonical cone (in particular, the carrier of $\mstr{A}$ is $A_\infty$). 
	
	In the same way we may  assume that  $\mstr{B}=\mlim\cochain{B}$, where $\cochain{B}$ is given by  
	\[
	\begin{tikzcd}
		\struc{B}_0 & \struc{B}_1\arrow[l,two heads,"\beta^1_0"'] & \struc{B}_2\arrow[l,two heads,"\beta^2_1"'] & \struc{B}_3\arrow[l,two heads,"\beta^3_2"'] & \arrow[l,two heads,"\beta_3^4"']\dots.
	\end{tikzcd}
	\]
	Let $(\struc{B}_\infty,(\beta^\infty_i)_{i<\omega})$ be the corresponding canonical cone (in particular, the carrier of $\mstr{B}$ is $B_\infty$). 
	
	Using a back-and-forth argument, we construct a metric isomorphism between countable dense incomplete metric substructures of $\mstr{A}$ and $\mstr{B}$. 
	
	Let us formulate the construction as a game between two players. The game is identical to the classical Ehrenfeucht-\Fraisse game of length $\omega$ with a twist concerning the winning condition. A play $\bigl((\tup{a}_i)_{i<\omega},(\tup{b}_i)_{i<\omega}\bigr)$ is a win for player 2 if the assignment $\varphi\colon \tup{a}_i\mapsto\tup{b}_i$ induces a metric  isomorphism $\hat\varphi$ from $\langle\tup{a}_i\mid i<\omega\rangle_{\mstr{A}}$ to $\langle \tup{b}_i\mid i<\omega\rangle_{\mstr{B}}$.
	
	Let us describe a winning strategy for player 2. Suppose that after $n$ rounds the position of the game is $(\tup{a}_0,\dots,\tup{a}_{n-1}; \tup{b}_0,\dots,\tup{b}_{n-1})$, such that the assignment $\tup{a}_i\mapsto\tup{b}_i$ ($0\le i<n$) induces an isomorphism $\iota$ from $\mstr{A}_n\coloneqq\langle \tup{a}_0,\dots,\tup{a}_{n-1}\rangle_{\mstr{A}}$ to $\mstr{B}_n\coloneqq\langle \tup{b}_0,\dots,\tup{b}_{n-1}\rangle_{\mstr{B}}$. 
	
	Suppose further that player 1 chooses $\tup{a}_n\in A_\infty$. Let $\mstr{A}_{n+1}\coloneqq\langle \tup{a}_0,\dots,\tup{a}_n\rangle_{\mstr{A}}$.  Since $\mstr{B}$ is universal and homogeneous, by Observation~\ref{injective}, there exists an isometric embedding $\hat\iota\colon\mstr{A}_{n+1}\injto \mstr{B}$ that makes the following diagram commutative:
	\[
	\begin{tikzcd}
		\mstr{A}_{n+1} \ar[rr,hook,"\hat\iota"]& &\mstr{B} \\
		\mstr{A}_n \ar[r,hook,"\iota"']\ar[u,hook',"\le"]& \mstr{B}_n \ar[r,hook,"\le"']& \mstr{B}\ar[u,equals].
	\end{tikzcd}
	\]
	Now player 2 answers with $\tup{b}_n\coloneqq\hat\iota(\tup{a}_n)$. 
	
	On the other hand, if player 1 chooses $\tup{b}_n$ from $B_\infty$, then the choice of $\tup{a}_n$ from $A_\infty$ for player 2 goes analogously, using the universality and  homogeneity of $\mstr{A}$.
	
	It is  not hard to see that the described strategy is a winning strategy for player 2. 
	
	It remains to show that player 1 has a strategy to enforce that each game $\bigl((\tup{a}_i)_{i<\omega},(\tup{b}_i)_{i<\omega}\bigr)$ has the property that $\{\tup{a}_i\mid i<\omega\}$ is dense in $A_\infty$ and that $\{\tup{b}_i\mid i<\omega\}$ is dense in $B_\infty$. For this we fix bijections $\chi\colon\omega\to\bigcup_{i<\omega} \{i\}\times A_i$ and $\xi\colon\omega\to\bigcup_{i<\omega} \{i\}\times B_i$. Suppose again that  after $n$ rounds the position of the game is $(\tup{a}_0,\dots,\tup{a}_{n-1}; \tup{b}_0,\dots,\tup{b}_{n-1})$. If $n$ is even, then player 1 chooses the smallest $j<\omega$, such that with $\chi(j)=(k,x)$ we have that $x\notin\{\alpha^\infty_k(\tup{a}_i)\mid i<n\}$, and goes on to take any $\tup{a}_n\in A_\infty$ with $\alpha^\infty_k(\tup{a}_n)=x$.  
	
	If $n$ is odd, then player 1 chooses $\tup{b}_n\in B_\infty$ in an analogous way, using $\xi$. 
	
	If players 1  and 2 each play their respective strategy, then the resulting game defines an isometric isomorphism from a dense isometric substructure of $\mstr{A}$ to a dense isometric substructure of $\mstr{B}$.  Using the completion functor, this isomorphism extends to an isometric isomorphism from $\mstr{A}$ to $\mstr{B}$, as desired.
\end{proof}

\begin{proof}[Proof of the backwards-implication in Theorem~\ref{mainthm}]
	Let $\mstr{U}$ be a universal homogeneous ultrametric structure in $\pi\sigma\class{C}$. Without loss of generality, $\mstr{U}=\mlim \cochain{U}$, where $\cochain{U}= ((\struc{U}_i)_{i<\omega}, (u^j_i)_{i\le j<\omega})$ and where all $u_i^j$ are surjective . We are going to show that $\class{C}$ has the \AP and the \AEP. 
	
	Consider any $\omega$-cochain $\cochain{X}$ of the shape  
	\[
	\begin{tikzcd}
		\struc{X} & \struc{X}\ar[l,two heads,"1_{\struc{X}}"'] & \struc{X}\ar[l,two heads,"1_{\struc{X}}"']&\dots\ar[l,two heads,"1_{\struc{X}}"']\\
	\end{tikzcd}
	\] 
	This is a surjective $\omega$-cochain. By Observation~\ref{isoiso}(\ref{epsiso}) we have that both,  $\varepsilon_{\cochain{X}}\colon \Seq\mlim\cochain{X}\to\cochain{X}$ and $\varepsilon_{\cochain{U}}\colon \Seq\mlim\cochain{U}\to\cochain{U}$, are natural isomorphisms. By universality of $\mstr{U}$ there exists a metric embedding $\kappa\colon\mlim\cochain{X}\injto\mstr{U}$. Note that then $(\kappa_i)_{i<\omega}\coloneqq \varepsilon_{\cochain{U}}\circ\Seq\kappa\circ\varepsilon_{\cochain{X}}^{-1}$ is a natural embedding from $\cochain{X}$ into $((\struc{U}_i)_{i<\omega}, (u^j_i)_{i\le j<\omega})$. In other words, the following diagram commutes:
	\[
	\begin{tikzcd}
		\struc{U}_0 & \struc{U}_1\ar[l,two heads,"u^1_0"'] & \struc{U}_2\ar[l,two heads,"u^2_1"'] & \dots\ar[l,two heads,"u^3_2"']\\
		\struc{X} \ar[u,hook',"\kappa_0"]& \struc{X}\ar[u,hook',"\kappa_1"]\ar[l,two heads,"1_{\struc{X}}"'] & \struc{X}\ar[u,hook',"\kappa_2"]\ar[l,two heads,"1_{\struc{X}}"']& \dots \ar[l,two heads,"1_{\struc{X}}"']
	\end{tikzcd}
	\]
	Let us now show that $\class{C}$ has the \AP.
	Let $\struc{A},\struc{B},\struc{C}\in\class{C}$, and let $\iota\colon\struc{A}\injto\struc{B}$, $\lambda\colon\struc{A}\injto\struc{C}$. With the same reasoning as above, there exist natural embeddings $(\iota_i)_{i<\omega}$ and $(\lambda_i)_{i<\omega}$, such that the following diagram commutes:
	\[
	\begin{tikzcd}
		\struc{U}_0 & \struc{U}_1\ar[l,two heads,"u^1_0"'] & \struc{U}_2\ar[l,two heads,"u^2_1"'] & \dots\ar[l,two heads,"u^3_2"']\\
		\struc{B} \ar[u,hook',"\iota_0"]& \struc{B}\ar[u,hook',"\iota_1"] \ar[l,two heads,"1_{\struc{B}}"']& \struc{B}\ar[u,hook',"\iota_2"]\ar[l,two heads,"1_{\struc{B}}"'] &\dots\ar[l,two heads,"1_{\struc{B}}"']\\
		\struc{A} \ar[d,hook',"\lambda"']\ar[u,hook',"\iota"]& \struc{A}\ar[d,hook',"\lambda"']\ar[u,hook',"\iota"]\ar[l,two heads,"1_{\struc{A}}"'] & \struc{A}\ar[d,hook',"\lambda"']\ar[u,hook',"\iota"]\ar[l,two heads,"1_{\struc{A}}"'] &\dots\ar[l,two heads,"1_{\struc{A}}"']\\
		\struc{C} \ar[d,hook',"\lambda_0"']& \struc{C}\ar[d,hook',"\lambda_1"']\ar[l,two heads,"1_{\struc{C}}"'] & \struc{C}\ar[d,hook',"\lambda_2"']\ar[l,two heads,"1_{\struc{C}}"'] &\dots\ar[l,two heads,"1_{\struc{C}}"']\\
		\struc{U}_0 & \struc{U}_1\ar[l,two heads,"u^1_0"'] & \struc{U}_2\ar[l,two heads,"u^2_1"'] &\dots\ar[l,two heads,"u^3_2"'] 
	\end{tikzcd}
	\]
	By the homogeneity of $\mstr{U}$, there exists a natural automorphism $(\varphi_i)_{i<\omega}$ of $\cochain{U}$, such that for all $i<\omega$ we have $\varphi_i\circ\iota_i\circ\iota = \lambda_i\circ\lambda$.  Let $\struc{D}\coloneqq\langle\varphi_0(\iota_0(B))\cup \lambda_0(C)\rangle_{\struc{U}_0}$. Let $\tilde\lambda_0\colon\struc{C}\injto\struc{D}$ be the image restriction of $\lambda_0$ to $D$ and let $\tilde\iota_0\colon\struc{B}\injto\struc{D}$ be the image restriction of $\varphi_0\circ\iota_0$ to $D$. Then the following diagram commutes:
	\[
	\begin{tikzcd}
		\struc{U}_0\ar[rrd,"\varphi_0","\cong"',near start]\\
		 \struc{B} \ar[u,hook',"\iota_0"]\ar[r,hook,"\tilde\iota_0"]& \struc{D} \ar[r,hook,"="]& \struc{U}_0\\
		 \struc{A}\ar[u,hook',"\iota"] \ar[r,hook,"\lambda"]& \struc{C}\ar[u,hook',"\tilde\lambda_0"']\ar[ur,hook',"\lambda_0"']& .
	\end{tikzcd}
	\]
	In particular, $\class{C}$ has the \AP. 
	
	Our next step is to show that $\class{C}$ has the \AEP. To this end we consider the comma-category $(\func{F}\comma \func{F})$ (see the beginning of Section~\ref{commacats} for a definition of $\func{F}$). 
	The objects of $(\func{F}\comma\func{F})$ are triples $(\struc{A},h,\struc{B})$, where $\struc{A},\struc{B}\in\sigma\class{C}$ and where $h\colon\struc{A}\to\struc{B}$. The homomorphisms from $(\struc{A}_1,h_1,\struc{B}_1)$ to $(\struc{A}_2,h_2,\struc{B}_2)$ are of the shape $(\iota,\kappa)$, where $\iota\colon\struc{A}_1\injto\struc{A}_2$ and $\kappa\colon\struc{B}_1\injto\struc{B}_2$, such that the following diagram commutes:
	\[
	\begin{tikzcd}
		\struc{A}_2 \ar[r,"h_2"] & \struc{B}_2\\
		\struc{A}_1 \ar[r,"h_1"] \ar[u,hook',"\iota"] & \struc{B}_1.\ar[u,hook',"\kappa"]
	\end{tikzcd}
	\]
	With the same argument as in the proof Observation~\ref{algebroidal} (namely, observing that the premises of  \cite[Proposition 4.4]{PecPec18b} are trivially satisfied in this case), we observe that $(\func{F}\comma\func{F})$ is semi-algebroidal, and that the objects of $(\func{F}\comma\func{F})_{<\omega}$ are of the shape $(\struc{A},h,\struc{B})$, where $\struc{A},\struc{B}\in \class{C}$. 
	
	Next we are going to show that $(\func{F}\comma \func{F})_{<\omega}$ has the \AP. When this is done, then we may use \cite[Proposition 5.2(2)]{PecPec18b} in order to conclude that $\class{C}$ has the \AEP. 
	
	Let $(\struc{A}_1, a^1_0,\struc{A}_0),(\struc{B}_1,b^1_0,\struc{B}_0), (\struc{C}_1,c^1_0,\struc{C}_0)\in(\func{F}\comma\func{F})$.  In the first step let us assume that $a^1_0$, $b^1_0$, and $c_1^0$ are surjective. Like above, by the universality of $\mstr{U}$, there exist natural embeddings $(\nu_i)_{i<\omega}$ and $(\kappa_i)_{i<\omega}$, such that the following diagram commutes:
		\[
	\begin{tikzcd}
		\struc{U}_0 & \struc{U}_1\ar[l,two heads,"u^1_0"'] & \struc{U}_2\ar[l,two heads,"u^2_1"'] & \struc{U}_3 \ar[l,two heads,"u^3_2"'] & \dots \ar[l,two heads,"u^4_3"']\\
		\struc{B}_0 \ar[u,hook',"\nu_0"]& \struc{B}_1\ar[u,hook',"\nu_1"] \ar[l,two heads,"b^1_0"']& \struc{B}_1\ar[u,hook',"\nu_2"]\ar[l,two heads,"1_{\struc{B}_1}"'] & \struc{B}_1\ar[u,hook',"\nu_3"]\ar[l,two heads,"1_{\struc{B}_1}"'] & \dots \ar[l,two heads,"1_{\struc{B}_1}"']\\
		\struc{A}_0 \ar[d,hook',"\lambda_0"']\ar[u,hook',"\iota_0"]& \struc{A}_1\ar[d,hook',"\lambda_1"']\ar[u,hook',"\iota_1"]\ar[l,two heads,"a^1_0"'] & \struc{A}_1\ar[d,hook',"\lambda_1"']\ar[u,hook',"\iota_1"]\ar[l,two heads,"1_{\struc{A}_1}"'] & \struc{A}_1\ar[d,hook',"\lambda_1"']\ar[u,hook',"\iota_1"]\ar[l,two heads,"1_{\struc{A}_1}"'] & \dots \ar[l,two heads,"1_{\struc{A}_1}"']\\
		\struc{C}_0 \ar[d,hook',"\kappa_0"']& \struc{C}_1\ar[d,hook',"\kappa_1"']\ar[l,two heads,"c^1_0"'] & \struc{C}_1\ar[d,hook',"\kappa_2"']\ar[l,two heads,"1_{\struc{C}_1}"'] &\struc{C}_1\ar[l,two heads,"1_{\struc{C}_1}"']\ar[d,hook',"\kappa_3"'] & \dots\ar[l,two heads,"1_{\struc{C}_1}"']\\
		\struc{U}_0 & \struc{U}_1\ar[l,two heads,"u^1_0"'] & \struc{U}_2\ar[l,two heads,"u^2_1"'] &\struc{U}_3\ar[l,two heads,"u^3_2"'] & \dots \ar[l,two heads,"u^4_3"']
	\end{tikzcd}
	\]
	By the homogeneity of $\mstr{U}$, there exists a natural automorphism $(\varphi_i)_{i<\omega}$ of $\cochain{U}$, such that  $\varphi_0\circ\nu_0\circ\iota_0 = \kappa_0\circ\lambda_0$ and such that for all $1\le i<\omega$ we have that $\varphi_i\circ\nu_i\circ\iota_1 = \kappa_i\circ\lambda_1$. 

	Let $\struc{D}_1\coloneqq\langle\varphi_1(\nu_1(B_1))\cup \kappa_1(C_1)\rangle_{\struc{U}_1}$, and let $\struc{D}_0\coloneqq\langle\varphi_0(\nu_0(B_0))\cup \kappa_0(C_0)\rangle_{\struc{U}_0}$.  Then $u^1_0(D_1)= D_0$. Indeed:
	\begin{align*}
	u^1_0(D_1)\ &= u^1_0(\langle\varphi_1(\nu_1(B_1))\cup \kappa_1(C_1)\rangle_{\struc{U}_1}) = \langle u^1_0(\varphi_1(\nu_1(B_1))\cup \kappa_1(C_1))\rangle_{\struc{U}_0} \\
	&=\langle u^1_0(\varphi_1(\nu_1(B_1)))\cup u^1_0(\kappa_1(C_1))\rangle_{\struc{U}_0}= \langle \varphi_0(u^1_0(\nu_1(B_1))) \cup \kappa_0(c^1_0(C_1))\rangle_{\struc{U}_0}\\
	&= \langle \varphi_0(\nu_0(b^1_0(B_1)))\cup \kappa_0(c^1_0(C_1))\rangle_{\struc{U}_0} = \langle \varphi_0(\nu_0(B_0))\cup \kappa_0(C_0)\rangle_{\struc{U}_0} = D_0
	\end{align*}
	Let $d^1_0$ be the unique homomorphism that makes the following diagram commutative:
	\[
	\begin{tikzcd}
		\struc{U}_1 \ar[r,two heads,"u^1_0"]& \struc{U}_0\\
		\struc{D}_1 \ar[r,two heads,"d^1_0"]\ar[u,hook',"="]& \struc{D}_0\ar[u,hook',"="]
	\end{tikzcd}  
	\]
	For each $i\in\{1,2\}$ let $\tilde\kappa_i$ be the image restriction of $\kappa_i$ to $D_i$, and let $\tilde\nu_i$ be the image restriction of $\varphi_0\circ\nu_i$ to $D_i$. In particular, the following diagrams commute:
	\[
	\begin{tikzcd}
		& \struc{U}_0 \\
		\struc{B}_0\ar[r,hook,"\tilde\nu_0"'] \ar[ur,hook',"\varphi_0\circ\nu_0"]&\struc{D}_0\ar[u,hook',"="] & \struc{C}_0\ar[l,hook',"\tilde\kappa_0"]\ar[ul,hook',"\kappa_0"']
	\end{tikzcd}\quad\text{and}\quad
	\begin{tikzcd}
		& \struc{U}_1 \\
		\struc{B}_1\ar[r,hook,"\tilde\nu_1"'] \ar[ur,hook',"\varphi_1\circ\nu_1"]&\struc{D}_1\ar[u,hook',"="] & \struc{C}_1\ar[l,hook',"\tilde\kappa_1"]\ar[ul,hook',"\kappa_1"']
	\end{tikzcd}
	\]
	Then the following diagram commutes, too:
	\[
	\begin{tikzcd}
		\struc{B}_0 \ar[rrr,hook,"\tilde\nu_0"]& & & \struc{D}_0\\
		&\struc{B}_1 \ar[r,hook,"\tilde\nu_1"]\ar[ul,two heads,"b^1_0"']& \struc{D}_1\ar[ur,two heads,"d^1_0"]\\
		&\struc{A}_1 \ar[r,hook,"\lambda_1"']\ar[u,hook',"\iota_1"]\ar[dl,two heads,"a^1_0"']& \struc{C}_1\ar[u,hook',"\tilde\kappa_1"']\ar[dr,two heads,"c^1_0"]\\
		\struc{A}_0 \ar[uuu,hook',"\iota_0"]\ar[rrr,hook,"\lambda_0"']& & & \struc{C}_0\ar[uuu,hook',"\tilde\kappa_0"']
	\end{tikzcd}
	\]
	This shows the amalgamation property for surjective objects of $(\func{F}\comma\func{F})_{<\omega}$. The general case bases on this one as follows: Let $(\struc{A}_1,a^1_0,\struc{A}_0),(\struc{B}_1,b^1_0,\struc{B}_0), (\struc{C}_1,c^1_0,\struc{C}_0)\in (\func{F}\comma\func{F})_{<\omega}$,   $(\iota_1,\iota_0)\colon (\struc{A}_1,a^1_0,\struc{A}_0)\injto (\struc{B}_1,b^1_0,\struc{B}_0)$, and $(\lambda_1,\lambda_0)\colon (\struc{A}_1,a^1_0,\struc{A}_0)\injto (\struc{C}_1,c^1_0,\struc{C}_0)$. Consider the epi-mono factorizations of $a^1_0$, $b^1_0$, and $c^1_0$:
	\[
	\begin{tikzcd}
		\struc{B}_1 & \struc{A}_1\ar[l,hook',"\iota_1"']\ar[r,hook,"\lambda_1"] & \struc{C}_1 \\
		\widetilde{\struc{B}}_1 \ar[u,hook',"="]& \widetilde{\struc{A}}_1\ar[l,hook',"\tilde\iota_1"',dashed]\ar[r,hook,"\tilde\lambda_1",dashed]\ar[u,hook',"="] & \widetilde{\struc{C}}_1\ar[u,hook',"="]\\
		\struc{B}_0 \ar[uu,bend left=40,"b^1_0",near end]\ar[u,two heads,"\tilde{b}^1_0"]& \struc{A}_0 \ar[uu,bend left=40,"a^1_0", near end]\ar[u,two heads,"\tilde{a}^1_0"]\ar[l,hook',"\iota_0"]\ar[r,hook,"\lambda_0"']& \struc{C}_0\ar[uu,bend left=40,"c^1_0",near end]\ar[u,two heads,"\tilde{c}^1_0"]
	\end{tikzcd}
	\]
	Here the dashed arrows exist and are unique because
	\begin{align*}
	\iota_1(\widetilde{A}_1) &= \iota_1(a^1_0(A_0)) = b^1_0(\iota_0(A_0))\subseteq b^1_0(B_0) = \widetilde{B}_1,\quad\text{and}\\ 
	\lambda_1(\widetilde{A}_1) &= \lambda_1(a^1_0(A_0)) = c^1_0(\lambda_0(A_0))\subseteq c^1_0(C_0) = \widetilde{C}_1.
	\end{align*}
	Using the amalgamation property for surjective objects and thrice the amalgamation property of $\class{C}$ we obtain  the following commuting diagram:
		\[
	\begin{tikzcd}
	\struc{B}_0 \ar[rrrr,hook,dashed]& & & & \struc{X}\ar[r,hook,dashed]& \struc{D}_0\\
		&\widetilde{\struc{B}}_0 \ar[ul,hook',"="]\ar[rrr,hook]& & & \widetilde{\struc{D}}_0\ar[u,hook',dashed]\ar[r,hook,dashed] &\struc{Y}\ar[u,hook',dashed]\\
		& &\struc{B}_1 \ar[r,hook]\ar[ul,two heads,"\tilde{b}^1_0"']& \struc{D}_1\ar[ur,two heads]\\
		& &\struc{A}_1 \ar[r,hook,"\lambda_1"']\ar[u,hook',"\iota_1"]\ar[dl,two heads,"\tilde{a}^1_0"']& \struc{C}_1\ar[u,hook']\ar[dr,two heads,"\tilde{c}^1_0"]\\
		& \widetilde{\struc{A}}_0 \ar[dl,hook',"="]\ar[uuu,hook',"\tilde\iota_0"]\ar[rrr,hook,"\tilde\lambda_0"']& & & \widetilde{\struc{C}}_0\ar[uuu,hook']\ar[dr,hook',"="]\\
		\struc{A}_0 \ar[uuuuu,hook',"\iota_0"]\ar[rrrrr,hook,"\lambda_0"]& & & & & \struc{C}_0\ar[uuuu,hook',dashed]
	\end{tikzcd}
	\]
	This shows that $(\func{F}\comma\func{F})$ has the \AP. Thus, as already noted above, $\class{C}$ has the \AEP.
\end{proof}

\subsection{Proof of Theorem~\ref{secmainthm}}

For convenience, let us repeat the formulation of Theorem~\ref{secmainthm}:
	\begin{secmainthm}
		Let $\Sigma$ be a finite and purely relational signature, and let $\class{C}\subseteq\class{S}_\Sigma$  be an age with the \AP, the \AEP, and the \HAP. Let $\mstr{U}$ be the unique universal homogeneous metric structure in $\pi\sigma\class{C}$ postulated by Theorem~\ref{mainthm}. Then every isomorphism between finite substructures of its underlying structure $\struc{U}$ extends to an automorphism of\/ $\struc{U}$. Moreover, this extension may be chosen to be an isometry with respect to the metric of $\mstr{U}$ on every ball of radius $< 2^{-l}$ for some $l>0$. 
	\end{secmainthm}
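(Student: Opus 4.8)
The plan is to reduce the statement to the construction of a natural automorphism of a \emph{tail} of the cochain $\cochain{U}$. By the existence part of Theorem~\ref{mainthm} we may assume $\mstr{U}=\mlim\cochain{U}$ with $\cochain{U}=((\struc{U}_i)_{i<\omega},(\Omega_i^j)_{i\le j<\omega})$, where $\struc{U}_0$ is a \Fraisse limit of $\class{C}$ and every $\Omega_i^{i+1}$ is a universal skew-homogeneous homomorphism; write $\struc{U}=\struc{U}_\infty$ for the underlying structure and $(\Omega_i^\infty)_{i<\omega}$ for the canonical cone. The key observation is that if $(\varphi_i)_{i\ge l}$ is a natural automorphism of the truncated cochain $((\struc{U}_i)_{i\ge l},(\Omega_i^j)_{l\le i\le j})$, then $\Phi:=\mlim(\varphi_i)_{i\ge l}$, acting by $(x_i)_{i<\omega}\mapsto(y_i)_{i<\omega}$ with $y_i=\varphi_i(x_i)$ for $i\ge l$ and $y_i=\Omega_i^l(y_l)$ for $i<l$, is an automorphism of $\struc{U}$: each $\varphi_i$ preserves $\varrho^{\struc{U}_i}$, and membership in $\varrho^{\struc{U}_\infty}$ is governed by the levels $i\ge l$ (truth at level $l$ forces truth at all $i\le l$ through the homomorphisms $\Omega_i^l$). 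Moreover $\Phi$ is an isometry on every ball of radius $<2^{-l}$: such a ball is a fibre of $\Omega_l^\infty$, and on it $\Phi$ only alters coordinates $i>l$ through the bijections $\varphi_i$, hence preserves $\Delta$. This already accounts for the ``moreover'' clause.

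The crucial and, I expect, hardest step is to show that $\HAP$ forces every $\struc{U}_i$ to be homogeneous; I claim in fact that each $\struc{U}_i\cong\struc{U}_0$. First, $\HAP$ implies that $\struc{U}_0$ is \emph{$\class{C}$-injective}: given an embedding $f\colon\struc{A}\injto\struc{B}$ and a homomorphism $h\colon\struc{A}\to\struc{U}_0$ with $\struc{A},\struc{B}\in\class{C}$, put $\struc{T}:=\langle h(A)\rangle_{\struc{U}_0}\in\class{C}$ and apply $\HAP$ to $f$ and $h\colon\struc{A}\to\struc{T}$, obtaining $\struc{C}\in\class{C}$, a homomorphism $g_1\colon\struc{B}\to\struc{C}$ and an embedding $g_2\colon\struc{T}\injto\struc{C}$ with $g_1f=g_2h$; using universality and homogeneity of $\struc{U}_0$ one embeds $\struc{C}$ into $\struc{U}_0$ along $g_2$ over the inclusion $\struc{T}\injto\struc{U}_0$, and the composite is the desired extension $h'\colon\struc{B}\to\struc{U}_0$ of $h$. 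Next I would propagate this through the cochain: assuming $\struc{U}_i$ is $\class{C}$-injective, I analyse the comma category $(\func{F}\comma\func{G}_{\struc{U}_i})$, whose \Fraisse limit is $(\struc{U}_{i+1},\Omega_i^{i+1},\struc{U}_i)$ by Corollary~\ref{critunivskewhom} and Observation~\ref{equivunivhom}. Projecting a \Fraisse sequence $((\struc{A}_n,h_n,\struc{U}_i))_{n}$ of this category to its first coordinate yields an $\omega$-chain of embeddings whose colimit is $\struc{U}_{i+1}$; cofinality in $\cat{C}$ holds because every finite structure admits a homomorphism into $\struc{U}_i$ (again by $\HAP$, applied with empty $\struc{A}$), while absorption in $\cat{C}$ holds precisely because $\class{C}$-injectivity of $\struc{U}_i$ lets one extend each $h_n$ along a given embedding into a comma-morphism. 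Hence the projected chain is a \Fraisse sequence in $\cat{C}$, so $\struc{U}_{i+1}\cong\struc{U}_0$, which is again $\class{C}$-injective. By induction every $\struc{U}_i\cong\struc{U}_0$ is homogeneous.

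With homogeneity of the $\struc{U}_i$ secured, I set up the induction data. Let $\phi\colon F\to F'$ be an isomorphism of finite substructures of $\struc{U}=\struc{U}_\infty$. Since $F\cup F'$ is finite and distinct branches separate at finite levels, there is $l_0$ with $\Omega_{l_0}^\infty$ injective on $F\cup F'$. For a tuple $\bar a$ over $F\cup F'$ and a relation $\varrho$, the truth value $(\Omega_i^\infty(\bar a))\in\varrho^{\struc{U}_i}$ is nonincreasing in $i$ (the $\Omega_i^{i+1}$ are homomorphisms) and equals $\bar a\in\varrho^{\struc{U}_\infty}$ for all large $i$; as $\Sigma$ is finite and relational and $F\cup F'$ is finite, there is $l\ge l_0$ beyond which all these finitely many values have stabilised. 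Setting $F_i:=\Omega_i^\infty(F)$ and $F_i':=\Omega_i^\infty(F')$ for $i\ge l$, the maps $\Omega_i^\infty$ restrict to isomorphisms of $\struc{U}_\infty\restr_F$ and $\struc{U}_\infty\restr_{F'}$ onto the induced substructures $F_i,F_i'$ of $\struc{U}_i$, and $\phi_i\colon F_i\to F_i'$, defined by $\phi_i(\Omega_i^\infty(a)):=\Omega_i^\infty(\phi(a))$, is a well-defined isomorphism satisfying the naturality relation $\Omega_i^{i+1}\circ\phi_{i+1}=\phi_i\circ\Omega_i^{i+1}$ on $F_{i+1}$.

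Finally I would construct $(\varphi_i)_{i\ge l}$ by induction. For the base, homogeneity of $\struc{U}_l\cong\struc{U}_0$ yields $\varphi_l\in\Aut(\struc{U}_l)$ extending $\phi_l$. For the step, apply skew-homogeneity of $\Omega_i^{i+1}$ to the finite structure $\struc{A}=F_{i+1}$, the embeddings $\iota=(\text{inclusion of }F_{i+1})$ and $\kappa=\phi_{i+1}$, the homomorphism $h=\Omega_i^{i+1}\restr_{F_{i+1}}$, and $\psi=\varphi_i$; the required premise $\Omega_i^{i+1}\circ\kappa=\varphi_i\circ\Omega_i^{i+1}\circ\iota$ follows from naturality of $(\phi_i)$ together with the inductive hypothesis $\varphi_i\restr_{F_i}=\phi_i$. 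Skew-homogeneity then delivers $\varphi_{i+1}\in\Aut(\struc{U}_{i+1})$ with $\varphi_{i+1}\restr_{F_{i+1}}=\phi_{i+1}$ and $\Omega_i^{i+1}\circ\varphi_{i+1}=\varphi_i\circ\Omega_i^{i+1}$, which is exactly the naturality needed. Taking $\Phi:=\mlim(\varphi_i)_{i\ge l}$ and checking $\Phi\restr_F=\phi$ (both sides agree in all coordinates $i\ge l$, hence in all coordinates) completes the argument. The only genuinely non-routine ingredient is the $\HAP$-driven identification $\struc{U}_i\cong\struc{U}_0$ of the second paragraph; the rest is separation/stabilisation bookkeeping together with a standard skew-homogeneous back-and-forth.
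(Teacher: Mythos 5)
Your proposal is correct in outline, but it takes a genuinely different route from the paper. The paper does not work with the cochain coming from the existence proof of Theorem~\ref{mainthm}: it first uses the \AEP and the \HAP (via \cite[Theorem 6.8]{PecPec18b}) to replace it by the \emph{constant} cochain built from a single universal homogeneous \emph{endomorphism} $\Omega$ of the \Fraisse limit $\struc{U}$, so that every level is literally $\struc{U}$. This self-similarity produces two mutually inverse shift automorphisms $T_L,T_R$ of $\struc{U}_\infty$ that rescale the ultrametric by $2^{\pm1}$; a finite isomorphism $\alpha$ is conjugated by $T_R^l$ for $l$ large enough that all points of its domain and range become equidistant and the finitely many relation values have stabilised, at which point it is a \emph{metric} isomorphism, the metric homogeneity already established in Theorem~\ref{mainthm} extends it to a metric automorphism, and conjugating back yields the desired automorphism together with the isometry-on-small-balls clause. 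Your argument replaces the citation of \cite[Theorem 6.8]{PecPec18b} by an internal proof that each level $\struc{U}_i$ is isomorphic to $\struc{U}_0$ ($\class{C}$-injectivity from the \HAP, then projecting the comma-category \Fraisse sequence to its first coordinate) --- this is essentially a re-derivation of the ingredient the paper imports --- and then re-runs the skew-homogeneous lifting level by level above the stabilisation index, assembling a natural automorphism of a tail of the cochain. Both proofs hinge on the same two phenomena: homogeneity of the individual levels (where the \HAP enters) and stabilisation of finitely many relation values (where finiteness of $\Sigma$ enters). The paper's shift-conjugation is shorter because it reuses the metric homogeneity of $\mstr{U}$ wholesale; your version is more self-contained at the cochain level but must verify that the levels are \Fraisse limits and redo the inductive back-and-forth. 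Your supporting details --- the premises of skew-homogeneity, the naturality of the induced maps $\phi_i$, the passage from a natural automorphism of a tail to an automorphism of the limit, and the isometry computation on balls of radius $<2^{-l}$ --- all check out.
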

	\label{proofsecmain} Recall that in the proof of Theorem~\ref{mainthm} a universal homogeneous metric structure in $\pi\sigma\class{C}$ is obtained as $\mlim\cochain{U}$, where $\cochain{U}=((\struc{U}_i)_{i<\omega}, (\Omega_i^j)_{i\le j<\omega})$ has very specific properties. In particular, $\struc{U}_0$ is a \Fraisse limit of $\class{C}$, and for each $i<\omega$ we have that $\Omega_i^{i+1}\colon\struc{U}_{i+1}\epito\struc{U}_i$ is a universal skew-homogeneous homomorphism (cf.~``Proof of existence.'' on page~\pageref{poe}).   
	
	More concretely, it was shown above that $\Omega_i^{i+1}$ may be chosen in such a way that $(\struc{U}_{i+1},\Omega^{i+1}_i,\struc{U}_i)$ is a universal homogeneous object in the comma-category $(\func{F}\comma \func{G}_{\struc{U}_i})$ (cf. the proof of Corollary~\ref{critunivskewhom}). 
	
	Next we are going to revise the definition of the cochain $\cochain{U}$ retaining all its crucial properties. Let $\struc{U}$ be the \Fraisse limit of $\class{C}$. Then, using that  $\class{C}$ has the \AEP and the \HAP, together with \cite[Theorem 6.8]{PecPec18b}, we conclude that $\struc{U}$ has a universal homogeneous \emph{endomorphism} $\Omega$. In other words  $(\struc{U},\Omega,\struc{U})$ is a universal homogeneous object in $(\func{F}\comma \func{G}_{\struc{U}})$. Using the same argument as in the proof of Corollary~\ref{critunivskewhom} we conclude that $\Omega$ is universal and skew-homogeneous, too. Thus, we may consider the cochain  $((\struc{U}_i)_{i<\omega}, (\Omega_i^j)_{i\le j<\omega})$ defined for each $i<\omega$ by $\struc{U}_i\coloneqq\struc{U}$, and $\Omega^{i+1}_i\coloneqq\Omega$. By all what we know we have that $\mstr{U}_\infty\coloneqq\mlim\cochain{U}$ is a universal homogeneous metric structure in $\pi\sigma\class{C}$. Let $\struc{U}_\infty$ be its underlying structure. 
	
	On $\struc{U}_\infty$ we may introduce two shift-operators $T_L$ and $T_R$ given by
	\begin{align*}
		T_L\colon & (x_0,x_1,x_2,\dots)\mapsto (x_1,x_2,x_3,\dots),\text{ and}\\
		T_R\colon & (x_0,x_1,x_2,\dots)\mapsto (\Omega(x_0),x_0,x_1,\dots). 
	\end{align*}
	
	\begin{lemma}
		$T_L$ and $T_R$ are mutually inverse bi-Lipschitz automorphisms of\/ $\struc{U}_\infty$ with respect to the metric $\delta$ of $\mstr{U}_\infty$. 
	\end{lemma}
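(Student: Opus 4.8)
The plan is to work with the concrete description of the carrier $U_\infty$ as the set of sequences $\tup{x}=(x_i)_{i<\omega}$ with $x_i\in U$ and $\Omega(x_{i+1})=x_i$ for every $i$, which is available because every bonding map of $\cochain{U}$ equals $\Omega$. First I would verify that $T_L$ and $T_R$ really are self-maps of $U_\infty$: for $T_L$ the compatibility constraints of the shifted sequence form a subset of those satisfied by $\tup{x}$, and for $T_R$ the only new constraint, linking the prepended entry $\Omega(x_0)$ to $x_0$, reads $\Omega(x_0)=\Omega(x_0)$ and is vacuous. A direct computation then shows $T_L\circ T_R=\mathrm{id}_{U_\infty}$ and, using $\Omega(x_1)=x_0$, also $T_R\circ T_L=\mathrm{id}_{U_\infty}$, so that $T_L$ and $T_R$ are mutually inverse bijections of $U_\infty$.

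Next I would check that they are automorphisms of the structure $\struc{U}_\infty$. Since $\Sigma$ is purely relational, it suffices to show that each of $T_L,T_R$ preserves and reflects every basic relation. Recall that $(\tup{x}^{(0)},\dots,\tup{x}^{(n-1)})\in\varrho^{\struc{U}_\infty}$ holds exactly when the level-$i$ projection $(x_i^{(0)},\dots,x_i^{(n-1)})$ lies in $\varrho^{\struc{U}}$ for every $i$. The key observation is that, because $\Omega$ is a homomorphism and $\Omega(x_{i+1})=x_i$, membership of the level-$(i+1)$ projection in $\varrho^{\struc{U}}$ forces membership of the level-$i$ projection; hence the set of levels at which the projection lies in $\varrho^{\struc{U}}$ is downward closed, and belonging to $\varrho^{\struc{U}_\infty}$ is equivalent to this level set being all of $\omega$. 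Consequently membership is unaffected by deleting the initial level (as $T_L$ does) or by prepending the $\Omega$-image of the initial level (as $T_R$ does); this is precisely where the homomorphism property of $\Omega$, rather than mere functionality, is used. Being mutually inverse homomorphisms, $T_L$ and $T_R$ are automorphisms of $\struc{U}_\infty$.

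Finally I would establish the bi-Lipschitz estimates through the quantity $\Delta$. The same compatibility $x_i=\Omega(x_{i+1})$ shows that $a_{i+1}=b_{i+1}$ implies $a_i=b_i$, so for distinct $\tup{a},\tup{b}\in U_\infty$ the disagreement set $\{i : a_i\neq b_i\}$ is an up-set $[\Delta,\infty)$ with $\Delta=\Delta(\tup{a},\tup{b})$. From this one computes $\Delta(T_L\tup{a},T_L\tup{b})=\max(\Delta-1,0)$ and $\Delta(T_R\tup{a},T_R\tup{b})\in\{\Delta,\Delta+1\}$, the latter being $\Delta+1$ whenever $\Delta\ge 1$ and otherwise determined by whether $\Omega(a_0)\neq\Omega(b_0)$. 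Hence $\delta(T_L\tup{a},T_L\tup{b})\le 2\,\delta(\tup{a},\tup{b})$ and $\delta(T_R\tup{a},T_R\tup{b})\le\delta(\tup{a},\tup{b})$, so $T_L$ is $2$-Lipschitz and $T_R$ is $1$-Lipschitz; since they are mutually inverse, each is bi-Lipschitz.

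The computations are routine, and I expect no serious obstacle. The only points demanding care are the handling of the single boundary level (the deleted entry for $T_L$, the prepended $\Omega(x_0)$ for $T_R$, and the case $\Delta=0$), together with the fact that reflecting the relations genuinely requires $\Omega$ to be a homomorphism and that the metric estimates rely on the level-compatibility $x_i=\Omega(x_{i+1})$.
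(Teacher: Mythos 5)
Your proposal is correct and follows essentially the same route as the paper's own (much terser) proof: direct verification that the shifts are well-defined mutually inverse homomorphisms (using that $\Omega$ is a homomorphism for $T_R$), followed by the level-by-level computation of $\Delta$ showing $T_L$ is $2$-Lipschitz and $T_R$ is $1$-Lipschitz. The extra details you supply — the downward/upward closedness of the agreement and disagreement sets coming from the compatibility $x_i=\Omega(x_{i+1})$, and the careful handling of the boundary level and the case $\Delta=0$ — are exactly the points the paper leaves as ``clearly'', and they check out.
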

	\begin{proof}
		Clearly, $T_L$ and $T_R$ are well defined self-mappings of $U_\infty$. 
	It is also clear that $T_L$ is a homomorphism. Moreover, $T_R$ is a homomorphism because $\Omega$ is.
	
	$T_L\circ T_R=1_{\struc{U}_\infty}= T_R\circ T_L$ follows from the definition of $T_L$ and $T_R$. 
		
	It remains to observe the bi-Lipschitz property. Suppose that $\delta(\tup{x},\tup{y})=2^{-n}$. If $n>0$, then we have that $\delta(T_R(\tup{x}),T_R(\tup{y}))= 2^{-(n+1)}$  and  $\delta(T_L(\tup{x}),T_L(\tup{y}))= 2^{-(n-1)}$. If we suppose that $n=0$, then we have at least that $1/2\le \delta(T_R(\tup{x}),T_R(\tup{y}))\le  1$,  and $\delta(T_L(\tup{x}),T_L(\tup{y}))= 1$. 
	It follows that $T_L$ and $T_R$ are bi-Lipschitz (with Lipschitz constant $L=2$)
 	\end{proof}
 
\begin{lemma}\label{shiftmet}
	Let  $\struc{A},\struc{B}$ be finite substructures of $\struc{U}_\infty$ and let  $\alpha\colon\struc{A}\to\struc{B}$ be an isomorphism. Then there exists an $l<\omega$, such that $T_L^l\circ\alpha\circ T_R^l\colon \langle T_L^l(A)\rangle_{\mstr{U}_\infty}\to \langle T_L^l(B)\rangle_{\mstr{U}_\infty}$ is a metric isomorphism.
\end{lemma}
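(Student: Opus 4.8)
The plan is to exploit that $T_R^l$ and $T_L^l$ are mutually inverse. Setting $\beta \coloneqq T_L^l\circ\alpha\circ T_R^l$, for a shifted point $T_L^l(\tup{a})$ with $\tup{a}\in A$ we get $\beta(T_L^l(\tup{a})) = T_L^l(\alpha(T_R^l(T_L^l(\tup{a})))) = T_L^l(\alpha(\tup{a}))$, so $\beta$ is just the conjugate of $\alpha$ by the automorphism $T_L^l$ of $\struc{U}_\infty$. In particular $\beta$ is automatically an isomorphism of the underlying $\Sigma$-structures from $\langle T_L^l(A)\rangle_{\mstr{U}_\infty}$ onto $\langle T_L^l(B)\rangle_{\mstr{U}_\infty}$, and since $\Sigma$ is purely relational these generated substructures are just the finite sets $T_L^l(A)$ and $T_L^l(B)$. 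Hence the only thing left to arrange, by choosing $l$ large, is that $\beta$ is \emph{isometric} and preserves the exact values of each $\ul\varrho^{\struc{U}_\infty}$ (not merely their zero sets, which $\alpha$ already respects).

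First I would record the monotonicity facts that make shifting effective. Write $\tup{x}=(x_i)_{i<\omega}$ for a coherent sequence, so $x_i=\Omega(x_{i+1})$. Because $\Omega$ is a homomorphism, $(x^{(0)}_{i+1},\dots,x^{(n-1)}_{i+1})\in\varrho^{\struc{U}}$ implies $(x^{(0)}_{i},\dots,x^{(n-1)}_{i})\in\varrho^{\struc{U}}$; thus the set of levels at which a relational tuple lies in $\varrho^{\struc{U}}$ is an initial segment of $\omega$, and $\ul\varrho^{\struc{U}_\infty}$ is governed by a single threshold: it is $0$ if the relation holds at all levels, and otherwise equals $2^{-M_0}$, where $M_0$ is the first (hence eternal) level of failure. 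Likewise, two distinct branches separate from some level on and stay separated, so $\delta(\tup{a},\tup{a}')=2^{-\Delta}$ with $a_i\neq a'_i$ for all $i\ge\Delta$.

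Then I would choose $l$ large. Shifting by $T_L^l$ replaces level $i$ by level $i+l$, so $\delta(T_L^l\tup{a},T_L^l\tup{a}') = 2^{-\max(0,\Delta-l)}$, while $\ul\varrho^{\struc{U}_\infty}$ of a shifted relational tuple is $2^{-\max(0,M_0-l)}$ in the failing case and $0$ in the non-failing case. Since $A$, $B$, and the relational part of $\Sigma$ are all finite, there are only finitely many separation levels $\Delta$ (over distinct pairs in $A$ and in $B$) and finitely many failure thresholds $M_0$ (over relational tuples from $A$ and from $B$); let $l$ exceed all of them. After this shift, every pair of distinct points of $T_L^l(A)$, and of $T_L^l(B)$, is at distance exactly $1$, and every relational tuple has value $0$ if it lies in $\varrho^{\struc{U}_\infty}$ and $1$ otherwise.

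Finally I would conclude. As $\alpha$ is a bijection it preserves distinctness, so $\beta$ carries the all-distance-$1$ metric on $T_L^l(A)$ to the all-distance-$1$ metric on $T_L^l(B)$, i.e.\ $\beta$ is isometric; and as $\alpha$ is a $\Sigma$-isomorphism it preserves membership in $\varrho^{\struc{U}_\infty}$, so the flattened values ($0$ or $1$) agree on both sides, i.e.\ $\beta$ preserves every $\ul\varrho^{\struc{U}_\infty}$. Being a bijective, isometric, relation-value-preserving homomorphism, $\beta$ is a metric isomorphism. I expect the only real content to be the first monotonicity observation — that failure of a relation propagates upward because the bonding maps are the homomorphism $\Omega$ — since this is precisely what lets a single $l$ simultaneously flatten all distances and all relation values to the top level; the remaining steps are finite bookkeeping.
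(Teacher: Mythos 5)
Your proof is correct and takes essentially the same route as the paper's: pick $l$ beyond all separation levels of distinct points of $A\cup B$ and beyond all first-failure levels of relational tuples from $A$ and $B$ (finitely many of each, and monotone in the level because the bonding maps are homomorphisms), so that after shifting all distances and all values $\ul\varrho^{\struc{U}_\infty}$ are flattened to $1$ or to $\{0,1\}$ according to membership, whence the conjugate $T_L^l\circ\alpha\circ T_R^l$ is a metric isomorphism. If anything, your write-up is slightly more careful than the paper's, which records the relational thresholds only for tuples from $A$ and leaves the rest to ``the claim follows at once.''
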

\begin{proof}
	As $A$ is finite, for every relational symbol $\varrho$ of arity $n$  there are just finitely many $n$-tuples $\ttup{x}\in A^n$, such that $\ttup{x}\notin \varrho^\struc{A}$. 
	Let $m_\varrho<\omega$ be minimal, such that $\Omega^\infty_{m_\varrho}(\ttup{x})\notin \varrho^{\struc{U}_{m_\varrho}}$, for all $\ttup{x}\notin \varrho^\struc{A}$. 
	Let $m$ be the maximum over all $m_\varrho$.  Further, let $n<\omega$ be minimal, such that $\Omega^\infty_n\restr_{A\cup B}$ is one to one. Let $l=\max\{m,n\}$. Then for all $\tup{a},\tup{b}\in A$ we have 
	\[
	\delta(T_L^l(\tup{a}),T_L^l(\tup{b})) = 1= \delta(T_L^l(\alpha(\tup{a})),T_L^l(\alpha(\tup{b}))). 
	\]
	From this the claim follows at once.
\end{proof}

\begin{proof}[Proof of Theorem~\ref{secmainthm}.]
	Let $\struc{A},\struc{B}$ be finite substructures of $\struc{U}_\infty$. Let $l<\omega$ be such that $T_L^l\circ\alpha\circ T_R^l\colon \langle T_L^l(A)\rangle_{\mstr{U}_\infty}\to \langle T_L^l(B)\rangle_{\mstr{U}_\infty}$ is a metric isomorphism (exists by Lemma~\ref{shiftmet}). By homogeneity there exists a metric automorphism $h$ of $\mstr{U}_\infty$ that extends $T_L^l\circ\alpha\circ T_R^l$. Considered as automorphism of $\struc{U}_\infty$, $h$ is an isometry. Consider now the mapping $\tilde{h}\coloneqq T_L^l\circ h\circ T_R^l$. 
	We claim that $\tilde{h}$ extends $\alpha$. Most easily this is seen through the following commuting diagram 
	\[
	\begin{tikzcd}[column sep = small]
		\struc{U}_\infty\ar[rr,"h"] & &\struc{U}_\infty\\
		\struc{U}_\infty\ar[rr,"\tilde{h}",dashed]\ar[u,"T_R^l","\cong"'] & &\struc{U}_\infty\ar[u,"T_R^l","\cong"']\\
		 & \struc{A}.\ar[ul,"\le",hook'] \ar[ur,hook',"\alpha"']
	\end{tikzcd}
	\] 
	Observe now that  whenever $\delta(\tup{x},\tup{y})< 1$ then $\delta(T_R^l(\tup{x}), T_R^l(\tup{y}))= 2^{-l}\cdot \delta(\tup{x},\tup{y})$.  
	
	Consider $\tup{u}$ and $\tup{v}$ such that $\delta(\tup{u},\tup{v})< 2^{-l}$. Then   $\delta(T_L^l(\tup{u}), T_L^l(\tup{v})) =  2^{l}\cdot \delta(\tup{u},\tup{v}) <1$. On the other hand  $\delta(T_L^l(\tup{u}), T_L^l(\tup{v}))= \delta(h(T_L^l(\tup{u})), h(T_L^l(\tup{v})))$. Thus by the previous observation we obtain that 
	\begin{align*}
		\delta(\tilde{h}(\tup{u}),\tilde{h}(\tup{v})) &= \delta(T_R^l(h(T_L^l(\tup{u}))), T_R^l(h(T_L^l(\tup{v})))) = 2^{-l}\cdot  \delta(h(T_L^l(\tup{u})), h(T_L^l(\tup{v}))) \\
		&= 2^{-l}\cdot \delta(T_L^l(\tup{u}), T_L^l(\tup{v})) = 2^{-l}\cdot 2^l \delta(\tup{u},\tup{v})\\
		&= \delta(\tup{u},\tup{v}).
	\end{align*}
	Thus $\tilde{h}$ acts as an isometry on all metric balls of radius $<2^{-l}$.
\end{proof}

\section{Examples and concluding remarks}

Now that we have a criterion for $\pi\sigma\class{C}$ to contain a universal homogeneous ultrametric structure, it is high time to give a couple of examples. Very many \Fraisse classes are known and we need to decide which of the ``known suspects'' have the \AEP. Unfortunately, checking the \AEP directly  is not a pleasant task. Fortunately, Proposition~\ref{propertieseq} gives us some hints. To our mind spring \Fraisse classes that have in one way or the other canonical amalgams. Here ``canonical'' can be understood as a uniform method for the construction of amalgams. In the language of category theory this amounts to looking for \Fraisse classes in which for every span $\struc{B}\stackrel{\iota}{\injfrom}\struc{A}\stackrel{\kappa}{\injto}\struc{C}$ in $(\cat{C},\injto)$ there exists a cospan $\struc{B}\stackrel{\lambda}{\injto}\struc{D}\stackrel{\nu}{\injfrom}\struc{C}$ in $(\cat{C},\injto)$, such that the square:
\[
\begin{tikzcd}
	\struc{B} \ar[r,hook,"\lambda"]& \struc{D}\\
	\struc{A} \ar[u,hook',"\iota"]\ar[r,hook,"\kappa"']& \struc{C}\ar[u,hook',"\nu"']
\end{tikzcd}
\]
is a pushout-square in $\cat{C}$. This means, for all $\struc{X}\in\class{C}$ and for all homomorphisms $f\colon\struc{B}\to\struc{X}$, $g\colon\struc{C}\to\struc{X}$, such that $f\circ\iota=g\circ\kappa$ there exists a unique homomorphism $h\colon\struc{D}\to\struc{X}$ that makes the following diagram commutative:
\[
\begin{tikzcd}
 & & \struc{X}\\
	\struc{B} \ar[urr,bend left,"f"]\ar[r,hook,"\lambda"]& \struc{D}\ar[ur,"h",dashed]\\
	\struc{A} \ar[u,hook',"\iota"]\ar[r,hook,"\kappa"']& \struc{C}\ar[uur,bend right,"g"']\ar[u,hook',"\nu"'].
\end{tikzcd}
\]
\Fraisse classes with this property are sometimes called \emph{strict} (cf.~\cite[Page 638]{DolMas12b}). Bearing in mind Proposition~\ref{propertieseq} it is quite obvious that any strict \Fraisse class has the \AEP. In particular every free amalgamation class has the strict amalgamation property. Some examples of strict \Fraisse classes are given by the classes of finite
\begin{itemize}
	\item simple graphs,
	\item $K_n$-free graphs, for every $n\ge 3$,
	\item non-strict posets,
	\item rational metric spaces,
	\item semilattices,
	\item distributive lattices,
	\item Boolean algebras,
	\item \dots.
\end{itemize}

An example of an age that has the \AEP but that does not have the strict amalgamation property is given by the class of finite total orders (with strict or with reflexive order relation).

Finding natural examples of \Fraisse classes that fail to have the \AEP appears do be more difficult. Here we give a somewhat artificial example just to make the point:
\begin{example}
	Consider the signature $\Sigma=(\Phi,\Rho)$, where $\Phi$ is empty and where $\Rho$ consists of one binary relational symbol $\varrho$ and of two unary symbols $P$ and $Q$. We may see $\Sigma$-structures as some kind of vertex-colored directed graphs. Consider the $\Sigma$-structure $\struc{\Gamma}$ on the set $\{x_1,x_2,x_3,x_4\}$ given in the figure below:

\[	
\struc{\Gamma}:\quad
\begin{tikzpicture}[baseline={(current bounding box.center)},scale=.9,auto=center,every node/.style={draw=black,circle,fill=blue!0}]
	\node (u1) at (0,0) {$x_1$};
	\node[label={$P$}] (u2) at (-2,2) {$x_2$};
	\node[label={$Q$}] (u3) at (2,2) {$x_3$};
	\node[label={$P,Q$}] (u4) at (0,4) {$x_4$};
	\draw[->] (u2) to (u3);
\end{tikzpicture}
\]
	It is clear, that $\struc{\Gamma}$ has no non-trivial local isomorphisms. Therefore it is homogeneous. 
	Let 
	\begin{align*}
		\struc{A}&\coloneqq\langle x_1\rangle_{\struc{\Gamma}},		& \struc{B}_1&\coloneqq\langle x_1,x_2\rangle_{\struc{\Gamma}}, & \struc{B}_2&\coloneqq\langle x_1,x_3\rangle_{\struc{\Gamma}},&\struc{T}&\coloneqq\langle x_4\rangle_{\struc{\Gamma}}.
	\end{align*}
	  For each $i\in\{1,2\}$ let $h_i\colon\struc{B}_i\to\struc{T}$ be the unique mapping. Clearly, $h_1$ and $h_2$ are homomorphisms. Moreover, the following diagram is commutative:
	  \[
	  \begin{tikzcd}
	  	& & \struc{T}\\
	  	\struc{B}_1\ar[rru,bend left,"h_1"] \\
	  	\struc{A}\ar[u,hook',"="]\ar[r,hook,"="] & \struc{B}_2\ar[uur,bend right,"h_2"']
	  \end{tikzcd}
	  \]
	  The unique minimal amalgam of $\struc{B}_1$ and $\struc{B}_2$ with respect to $\struc{A}$ in $\struc{\Gamma}$ is $\struc{C}=\langle x_1,x_2,x_3\rangle_{\struc{\Gamma}}$. Any joint extension of $h_1$ and $h_2$ to $\struc{C}$ within $\struc{\Gamma}$ maps $x_2$ and $x_3$ to $x_4$. However, $(x_2,x_3)\in\varrho^{\struc{\Gamma}}$, while $(x_4,x_4)\notin\varrho^{\struc{\Gamma}}$. This shows that $\Age(\struc{\Gamma})$ does not have the \AEP.
	\end{example}

	At the end of this paper let us  develop some  examples with a bit greater detail.
	As the first example let us consider finite linear orders:
	\begin{example}
		In the following let $\class{C}$ denote the class of finite linear orders with reflexive order relations (so we look onto structures of the shape $(A,\le)$, rather than the more common $(A,<)$; while this distinction is of no importance from the point of view of \Fraisse theory, it  makes a big difference in our setting). Being the age of the chain of rationals, $\class{C}$ has the \AP. It was shown in \cite[Proposition 3.23]{Kub13}, that $\class{C}$ has the \AEP and the \HAP. Arguing just like in the proof of Theorem~\ref{secmainthm} (see page~\pageref{proofsecmain}), the universal homogeneous ultrametric structure in $\pi\sigma\class{C}$ can be obtained as  $\mlim\cochain{U}$, where $\cochain{U}$ is of the shape:
		\[
		\begin{tikzcd}
			\struc{Q} & \ar[l, two heads, "\Omega"'] \struc{Q} & \ar[l, two heads, "\Omega"'] \struc{Q}  &  \ar[l, two heads, "\Omega"'] \cdots
		\end{tikzcd}
		\]
		and where $\Omega$ is a universal homogeneous endomorphism of $\struc{Q}=(\mathbb{Q},\le)$, i.e., $(\struc{Q},\Omega,\struc{Q})$ is a universal homogeneous object in $(\func{F}\comma \func{G}_{\struc{Q}})$. An realization of $\Omega$ was given in \cite[Remark on page 818]{PecPec18b}. In particular, $\Omega$ may be taken to be any locally constant surjective endomorphism of $\struc{Q}$. Next let us describe a concrete model of the $\omega$-cochain $\cochain{U}$: For every $n\ge 1$  Consider the lexicographic power  $\struc{Q}^n_\lex=(\mathbb{Q}^n,\le_\lex)$ of $\struc{Q}$, where $(a_i)_{i<n} <_\lex (b_i)_{i<n})$ if there exists some $i<n$ such that $a_i< b_i$ and such that for all $j<i$ we have $a_i=b_i$. 
		Being a countable dense chain with no end points it follows from Cantor's theorem that $\struc{Q}^n_\lex$ is isomorphic to $\struc{Q}$. Now if we define $\Omega^{n+1}_n\colon \struc{Q}^{n+1}_\lex\epito\struc{Q}^n_\lex$ according to $\Omega^{n+1}_n\colon (a_0,\dots,a_n)\mapsto (a_0,\dots,a_{n-1})$, then $\Omega^{n+1}_n$ is order preserving, surjective, and locally constant. In other words, it is a universal homogeneous homomorphism. Hence, instead of considering the $\omega$-cochain $\cochain{U}$ we may switch our attention to the naturally isomorphic $\omega$-cochain $\cochain{V}$ given by:
		\[
		\begin{tikzcd}
			\struc{Q}= \struc{Q}^1_\lex & \ar[l, two heads, "\Omega^2_1"'] \struc{Q}^2_\lex & \ar[l, two heads, "\Omega^3_2"'] \struc{Q}^3_\lex  &  \ar[l, two heads, "\Omega^4_3"'] \cdots.
		\end{tikzcd}
		\]
		A limit of $\cochain{V}$ is given by the lexicographic power $\struc{Q}^\omega_\lex$, together with the limiting cone $\Omega^\infty_i\colon \struc{Q}^\omega_\lex\to\struc{Q}^{i+1}_\lex$ given by $\tup{a}\mapsto (a_0,\dots,a_i)$. This can be completed to an ultrametric structure $\mstr{Q}$ by augmenting it with the standard ultrametric $\delta$ on $\mathbb{Q}^\omega$ and the multivalued predicate $\le^{\mstr{Q}}$ defined by
		\[
		\le^{\mstr{Q}}(\tup{a},\tup{b})\coloneqq
		\begin{cases}
			0 &  \tup{a}\le_\lex\tup{b} \\
			\delta(\tup{a},\tup{b}) & \text{else.}
		\end{cases}
		\]
		The resulting ultrametric structure $\mstr{Q}$ is universal and homogeneous in $\pi\sigma\class{C}$. At this point we make the following observation:
		\begin{obs}
			The poset $\struc{Q}^\omega_\lex$ is isomorphic to the chain of irrational numbers with their natural order.
		\end{obs}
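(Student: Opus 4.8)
The plan is to prove the isomorphism through the classical order-theoretic characterisation of the irrationals rather than by writing down an explicit bijection. Set $L \coloneqq \struc{Q}^\omega_\lex$. First I would record the elementary order properties of $L$. It is densely ordered with neither least nor greatest element: if $u <_\lex v$ disagree first in coordinate $k$, then for any rational $c$ with $u_k < c < v_k$ the sequence $(u_0,\dots,u_{k-1},c,0,0,\dots)$ lies strictly between them, and the first coordinate is both coinitial and cofinal. The same interpolation shows that the (countably many) eventually-zero sequences are order-dense, so $L$ is separable. Hence $L$ sits as a dense suborder of its Dedekind completion $\hat L$, and since $\hat L$ is a complete separable dense linear order without endpoints, the order-characterisation of the real line gives $\hat L \cong \mathbb{R}$.

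The core of the argument is then the analysis of the gap set $G \coloneqq \hat L \setminus L$. I would aim to show that $G$ is countable and order-dense in $\hat L$, so that under the identification $\hat L \cong \mathbb{R}$ the pair $(\hat L, G)$ is carried to $(\mathbb{R}, D)$ for a countable dense $D \subseteq \mathbb{R}$. Granting this, the proof closes quickly: by the standard back-and-forth theorem any two countable dense subsets of $\mathbb{R}$ are interchanged by an order-automorphism of $\mathbb{R}$, so $(\mathbb{R}, D) \cong (\mathbb{R}, \mathbb{Q})$, and restricting that automorphism to the complements of the distinguished sets yields $L \cong \mathbb{R}\setminus\mathbb{Q}$, the irrationals with their natural order.

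I expect the gap analysis to be the main obstacle, and indeed the crux on which the whole statement hinges. A cut of $L$ that descends an entire branch $(a_0,a_1,\dots)$ is realised by that branch and contributes no gap, so every genuine gap must branch off at some finite level; thus a gap is determined by a finite prefix $p\in\mathbb{Q}^{<\omega}$ together with a Dedekind cut of the $\mathbb{Q}$ sitting at the branching coordinate. The decisive and delicate step is to determine the order type of $G$ from this description. Because each coordinate is itself a dense copy of $\mathbb{Q}$, every cut of a coordinate threatens to produce a separate gap, and the entire content of the claim lies in controlling this coordinatewise cut-counting and showing that the gaps nonetheless assemble into a countable, densely placed set of order type $\eta$. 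This is the step I would spend essentially all the effort on; the preliminary density, endpoint, separability, and completion steps are routine by comparison. (An alternative scheme would build a continued-fraction-style order isomorphism level by level along the cochain $\cochain{V}$, but arranging that map to be simultaneously order-preserving and exactly onto the irrationals appears no easier than the gap bookkeeping above.)
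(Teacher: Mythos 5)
Your plan founders at exactly the step you single out as the crux, and the failure is not repairable: the gap set $G=\hat L\setminus L$ is not countable but has cardinality $2^{\aleph_0}$. For every irrational $\xi$ put $A_\xi=\{x\in\mathbb{Q}^\omega\mid x_0<\xi\}$ and $B_\xi=\{x\mid x_0>\xi\}$. This partitions $L=\struc{Q}^\omega_\lex$ into an initial and a final segment; $A_\xi$ has no greatest element and $B_\xi$ no least element (given $x\in A_\xi$, choose a rational $c$ with $x_0<c<\xi$; then $(c,0,0,\dots)$ lies in $A_\xi$ strictly above $x$, and dually for $B_\xi$), and distinct $\xi$ give distinct cuts. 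So the irrational cuts of the first coordinate alone contribute continuum many gaps. Your own description of the gaps---a finite prefix together with a Dedekind cut of the copy of $\mathbb{Q}$ at the branching coordinate---already points the same way, since $\mathbb{Q}$ has continuum many irrational cuts and these cuts of $L$ are pairwise distinct as subsets; there is no cancellation to hope for. Consequently $(\hat L,G)\not\cong(\mathbb{R},\mathbb{Q})$ and the back-and-forth step has nothing to act on.

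Worse, the invariant you chose actually separates the two orders: the completion of the irrationals is $\mathbb{R}$ with exactly the countably many rational gaps, the completion of $\struc{Q}^\omega_\lex$ has continuum many gaps, and the cardinality of the gap set is preserved by order isomorphisms. So carried out honestly, your computation refutes the stated observation rather than proving it, and no argument along any lines can close. For comparison, the paper proceeds differently: it builds equivalence relations $\sigma_i$ on $\mathbb{I}$ with convex (interval) classes such that $\struc{I}/\sigma_i\cong\struc{Q}$, and identifies $\mathbb{I}$ with the inverse limit of the quotients. But the same obstruction applies there: for a convex equivalence relation, the gaps of the quotient inject into the gaps of the base order (a gap $(\mathcal{A},\mathcal{B})$ of $\struc{I}/\sigma$ yields the gap $(\bigcup\mathcal{A},\bigcup\mathcal{B})$ of $\struc{I}$, and distinct gaps have distinct unions), so a convex quotient of $\mathbb{I}$ has at most countably many gaps and can never be isomorphic to $\mathbb{Q}$, which has continuum many. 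The paper's condition (1b) is therefore unsatisfiable, and its proof breaks at the unverified assertion that $\sigma_0$ satisfies condition (1). You should treat the statement itself, not merely your proof of it, as in need of repair.
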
	
		\begin{proof}
			In the following we denote $\struc{R}=(\mathbb{R},\le)$, and $\struc{I}=(\mathbb{I},\le)$. Here $\mathbb{R}$ and $\mathbb{I}$ are the real and the irrational numbers, respectively.
			
			Our goal is to construct a sequence $(\sigma_i)_{i<\omega}$ such that:
			\begin{enumerate}[label=\arabic*), ref=\arabic*)]
				\item \label{c1} $\forall i<\omega: \sigma_i\subseteq\mathbb{I}^2$ is an equivalence relation with the following properties:
				\begin{enumerate}[label=(\alph*), ref=\arabic{enumi}\alph*)]
					\item \label{c1a}$\forall x\in\mathbb{I}:[x]_{\sigma_i}$ is an open interval with rational bounds in $\mathbb{I}$,
					\item \label{c1b}$\struc{I}/\sigma_i\cong\struc{Q}$,
				\end{enumerate} 
				\item \label{c2}$\forall a\in\mathbb{Q}\,\exists i<\omega, x\in\mathbb{I} : a = \sup [x]_{\sigma_i} \text{ or }  a=\inf [x]_{\sigma_i}$,
				\item \label{c3}$\forall i,j<\omega: i\le j \implies \sigma_j\subseteq\sigma_i$, 
				\item \label{c4}$\forall i<\omega\,\forall x\in\mathbb{I}: \langle [y]_{\sigma_{i+1}}\mid y\in [x]_{\sigma_i}\rangle_{\struc{I}/\sigma_{i+1}}\cong\struc{Q}$.  
			\end{enumerate}
			Before starting with the construction of the sequence $(\sigma_i)_{i<\omega}$, let us fix an enumeration $(q_i)_{i<\omega}$ of $\mathbb{Q}$. 
			
			Let $\Omega\colon\struc{Q}\epito\struc{Q}$ be a locally constant, surjective endomorphism (in other words, $\Omega$ is universal and homogeneous). Define $L\coloneqq \{\inf[x]_{\ker\Omega}\mid x\in\mathbb{Q}\}$ and $U\coloneqq \{\sup[x]_{\ker\Omega}\mid x\in\mathbb{Q}\}$. Let $\struc{B}\coloneqq \langle L\cup U\rangle_{\struc{R}}$. Then $\struc{B}$ is a countably infinite unbounded chain. By Cantor's theorem there exists an embedding $\iota$ of $\struc{B}$ into $\struc{Q}$. Note that $\iota$ may be chosen in such a way that its image in $\mathbb{Q}$ is unbounded, too. 
			
			Next, for $x,y\in\mathbb{I}$ define $x\mathop{\sigma_0} y :\Leftrightarrow (x,y)\cap \iota(B) = \emptyset$. By construction, $\sigma_0$ satisfies condition  \ref{c1}.
			
			Let $X$ be any class of $\sigma_0$ and let $q\coloneqq\inf X$. Suppose that $\sigma_0,\dots,\sigma_n$ are already constructed. Let $(X_i)_i<\omega$  be an enumeration of the equivalence classes of $\sigma_n$. For each $i<\omega$ let $l_i\coloneqq\inf X_i$ and $u_i\coloneqq\sup X_i$ in $\struc{R}$, and let $j_i$ be the smallest number such that $q_{j_i}\in(l_i,u_i)$. Let $\psi_i\colon\struc{Q}\to\langle(l_i,u_i)\cap\mathbb{Q}\rangle_{\struc{Q}}$ be an order isomorphism that maps $q$ to $q_{j_i}$ (this exists by Cantor's theorem and by the homogeneity of $\struc{Q}$). Then $\psi_i$ uniquely extends to an isomorphism $\widehat\varphi\colon\struc{R}\to\langle(l_i,u_i)\rangle_{\struc{R}}$. Finally, let $\varphi\colon\struc{I}\to \langle (l_i,u_i)\cap\mathbb{I}\rangle_{\struc{I}}$ be the restriction of $\widehat\varphi$ to $\struc{I}$. Clearly, $\varphi_i$ is an order isomorphism. Define $\sigma_{n+1,i}:=\varphi_i(\sigma_0)$. Then $\varphi_i(X)$ is an equivalence class of $\sigma_{n+1,i}$ and $q_{j_i}=\inf \varphi_i(X)$. Finally, define $\sigma_{n+1}\coloneqq \bigcup_{i<\omega}\sigma_{n+1,i}$. Thus the sequence $(\sigma_i)_{i<\omega}$ is completely defined. By design it has properties \ref{c1}, \ref{c2}, \ref{c3}, and \ref{c4}. 
			
			Now for each $i<\omega$ fine $\struc{I}_i\coloneqq \struc{I}/\sigma_i$, and $\pi^{i+1}_i\colon\struc{I}_{i+1}\epito\struc{I}_i$ according to $\pi^{i+1}_i\colon [x]_{\sigma_{i+1}}\mapsto [x]_{\sigma_i}$, for all $x\in \mathbb{I}$. Observe that for every $i<\omega$ we have that $\struc{I}_i\cong\struc{Q}$ and that $\pi^{i+1}_i$ is a universal homogeneous homomorphism. Let us denote by $\cochain{I}$  the $\omega$-cochain given by 
			\[
			\begin{tikzcd}
				\struc{I}_0 & \ar[l, two heads, "\pi^1_0"'] \struc{I}_1 & \ar[l, two heads, "\pi^2_1"'] \struc{I}_2  &  \ar[l, two heads, "\pi^3_2"'] \cdots.
			\end{tikzcd}
			\]
			Let $\struc{\tilde{I}}\coloneqq \plim \cochain{I}$ (in particular, $\struc{\tilde{I}}\cong\struc{Q}^\omega_\lex$). Define $\pi\colon \struc{I}\to\struc{\tilde{I}}$ according to $\pi\colon x\mapsto ([x]_{\sigma_i})_{i<\omega}$. We claim that $\pi$ is an isomorphism. Clearly, $\pi$ is order preserving. It remains to show that it is also bijective. To this end it suffices to prove that for all $(X_i)_{i<\omega}\in \tilde{\mathbb{I}}$ we have 
			\begin{equation}\label{clm}
			\bigcap_{i<\omega} X_i = \{x\}, \text{for some $x\in\mathbb{I}$.} 
			\end{equation}
			The rest of the proof will be dedicated to the proof of \eqref{clm}.
			
			For each $i<\omega$ chose some $x_i$ from $X_i$. Since $X_0$ is a bounded subset of $\mathbb{R}$, the sequence $(x_i)_{i<\omega}$ has an accumulation point in $\mathbb{R}$, say, $x$. 
			\begin{enumerate}[label=\textbf{Claim \arabic*:}, ref=\arabic*,nosep,align=left,leftmargin=0em,labelindent=0em,itemindent=1em,labelsep=0.5em,labelwidth=!]
				\item \label{clm1}$x\in\mathbb{I}$. Suppose on the contrary that $x\in\mathbb{Q}$. Then by \ref{c2} there exists some $y\in\mathbb{I}$, such that $x=\inf [y]_{\sigma_j}$ or $x=\sup [y]_{\sigma_j}$. We will handle only case that $x=\sup [y]_{\sigma_j}$, as the other case goes analogously. 
					\begin{enumerate}[label=\textbf{Claim \theenumi\alph*:}, ref=\theenumi\alph*,align=left,leftmargin=0em,labelindent=0em,itemindent=1em,labelsep=0.5em,labelwidth=!]
					  \item\label{clm1a} $X_j=[y]_{\sigma_j}$. If $x$ was a lower bound of $X_j$, then $\inf X_j>x$, by  \ref{c1b}. However, then $|x-x_k|>|\inf X_j-x_k|$, for all $k\ge j$, which is in contradiction with $x$ being an accumulation point of $(x_n)_{n<\omega}$. Thus $x$ is an upper bound of $X_j$. If $X_j < [y]_{\sigma_j}$, then $|x_k-x|\ge\varepsilon$, for all $k\ge j$, where $\varepsilon$ is the length of $[y]_{\sigma_j}$, in contradiction with $x$ being an accumulation point of $(x_n)_{n<\omega}$. Thus Claim \ref{clm1a} holds.
					\end{enumerate}
					Consider now $\hat{x}\coloneqq\sup X_{j+1}$. By \ref{c4} we have $\hat{x}<x$. However, then for all $k\ge j+1$ we have $|x_k-x|>|\hat{x}-x|$, and this too is a contradiction with $x$ being an accumulation point of $(x_n)_{n<\omega}$. It follows that the assumption that $x$ is rational was wrong and Claim \ref{clm1} is proven.
				\item\label{clm2} $x\in\bigcap_{i<\omega} X_i$. Suppose on the contrary that $x\notin \bigcap_{i<\omega} X_i$. Let $j<\omega$ be minimal with the property that $x\notin X_j$. Suppose that $x$ is an upper bound of $X_j$. Since $x$ is irrational, we have that $x>\sup X_j$. But then $|x-x_k|>|x-\sup X_j|$, for all $k\ge j$, in contradiction with $x$ being an accumulation point of $(x_n)_{n<\omega}$. The case that $x$ is a lower bound of $X_j$ is eliminated analogously. Thus Claim \ref{clm2} is proven. 
				\item\label{clm3} $\bigcap_{i<\omega} X_i = \{x\}$. Suppose on the contrary that there exists some $y\neq x$ such that $y\in\bigcap_{i<\omega} X_i$. Then inbetween $x$ and $y$ there exists some $q\in\mathbb{Q}$. By \ref{c2} there exists some $j<\omega$ and some $X\in\mathbb{I}/\sigma_j$, such that $q=\inf X$ or $q=\sup X$. However, then $x$ and $y$ can not belong to the same equivalence class of $\sigma_j$, a contradiction. Thus Claim \ref{clm3} is proven.\qedhere
			\end{enumerate}
		\end{proof}
		Thus we arrived at another description of the universal homogeneous ultrametric structure in $\pi\sigma\class{C}$: $\mstr{A} = (\mathbb{I}, \delta_\mstr{A},\le^{\mstr{A}})$, where
		\begin{align*}
			\delta_\mstr{A}(x,y) &= \begin{cases}
				0 & x=y \\
				2^{-i} & i = \min\{j<\omega \mid [x]_{\sigma_j}\neq [y]_{\sigma_j}\},
			\end{cases}\\
			\le^{\mstr{A}}(x,y) &= \begin{cases}
				0 & x\le y \\
				2^{-i} & i = \min\{j<\omega\mid [x]_{\sigma_j} \neq [y]_{\sigma_j}\}.
			\end{cases}
		\end{align*}

	\end{example}

	As the second example let us have a look onto the class of finite graphs:
	\begin{example}
		In the following, let $\class{C}$ denote the class of finite graphs in which each vertex carries a loop. It is well-known that $\class{C}$ has the free amalgamation property. As was noted above, this entails directly that $\class{C}$ has the \AP and the \AEP. The \Fraisse limit of $\class{C}$ is just the Rado graph with all loops added; let us denote this graph by $\struc{R}$. It is easy to see that $\class{C}$ has the \HAP. Appealing once more to the arguments in the proof of Theorem~\ref{secmainthm} (see page~\pageref{proofsecmain}), the universal homogeneous ultrametric structure in $\pi\sigma\class{C}$ can be obtained as  $\mstr{R}=\mlim\cochain{U}$, where $\cochain{U}$ is of the shape:
		\[\begin{tikzcd}
			\struc{R} & \ar[l, two heads, "\Omega"'] \struc{R} & \ar[l, two heads, "\Omega"'] \struc{R} \ar[l, two heads, "\Omega"'] &  \ar[l, two heads, "\Omega"'] \cdots
		\end{tikzcd}
		\]
		and where $\Omega$ is a universal homogeneous endomorphism of $\struc{R}$ (i.e., a universal homogeneous object in $(\func{F}\comma \func{G}_{\struc{R}})$). In order to grasp the structure of $\mstr{R}$, let us have a closer look onto $\Omega$: By \cite[Proposition~2.2]{DroGoe92} a homomorphism $\Omega\colon\struc{R}\to\struc{\widetilde{R}}$ is universal homogeneous in $(\func{F}\comma\func{G}_{\struc{\widetilde{R}}})$ if and only if for all $\struc{A},\struc{B}\in\class{C}$, for all commuting diagrams of the shape 
		\[
		\begin{tikzcd}[sep=scriptsize]
			\struc{B}\ar[rrrd,bend left,"\hat{h}"]\\
			& \struc{A} \ar[rr,"h"]\ar[ul,hook',"\kappa"']\ar[dl,hook',"\iota"]& & \struc{\widetilde{R}}\\
			\struc{R}	\ar[rrru,bend right,two heads,"\Omega"']		
		\end{tikzcd}
		\]
		there exists $\hat\iota\colon\struc{B}\injto\struc{R}$ that makes the following diagramm commutative:
		\[
		\begin{tikzcd}[sep=scriptsize]
			\struc{B}\ar[dd,hook',dashed,"\hat\iota"']\ar[rrrd,bend left,"\hat{h}"]\\
			& \struc{A} \ar[rr,"h"]\ar[ul,hook',"\kappa"']\ar[dl,hook',"\iota"]& & \struc{\widetilde{R}}.\\
			\struc{R}	\ar[rrru,bend right,two heads,"\Omega"']		
		\end{tikzcd}
		\]
		On closer inspection this is equivalent to	the condition that for all finite disjoint subsets $A$ and $B$ of $R$ and for every $c\in \widetilde{R}$ that is connected to each vertex of $\Omega(A)$ there exists some $x\in R$ with $\Omega(x)=c$, such that $x$ is connected by an edge to each vertex from $A$ and to none from $B$. With this in mind, it becomes clear that for every $c\in R$, the subgraph of $\struc{R}$ induced by $\Omega^{-1}(c)$ is isomorphic to $\struc{R}$. Similarly it can be observed that for all edges $(c,d)$ of $\struc{R}$ the edges between $\Omega^{-1}(c)$ and $\Omega^{-1}(d)$ in $\struc{R}$ induce a random bipartite graph. 
		
		Having noted all this, we may envision $\mstr{R}$ as kind of a fractal structure. From very far away $\mstr{R}$ looks just like the Rado graph. When slowly closing up to  $\mstr{R}$, the apparent vertices start looking like clouds of vertices inducing again  Rado graphs and the edges resolve into copies of the random bipartite graph. Going even closer this repeats ad infinitum. 
		
		Let us round off this example by describing a concrete representation of a universal homogeneous homomorphism $\Omega$ between two different concrete copies of the Rado graph:
		As the codomain $\struc{\widetilde{R}}$ of $\Omega$ we take the ``standard'' representation  of the Rado graph with all loops added, where $\widetilde{R}=\omega$ and where for all $n\le m<\omega$ an edge is put between $n$ and $m$ whenever either $n=m$ or  the $n$-th digit in the binary expansion of $m$ is equal to $1$. Let us denote the edge relation of $\struc{R}$ by $\varrho$. 
		
		Let us now define the domain of $\Omega$. Consider the set $\{0,1\}^\ast$ consisting of all finite words in the alphabet $\{0,1\}$. As usually, by $\varepsilon$ we denote the empty word, and by $|w|$ we denote the number of letters of the word $w$. By $\#(w)$ we are going to denote the index of $w$ in the lexicographic enumeration of $\{0,1\}^\ast$. Here $v<w$ means that either $|v|<|w|$ or $|v|=|w|$ and $v<_{\operatorname{lex}} w$. Thus, we get, e.g., 
		\[
		\begin{array}{r|c|c|c|c|c|c|c|c}
			w & \varepsilon & 0 & 1 & 00 & 01 & 10 & 11 & \dots\\\hline
			\#(w) & 0 & 1 & 2 & 3 & 4 & 5 & 6 & \dots.
		\end{array}
		\]
		Moreover, for $w=a_0\dots a_n$ and for $i<\omega$ we define 
		\[
		w_i\coloneqq\begin{cases}
			a_i & i\le n\\
			0 & \text{else.}
		\end{cases}
		\]
		Next, let $\varphi\colon \omega\epito\omega$ be such that for all $n<\omega$ we have that $\varphi^{-1}(n)$ is infinite. E.g., we may take the sequence \texttt{A002262} from the online encyclopedia of integer sequences (see \cite{OEIS23}) given by:
		\[
		\begin{array}{r|c|c|c|c|c|c|c|c|c|c|c}
			n & 0 & 1 & 2 & 3 & 4 & 5 & 6 & 7 & 8 & 9 &\dots \\\hline
			\varphi(n) & 0 & 0 & 1 & 0 & 1 & 2 & 0 & 1 & 2 & 3 & \dots
		\end{array}
		\]
		Now, the domain of $\Omega$ is the graph $\struc{R}$ with vertex set $\{0,1\}^\ast$,  where for any $v,w\in\{0,1\}^\ast$ with $\#(v)\le\#(w)$ we put an edge between $v$ and $w$ if either $v=w$ or if $(\varphi(|v|),\varphi(|w|))\in\varrho$ and $w_{\#(v)}=1$. 

		Finally we define $\Omega\colon\struc{R}\to\struc{\widetilde{R}}$ according to 
		\[
		\Omega\colon w\mapsto \varphi(|w|).
		\]
		In order to see that $\Omega$ is indeed a universal homogeneous homomorphism, let $A$ and $B$ be finite disjoint subsets of $\{0,1\}^\ast$, and let $C:=\Omega(A)$. Let $c$ be any vertex of $\struc{\widetilde{R}}$ that is connected to all vertices from $C$. Let $n$ be such that $\varphi(n)=c$ and such that $n>\max\{\#(x)\mid x\in A\cup B\}$. Let $u\in\{0,1\}^\ast$, such that $|u|=n$ and such that for all $v\in A$ we have $u_{\#(v)}=1$ and for all $w\in B$ we have $u_{\#(w)}=0$. It is not hard to see that with this choice we have that $\Omega(u)=c$ and that $u$ is connected in $\struc{R}$ to all elements of $A$ and to none of $B$. By the remarks above, this implies that $\Omega$ is indeed universal and homogeneous.  
	\end{example}

\bibliographystyle{abbrv}
\bibliography{homMetStruc}

\end{document}